\theoremstyle{plain}
\newtheorem{theorem}{Theorem}[section]	
\newtheorem{lemma}{Lemma}[section]
\newtheorem{corollary}{Corollary}[section]
\newtheorem{proposition}{Proposition}[section]
\theoremstyle{definition}
\newtheorem{remark}{Remark}[section]
\newcommand{\R}{\mathbb{R}}
\renewcommand{\qed}{\hfill{\tiny \ensuremath{\blacksquare} }}%
\newcommand{\Ep}{{\mathrm{E}}}
\renewcommand{\Pr}{{\mathrm{P}}}
\newcommand{\red}[1]{\textcolor{red}{#1}}
\def\ben#1{\begin{equation}#1\end{equation}}
\def\ba#1{\begin{align*}#1\end{align*}}
\newcommand{\wh}[1]{\hat{#1}}
\begin{document}

\begin{frontmatter}
\title{Improved Central Limit Theorem and Bootstrap Approximations in High Dimensions}
%\title{A sample article title with some additional note\thanksref{t1}}
\runtitle{Improved CLT and bootstrap in high dimensions}
%\thankstext{T1}{A sample additional note to the title.}

\begin{aug}
%%%%%%%%%%%%%%%%%%%%%%%%%%%%%%%%%%%%%%%%%%%%%%
%%Only one address is permitted per author. %%
%%Only division, organization and e-mail is %%
%%included in the address.                  %%
%%Additional information can be included in %%
%%the Acknowledgments section if necessary. %%
%%%%%%%%%%%%%%%%%%%%%%%%%%%%%%%%%%%%%%%%%%%%%%
\author[A]{\fnms{Victor} \snm{Chernozhuokov}\ead[label=e1]{vchern@mit.edu}},
\author[B]{\fnms{Denis} \snm{Chetverikov}\ead[label=e2,mark]{chetverikov@econ.ucla.edu}}
\author[C]{\fnms{Kengo} \snm{Kato}\ead[label=e3,mark]{kk976@cornell.edu}} \\
\and
\author[D]{\fnms{Yuta} \snm{Koike}\ead[label=e4,mark]{kyuta@ms.u-tokyo.ac.jp}}
%%%%%%%%%%%%%%%%%%%%%%%%%%%%%%%%%%%%%%%%%%%%%%
%% Addresses                                %%
%%%%%%%%%%%%%%%%%%%%%%%%%%%%%%%%%%%%%%%%%%%%%%
\address[A]{
Department of Economics and Operations Research Center, MIT,
\printead{e1}}

\address[B]{
Department of Economics, UCLA,
\printead{e2}}

\address[C]{
Department of Statistics and Data Science, Cornell University,
\printead{e3}}

\address[D]{Department,
Mathematics and Informatics Center and Graduate School of Mathematical Sciences, The University of Tokyo,
\printead{e4}}

\end{aug}

\begin{abstract}
This paper deals with the Gaussian and bootstrap approximations to the distribution of the max statistic in high dimensions. This statistic takes the form of the maximum over components of the sum of independent random vectors and its distribution plays a key role in many high-dimensional estimation and testing problems. Using a novel iterative randomized Lindeberg method, the paper derives new bounds for the distributional approximation errors. These new bounds substantially improve upon existing ones and simultaneously allow for a larger class of bootstrap methods.
\end{abstract}

\begin{keyword}[class=MSC2020]
\kwd{60F05, 62E17}
\end{keyword}

\begin{keyword}
\kwd{bootstrap}
\kwd{central limit theorem}
\kwd{iterative randomized Lindeberg method}
\kwd{Stein kernel}
\end{keyword}

\end{frontmatter}

\section{Introduction}
%In these notes, we study bootstrap approximations for the distribution of the maxima of sums of high-dimensional independent random vectors. We aim at improving the dependence on the sample size $n$.

Let $X_1,\dots,X_n$ be independent random vectors in $\mathbb R^p$ such that
$\Ep[X_{i j}] = \mu_{j}$ for all $i=1,\dots,n$ and $j=1,\dots,p$,
where $X_{i j}$ denotes the $j$th component of the vector $X_i$. We are interested in approximating the distribution of the maximum coordinate of the centered sample mean of $X_{1},\dots,X_{n}$, i.e., 
\begin{equation}\label{eq: statistic first}
T_n = \max_{1\leq j\leq p}\frac{1}{\sqrt n}\sum_{i=1}^n(X_{i j} - \mu_{j}).
\end{equation}
The distribution of $T_n$ plays a particularly important role in many high-dimensional settings, where $p$ is potentially larger or much larger than $n$. For example, it appears in selecting the regularization parameters for the Lasso estimator and the Dantzig selector (\cite{CCK13}), in carrying out reality checks for data snooping and testing superior predictive ability (\cite{W00,H05}), in constructing model confidence sets (\cite{HLN11}), in testing conditional and/or many unconditional moment inequalities (\cite{BSS19, C18, CCK19, KB19}), in multiple testing with the family-wise error rate control (\cite{BCCHK18}), in constructing simultaneous confidence intervals for high-dimensional parameters (\cite{BCCW18}), in adaptive testing of regression and stochastic monotonicity (\cite{C19,CWK18}), in carrying out inference on generalized instrumental variable models (\cite{CR19}), and in constructing Lepski-type procedures for adaptive estimation and inference in nonparametric problems (\cite{CCK14}); more references can be found in \cite{DZ17} and especially in \cite{BCCHK18}. It is therefore of great interest to develop methods for obtaining feasible and accurate approximations to the distribution of $T_n$, allowing for the high-dimensional $p \gg n$ case.

%and let $\mathcal A$ be the class of all hyper-rectangles in $\mathbb R^p$, i.e. sets $A$ of the form
%\begin{equation}\label{eq: rectangle}
%A = \Big\{x\in \mathbb R^p\colon a_{l j}\leq x_j\leq a_{r j}\text{ for all }j = 1,\dots,p\Big\},
%\end{equation}
%where $a_l$ and $a_r$ are vectors in $\overline {\mathbb R}^p$ such that $a_{lj}\leq a_{rj}$ for all $j = 1,\dots,p$ and $\overline{\mathbb R} = \mathbb R\cup\{-\infty\}\cup\{+\infty\}$. We are interested  in approximating the distribution of $S^X$ over $\mathcal A$ when the dimension of the vectors $p$ is potentially larger or much larger than the sample size $n$.

Toward this goal, the first three authors of this paper obtained the following Gaussian approximation result in \cite{CCK13, CCK17}. Let $G = (G_1,\dots,G_p)'$ be a Gaussian random vector in $\mathbb R^p$ with mean $\mu = (\mu_1,\dots,\mu_p)'$ and covariance matrix $n^{-1}\sum_{i=1}^n\Ep[(X_i - \mu)(X_i - \mu)']$ and let the critical value $c_{1-\alpha}$ be the $(1-\alpha)$th quantile of $\max_{1\leq j\leq p}G_j$. Then under mild regularity conditions,
\begin{equation}\label{eq: basic approximation}
\Big|\Pr(T_n > c_{1-\alpha}) - \alpha \Big|\leq C\left(\frac{\log^7(p n)}{n}\right)^{1/6},
\end{equation}
where $C$ is a constant that is independent of $n$ and $p$. This result is important because the right-hand side of the bound \eqref{eq: basic approximation} depends on $p$ only via the logarithm of $p$, and hence it shows that the Gaussian approximation holds if $\log  p = o(n^{1/7})$, which allows $p$ to be much larger than $n$. Besides, building upon this result, the same authors have proved bounds similar to (\ref{eq: basic approximation}) for the critical values obtained by the Gaussian multiplier and empirical bootstraps in \cite{CCK17}.

Gaussian approximation of the form (\ref{eq: basic approximation}) allows us to develop powerful inference methods for high-dimensional data in applications discussed above and has stimulated further developments into dependent data \cite{ZW17,ZC17,CCK19}, $U$-statistics \cite{C18,CK19a,CK19b}, Malliavin calculus \cite{C19}, and homogeneous sums \cite{K19a}. Despite such rapid developments, the literature has left much to be desired on  coherent understanding of sharpness of the bound (\ref{eq: basic approximation}) for the Gaussian or bootstrap critical values since the first appearance of \cite{CCK17} in 2014 on arXiv. The problem can be decomposed into two parts: (i) sharpness of the bound in terms of dependence on $n$ and (ii) sharpness of the bound in terms of dependence on $p$.

There are two important developments toward the question of sharpness of the bound (\ref{eq: basic approximation}) that should be mentioned.  
First, Deng and Zhang \cite{DZ17} considered direct bootstrap approximation without taking the root of Gaussian approximation, and proved the following bound for the critical value $c_{1-\alpha}$ obtained by the empirical or  third-order matching (or Mammen's \cite{M93}) multiplier bootstraps: 
\begin{equation}\label{eq: dz bound}
\Big|\Pr(T_n > c_{1-\alpha}) - \alpha \Big|\leq C\left(\frac{\log^5(p n)}{n}\right)^{1/6}.
\end{equation}
Their bound improves the power of the logs in the previous bound (\ref{eq: basic approximation}), showing that the empirical and Mammen's bootstraps are consistent to approximate the distribution of $T_{n}$ if $\log p = o(n^{1/5})$ instead of $\log p = o(n^{1/7})$. %Their intuition for the improved bound is that the empirical and Mammen's bootstrap can approximately match the moments of the sample mean up to the third order, while the Gaussian approximation can only match the moments up to the second order.  
Second, the recent preprint by the fourth author \cite{K19b} shows that the same bound (\ref{eq: dz bound}) indeed holds for the Gaussian critical value as well. 

In turn, in this paper, we show that in fact a much larger improvement is possible: under mild regularity conditions, we prove that
\begin{equation}\label{eq: empirical bootstrap introduction}
\Big|\Pr(T_n > c_{1-\alpha}) - \alpha \Big|\leq C\left(\frac{\log^5(p n)}{n}\right)^{1/4},
\end{equation}
both for the Gaussian and  bootstrap critical values $c_{1-\alpha}$. In comparison with the Gaussian approximation result \eqref{eq: basic approximation}, our new bound improves not only the power of the logs but also the power of the sample size $n$. Moreover, regarding the bootstrap types, we allow for not only the empirical and third-order matching multiplier bootstrap methods, but also for general multiplier bootstrap methods (with i.i.d weights), which match only two moments of the data, such as the multiplier bootstrap methods with Gaussian and Rademacher weights.

We remark that several authors have recently pointed out that an additional structural assumption on the covariance matrices of $X_i$'s can improve the bound \eqref{eq: empirical bootstrap introduction}. In particular, Fang and Koike \cite{FK20} showed that the right-hand side of \eqref{eq: empirical bootstrap introduction} can be improved to $C(\log^4(pn)/n)^{1/3}$ when the covariance matrices are non-degenerate and can be further improved to $C(\log^3(p)/n)^{1/2}\log n$ when we additionally assume that $X_i$'s have log-concave densities. The latter result is based on the fact that random vectors with log-concave densities admit Stein kernels with sub-Weibull entries, which is established by Fathi in \cite{Fa19}. Moreover, building on the important results by Lopes in \cite{L20} and Kuchibhotla and Rinaldo in \cite{KR20}, \cite{CCK20} showed that the bound $C(\log^3(p)/n)^{1/2}\log n$ can be achieved even without the assumption of log-concave densities (non-degenerate covariance matrices are still required; \cite{L20} and \cite{KR20} were the first to obtain dependence on $n$ via $1/\sqrt n$ in \eqref{eq: empirical bootstrap introduction} without requiring log-concave densities). In addition, Lopes, Lin and M\"uller \cite{LLM20} showed that the right-hand side of \eqref{eq: empirical bootstrap introduction} can be improved to $C n^{-1/2+\delta}$ for any $\delta>0$ when the coordinates of $X_i$'s have decaying variances. Compared to these results, our bound requires neither non-degenerate covariance matrices nor decaying variances.

In addition, we prove that if the distribution of the random vectors $X_1,\dots,X_n$ is symmetric around the mean, then even better approximation to the distribution of $T_n$ is possible:
\begin{equation}\label{eq: randomization introduction}
\Big|\Pr(T_n > c_{1-\alpha}) - \alpha \Big|\leq C\left(\frac{\log^3(p n)}{n}\right)^{1/2}
\end{equation}
as long as the critical value $c_{1-\alpha}$ is obtained via the multiplier bootstrap method with Rademacher weights. This new bound makes Rademacher weights particularly appealing in the high-dimensional settings, at least from a theoretical perspective.

We also consider bootstrap approximations with incremental factors, previously used by Andrews and Shi in \cite{AS13} in the context of testing conditional moment inequalities. Specifically, for a small but fixed constant $\eta > 0$, called an incremental factor, we derive the following bounds:
\begin{equation}\label{eq: or 1}
\Pr(T_n > c_{1-\alpha} + \eta) - \alpha \leq C\left(\frac{\log^3(p n)}{n}\right)^{1/2}
\end{equation}
if $c_{1-\alpha}$ is obtained via either the empirical or the third-order matching multiplier bootstrap methods and
\begin{equation}\label{eq: or 2}
\Pr(T_n > c_{1-\alpha} + \eta) - \alpha \leq C\left(\frac{\log^5(p n)}{n}\right)^{1/2}
\end{equation}
if $c_{1-\alpha}$ is obtained via general multiplier bootstrap methods, where the constant $C$ may depend on $\eta$. Even though these are one-sided bounds, they are useful because they show that in any test based on the statistic $T_n$, increasing the critical value $c_{1-\alpha}$ by an incremental factor $\eta$ may substantially reduce the sample complexity for over-rejection. Namely,  assuming $\log p \gtrsim \log n$ for simplicity, for the over-rejection probability to be less than or equal to a given level $0 < \Delta < 1-\alpha$, the empirical bootstrap or multiplier bootstrap (without incremental factor) requires $n \gtrsim \Delta^{-4} \log^5 p$, while adding a constant incremental factor reduces the sample complexity to $n \gtrsim (\Delta^{-2} \log^3p) \vee \log^5 p$ if we use the empirical or third-order matching bootstrap. 
It is worth noting that, given that in high-dimensional settings, where $p$ is rapidly increasing together with $n$, $c_{1-\alpha}$ is typically also getting large as we increase $n$, adding an incremental factor $\eta$ may not have a large impact on the power properties of the test.

In fact, all our results apply to a more general version of the statistic $T_n$:
\begin{equation}\label{eq: statistic second}
T_n = \max_{1\leq j\leq p}\frac{1}{\sqrt n}\sum_{i=1}^n(X_{i j} - \mu_{j} + a_j),
\end{equation}
where $a=(a_1,\dots,a_p)'$ is a vector in $\mathbb R^p$, which reduces to \eqref{eq: statistic first} if we set $a = 0_p$. In most applications mentioned above, the former version \eqref{eq: statistic first} is sufficient but there are some applications where the more general version \eqref{eq: statistic second} is required; for example, the latter was used by Bai, Shaikh, and Santos in \cite{BSS19} to extend the method of testing moment inequalities proposed in \cite{RSW14} for the case of a small number of inequalities to the case of a large number of inequalities. For the rest of the paper, we will therefore work with the more general version \eqref{eq: statistic second} of the statistic $T_n$. In addition, we emphasize that our results can be equally applied with
$$
T_n = \max_{1\leq j\leq p}\left|\frac{1}{\sqrt n}\sum_{i=1}^n(X_{i j} - \mu_j + a_j)\right|
$$
by replacing the $p$-dimensional vectors $X_i - \mu + a$ with the $2p$-dimensional vectors whose first $p$ components are equal to $X_i - \mu + a$ and the last $p$ components are equal to $-(X_i - \mu + a)$. 

To prove \eqref{eq: empirical bootstrap introduction}, we develop a novel and iterative version of the randomized Lindeberg method. A key feature of our approach is that we carry out a careful analysis of the coefficients in the Taylor expansion underlying the Lindeberg method. In particular, we apply the Lindeberg method iteratively in combination with an anti-concentration inequality for maxima of Gaussian processes to bound these coefficients, which substantially improves upon the original randomized Lindeberg method proposed in \cite{DZ17}. In addition, we sharpen the Gaussian approximation bounds for the multiplier processes developed in \cite{K19b} using Stein's kernels. In turn, to prove \eqref{eq: randomization introduction}, we establish a new connection between the Rademacher bootstrap and the randomization tests, as discussed in \cite{LR05}, using a recent result from the computer science literature on pseudo-random number generators by O'Donnell, Servedio, and Tan \cite{OST18}, which provides an anti-concentration inequality for maxima of Rademacher processes. Finally, to prove error bounds  \eqref{eq: or 1} and \eqref{eq: or 2}, we apply the original randomized Lindeberg method as developed in \cite{DZ17}.

Finally, we conduct a small scale simulation study. Our simulation study shows that (i) all bootstrap methods considered in this paper perform reasonably well in high dimensions; (ii) for asymmetric distributions, the empirical and
the third-order matching multiplier bootstrap methods outperform the multiplier bootstrap
methods with Gaussian and Rademacher weights; and (iii) for symmetric distributions, the multiplier bootstrap with Rademacher weights performs the best, which is consistent with Theorem \ref{thm: gauss and rademacher} ahead. See the Supplementary Material for details.

The rest of the paper is organized as follows. In the next section, we present our main results. In Section \ref{sec: main arguments}, we develop the iterative randomized Lindeberg method, which is the first key component in deriving our main results. In Section \ref{sec: stein kernels}, we provide new bounds for the Gaussian approximations using Stein's kernels, which is the second key component in deriving our main results. In Section \ref{sec: proofs}, we give proofs of the main results. In the Supplemental Material, we collect additional derivations and conduct a small simulation study.

% confirming that all bootstrap schemes considered in this paper perform well in finite samples and comparing their performance in the high-dimensional regime. %  In Section \ref{lem: technical lemmas}, we obtain a new exponential inequality for weighted sums of exchangeable random variables\red{, which is derived from the Azuma-Hoeffding inequality,} as well as several other useful results. In Section \ref{sec: useful lemmas}, for ease of reference, we collect lemmas on maximal, deviation, and anti-concentration inequalities taken from the literature.

\subsection{Notation} For any vectors $x,y\in\mathbb R^p$ and any scalar $c\in\mathbb R$, we write $x\leq y$ if $x_j \leq y_j$ for all $j=1,\dots,p$ and  write $x + c$ to denote the vector in $\mathbb R^p$ whose $j$th component is $x_j + c$ for all $j = 1,\dots,p$. Also, for any sequences of scalars $\{a_n\}_{n\geq 1}$ and $\{b_n\}_{n\geq 1}$ we write $a_n\lesssim b_n$ if $a_n \leq C b_n$ for all $n\geq 1$  for some constant $C$. Recall that,  for any random variable $T$ and a constant $\gamma\in(0,1)$, the $\gamma$th quantile of $T$ is defined as  $\inf\{t\in\mathbb R\colon  \Pr(T\leq t) \ge \gamma \}$. Finally, we use the notation $X_{1:n} = (X_1,\dots,X_n)$.

\section{Main Results}\label{sec: main results}
In this section, we present our main results. We first formally define all the critical values $c_{1-\alpha}$ to be used throughout the paper. We then discuss the required regularity conditions and present the results.

\subsection{Gaussian and Bootstrap Critical Values}

First, define the Gaussian critical value $c_{1-\alpha}^G$ as the $(1-\alpha)$th quantile of 
\begin{equation}\label{eq: gaussian analog}
T_n^G = \max_{1\leq j\leq p}(G_j + a_j),
\end{equation}
where $G$ is a centered Gaussian random vector in $\mathbb R^p$ with the covariance matrix
\begin{equation}\label{eq: sigma definition}
\Sigma_n = \frac{1}{n}\sum_{i=1}^n\Ep[(X_i-\mu)(X_i-\mu)'],
\end{equation}
which coincides with the covariance matrix of $\sqrt{n}(\bar{X}_n-\mu)$.
Second, define the bootstrap critical value $c_{1-\alpha}^B$ as the $(1-\alpha)$th quantile of the conditional distribution of
\begin{equation}\label{eq: bootstrap statistic}
T_n^* = \max_{1\leq j\leq p}\frac{1}{\sqrt n}\sum_{i=1}^n(X_{i j}^* + a_j)
\end{equation}
given the data $X_1,\dots,X_n$, where $X_1^*,\dots,X_n^*$ is a (not necessarily empirical) bootstrap sample. We consider the following types of the bootstrap:
\begin{itemize}
\item  Empirical bootstrap: let $X_1^*,\dots,X_n^*$ be a sequence of i.i.d. random variables sampled from the uniform distribution on $\{X_1 - \bar X_n,\dots,X_n - \bar X_n\}$, where $\bar X_n = n^{-1}\sum_{i=1}^n X_i$ denotes the sample mean of the data $X_1,\dots,X_n$. 
\item Multiplier bootstrap: let $e_1,\dots,e_n$ be a sequence of i.i.d. random variables with mean zero and unit variance, referred to as weights, which are independent of $X_1,\dots,X_n$. Define $X_i^* = e_i(X_i - \bar X_n)$ for all $i = 1,\dots,n$. 
\end{itemize}

For the multiplier bootstrap, we will assume throughout the paper that the weights $e_1,\dots,e_n$  are such that
\begin{equation}\label{eq: multiplier bootstrap simplification}
\begin{array}{l}
\text{$e_i=e_{i,1}+e_{i,2}$, where $e_{i,1}$ and $e_{i,2}$ are independent, $e_{i,1}$ has}\\ 
\text{the $N(0,\sigma_e^2)$ distribution for some $\sigma_e\geq0$, and $|e_{i,2}|\leq3$.}
\end{array}
\end{equation}
Condition (\ref{eq: multiplier bootstrap simplification}) is mild and covers many commonly used weights, such as:
\begin{itemize}
\item Gaussian weights: $e_{i,1} \sim N(0,1)$ and $e_{i,2} = 0$.
\item Rademacher weights: $e_{i,1} = 0$ (i.e., $\sigma_{e} =0$) and $\Pr (e_{i,2} = \pm 1) = 1/2$.
\item Mammen's weights \cite{M93}: $e_{i,1} = 0$  and
\[
\Pr\left (e_{i,2} =  \frac{1 \pm \sqrt{5}}{2} \right ) =\frac{\sqrt{5} \mp 1}{2\sqrt{5}}.
\]
\end{itemize}
See Remark \ref{rem: weight discussion} for further discussion on Condition (\ref{eq: multiplier bootstrap simplification}).

Occasionally, we will also consider the weights with unit third moment, namely, 
\begin{equation}\label{eq: third order matching multipliers}
\Ep[e_i^3] = 1,\quad\text{for all }i=1,\dots,n.
\end{equation}
The weights satisfying Condition (\ref{eq: third order matching multipliers}) correspond to  the third-order matching multiplier bootstrap mentioned in the Introduction. We note that Mammen's weights satisfy both Conditions (\ref{eq: multiplier bootstrap simplification}) and (\ref{eq: third order matching multipliers}), but neither Rademacher nor Gaussian weights satisfy Condition \eqref{eq: third order matching multipliers}. 
See Lemma \ref{lem: multipliers} in the Supplemental Material, where we provide a more general class of distributions for the weights satisfying both  Conditions \eqref{eq: multiplier bootstrap simplification} and \eqref{eq: third order matching multipliers}.

%However, keep in mind when we consider multiplier bootstraps, we do \textit{not} assume (\ref{eq: third order matching multipliers}); we only assume that the weights are i.i.d. with mean zero, unit variance, and sub-Gaussian, so that (\ref{eq: multiplier bootstrap simplification}) is satisfied. \blue{KK note: the last sentence may be a bit confusing?} 

%An example of the weights $e_1,\dots,e_n$ satisfying both \eqref{eq: third order matching multipliers} and \eqref{eq: multiplier bootstrap simplification} is given in Lemma \ref{lem: multipliers} (but keep in mind when we consider multiplier bootstraps, we do not assume (\ref{eq: third order matching multipliers}); we only assume that the weights are i.i.d. with mean zero, unit variance, and sub-Gaussian, so that (\ref{eq: multiplier bootstrap simplification}) is satisfied). 

% In contrast with \eqref{eq: third order matching multipliers}, this condition is not essential for our arguments. We have opted to use this condition because it allows us to simplify the statements of the main results and is rather mild. For example, it is satisfied if the random variables $e_1,\dots,e_n$ are sampled from the distribution constructed in Lemma \ref{lem: multipliers}. Some distributions presented in \cite{M93} also satisfy this condition. 

Before proceeding to the regularity conditions, we also note that the multiplier bootstrap critical value $c_{1-\alpha}^B$ with Gaussian weights can be regarded as a feasible version of the Gaussian critical value $c_{1-\alpha}^G$. Indeed, it is easy to see that the former can be alternatively defined as the $(1-\alpha)$th quantile of the distribution of 
$$
T_n^{\hat G} = \max_{1\leq j\leq p}(\hat G_j + a_j),$$
where 
$
\hat G\sim N(0_p,\widehat\Sigma_n)$ and $\widehat \Sigma_{n}$ is the empirical covariance matrix
\begin{equation}\label{eq: sigma estimator definition}
\widehat\Sigma_n = n^{-1}\sum_{i=1}^n(X_i-\bar X_n)(X_i-\bar X_n)'. 
\end{equation}
For brevity, we sometimes refer to both quantities as the Gaussian critical values. %Similarly, given that we obtain identical results for both Gaussian critical values, we refer to both Gaussian and Gaussian bootstrap approximations simply as the Gaussian approximation.

\begin{remark}[On Condition (\ref{eq: multiplier bootstrap simplification})]
\label{rem: weight discussion}
Condition (\ref{eq: multiplier bootstrap simplification}) is technical and can be weakened depending on the moment conditions on $X_i$. A key step in the proof of Theorem \ref{cor: rejection probabilities} is to apply Theorem \ref{cor: max} ahead to approximate the conditional distribution of $T_n^*$ with that of the multiplier bootstrap statistic with weights following a Beta distribution that matches the moments of $e_i$ up to the third order (to be precise, we first replace the Gaussian components $e_{i,1}$ by bounded weights in the proof of Theorem \ref{cor: rejection probabilities}). Condition (\ref{eq: multiplier bootstrap simplification}) will be used to verify Conditions V, P, and B when we apply Theorem \ref{cor: max} there. 
If, e.g., $X_i$ are bounded by $B_n$, then the conclusion of Theorem \ref{cor: rejection probabilities} continues to hold for sub-exponential weights. Since current Condition (\ref{eq: multiplier bootstrap simplification})  already covers many commonly used bootstrap weights, however, we do not pursue this generality of the weights to keep our presentation reasonable concise.
\end{remark}

\subsection{Regularity Conditions}
First, observe that given the construction of the statistic $T_n$ in \eqref{eq: statistic second} and its Gaussian and bootstrap analogs in \eqref{eq: gaussian analog} and \eqref{eq: bootstrap statistic}, it is without loss of generality to assume that $\mu_{j} = 0$ for all $j = 1,\dots,p$, which is what we do for the rest of the paper. Also, all our results follow immediately if $n= 2$, so we assume $n\geq 3$,  which  in particular implies $\log(p n)\geq 1$. In addition, since we are primarily interested in the case with large $p$, we assume $p\geq 2$.

Second, let $b_1$ and $b_2$ be some strictly positive constants such that $b_1\leq b_2$ and let $\{B_n\}_{n\geq 1}$ be a sequence of constants such that $B_n \geq 1$ for all $n\geq 1$. Here, the sequence $\{B_n\}_{n\geq 1}$ can diverge to infinity as the sample size $n$ increases. 

%Also, let $\mathcal R^m$ be the class of all Borel sets in $\mathbb R^p$. Consider the following conditions. KK note: $\mathbcal R^m$ is used only in Condition S, I think. 

\medskip

\noindent
{\bf Condition E:} {\em For all $i=1,\dots,n$ and $j = 1,\dots,p$, we have 
\[
\Ep[\exp(|X_{i j}|/B_n)]\leq 2.
\]
}

\noindent
{\bf Condition M:} {\em For all $j=1,\dots,p$, we have 
%$$
%b_1\leq \frac{1}{n}\sum_{i=1}^n \Ep[X_{i j}^2]\leq b_2\quad\text{and}\quad \frac{1}{n}\sum_{i=1}^n \Ep[X_{i j}^4]\leq B_n^2 b_2.
%$$
$$
b_1^2\leq \frac{1}{n}\sum_{i=1}^n \Ep[X_{i j}^2]\quad\text{and}\quad \frac{1}{n}\sum_{i=1}^n \Ep[X_{i j}^4]\leq B_n^2 b_2^2.
$$
}

\noindent
{\bf Condition S:} {\em For all $i=1,\dots,n$, the distribution of $X_i$ is symmetric in the sense that $X_i$ and $-X_i$ are identically distributed.}

\medskip

Condition E implies that the random variables $X_{i j}$ are sub-exponential with the Orlicz $\psi_1$-norm bounded by $B_n$; see \cite{V11} for details. The same sub-exponential condition was assumed in e.g. \cite{CCK17} and \cite{DZ17}; see Condition (E.1) in \cite{CCK17} and (E.1) in \cite{DZ17}.
The first part of Condition M, which we refer to as the variance lower bound condition,  requires that each component of the random vectors $X_i$ is scaled properly. The variance lower bound condition is needed to apply the anti-concentration inequalities (cf.  Lemmas \ref{lem: anticoncetration} and \ref{lem: rademacher anticoncentration} in the Supplemental Material) but can be dropped in Theorem \ref{thm: infinitesimal factors} ahead. Also, at least for Theorems \ref{thm: gaussian approximation main} and \ref{cor: rejection probabilities}, it can be relaxed by using Theorem 10 in \cite{DZ17}. However, to consistently state all the results, we work with the present assumption. Given the first part, the second part of Condition M holds if, for example, all random variables $X_{i j}$ are bounded  by $B_n$ and $n^{-1}\sum_{i=1}^n\Ep[X_{ij}^2]\leq b_2^2$ for all $j=1,\dots,p$. 
%Condition E implies that the random variables $X_{i j}$ are sub-Gaussian with the Orlicz $\psi_2$-norm bounded by $B_n$. This condition is satisfied if the tails of the distribution of each $X_{i j}$ are lighter than those of the $N(0,B_n^2/4)$ distribution; see \cite{V11} for details.
Condition S means that the distribution of each $X_i$ is symmetric around the mean. 
%Importantly, none of these conditions restrict the covariance matrices $\Ep[X_i X_i']$, and so our results do not follow from the classical results in the empirical process theory.
Importantly, none of these conditions restrict the correlation matrices of $X_i$, and so our results do not follow from the classical results in  empirical process theory.

In what follows, we will always  maintain Conditions E and M and will assume Condition S only in Theorem \ref{thm: gauss and rademacher}, which shows that imposing the symmetric distributions improves the approximation bound for the multiplier bootstrap with Rademacher weights.% In Theorem \ref{thm: gauss and rademacher}, we will show that  the multiplier bootstrap with Rademacher weights has better bounds of order $\left(\frac{B_n^2\log^3(p n)}{n}\right)^{1/2}$ under the symmetry Condition S. 
%In addition, we note that Condition E is slightly stronger than the corresponding conditions we previously used in \cite{CCK17}, but this additional restriction is not substantial: it is possible to relax Condition E in exchange for a more complicated statement of the main results.

\subsection{Main Results} We first present a non-asymptotic bound on the error of the Gaussian approximation to the distribution of the statistic $T_n$:
\begin{theorem}[Gaussian Approximation]\label{thm: gaussian approximation main}
Suppose that Conditions E and M are satisfied. Then
\begin{equation}\label{eq: gaussian bound main}
\left|\Pr\left(T_n > c^G_{1-\alpha}\right) - \alpha\right| \leq C\left(\frac{B_n^2\log^5(p n)}{n}\right)^{1/4},
\end{equation}
where $C$ is a constant depending only on $b_1$ and $b_2$. 
\end{theorem}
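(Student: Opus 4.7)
The plan is to bound $\sup_{y}|\Pr(T_n\leq y)-\Pr(T_n^G\leq y)|$ by $C(B_n^2\log^5(pn)/n)^{1/4}$, from which the quantile version in the theorem follows by a standard argument using Nazarov's anti-concentration inequality for $\max_j(G_j+t_j)$. To bound the Kolmogorov distance, I would first replace the indicator $1\{\max_j(x_j+t_j)\leq y\}$ by a composition $g(F_\beta(x+t-y\mathbf 1))$, where $F_\beta(x)=\beta^{-1}\log\sum_j e^{\beta x_j}$ is the soft-max with $F_\beta(x)\leq\max_j x_j+\beta^{-1}\log p$, and $g$ is a smooth non-increasing cutoff approximating $1\{\cdot\leq 0\}$ on scale $\varepsilon$. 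This reduces the problem to bounding $|\bE[h(S_n)]-\bE[h(G)]|$ for $h(x)=g(F_\beta(x+t-y\mathbf 1))$, where $S_n=n^{-1/2}\sum_i X_i$ and $G\sim N(0,\Sigma_n)$. The soft-max has well-known derivative bounds of the form $\|\partial^k F_\beta\|_\infty\lesssim\beta^{k-1}$ that one exploits when Taylor-expanding.

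The core step is the iterative randomized Lindeberg argument. A single Lindeberg swap replaces each $X_i$ by an independent Gaussian $Y_i\sim N(0,\Ep[X_iX_i'])$ and, by a third-order Taylor expansion, produces a remainder of the form $n^{-3/2}\sum_i\sum_{j,k,l}\bE[\partial_{jkl}h(\xi_i)](X_{ij}X_{ik}X_{il}-Y_{ij}Y_{ik}Y_{il})$ at an intermediate point $\xi_i$. Because $\|\partial^3h\|_1\lesssim\beta^2$ and the third moments of $X_i$ are bounded by $B_n^3$ times sub-Gaussian tail quantities, this gives the crude $(B_n^2\log^7(pn)/n)^{1/6}$ rate. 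To do better, following the plan of Section 4, I would (i) randomize the Lindeberg ordering, so that $\xi_i$ is exchangeable across $i$, and (ii) apply a \emph{second} Lindeberg step to the coefficient $\bE[\partial_{jkl}h(\xi_i)]$ itself, comparing $\xi_i$ to a pure Gaussian vector. For the second step, the anti-concentration inequality for Gaussian maxima lets one trade derivatives of $h$ for powers of $\log p$ and $\varepsilon^{-1}$, and the new exponential inequality for weighted sums of exchangeable random variables (Section 7) is used to control the tail contributions coming from the random ordering. Balancing $\beta$ and $\varepsilon$ against the remainders after iteration yields a rate with exponent $1/4$ in $n$ and a fifth, rather than seventh, power of the log.

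To complete the argument, I would invoke the Stein-kernel based Gaussian approximation of Section 5 to replace the comparison random vector in the iterated step by the true Gaussian $G$ of interest, absorbing the cost of centering the sample covariance into the same rate. Concretely, the Stein-kernel bound converts moment matching up to order two into an inequality for smooth test functions with the derivative dependence on $\beta$ that fits the iterative scheme, and this is what allows only $\log^5$ rather than $\log^7$ to appear. At the end, choose $\beta\asymp\varepsilon^{-1}\log p$ and $\varepsilon\asymp(B_n^2\log^5(pn)/n)^{1/4}$ so that the smoothing error $\beta^{-1}\log p+\varepsilon\cdot(\sup_y\varphi_G(y))$ (bounded using Nazarov) matches the Lindeberg remainder, giving the stated rate.

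The main obstacle will be Step (ii) of the iteration: bounding $\bE[\partial_{jkl}h(\xi_i)]$ uniformly in $(j,k,l)$ when $\xi_i$ is a randomized convex combination of partial sums of $X$'s and $Y$'s. The derivatives of $h$ are highly localized where the soft-max is nearly attained, so one must show that $\xi_i$ has enough Gaussian-like behavior for Nazarov's anti-concentration to apply to its soft-max, despite $\xi_i$ being only exchangeably dependent on the swap index. This is precisely where the exchangeable exponential inequality and the Stein-kernel refinement do the heavy lifting, and getting the quantitative dependence on $B_n$ right (through the sub-Gaussian norm of each $X_{ij}$ via Condition E) without losing a power of $\log(pn)$ is the delicate part.
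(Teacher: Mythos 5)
There is a genuine gap at the core of your plan: you propose to run the (iterative, randomized) Lindeberg comparison directly between $S_n=n^{-1/2}\sum_i X_i$ and the Gaussian, but this comparison cannot reach the $n^{-1/4}$ rate because the third moments do not match. The third-order Taylor term carries the quantity $\max_{j,k,l}\bigl|n^{-1/2}\sum_{i=1}^n\Ep[X_{ij}X_{ik}X_{il}]\bigr|$ (the Gaussian contributes zero third moments), which under Conditions M and E can be as large as order $\sqrt{n}\,B_n$. The iterative step you describe only sharpens the coefficients $\Ep[\partial_{jkl}h(\cdot)]$ by an anti-concentration factor of order $\sqrt{\log p}/\phi$; with that factor included the third-order term is still of order $B_n\phi^2\log^{5/2}p/\sqrt{n}$, and balancing it against the smoothing error $\sqrt{\log p}/\phi$ stalls the rate at the exponent $1/6$ (this is exactly why Theorem \ref{cor: max} imposes the approximate third-moment matching condition \eqref{eq: bn bounds 2}; with mismatched third moments the recursion behind Corollary \ref{cor: main} does not close). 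Your suggestion that a ``second Lindeberg step'' on the coefficients together with the Stein-kernel bound rescues this is not substantiated and does not address the mismatch: no amount of coefficient sharpening removes a non-vanishing third-moment discrepancy of that size.

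The missing idea is the intermediate process. The paper never Lindeberg-compares $S_n$ to the Gaussian; instead it (a) compares $T_n$ with a \emph{third-order matching} multiplier bootstrap statistic, conditionally on the data, via Theorem \ref{cor: max} --- this is legitimate because the empirical third moments $n^{-1}\sum_i\tilde X_{ij}\tilde X_{ik}\tilde X_{il}$ concentrate around the population ones (Lemma \ref{lem: all conditions}), so \eqref{eq: bn bounds 2} holds; (b) compares that multiplier process with $N(0,\widehat\Sigma_n)$ via the Stein-kernel result (Corollary \ref{coro:beta-comparison}), which attains the $1/4$ rate despite only two moments matching because the multiplier process is a linear form in i.i.d.\ weights whose Stein kernel is within $O(B_n\sqrt{\log p/n})$ of the covariance; and (c) compares $N(0,\widehat\Sigma_n)$ with $N(0,\Sigma_n)$ via Corollary \ref{coro:g-g-comparison}. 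Note also that the Stein-kernel theorem is a Kolmogorov-distance bound for such structured vectors; it is not available for $S_n$ itself under Conditions M and E and cannot be ``plugged into'' the iterative scheme as a smooth-function inequality in the way you describe. Your smoothing set-up, choice of $\phi$ and $\varepsilon$, and the reduction from the Kolmogorov bound to the quantile statement are fine, but without routing the argument through the third-order matching bootstrap (or some other device that restores approximate third-moment matching) the proof as proposed would only yield the $1/6$ rate.
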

This result improves upon the bound in \cite{K19b}, who obtained a similar result with the rate $1/6$ instead of $1/4$. Since $a\in\mathbb R^p$ in the definition of $T_n$ in \eqref{eq: statistic second} is arbitrary, the bound \eqref{eq: gaussian bound main} can be equivalently stated as
$$
\sup_{A\in\mathcal A}\left|\Pr\left(\frac{1}{\sqrt n}\sum_{i=1}^n X_i \in A\right) - \Pr(G\in A)\right|\leq C\left(\frac{B_n^2\log^5(p n)}{n}\right)^{1/4},
$$
where $G\sim N(0_p,\Sigma_n)$ and $\mathcal A$ is the class of all hyper-rectangles in $\mathbb R^p$, i.e. sets of the form
$$
A = \Big\{ w = (w_1,\dots,w_p)'\in\mathbb R^p\colon a_{lj} \leq w_j \leq a_{rj}\text{ for all }j=1,\dots,p \Big\},
$$
for some constants $-\infty\leq a_{lj} \leq a_{rj} \leq \infty$ with $j=1,\dots,p$. This gives a quantitative Central Limit Theorem (CLT) over the hyper-rectangles in high dimensions.

%To prove Theorem \ref{thm: gaussian approximation main}, we first apply Theorem \ref{cor: max} below, which allows, among other things, to establish a result similar to that in \eqref{eq: gaussian bound main} but for the third-order matching multiplier bootstrap. Next, we use the Stein kernel method to demonstrate that the maxima of the third-order matching multiplier bootstrap processes can be well approximated in distribution by the maxima of appropriate Gaussian processes. Finally, we obtain \eqref{eq: gaussian bound main} by combining two steps via the triangle inequality.

The proof of Theorem \ref{thm: gaussian approximation main}, which is deferred to Section \ref{sec: proofs}, is fairly complicated and goes somewhat backward: (i) we first compare the conditional distribution of a third-order matching bootstrap statistic $T_n^*$ with that of the Gaussian multiplier bootstrap statistic $T_n^{\hat{G}}$, and then compare the conditional distribution of $T_n^{\hat{G}}$ with the distribution of $T_n^G$. These two comparisons rely on the Gaussian approximation via Stein kernel (Theorem \ref{thm: stein kernel}). Then, (ii) we use the preceding comparison between $T_n^*$ and $T_n^{G}$ to verify the anti-concentration for $T_n^*$ to invoke Theorem \ref{cor: max} and compare the conditional distribution of $T_n^*$ with the distribution of $T_n$. The proof of Theorem \ref{cor: max} relies on a novel technique which we call the \textit{iterative randomized Lindeberg method}. 
The conclusion of Theorem \ref{thm: gaussian approximation main} follows from combining the results in Steps (i) and (ii) and the triangle inequality.

Comparison of the the Gaussian multiplier bootstrap statistic $T_n^{\hat{G}}$ with $T_n^G$ relies on  the following Gaussian-to-Gaussian comparison inequality, which can be of independent interest and whose proof is presented in Section \ref{sec: stein kernels} as a consequence of Theorem \ref{thm: stein kernel}:%, in turn relies on the Stein kernel approach, Theorem \ref{thm: stein kernel}.

\begin{proposition}[Gaussian-to-Gaussian Comparison]\label{coro:g-g-comparison}
If $Z_1$ and $Z_2$ are centered Gaussian random vectors in $\mathbb R^p$ with covariance matrices $\Sigma^1$ and $\Sigma^2$, respectively, and $\Sigma^2$ is such that $\Sigma^2_{jj}\geq c$ for all $j=1,\dots,p$ for some constant $c>0$, then
$$
\sup_{y\in\mathbb R^p}\Big|\Pr(Z_1\leq y) - \Pr(Z_2\leq y)\Big| \leq C\Big(\Delta \log^2 p\Big)^{1/2},
$$
where $C$ is a constant depending only on $c$ and $\Delta = \max_{1\leq j,k\leq p}|\Sigma^1_{jk} - \Sigma^2_{jk}|$. 
\end{proposition}
\begin{remark}
Two comments on Proposition \ref{coro:g-g-comparison} are warranted. First, Proposition \ref{coro:g-g-comparison} improves upon Theorem 2 in \cite{CCK15}, which shows that
$$
\sup_{x\in\mathbb R}\left|\Pr\left(\max_{1\leq j\leq p}Z_{1j}\leq x\right) - \Pr\left(\max_{1\leq j\leq p}Z_{2j}\leq x\right)\right| \leq C\Big(\Delta \log^2 p\Big)^{1/3},
$$
under the same conditions. Second, the bound in this proposition is sharp in the sense that there exists a constant $c>0$ such that for infinitely many values of $p$, there exist centered Gaussian random vectors $Z_1$ and $Z_2$ in $\mathbb R^p$ such that the covariance matrix $\Sigma^2$ of $Z_2$ satisfies $\Sigma_{jj}^2 = 1$ for all $j=1,\dots,p$ and
$$
\sup_{y\in\mathbb R^p}\Big|\Pr(Z_1\leq y) - \Pr(Z_2\leq y)\Big| \geq c\Big(\Delta \log^2 p\Big)^{1/2}.
$$
The latter claim is proven in Appendix \ref{sec: sharpness} of the Supplemental Material.\qed
\end{remark}

%\begin{remark}[On iterative randomized Lindeberg method]
Comparison of the conditional distribution of the third-order matching bootstrap statistic $T_n^*$ with that of $T_n$ (Theorem \ref{cor: max}) relies on the iterative randomized Lindeberg method. 
%As noted above, one of the main ingredients of the proof of Theorem \ref{thm: gaussian approximation main} is Theorem \ref{cor: max}, which enables us to compare a certain multiplier bootstrap $T_n^*$ (with specific weights) and $T_n$. The proof of Theorem \ref{cor: max} relies on the iterative randomized Lindeberg method. 
An intuition behind the iterative randomized Lindeberg method goes as follows. Recall that, for any smooth function $g\colon\mathbb R^p\to\mathbb R$ and any two sequences of independent random vectors $X_1,\dots,X_n$ and $Y_1,\dots,Y_n$ in $\mathbb R^p$, in order to approximate $\Ep[g(X_1+\dots+X_n)]$ by $\Ep[g(Y_1+\dots+Y_n)]$, the original Lindeberg method constructs an interpolation path from $\Ep[g(X_1+\dots+X_n)]$ to $\Ep[g(Y_1+\dots+Y_n)]$ by replacing $X_i$'s with $Y_i$'s one-by-one in a given order and uses Taylor's expansion to show that the change in the expectation at each step is sufficiently small; see \cite{C06} for example. The randomized Lindeberg method, introduced in \cite{DZ17}, is similar to the original Lindeberg method but it replaces $X_i$'s with $Y_i$'s in a randomly selected order. It turns out that this randomization may bring substantial benefits to the final bound. In turn, to improve upon this version of the randomized Lindeberg method, we carry out a careful analysis of the coefficients in the Taylor's expansions underlying the method. In particular, given that $k$th order coefficients take the form of $\Ep[g^{(k)}(Z_1+\dots+Z_n)]$, up to some approximation error, where $g^{(k)}$ is a vector of the $k$th partial derivatives of $g$ and $Z_1,\dots,Z_n$ is a sequence such that some of its elements are given by $X_i$'s and others by $Y_i$, and using the fact that it is easier in our setting to bound $\Ep[g^{(k)}(Y_1+\dots+Y_n)]$, we apply the randomized Lindeberg method once again to approximate $\Ep[g^{(k)}(Z_1+\dots+Z_n)]$ by $\Ep[g^{(k)}(Y_1+\dots+Y_n)]$. Here, since a new application of the method will bring new Taylor's coefficients, we apply the same method over and over again until the approximation error becomes sufficiently small. We demonstrate that this iterative use of the randomized Lindeberg method gives further substantial benefits to the final bound. See also the discussion before Lemma \ref{lem: main} concerning comparisons of the iterative randomized Lindeberg method with the randomized Lindeberg method used in \cite{DZ17} and the related Slepian-Stein method used in our earlier work \cite{CCK13,CCK17}. 
%\qed
%\end{remark}

Our second main result gives a non-asymptotic bound on the deviation of the bootstrap rejection probabilities $\Pr(T_n > c_{1-\alpha}^B)$ from the nominal level $\alpha$ for the empirical and the multiplier bootstrap methods: 
\begin{theorem}[Bootstrap Approximation]\label{cor: rejection probabilities}
Suppose that Conditions E and M are satisfied and that $c_{1-\alpha}^B$ is obtained via either the empirical bootstrap or the multiplier bootstrap with weights satisfying \eqref{eq: multiplier bootstrap simplification}. Then 
\begin{equation}\label{eq: empirical bootstrap main}
\left|\Pr\left(T_n > c_{1-\alpha}^B\right) - \alpha\right| \leq C\left(\frac{B_n^2\log^5(p n)}{n}\right)^{1/4},
\end{equation}
where $C$ is a constant depending only on $b_1$ and $b_2$. 
\end{theorem}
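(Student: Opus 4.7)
The overall strategy is to reduce Theorem~\ref{cor: rejection probabilities} to the Gaussian approximation of Theorem~\ref{thm: gaussian approximation main} via the triangle inequality. Specifically, if we can show that on a high-probability event $|c_{1-\alpha}^{B} - c_{1-\alpha}^{G}| \lesssim (B_n^2 \log^5(p n)/n)^{1/4}$, then combining this with Theorem~\ref{thm: gaussian approximation main} and the Gaussian anti-concentration inequality from Section~\ref{sec: useful lemmas} applied to $\max_j(G_j + t_j)$ immediately yields the two-sided bound on $|\Pr(T_n > c_{1-\alpha}^{B}) - \alpha|$ at the desired rate. So the entire argument reduces to controlling $|c_{1-\alpha}^{B} - c_{1-\alpha}^{G}|$.

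To do this, I would work conditionally on the data $X_1,\dots,X_n$. For the multiplier bootstrap the conditional summands are $e_i(X_i - \bar X_n)$; for the empirical bootstrap they are i.i.d.\ draws from the centered empirical distribution on $\{X_i - \bar X_n\}_{i=1}^n$. In both cases, conditionally on the data $T_n^*$ is a maximum of a normalized sum of independent random vectors with conditional covariance $\widehat{\Sigma}_n$. The first step is then to apply the conditional version of Theorem~\ref{cor: max} (the iterative randomized Lindeberg bound from Section~\ref{sec: main arguments}, in the bootstrap form already used in the proof of Theorem~\ref{thm: gaussian approximation main}) to approximate the conditional distribution of $T_n^*$ by that of $\max_j(\hat G_j + t_j)$ with $\hat G \sim N(0_p, \widehat{\Sigma}_n)$. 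The second step is Gaussian-to-Gaussian comparison: a Bernstein-type inequality (using Condition~E) gives $\|\widehat{\Sigma}_n - \Sigma_n\|_{\max} \lesssim B_n^2 \sqrt{\log(p n)/n}$ with probability at least $1 - (p n)^{-c}$, and the Gaussian comparison inequality for maxima collected in Section~\ref{sec: useful lemmas} converts this into a Kolmogorov-distance bound of order $(B_n^2 \log^5(p n)/n)^{1/4}$ between $\max_j(\hat G_j + t_j)$ and $\max_j(G_j + t_j)$. Putting the two steps together and using anti-concentration of $T_n^G$ to translate Kolmogorov distance into quantile difference yields the required control of $|c_{1-\alpha}^{B} - c_{1-\alpha}^{G}|$.

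The main obstacle is the first step. Applying Theorem~\ref{cor: max} conditionally on the data is not automatic: the bootstrap summands involve the random centering $\bar X_n$ and, for the multiplier bootstrap, the independent weights $e_i$, so their conditional second and fourth moments are random and must be controlled uniformly in $j$ at the resolution $(\log(p n)/n)^{1/2}$. In particular, one must verify that on a high-probability event Conditions~M and~E hold for the bootstrap summands with constants of the same order as for the original $X_i$; this is handled by Bernstein concentration for the empirical moments of $X_{ij}$ together with standard tail bounds for products of sub-Gaussian random variables for the $e_i(X_{ij} - \bar X_{nj})$ terms. A further subtlety is that, for general multiplier bootstraps that do not match third moments (e.g.\ Gaussian or Rademacher weights), Theorem~\ref{cor: max} must already deliver the $1/4$-rate without any third-moment matching assumption; this is precisely the feature of the iterative randomized Lindeberg method developed in Section~\ref{sec: main arguments} that makes the unified treatment of empirical, third-order matching, Gaussian and Rademacher weights possible.
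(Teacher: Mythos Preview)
Your overall strategy---reduce to Theorem~\ref{thm: gaussian approximation main} by showing $c_{1-\alpha}^{B}$ is close to $c_{1-\alpha}^{G}$ via a conditional Kolmogorov bound plus Gaussian anti-concentration---is a perfectly legitimate alternative to the paper's route, which instead compares $T_n^*$ directly to $T_n$ (Lemmas~\ref{thm: empirical bootstrap} and~\ref{eq: second order matching multiplier bootstrap ks metric}) and then uses the anti-concentration of $T_n$ itself (Lemma~\ref{thm: empirical anticoncentration}). The difference is only in where one places the Gaussian pivot.

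The genuine gap is in your Step~1 and, more precisely, in the last paragraph's claim that Theorem~\ref{cor: max} delivers the $1/4$-rate ``without any third-moment matching assumption.'' It does not: Theorem~\ref{cor: max} requires condition~\eqref{eq: bn bounds 2}, a bound of order $B_n^2\sqrt{\log^3(pn)}$ on $n^{-1/2}\sum_i(\Ep[V_{ij}V_{ik}V_{il}]-\Ep[Z_{ij}Z_{ik}Z_{il}])$. If you take $Z_i$ Gaussian (third moments zero) and $V_i$ the empirical or third-order matching bootstrap summands, the left side is $n^{-1/2}\sum_i\tilde X_{ij}\tilde X_{ik}\tilde X_{il}$, which is generically of order $B_n\sqrt{n\log(pn)}$---far too large. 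Ironically, your worry case (Gaussian or Rademacher multipliers, $\Ep[e_i^3]=0$) is the one where \eqref{eq: bn bounds 2} \emph{is} satisfied against a Gaussian target; it is the empirical and Mammen bootstraps that break your Step~1. The paper handles this not by strengthening Theorem~\ref{cor: max} but by inserting an intermediate multiplier process whose third moments match the source exactly (so \eqref{eq: bn bounds 2} is trivial), and then bridging that intermediate to Gaussian via the Stein-kernel comparison of Corollary~\ref{coro:beta-comparison}. Without this Stein-kernel step---or some other device to absorb the third-moment mismatch---your Step~1 cannot be completed for the empirical and third-order matching cases.
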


This theorem improves upon the bounds in \cite{DZ17}, who obtained a similar result with the rate $1/6$ instead of $1/4$. In addition, we allow for a larger class of multiplier bootstrap methods. In particular, we do not require the weights $e_1,\dots,e_n$ to satisfy \eqref{eq: third order matching multipliers}. The proof of this theorem is given in Section \ref{sec: proofs}. %We do use slightly stronger conditions than those in \cite{DZ17} but we emphasize that it is not possible to obtain \eqref{eq: empirical bootstrap main} using existing techniques. We therefore develop a novel technique, which we call the iterative randomized Lindeberg method. \blue{KK: Need to change discussion here.}

Our third main result gives a non-asymptotic bound on the deviation of the bootstrap rejection probabilities from the nominal level for the multiplier bootstrap method with Rademacher weights in the case of symmetric distributions:

\begin{theorem}[Rademacher Bootstrap Approximation in Symmetric Case]\label{thm: gauss and rademacher}
Suppose that Conditions E, M, and S are satisfied and that $c_{1-\alpha}^B$ is obtained via the multiplier bootstrap with Rademacher weights. Then
\begin{equation}\label{eq: rademacher symmetric}
\left|\Pr\left(T_n > c_{1-\alpha}^B\right) - \alpha\right| \leq C\left(\frac{B_n^2\log^3(p n)}{n}\right)^{1/2},
\end{equation}
where $C$ is a constant depending only on $b_1$ and $b_2$.
\end{theorem}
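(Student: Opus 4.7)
The strategy is to bypass any Lindeberg-style replacement by exploiting Condition~S to couple $T_n$ with an exact Rademacher randomization of the data, and then to absorb the small residual discrepancy using a sharp anti-concentration inequality for maxima of Rademacher processes. Because no moment-matching comparison is needed, one loses only a single $\sqrt{\log p}$ factor from anti-concentration, which is what produces the rate with exponent~$1/2$ rather than $1/4$.

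First, I would use Condition~S together with independence to observe that $(X_1,\dots,X_n)\stackrel{d}{=}(\varepsilon_1 X_1,\dots,\varepsilon_n X_n)$ for i.i.d.\ Rademacher $\varepsilon_1,\dots,\varepsilon_n$ independent of $X$, whence
\[
T_n \stackrel{d}{=} \widetilde T_n := \max_{1\le j\le p}\Bigl(\tfrac{1}{\sqrt n}\sum_{i=1}^n \varepsilon_i X_{ij} + \sqrt n\, t_j\Bigr).
\]
Identifying the bootstrap weights $e_i$ with $\varepsilon_i$ (both Rademacher), I would then compare $\widetilde T_n$ to the bootstrap statistic $T_n^* = \max_j \bigl(n^{-1/2}\sum_i e_i(X_{ij}-\bar X_{nj}) + \sqrt n\, t_j\bigr)$. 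Undoing the $\bar X_n$-centering and using $|\max_j a_j - \max_j b_j|\le \max_j|a_j-b_j|$ yields
\[
|T_n^* - \widetilde T_n| \le \frac{|\sum_{i=1}^n e_i|}{\sqrt n}\cdot \max_{1\le j\le p}|\bar X_{nj}|.
\]
The first factor is sub-Gaussian of order $1$, while Conditions~M and~E combined with a standard maximal inequality give $\max_j|\bar X_{nj}|\lesssim B_n\sqrt{\log(pn)/n}$ with probability at least $1-1/(pn)$. Hence $|T_n^*-\widetilde T_n|\le \eta_n$ on a high-probability event, for some $\eta_n\lesssim B_n\log(pn)/\sqrt n$.

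Second, I would apply an anti-concentration inequality for maxima of Rademacher processes conditionally on $X$. Conditionally on $X_1,\dots,X_n$, the statistic $T_n^*$ is the maximum of $p$ linear forms in the Rademacher weights $e_1,\dots,e_n$, whose variances are controlled under Condition~M by concentration of $\widehat\Sigma_n$. The pseudorandom generator construction of O'Donnell, Servedio, and Tan~\cite{OST18} yields an anti-concentration bound of the form
\[
\sup_{u\in\R}\Pr\bigl(|T_n^* - u|\le \eta_n \,\big|\, X_1,\dots,X_n\bigr) \lesssim \eta_n\sqrt{\log p}
\]
on this high-probability event. Combining with the previous step by a standard bracketing argument,
\[
\Pr(T_n > c^B_{1-\alpha}) = \Pr(\widetilde T_n > c^B_{1-\alpha}) \le \alpha + C\eta_n\sqrt{\log p} + \Pr\bigl(|\widetilde T_n - T_n^*|>\eta_n\bigr),
\]
together with the matching lower bound, gives a right-hand side of order $(B_n^2\log^3(pn)/n)^{1/2}$ once $\eta_n$ is substituted in.

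The main obstacle is the second step: extracting from \cite{OST18}, whose original statement is a pseudorandom generator result against polynomial-threshold functions, a clean anti-concentration inequality for the maximum of $p$ arbitrary Rademacher linear forms that is uniform over the (random) data on a high-probability event. A secondary difficulty is that the anti-concentration must be applied \emph{conditionally} on $X$, because $c^B_{1-\alpha}$ is itself a conditional quantile; one must simultaneously control the conditional bootstrap variances and the joint tail of $|n^{-1/2}\sum_i e_i|$ and $\max_j|\bar X_{nj}|$ to keep $\eta_n$ at the scale $B_n\log(pn)/\sqrt n$, which is crucial for matching the target rate.
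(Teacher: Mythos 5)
Your overall strategy (exploit Condition S to turn $T_n$ into a Rademacher process, then use the O'Donnell--Servedio--Tan anti-concentration to absorb the small centering discrepancy) is the same as the paper's, but your assembly contains a genuine gap: the asserted identity $\Pr(T_n > c^B_{1-\alpha}) = \Pr(\widetilde T_n > c^B_{1-\alpha})$ is not justified, and as written it is false. The symmetry of the data only gives the joint identity
\begin{equation*}
\Pr\bigl(T_n(X) > c^B_{1-\alpha}(X)\bigr) = \Pr\bigl(T_n(\varepsilon X) > c^B_{1-\alpha}(\varepsilon X)\bigr),
\end{equation*}
i.e.\ the critical value must be flipped along with the statistic. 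To drop the flip you need $c^B_{1-\alpha}(\cdot)$ to be invariant under sign changes of the data, and it is not: the bootstrap recenters by $\bar X_n$, and the sample mean of $(\varepsilon_1 X_1,\dots,\varepsilon_n X_n)$ differs from $\bar X_n$, so the conditional law of $T_n^*$ (hence its quantile) changes under the flip. Your subsequent bracketing never accounts for the discrepancy between $c^B_{1-\alpha}(X)$ and $c^B_{1-\alpha}(\varepsilon X)$, so the argument does not close. Controlling that discrepancy is not a cosmetic repair: closeness of the two conditional distributions in location does not by itself transfer to the event $\{T_n > c^B_{1-\alpha}\}$; you must compare quantiles at perturbed levels, which requires a lower bound on the spacing of the conditional quantiles, itself extracted from anti-concentration.

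This is exactly the extra machinery in the paper's proof. There, the sign-invariant object is the \emph{uncentered} statistic $T_n^{*,0}$ with critical value $c^{B,0}_{1-\gamma}$; the randomization-test lemma (Lemma \ref{lem: rand test}), which is where Condition S actually enters, gives $|\Pr(T_n > c^{B,0}_{1-\gamma}) - \gamma|$ up to a mass-point term, bounded via Lemma \ref{lem: rademacher anticoncentration}. Then $c^B_{1-\alpha}$ is sandwiched between $c^{B,0}_{1-\alpha\mp 2\beta_n}$: this uses your coupling bound $|T_n^* - T_n^{*,0}| \le |n^{-1/2}\sum_i e_i|\,\|\bar X_n\|_\infty$ together with Hoeffding, \emph{and} a lower bound of the form $c^{B,0}_{1-\gamma+\beta_n} - c^{B,0}_{1-\gamma} \gtrsim \sqrt{\log(pn)\log n/n}$ obtained by a contradiction argument from the conditional anti-concentration. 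Your ingredients (the coupling bound, the conditional application of \cite{OST18}, the rate bookkeeping giving $(B_n^2\log^3(pn)/n)^{1/2}$) all appear in the paper, but without the centered-versus-uncentered quantile comparison your first displayed equality is a missing step that the rest of the plan cannot compensate for.
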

This theorem implies that the multiplier bootstrap with Rademacher weights is very accurate in the symmetric case. To prove it, we note that under the assumption of symmetric distributions, one can construct the randomization critical value $c_{1-\alpha}^R$ such that $\Pr(T_n > c_{1-\alpha}^R) = \alpha$, up to possible mass points in the distribution of $T_n$. Thus, given that the critical value based on the multiplier bootstrap with Rademacher weights turns out to be a feasible version of this randomization critical value and the two are close to each other, \eqref{eq: rademacher symmetric} follows if we can show that the distribution of $T_n$ is not too concentrated. To this end, we use an anti-concentration inequality for maxima of Rademacher processes derived in \cite{OST18}. The proof of Theorem \ref{thm: gauss and rademacher} is given in Appendix \ref{sec: proof of theorem 23} of the Supplemental Material. %We also note that the same argument as that leading to \eqref{eq: gauss symmetric} also shows that \eqref{eq: gauss symmetric} holds with $c_{1-\alpha}^B$ replaced by $c_{1-\alpha}^G$.

%Regarding the approximation based on the Rademacher bootstrap, Theorem \ref{thm: gauss and rademacher} gives a better bound than that for 

% both types of the bootstrap are able to better match the moments of the random vector $n^{-1/2}\sum_{i=1}^n X_i$. Indeed, for both types of the bootstrap, the conditional third-order moments of $n^{-1}\sum_{i=1}^n X_i^*$ given $X_1,\dots,X_n$ are generally matching the corresponding moments of $n^{-1/2}\sum_{i=1}^n X_i$ up to a term of order $1/n$, whereas for the approximation based on the Gaussian critical values, the same moments are matched only up to an approximation error of order $1/\sqrt n$.

%\begin{remark}[Rademacher and Gaussian Multiplier Bootstrap Methods in Non-Symmetric Case]
%Theorems \ref{cor: rejection probabilities} and \ref{thm: rademacher best} show that Gaussian and especially Rademacher multiplier bootstrap methods yield rejection probabilities that are quite close to the nominal level as long as the distribution of the random vectors $X_1,\dots,X_n$ is symmetric. A natural question then is How these methods behave in the case of non-symmetric distributions. Unfortunately, in this case, we only have
%$$
%\left|\Pr\left(T_n > c_{1-\alpha}^B\right) - \alpha\right| \leq C\left(\frac{B_n^2\log^7(p n)}{n}\right)^{1/6},
%$$
%as follows from the results in \cite{CCK17}, which is worse than our bound for the empirical and the third-order matching multiplier bootstrap.\qed
%\end{remark}

Our fourth and final result shows that one-sided bounds in the bootstrap approximation can be substantially improved if we allow for incremental factors:

\begin{theorem}[Bootstrap Approximation with Incremental Factors]\label{thm: infinitesimal factors}
Suppose that Conditions E and M are satisfied and let $\eta > 0$ be a constant that may depend on $n$ and $p$. Then there exists a constant C depending only $b_1$ and $b_2$ such that the following hold. 
\begin{enumerate}
\item[(i)] If $B_n^2\log^5(pn)\leq n$ and $c_{1-\alpha}^B$ is obtained via either the empirical bootstrap or the multiplier bootstrap with weights satisfying \eqref{eq: multiplier bootstrap simplification} and \eqref{eq: third order matching multipliers}, then we have 
$$
\Pr(T_n > c_{1-\alpha}^B + \eta) \leq \alpha + C(1\vee\eta^{-4})\left(\frac{B_n^2\log^3(p n)}{n}\right)^{1/2}.
$$
\item[(ii)] If $n^{-1}\sum_{i=1}^n\Ep[X_{ij}^2]\leq b_2^2$ for all $j=1,\dots,p$ and $c_{1-\alpha}^B$ is obtained via the multiplier bootstrap with weights satisfying \eqref{eq: multiplier bootstrap simplification}, then 
$$
\Pr(T_n > c_{1-\alpha}^B + \eta) \leq \alpha + C(1\vee\eta^{-4})\left(\frac{B_n^2\log^5(p n)}{n}\right)^{1/2}.
$$
\end{enumerate}
\end{theorem}

Theorem \ref{thm: infinitesimal factors} allows $\eta$ to (slowly) decrease with $n$ and/or $p$. For example, if we choose $\eta \sim (\log n)^{-1}$, then the over-rejection probability is of order $n^{-1/2}$ in $n$ up to log factors, while only requiring  $p$ to be $\log p = o\big(n^{1/3}/\text{polylog} (n)\big)$ in (i) and $\log p = o\big(n^{1/5}/\text{polylog}(n)\big)$ in (ii) provided that $B_n$ is bounded in $n$. 

To prove this theorem, we use the randomized Lindeberg method but with an important simplification that the incremental factor $\eta$ now absorbs all the terms arising from smoothing the functions of the form $x\mapsto 1\{\max_{1\leq j\leq p}x_j > c\}$, which is used in the Lindeberg method. As discussed in the Introduction, Theorem \ref{thm: infinitesimal factors} is useful if one is concerned with the finite-sample over-rejection of tests based on the statistic $T_n$ as it says that adding an incremental factor $\eta$ to the critical value $c_{1-\alpha}^B$ may substantially reduce over-rejection, with a minimal effect on the power of the test. The proof of Theorem \ref{thm: infinitesimal factors} is given in Appendix \ref{sec: proof of theorem 24} of the Supplemental Material.

We conclude this section with a few remarks on cases with 
%polynomial moment conditions and
 approximate sample means. 
 %First, while we maintain the sub-exponential condition (Condition E) for $X_i$ throughout the main text, combining Theorems \ref{thm: gaussian approximation main} and \ref{cor: rejection probabilities}, and a simple truncation argument, we are able to derive analogs of Gaussian and bootstrap approximation results under polynomial moment conditions similar to e.g. Condition (E.1) in \cite{CCK17}. Those additional results can be found in Appendix \ref{sec: polynomial moment conditions} of the Supplemental Material. Second, 
 In many applications (such as simultaneous inference for high-dimensional statistical models; cf. \cite{BCK15}), the statistic $T_n$ can only be asymptotically approximated by the maximum coordinate of the sample mean of independent random vectors. Also, those random vectors, often corresponding to the influence functions, may not be directly observable but have to be estimated. We emphasize here  that all our results can be extended to such approximate sample mean cases using the same arguments as those used in \cite{BCCHK18}; however, we have opted not to carry out the extension here for brevity of the paper.

\iffalse
To conclude this section, we note that we have derived analogs of the Gaussian and bootstrap approximation results in Theorems \ref{thm: gaussian approximation main} and \ref{cor: rejection probabilities} under polynomial moment conditions in Appendix A of the Supplemental Material. Both results are based on a simple truncation argument and an application of the results for the sub-exponential case in Theorems \ref{thm: gaussian approximation main} and \ref{cor: rejection probabilities}. We also note that in many applications requiring asymptotic linearization, the statistic $T_n$ may not be exactly equal to but rather be asymptotically equal to that in \eqref{eq: statistic second}. In addition, the vectors $X_1,\dots,X_n$, often representing the values of the influence function, may not be directly observable but have to be estimated. We therefore emphasize that all our results can be extended to cover these cases via the so-called many approximate means framework using the same arguments as those used in \cite{BCCHK18}; however, we have opted not to carry out the extension here for brevity of the paper.
\fi

\subsection{Gaussian and Bootstrap Approximations under Polynomial Moment Conditions}\label{sec: polynomial moment conditions}

So far we have assumed the sub-exponential condition (Condition E) for $X_i$. It turns out that combining some elements of the proof of Lemma \ref{lem: main} below and a truncation argument leads to analogs of the Gaussian and bootstrap approximation results  under polynomial moment conditions, which are given next. The proofs of Theorems \ref{cor: ga under polynomial conditions} and \ref{cor: boot app pol mom cond} can be found in Appendix \ref{sec: proof of poly moment} in the Supplementary Material.

%As a corollary of Theorem \ref{thm: gaussian approximation main}, we also present a Gaussian approximation result under polynomial moment conditions:
%Leveraging Theorem \ref{thm: gaussian approximation main} and a simple truncation argument also leads to a Gaussian approximation result under polynomial moment conditions, which is given next.

\begin{theorem}[Gaussian Approximation under Polynomial Moment Conditions]\label{cor: ga under polynomial conditions}
Suppose that Condition M is satisfied and that for some $q>2$, we have 
\begin{equation}
\Ep\left[\max_{1\leq j\leq p}|X_{ij}|^q\right]\leq B_n^q
\label{eq: poly}
\end{equation}
for all $i=1,\dots,n$. Then
$$
\left|\Pr\left(T_n > c^G_{1-\alpha}\right) - \alpha\right| \leq C\left\{\left(\frac{B_n^2\log^5 p}{n}\right)^{1/4}+\sqrt{\frac{B_n^2(\log p)^{3-2/q}}{n^{1-2/q}}}\right\},
$$
where $C$ is a constant depending only on, $q$, $b_1$, and $b_2$.
\end{theorem}
%The proof of this corollary hinges on truncating $X_i$'s by $\kappa_n$ and applying the results for the  sub-exponential case to the truncated variables. We use the polynomial moment condition (\ref{eq: poly}) to bound the truncation error and optimally choose the truncation level $\kappa_n$, which leads to the desired result.
This theorem improves on the corresponding result obtained by \cite{K19b} by the fourth author. For bootstrap approximation, we focus on the Gaussian multiplier and empirical bootstraps for simplicity. 

%The recent work \cite{K19b} by the fourth author derived the following Gaussian approximation bound (up to constants)  under the polynomial moment condition (\ref{eq: poly}): $\big ( \frac{\log^5 p}{n}\big )^{1/6} + \big (\frac{\log^{3-2/q}p}{n^{1-2/q}} \big)^{1/3}$, where we assume for simplicity that $b_1,b_2$ are constant and $B_n$ is bounded. The bound given in Proposition \ref{cor: ga under polynomial conditions} gives a faster rate than the result of \cite{K19b}. 

%Again, combining Theorem \ref{cor: rejection probabilities} and a truncation argument  leads to a bootstrap approximation result under polynomial moment conditions. 
%As a corollary of Theorem \ref{cor: rejection probabilities}, we also present a bootstrap approximation result under polynomial moment conditions, whose proof relies on the truncation argument as in the Gaussian approximation (Corollary \ref{cor: ga under polynomial conditions}). 

\begin{theorem}[Bootstrap Approximation under Polynomial Moment Conditions]\label{cor: boot app pol mom cond}
Suppose that Condition M is satisfied and that Condition (\ref{eq: poly}) holds for all $i=1,\dots,n$ for some $q>2$.  Let $c_{1-\alpha}^B$ be the critical value obtained via either the empirical bootstrap or the Gaussian multiplier bootstrap. Then
\begin{equation*}%\label{eq: empirical bootstrap main}
\left|\Pr\left(T_n > c_{1-\alpha}^B\right) - \alpha\right| \leq C\left\{\left(\frac{B_n^2\log^5(p n)}{n}\right)^{1/4}+\sqrt{\frac{B_n^2\log^{3-2/q}(p n)}{n^{1-2/q}}}\right\},
\end{equation*}
where $C$ is a constant depending only on $q$,  $b_1$, and $b_2$. 
\end{theorem}

This theorem improves on the error bound for the empirical bootstrap given in \cite{DZ17} under the polynomial moment condition. 
%For the empirical bootstrap, \cite{DZ17} derived the following bound under the polynomial moment condition (\ref{eq: poly}):  $\big ( \frac{(\log^2p) (\log^3 (pn))}{n}\big )^{1/6} + \big (\frac{(\log p) (\log^2(np))}{n^{1-2/q}} \big)^{\frac{q}{2(q+1)}}$, where we again assume for simplicity that $b_1,b_2$ are constant and $B_n$ is bounded. The bound in Proposition \ref{cor: boot app pol mom cond} gives a faster rate than the result of  \cite{DZ17}. 

\section{Main Theoretical Arguments}

\subsection{Iterative Randomized Lindeberg Method}\label{sec: main arguments}
In this section, we derive a distributional approximation result, Theorem \ref{cor: max}, using a novel proof technique, which we call the iterative randomized Lindeberg method. We will use this result in Section \ref{sec: proofs} to prove our main results on the Gaussian and bootstrap approximations in high dimensions, as stated in Section \ref{sec: main results}.

Let $V_1,\dots,V_n, Z_1,\dots,Z_n$ be a sequence of independent random vectors in $\mathbb R^p$ such that $\Ep[V_{i j}] = \Ep[Z_{i j}] = 0$ for all $i = 1,\dots,n$ and $j = 1,\dots,p$, where $V_{i j}$ and $Z_{i j}$ denote the $j$th components of $V_{i j}$ and $Z_{i j}$, respectively. We will assume that these vectors obey the following conditions:

\medskip
\noindent
{\bf Condition V:} {\em There exists a constant $C_v>0$ such that for all $j = 1,\dots,p$, we have
$$
%\frac{1}{n}\sum_{i=1}^n\Ep\left[V_{i j}^2 + Z_{i j}^2\right] \leq C_v\text{ and }\frac{1}{n}\sum_{i=1}^n\Ep\left[V_{i j}^4 + Z_{i j}^4\right] \leq C_v B_n^2.
\frac{1}{n}\sum_{i=1}^n\Ep\left[V_{i j}^4 + Z_{i j}^4\right] \leq C_v B_n^2.
$$

}
%\medskip
\noindent
{\bf Condition P:} {\em There exists a constant $C_p\geq 1$ such that for all $i = 1,\dots,n$, we have
%\medskip
$$
\Pr\Big(\|V_i\|_{\infty}\vee\|Z_i\|_{\infty} > C_p B_n\log(p n)\Big) \leq 1/n^4.
$$

}
\medskip
\noindent
{\bf Condition B:} {\em There exists a constant $C_b>0$ such that for all $i = 1,\dots,n$, we have
%\medskip 
$$
%\Ep\Big[\|V_i\|_{\infty}^8 + \|Z_i\|_{\infty}^8\Big]\leq C_bB_n^8\log^4(p n).
\Ep\Big[\|V_i\|_{\infty}^8 + \|Z_i\|_{\infty}^8\Big]\leq C_bB_n^8\log^8(p n).
$$

}
\medskip
\noindent
{\bf Condition A:} {\em %There exists a constant $C_a > 0$ such that for all $(y,t)\in\mathbb R^p\times\mathbb R_{+}$, we have
There exist constants $C_a > 0$ and $\delta\geq0$ such that for all $(y,t)\in\mathbb R^p\times (0,\infty)$, we have
$$
\Pr\left(\frac{1}{\sqrt n}\sum_{i=1}^n Z_i \leq y + t \right) - \Pr\left(\frac{1}{\sqrt n}\sum_{i=1}^n Z_i \leq y\right) \leq C_a\left(t\sqrt{\log p}+\delta\right).
$$}

\noindent
Note that the constants $C_v$, $C_p$, $C_b$, and $C_a$ appearing in these conditions are not supposed to be dependent on their indices, e.g. $C_p$ here is not allowed to change with $p$; the indices are introduced with the only goal to differentiate between the constants. 

The following is the main result of this section:

\begin{theorem}[Distributional Approximation via Iterative Randomized Lindeberg Method]\label{cor: max}
Suppose that Conditions V, P, B, and A are satisfied. In addition, suppose that
\begin{equation}\label{eq: bn bounds 1}
\max_{1\leq j,k\leq p}\left| \frac{1}{\sqrt n}\sum_{i=1}^n(\Ep[V_{i j}V_{i k}] - \Ep[Z_{i j}Z_{i k}]) \right| \leq C_mB_n\sqrt{\log(p n)}
\end{equation}
and
\begin{equation}\label{eq: bn bounds 2}
\max_{1\leq j,k,l\leq p}\left| \frac{1}{\sqrt n}\sum_{i=1}^n (\Ep[V_{i j}V_{i k}V_{i l}] - \Ep[Z_{i j}Z_{i k}Z_{i l}]) \right| \leq C_m B_n^2\sqrt{\log^3(p n)}
\end{equation}
for some constant $C_m$. Then
\[
\sup_{y\in\mathbb R^p}\left| \Pr\left(\frac{1}{\sqrt n}\sum_{i=1}^n V_i \leq y\right) - \Pr\left(\frac{1}{\sqrt n}\sum_{i=1}^n Z_i \leq y\right) \right| 
%\le C\left(\frac{B_n^2\log^5(p n)}{n}\right)^{1/4},
 \le C\left(\left(\frac{B_n^2\log^5(p n)}{n}\right)^{1/4}+\delta\right),
\]
where $C$ is a constant depending only on $C_v$, $C_p$, $C_b$, $C_a$, and $C_m$.
\end{theorem}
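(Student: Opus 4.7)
The plan is to reduce the sup-norm distance between the distributions of $S_n^V = n^{-1/2}\sum_{i=1}^n V_i$ and $S_n^Z = n^{-1/2}\sum_{i=1}^n Z_i$ to estimating $|\Ep g(S_n^V) - \Ep g(S_n^Z)|$ for a smooth test function $g$. I would smooth the indicator of $\{x \leq y\}$ in two stages: first via the soft-max $F_\beta(x) = \beta^{-1}\log\sum_j e^{\beta x_j}$, satisfying $|F_\beta(x) - \max_j x_j| \leq \beta^{-1}\log p$; then via a smoothed step $h_\varepsilon : \R \to [0,1]$ with transition region of length $\varepsilon$. Condition A converts the resulting smoothing bias into a distributional error of order $(\beta^{-1}\log p + \varepsilon)\sqrt{\log p}$, leaving the smooth-function comparison as the main task.

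The basic step is the randomized Lindeberg method of \cite{DZ17}: pick a uniform random permutation $\sigma$ of $\{1,\dots,n\}$, interpolate between $S_n^V$ and $S_n^Z$ by swapping $V_{\sigma^{-1}(k)}$ for $Z_{\sigma^{-1}(k)}$ as $k$ runs from $1$ to $n$, and Taylor expand each swap to fourth order around the current partial sum. First-order terms vanish by centering; second- and third-order terms are absorbed by the moment-matching assumptions \eqref{eq: bn bounds 1} and \eqref{eq: bn bounds 2} combined with Condition V; and the fourth-order remainder, controlled by the tail hypotheses in Conditions P and B, reduces to bounding quantities of the form $\Ep[\partial^{(4)} g(\widetilde S)]$ contracted against fourth moments, where $\widetilde S$ is a mixed partial sum containing both $V_i$'s and $Z_i$'s.

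Here is the iterative twist that is essential to improve the exponent from $1/6$ to $1/4$. A naive bound would replace $\partial^{(4)} g$ by its sup-norm, which scales like $\beta^3$ and yields the old rate. Instead, I would exploit that $\Ep[\partial^{(4)} g(S_n^Z)]$ can be bounded by roughly $\sqrt{\log p}$ via Condition A, since the soft-max derivatives concentrate near the argmax and integrating against a Gaussian-like law costs only one extra $\sqrt{\log p}$ factor rather than $\beta^3$. To transfer this pure-$Z$ bound to the mixed sum $\widetilde S$, I would re-apply the randomized Lindeberg method to replace the remaining $V_i$'s in $\widetilde S$ by $Z_i$'s. That reapplication generates new Taylor remainders of the same shape but with higher-order derivatives of $g$; each is handled by yet another round of the same argument, so the procedure is iterated, producing a convergent geometric series whose terms are dominated, at every level, by the exponential inequality for weighted sums of exchangeable random variables established in Section \ref{lem: technical lemmas}.

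The main obstacle is making the iteration converge with constants depending only on $C_v, C_p, C_b, C_a, C_m$. At each level, the Taylor remainder of order $k$ decays like $n^{-k/2}$ times a $k$th moment bounded by $B_n^{k-2}$ through Conditions P, B, and V; moment matching at orders two and three kills the leading terms, leaving a per-level gain of order $\beta B_n\sqrt{\log(pn)}/\sqrt n$, while the exchangeable exponential inequality controls the random-permutation averaging uniformly in the iteration depth. Optimizing the smoothing parameters $\beta$ and $\varepsilon$ to balance the anti-concentration bias in Condition A against the accumulated Taylor error then yields the stated rate $(B_n^2 \log^5(pn)/n)^{1/4}$.
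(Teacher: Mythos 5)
Your skeleton matches the paper's: smooth the indicator with the soft-max plus a smooth step, use Condition A to absorb the smoothing bias, run the randomized Lindeberg interpolation with moment matching killing the second- and third-order terms, and bound the surviving derivative-coefficient expectations by localizing near the max boundary and transferring that localization probability from the mixed sum to $S_n^Z$ by re-applying the randomized Lindeberg method. However, the proposal has a genuine gap exactly where the improvement from $1/6$ to $1/4$ is earned: the convergence of the iteration. First, the recursion does not live at the level of ``higher-order derivatives of $g$'' at each round; in the paper it is a recursion in the Kolmogorov distances $\varrho_{\epsilon^d}$ of a sequence of mixed sums indexed by random subsets $\epsilon^0,\epsilon^1,\dots,\epsilon^D$ whose active set halves in expectation at each level (Lemma \ref{lem: main}), and it terminates only because after $D=[4\log n]+1$ levels no $V_i$'s remain with probability $1-1/n$ (Lemma \ref{lem: closing}). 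Your proposal contains no termination mechanism and no account of how the moment-mismatch bounds accumulate across levels; in the paper the exchangeable exponential inequality is used precisely for the latter, i.e.\ to show that the quantities $\mathcal B_{n,1,d},\mathcal B_{n,2,d}$ defining the events $\mathcal A_d$ grow only linearly in $d$ (Corollary \ref{cor: main}), not to control ``random-permutation averaging uniformly in the iteration depth.''

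Second, the assertion that the iteration ``produces a convergent geometric series'' is unsupported and, with a fixed smoothing parameter, false in the relevant regime. With the natural choice $\phi^2\asymp\sqrt n/(B_n\log^{3/2}(pn))$, the factor multiplying $\Ep[\varrho_{\epsilon^{d+1}}]$ in the recursion is of order $\mathcal B_{n,1,d}\phi^2\log p/\sqrt n\asymp (d+1)C_m\log p/\log(pn)$, which is a constant at $d=0$ and grows with $d$; it is not uniformly below one, and since the iteration must run for roughly $4\log n$ levels, any per-level factor bounded away from one from above would compound to a polynomial-in-$n$ loss. The paper's resolution is the genuinely delicate part of the argument: level-dependent smoothing parameters $\phi_d\propto n^{1/4}B_n^{-1/2}\log^{-3/4}(pn)\,((d+1)f_{d+1})^{-1/3}$, satisfying \eqref{eq: phi restriction}, which trade a slightly worse additive term $\sqrt{\log p}/\phi_d$ for a much smaller multiplicative one and yield the sublinear recursion $f_d\leq C_2(f_{d+1}^{2/3}+(d+1)^{2/3}+1)$ with terminal value $f_D=1$; induction then gives $f_d\lesssim d+1$ and hence $f_0\lesssim 1$, i.e.\ the stated rate. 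Without this (or an equivalent) device for closing the recursion over $\log n$ levels, your plan establishes the per-step estimates but not the theorem.
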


\begin{remark}[On Sharpness of Theorem \ref{cor: max}]
We do not claim sharpness of Theorem \ref{cor: max} in the high-dimensional case $p\gg n$ (when $p$ is fixed, the theorem is not sharp in view of the classical Berry-Esseen bound). On one hand, classical Edgeworth expansions in the low-dimensional case suggest that conditions like \eqref{eq: bn bounds 2} should lead to better distributional approximation results than the corresponding Gaussian approximation results, which we do not observe in Theorem \ref{cor: max} since Theorem \ref{thm: gaussian approximation main} gives the same dependence on both $n$ and $p$ for the Gaussian approximation. On the other hand, to the best of our knowledge, there exist no analogs of  Edgeworth expansions in high dimensions. The question whether conditions like \eqref{eq: bn bounds 2} can be used to improve distributional approximations (relative to the Gaussian approximations) thus remains open.
\qed
\end{remark}

To prove this result, we will need additional notation. For all $\epsilon\in\{0,1\}^n$, define
\begin{equation}\label{eq: rho eps definition}
\varrho_{\epsilon} = \sup_{y\in\mathbb R^p}\left|\Pr\left(S_{n,\epsilon}^V \leq y\right) - \Pr\left(S_n^Z \leq y\right)\right|,
\end{equation}
where
$$
S_{n,\epsilon}^V = \frac{1}{\sqrt n}\sum_{i=1}^n (\epsilon_iV_i + (1 - \epsilon_i)Z_i)\quad\text{and}\quad S_n^Z = \frac{1}{\sqrt n}\sum_{i=1}^n Z_i.
$$
We will replace $\epsilon$ with a certain sequence of random vectors $\epsilon^0,\dots,\epsilon^D \in \{ 0,1 \}^n$,  independent of $V_1,\dots,V_n,Z_1,\dots,Z_n$, and derive  recursive bounds for $\rho_{\epsilon^d}$ for $d=0,\dots,D$, which lead to the desired bound in Theorem \ref{cor: max}. Such a sequence of random vectors $\epsilon^0,\dots,\epsilon^D \in \{ 0,1 \}^n$ is constructed as follow:
\begin{itemize}
\item Set $D = [4\log n]+1$ and initialize $\epsilon^0 = (1,\dots,1)$.
\item Let $U_1,\dots,U_D$ be a sequence of independent uniform $[0,1]$ random variables that are independent of $V_1,\dots,V_n,Z_1,\dots,Z_n$.
\item For $d=1,\dots,D$: conditionally on $\epsilon^{d-1}$ and $U_1,\dots,U_D$, set $\epsilon^d_i = 0$  if $\epsilon^{d-1}_i = 0$, and generate $\{\epsilon^d_i\}_{i\in I_{d-1}}$ with $I_{d-1} = \{ i=1,\dots, n : \epsilon^{d-1}_i = 1 \}$  as i.i.d.~Bernoulli($U_d$) random variables. 
\end{itemize}
%In addition, denote $\epsilon^0 = (1,\dots,1)'\in\mathbb R^n$ and for $D = [4\log n]+1$, define random vectors $\epsilon^1,\dots,\epsilon^D \in \{0,1\}^n$ such that for all $d = 1,\dots,D$, the random vector $\epsilon^d$ obeys the following conditions: 
It is not difficult to see that for each $d=1,\dots,D$, the random vector $\epsilon^d$ satisfies the following properties:
\begin{enumerate}
\item[(i)]  for all $i = 1,\dots,n$, $\epsilon^d_i = 0$ if $\epsilon^{d-1}_i = 0$, and 
\item[(ii)] for $I_{d-1} = \{i=1,\dots,n\colon \epsilon^{d-1}_i = 1\}$, the random variables $\{\epsilon^d_i\}_{i\in I_{d-1}}$ are exchangeable conditional on $\epsilon^{d-1}$ and satisfy
\begin{equation}\label{eq: recursive epsilon}
\Pr\left( \sum_{i\in I_{d-1}}\epsilon^d_i = s \mid \epsilon^{d-1} \right) = \frac{1}{|I_{d-1}| + 1},\quad\text{for all }s = 0,\dots,|I_{d-1}|.
\end{equation}
\end{enumerate}
Indeed, to see that (\ref{eq: recursive epsilon}) holds, observe that,  conditional on $\epsilon^{d-1}$ and $U_d$,  $\sum_{i \in I_{d-1}} \epsilon_i^d$ follows the binomial distribution with parameters $|I_{d-1}|$ and (success probability) $U_d$, so that 
\[
\begin{split}
\Pr\left( \sum_{i\in I_{d-1}}\epsilon^d_i = s \mid \epsilon^{d-1} \right) &=
\binom{|I_{d-1}|}{s}  \int_0^1 u^{s} (1-u)^{|I_{d-1}|-s} du \\
&=\binom{|I_{d-1}|}{s} \frac{s! (|I_{d-1}|-s)!}{(|I_{d-1}|+1)!}  = \frac{1}{|I_{d-1}|+1}. 
\end{split}
\]
Also, two properties (i) and (ii) ensure that $S_{n,\epsilon^d}^V-n^{-1/2}\sum_{i\notin I_{d-1}}Z_i$ is the randomized Lindeberg interpolant between $n^{-1/2}\sum_{i\in I_{d-1}}V_i$ and $n^{-1/2}\sum_{i\in I_{d-1}}Z_i$; see Lemma \ref{lem: rand lind} and the discussion at the beginning of Step 1 of the proof of Lemma \ref{lem: main}.

%In addition, define $\rho_Y^y = \rho_{\epsilon^0,Y}^y$ and
%$
%\rho_Y = \rho_{\epsilon^0,Y} = \sup_{y\in\mathbb R^p}|\rho_Y^y|,
%$
%so that
%$$
%\varrho_Y = \sup_{y\in\mathbb R^p}\left| \Pr\left( \frac{1}{\sqrt n}\sum_{i=1}^n X_i \leq y \right) - \Pr\left( \frac{1}{\sqrt n}\sum_{i=1}^n e_i Y_i \leq y \right) \right|.
%$$

Further, for all $i=1,\dots,n$ and $j,k,l = 1,\dots,p$, define
$$
\mathcal E_{i, j k}^V = \Ep[V_{i j}V_{i k}], \ \mathcal E_{i, j k l}^V = \Ep[V_{i j} V_{i k}V_{i l}],
$$ 
$$
\mathcal E_{i, j k}^Z = \Ep[Z_{i j}Z_{i k}], \ \mathcal E_{i, j k l}^Z = \Ep[Z_{i j} Z_{i k}Z_{i l}].
$$
For all $n\geq 1$ and $d = 0,\dots,D$, let $\mathcal B_{n,1,d}$ and $\mathcal B_{n,2,d}$ be some strictly positive constants, and define the event $\mathcal A_d$ by 
\[
\begin{split}
\mathcal A_d &= \left \{ 
\max_{1\leq j,k\leq p}\left| \frac{1}{\sqrt n}\sum_{i=1}^n\epsilon^d_i(\mathcal E_{i, j k}^V - \mathcal E_{i, jk}^Z) \right| \leq \mathcal B_{n,1,d} \right \} \\
&\quad \bigcap 
\left \{ 
\max_{1\leq j,k,l\leq p}\left| \frac{1}{\sqrt n}\sum_{i=1}^n \epsilon_i^d (\mathcal E_{i,j k l}^V - \mathcal E_{i, j k l}^Z) \right| \leq \mathcal B_{n,2,d}
\right \}.
\end{split}
\]
%We then have the following results:

The proof of Theorem \ref{cor: max} proceeds as follows. In Lemma \ref{lem: main} and Corollary \ref{cor: main}, we establish a recursive inequality for $\Ep[\varrho_{\epsilon^d}1\{\mathcal A_d\}]$, $d=0,\dots,D$. Next, we show in Lemma \ref{lem: closing} that $\Ep[\varrho_{\epsilon^D}1\{\mathcal A_D\}]$ is bounded by $1/n$. Then, we use an induction argument backward to derive a bound for $\Ep[\varrho_{\epsilon^0}1\{\mathcal A_0\}]$. Since $\epsilon^0_i=1$ for all $i$, this gives the claim of the theorem once we appropriately choose the constants $\mathcal B_{n,1,d}$ and $\mathcal B_{n,2,d}$. The proof of Lemma \ref{lem: main} is long and is given in Appendix \ref{sec: proof of main lemma} of the Supplemental Material.

The derivation of the recursive inequality is based on connecting $S_{n,\epsilon^d}^V$ with $S_n^Z$ by the randomized Lindeberg method originally developed by \cite{DZ17}. 
A similar approach was used in \cite{CCK13,CCK17} to connect $S_{n,\epsilon^0}^V$ with $G$, where the Slepian--Stein method was applied instead. Unlike the latter approach, the randomized Lindeberg method allows us to match the moments of $S_{n,\epsilon^d}^V$ and $S_n^Z$ up to the third order rather than the second order. This leads to improvement on the power of $\log(pn)$ factors. In addition, we incorporate a smoothing effect induced by $Z_i$ via Condition A into our argument. This along with the higher-order moment matching lead to improvement on the power of the sample size $n$.

\begin{lemma}\label{lem: main}
Suppose that Conditions V, P, B, and A are satisfied. Then for any $d = 0,\dots,D-1$ and any constant $\phi>0$ such that 
\begin{equation}\label{eq: phi restriction}
C_pB_n\phi \log^2(p n)\leq \sqrt n,
\end{equation}
we have on the event $\mathcal A_d$, 
\begin{align*}
\varrho_{\epsilon^d} &\lesssim \frac{\sqrt{\log p}}{\phi} + \delta + \frac{B_n^2\phi^4\log^5(p n)}{n^2} +  \left( \Ep[\varrho_{\epsilon^{d+1}}\mid \epsilon^d] + \frac{\sqrt{\log p}}{\phi}+\delta \right)\\
&\quad \times \left( \frac{\mathcal B_{n,1,d}\phi^2\log p}{\sqrt n} + \frac{\mathcal B_{n,2,d}\phi^3\log^2p}{n} + \frac{B_n^2\phi^4\log^3(p n)}{n} \right)
\end{align*}
up to a constant depending only on $C_v$, $C_p$, $C_b$, and $C_a$.
\end{lemma}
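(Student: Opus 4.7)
The plan is to combine a smoothing step with a randomized Lindeberg interpolation whose path is indexed by the conditional law of $\epsilon^{d+1}$ given $\epsilon^d$, and then to exploit this indexing iteratively: the Taylor coefficients produced by the Lindeberg expansion are themselves expectations of smooth functions evaluated at partial sums of the form $S_{n,\epsilon}^V$, and these can be compared with the corresponding expectations at $S_n^Z$ via $\varrho_{\epsilon^{d+1}}$. First I would fix $y\in\mathbb R^p$ and replace the indicator $w\mapsto 1\{w\leq y\}$ by a smoothed-max functional $F_\beta$ with smoothing parameter $\beta$ of order $\phi$. Condition A applied to $S_n^Z$ then reduces the problem to controlling $\sup_y |\Ep[F_\beta(S_{n,\epsilon^d}^V - y)] - \Ep[F_\beta(S_n^Z - y)]|$ at the cost of the additive smoothing error $\sqrt{\log p}/\phi$.

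Next I would apply the randomized Lindeberg method using $\epsilon^{d+1}$. Because $\{\epsilon^{d+1}_i\}_{i\in I_d}$ is, conditional on $\epsilon^d$, an exchangeable family whose sum is uniform on $\{0,\dots,|I_d|\}$, a telescoping identity rewrites $\Ep[F_\beta(S_{n,\epsilon^d}^V - y) - F_\beta(S_n^Z - y)\mid\epsilon^d]$ as an average over $i\in I_d$ and intermediate configurations $\epsilon\leq\epsilon^{d+1}$ of terms of the form $\Ep[F_\beta(\tilde W + Z_i/\sqrt n - y) - F_\beta(\tilde W + V_i/\sqrt n - y)]$, where $\tilde W$ is a partial sum of the type $S_{n,\epsilon}^V$ with the $i$th summand removed. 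Taylor expand each such difference to fourth order: the first-order terms vanish by the zero-mean assumption, and the fourth-order remainder is controlled by truncating $V_i,Z_i$ at $C_pB_n\log(pn)$ using Condition P for the truncation event and Condition B for the tails, with the constraint $C_pB_n\phi\log^2(pn)\leq\sqrt n$ ensuring that this stand-alone contribution fits within the target $B_n^2\phi^4\log^4(pn)/n^2$.

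The heart of the argument is bounding the remaining Taylor coefficients of orders 2, 3, and 4. Each has the structure $\Ep[D^kF_\beta(\tilde W - y)]\cdot(\text{aggregated moment difference})$, where $\tilde W$ is essentially $S_{n,\epsilon}^V$ for some intermediate $\epsilon\leq\epsilon^{d+1}$. Representing $D^kF_\beta$ via its level-set/integration-by-parts expression in terms of indicators of rectangles, one can bound $|\Ep[D^kF_\beta(\tilde W - y)] - \Ep[D^kF_\beta(S_n^Z - y)]|$ by the total-variation weight of $D^kF_\beta$ multiplied by $\Ep[\varrho_{\epsilon^{d+1}}\mid\epsilon^d]$, while the leading term $\Ep[D^kF_\beta(S_n^Z - y)]$ is itself controlled by Condition A and contributes the additional factor $\sqrt{\log p}/\phi$. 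On $\mathcal A_d$, the aggregated moment differences supply $\mathcal B_{n,1,d}/\sqrt n$ and $\mathcal B_{n,2,d}/n$ for orders 2 and 3; multiplying by the corresponding derivative norms of $F_\beta$, which scale polynomially in $\phi$ and $\log(pn)$, yields the factors $\mathcal B_{n,1,d}\phi^2\log p/\sqrt n$, $\mathcal B_{n,2,d}\phi^3\log^2 p/n$, and the residual fourth-order contribution $B_n^2\phi^4\log^3(pn)/n$ inside the product. Assembling all contributions via the triangle inequality produces the stated bound.

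The delicate part will be the iterative coefficient-bounding step: making $\varrho_{\epsilon^{d+1}}$ truly emerge from $\Ep[D^kF_\beta(\tilde W - y)]$ requires carefully writing the higher derivatives as signed measures supported on rectangles and ensuring that the one-summand mismatch between $\tilde W$ and a genuine $S_{n,\epsilon}^V$ is absorbed by the truncation step. An auxiliary exponential inequality for weighted sums of exchangeable random variables (developed later in the paper) will be needed to ensure that the randomization over $\epsilon^{d+1}$ does not spoil the bound, and the hypothesis $C_pB_n\phi\log^2(pn)\leq\sqrt n$ is tuned precisely so that all truncation errors align with the target rate.
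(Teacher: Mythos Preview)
Your overall architecture is right---smooth, run a randomized Lindeberg telescoping, Taylor expand to fourth order, and pull out $\varrho_{\epsilon^{d+1}}$ from the derivative coefficients---but the mechanism you propose for that last step will not work as stated and is not what the paper does. You suggest bounding $|\Ep[D^k m^y(\tilde W)]-\Ep[D^k m^y(S_n^Z)]|$ by expressing $D^k m^y$ ``via its level-set/integration-by-parts expression in terms of indicators of rectangles'' and multiplying the total-variation weight by $\varrho_{\epsilon^{d+1}}$. The difficulty is that $\varrho_{\epsilon^{d+1}}$ only controls differences of probabilities over one-sided rectangles $\{w\leq z\}$, so to leverage it you would need a representation $D^k m^y(w)=\int 1\{w\leq z\}\,d\mu(z)$ with $|\mu|(\mathbb R^p)$ of the right size; in $p$ dimensions such a representation forces the density of $\mu$ to be the full mixed $p$-th order partial of $D^k m^y$, which is hopeless when $p$ is large. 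The paper's actual device is different and much simpler: since $m^y(w)=g(F(w-y))$ with $g$ supported on $[0,1/\phi]$, every partial derivative $m^y_{jk\cdots}(w)$ vanishes unless $-\phi^{-1}<\max_j(w_j-y_j)\leq\phi^{-1}$. One therefore inserts the strip indicator $h^y(W;\phi^{-1})$ for free, uses the pointwise majorants $U^y_{jk\cdots}$ with $\sum U^y_{jk\cdots}\lesssim\phi^k\log^{k-1}p$ to factor out the derivative size, and is left with the single probability $\Pr(W\in\text{strip})$. That strip is a difference of two one-sided rectangles, so \emph{this} probability is within $2\varrho_{\epsilon^{d+1}}$ of $\Pr(S_n^Z\in\text{strip})$, and the latter is $\lesssim\sqrt{\log p}/\phi$ by Condition~A. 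That is where the factor $(\Ep[\varrho_{\epsilon^{d+1}}\mid\epsilon^d]+\sqrt{\log p}/\phi)$ comes from.

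Two smaller corrections. First, the telescoping is done with a uniformly random permutation $\sigma$ on $I_d$, not directly with $\epsilon^{d+1}$; the link is Lemma~\ref{lem: rand lind}, which identifies the law of the randomized-Lindeberg pivot $W$ with that of $S_{n,\epsilon^{d+1}}^V$ conditional on $\epsilon^d$. The coefficient $\Ep[m^y_{jk}(W^\sigma_{\sigma^{-1}(i)})]$ is then rewritten as $\Ep[m^y_{jk}(W)]$ plus a remainder $R^\sigma_{i,jk}$, and that remainder is handled by a further second-order Taylor step (this is what produces the $B_n^2\phi^4\log^3(pn)/n$ contribution inside the product, while the stand-alone $B_n^2\phi^4\log^4(pn)/n^2$ comes from the truncation tails in these remainder terms, not from the fourth-order remainder of the main expansion). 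Second, the exponential inequality for exchangeable weighted sums is not used in the proof of this lemma at all; it enters only in Corollary~\ref{cor: main} to propagate $\mathcal A_d$ to $\mathcal A_{d+1}$.
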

%\begin{remark}
%Although this lemma allows for arbitrary vectors $Y_1,\dots,Y_n$, we will eventually apply it with $Y_1,\dots,Y_n$ being a copy of $X_1,\dots,X_n$. We will show in Lemma \ref{lem: all conditions} below that in this case, under mild conditions, for all large enough $n$, \eqref{eq: event 1} and $\mathcal A_0$ hold jointly with probability at least $1 - 1/n$ with $\mathcal B_{n,1}$ and $\mathcal B_{n,2}$ satisfying $\mathcal B_{n,1}\lesssim B_n\sqrt{\log(p n)}$ and $\mathcal B_{n,2}\lesssim B_n^2\sqrt{\log(p n)}$. We will also use Lemma \ref{lem: exponential inequality} to show that the events $\mathcal A_d$ hold with probability approaching one for all $d = 1,\dots,D$.
%\qed
%\end{remark}

\begin{remark}[Choice of $\phi$]
We will choose $\phi$ to depend on $n$ via $n^{1/4}$ when applying this lemma.\qed
\end{remark}

\begin{corollary}\label{cor: main}
Suppose that all assumptions of Lemma \ref{lem: main} are satisfied. Then there exists a constant $K>0$ depending only on $C_v$, $C_p$, and $C_b$ such that for all $d = 0,\dots,D-1$, if $\mathcal B_{n,1,d+1} \geq \mathcal B_{n,1,d} + KB_n\log^{1/2}(p n)$ and $\mathcal B_{n,2,d+1} \geq \mathcal B_{n,2,d} + KB_n^2\log^{3/2}(pn)$, then for any constant $\phi>0$ satisfying \eqref{eq: phi restriction}, we have
\begin{align}
\Ep[\varrho_{\epsilon^d}1\{\mathcal A_d\}] &\lesssim \frac{\sqrt{\log p}}{\phi} + \delta+ \frac{B_n^2\phi^{4}\log^5(p n)}{n^2} + \left( \Ep[\varrho_{\epsilon^{d+1}} 1\{\mathcal A_{d+1}\}] + \frac{\sqrt{\log p}}{\phi} +\delta\right)  \nonumber\\
&\quad\times \left( \frac{\mathcal B_{n,1,d}\phi^2\log p}{\sqrt n} + \frac{\mathcal B_{n,2,d}\phi^3\log^2p}{n} + \frac{B_n^2\phi^4\log^3(p n)}{n} \right)\label{eq: max induction}
\end{align}
up to a constant depending only on $C_v$, $C_p$, $C_b$, and $C_a$.
\end{corollary}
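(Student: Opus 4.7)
The corollary is meant to iterate Lemma \ref{lem: main}. The lemma bounds $\varrho_{\epsilon^d}$ on the event $\mathcal{A}_d$ in terms of $\Ep[\varrho_{\epsilon^{d+1}}\mid\epsilon^d]$, but what we actually want to iterate is $\Ep[\varrho_{\epsilon^d}\mathbf{1}\{\mathcal{A}_d\}]$, i.e., we want the right-hand side to feature $\Ep[\varrho_{\epsilon^{d+1}}\mathbf{1}\{\mathcal{A}_{d+1}\}]$. The plan is therefore to multiply the bound in Lemma \ref{lem: main} by $\mathbf{1}\{\mathcal{A}_d\}$ and take expectations, then compare the resulting $\Ep[\varrho_{\epsilon^{d+1}}\mathbf{1}\{\mathcal{A}_d\}]$ with $\Ep[\varrho_{\epsilon^{d+1}}\mathbf{1}\{\mathcal{A}_{d+1}\}]$, controlling the gap by $\Pr(\mathcal{A}_d\setminus\mathcal{A}_{d+1})$. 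That probability will in turn be shown to be negligible (say at most $1/n^2$) by a concentration argument based on the exponential inequality for weighted sums of exchangeable random variables of Section \ref{lem: technical lemmas}, combined with a union bound over the $O(p^3)$ indices.

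First, I would take the bound from Lemma \ref{lem: main}, multiply both sides by $\mathbf{1}\{\mathcal{A}_d\}$ and take expectations. Since $\mathcal{A}_d$ is $\sigma(\epsilon^d)$-measurable, the tower property gives
$$
\Ep[\mathbf{1}\{\mathcal{A}_d\}\Ep[\varrho_{\epsilon^{d+1}}\mid\epsilon^d]] = \Ep[\varrho_{\epsilon^{d+1}}\mathbf{1}\{\mathcal{A}_d\}].
$$
Using $\varrho_{\epsilon^{d+1}}\leq 1$ I would then split
$$
\Ep[\varrho_{\epsilon^{d+1}}\mathbf{1}\{\mathcal{A}_d\}] \leq \Ep[\varrho_{\epsilon^{d+1}}\mathbf{1}\{\mathcal{A}_{d+1}\}] + \Pr(\mathcal{A}_d\setminus\mathcal{A}_{d+1}).
$$

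The main substantive step is bounding $\Pr(\mathcal{A}_d\setminus\mathcal{A}_{d+1})$. Conditional on $\epsilon^d$, for each $i\in I_d$ we have $\Ep[\epsilon_i^{d+1}\mid\epsilon^d]=1/2$ because $\sum_{i\in I_d}\epsilon_i^{d+1}$ is uniform on $\{0,1,\dots,|I_d|\}$. Hence for fixed $j,k$,
$$
\Ep\!\left[\frac{1}{\sqrt n}\sum_{i=1}^n\epsilon_i^{d+1}(\mathcal{E}_{i,jk}^V-\mathcal{E}_{i,jk}^Z)\,\bigg|\,\epsilon^d\right]=\frac{1}{2}\cdot\frac{1}{\sqrt n}\sum_{i=1}^n\epsilon_i^d(\mathcal{E}_{i,jk}^V-\mathcal{E}_{i,jk}^Z),
$$
whose modulus is at most $\mathcal{B}_{n,1,d}/2$ on $\mathcal{A}_d$. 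The fluctuations around this conditional mean are a weighted sum of the centered exchangeable variables $\epsilon_i^{d+1}-1/2$, with coefficients $(\mathcal{E}_{i,jk}^V-\mathcal{E}_{i,jk}^Z)/\sqrt{n}$ which, by Condition V and Hölder, have sum of squares $O(B_n^2)$ and sup-norm $O(B_n^2/\sqrt n)$. The exchangeable Bernstein-type inequality will then give sub-Gaussian tails at scale $O(B_n\sqrt{\log(pn)})$; a union bound over $j,k$ absorbs a $\log p$ factor. Choosing $K$ large enough forces this fluctuation below $KB_n\log^{1/2}(pn)$ except on an event of probability $O(1/n^4)$, and the triangle inequality with $\mathcal{B}_{n,1,d}/2 + KB_n\log^{1/2}(pn)\leq \mathcal{B}_{n,1,d+1}$ yields the first half of $\mathcal{A}_{d+1}$. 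Exactly the same argument applied to the third moments (with the scale $B_n^2\log^{3/2}(pn)$) handles the second half. Summing these probabilities gives $\Pr(\mathcal{A}_d\setminus\mathcal{A}_{d+1})\lesssim n^{-2}$, which is trivially absorbed into, e.g., the $B_n^2\phi^4\log^4(pn)/n^2$ term already present in the lemma's bound (using \eqref{eq: phi restriction}).

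The hard part is calibrating the scales so that the conditions $\mathcal{B}_{n,1,d+1}\geq\mathcal{B}_{n,1,d}+KB_n\log^{1/2}(pn)$ and $\mathcal{B}_{n,2,d+1}\geq\mathcal{B}_{n,2,d}+KB_n^2\log^{3/2}(pn)$ line up exactly with what the exchangeable concentration inequality produces after taking a union bound over $O(p^2)$ pairs and $O(p^3)$ triples respectively; the correct powers of $B_n$ and $\log(pn)$ in the increments are dictated by the variance proxy and the range of the coefficients $\mathcal{E}_{i,jk}^V-\mathcal{E}_{i,jk}^Z$ and $\mathcal{E}_{i,jkl}^V-\mathcal{E}_{i,jkl}^Z$, the latter having an extra $B_n\log^{1/2}(pn)$ factor as in the calculation in \eqref{eq: details moment calculations}. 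Once this is checked, plugging the bound on $\Ep[\varrho_{\epsilon^{d+1}}\mathbf{1}\{\mathcal{A}_d\}]$ back into the expectation of the lemma's inequality yields exactly \eqref{eq: max induction}.
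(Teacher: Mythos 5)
Your proposal is correct and follows essentially the same route as the paper's proof: multiply the bound of Lemma \ref{lem: main} by $1\{\mathcal A_d\}$, use that $\mathcal A_d$ is $\epsilon^d$-measurable, replace $\Ep[\varrho_{\epsilon^{d+1}}1\{\mathcal A_d\}]$ by $\Ep[\varrho_{\epsilon^{d+1}}1\{\mathcal A_{d+1}\}]$ at the cost of $\Pr(\mathcal A_d\setminus\mathcal A_{d+1})$, and control that probability by applying Lemma \ref{lem: exponential inequality} conditionally on $\epsilon^d$ with a union bound over pairs and triples, using Condition V for the second-moment coefficients and Conditions P and B (as in \eqref{eq: details moment calculations}) to get the $B_n^2\log^{3/2}(pn)$ scale for the third-moment coefficients. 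The only cosmetic deviations are that you center at the conditional mean $\tfrac12$ of the $\epsilon^d$-weighted sum while Lemma \ref{lem: exponential inequality} centers at the full $\epsilon^d$-weighted sum (either centering is compatible with the assumed increments on $\mathcal B_{n,1,d+1}$, $\mathcal B_{n,2,d+1}$), and that the leftover probability ($4/n$ in the paper) is most cleanly absorbed via $\sqrt{\log p}/\phi$ using \eqref{eq: phi restriction} and $B_n\geq 1$, since it enters multiplied by the bracketed factor rather than as a standalone $n^{-2}$ term; neither point affects validity.
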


\begin{proof}
Since we assume throughout the paper that $p\geq 2$, the conclusion is trivial if $\phi<1$. We will therefore assume in the proof that $\phi\geq 1$. In turn, $\phi\geq 1$ together with \eqref{eq: phi restriction} imply that
\begin{equation}\label{eq: lazy condition 1}
C_p B_n \log^2(p n)\leq \sqrt n.
\end{equation}
This condition will be useful in the proof. %Also, in this proof, we will use the symbol $\lesssim$ to denote inequalities that hold up to a constant depending only on $C_v$, $C_p$, $C_b$, and $C_a$.

Fix $d = 0,\dots,D-1$. Then, given that $\mathcal A_d$ depends only on $\epsilon^d$, we have by Lemma \ref{lem: main} that
\begin{align*}
\Ep[\varrho_{\epsilon^d}1\{\mathcal A_d\}] &\lesssim \frac{\sqrt{\log p}}{\phi} + \delta + \frac{B_n^2\phi^{4}\log^5(pn)}{n^2} +  \left( \Ep[\varrho_{\epsilon^{d+1}} 1\{\mathcal A_{d}\}] + \frac{\sqrt{\log p}}{\phi} + \delta\right)\\
&\quad \times \left( \frac{\mathcal B_{n,1,d}\phi^2\log p}{\sqrt n} + \frac{\mathcal B_{n,2,d}\phi^3\log^2p}{n} + \frac{B_n^2\phi^4\log^3(p n)}{n} \right)
\end{align*}
up to a constant depending only on $C_v$, $C_p$, $C_b$, and $C_a$. Thus, given that \eqref{eq: phi restriction} implies that $\sqrt{\log p}/\phi\geq 1/n$, the conclusion of the corollary will follow if we can show that
\begin{equation}\label{eq: event switch}
\Ep[\varrho_{\epsilon^{d+1}}1\{\mathcal A_d\}] \leq \Ep[\varrho_{\epsilon^{d+1}}1\{\mathcal A_{d+1}\}] + 4/n.
\end{equation}
To this end, we first observe that , as $\varrho_{\epsilon^{d+1}}\in[0,1]$, 
\begin{equation}
\begin{split}
\Ep[\varrho_{\epsilon^{d+1}}1\{\mathcal A_d\}]
&= \Ep[\varrho_{\epsilon^{d+1}}1\{\mathcal A_d\}1\{\mathcal A_{d+1}\}] + \Ep[\varrho_{\epsilon^{d+1}}1\{\mathcal A_d\}(1 - 1\{\mathcal A_{d+1}\})]\\
& \leq \Ep[\varrho_{\epsilon^{d+1}}1\{\mathcal A_{d+1}\}]
+ \Ep[1\{\mathcal A_d\}(1 - 1\{\mathcal A_{d+1}\})]\\
&=\Ep[\varrho_{\epsilon^{d+1}}1\{\mathcal A_{d+1}\}] + \underbrace{\Pr(\mathcal A_d) - \Pr(\mathcal A_d \cap \mathcal A_{d+1})}_{=\Pr(\mathcal A_d) \left ( 1-\Pr(\mathcal A_{d+1}\mid \mathcal A_d) \right)} \\
& \leq \Ep[\varrho_{\epsilon^{d+1}}1\{\mathcal A_{d+1}\}]
+ 1 - \Pr(\mathcal A_{d+1}\mid \mathcal A_d).
\end{split}
\label{eq: event switch proof}
\end{equation}
Moreover, by Lemma \ref{lem: exponential inequality} in the Supplemental Material, for all $j,k = 1,\dots,p$ and $t > 0$, we have
\begin{align*}
&\Pr\left(\left| \frac{1}{\sqrt n}\sum_{i=1}^n\epsilon_i^{d+1}(\mathcal E_{i,jk}^V - \mathcal E_{i,jk}^Z) \right| > \left| \frac{1}{\sqrt n}\sum_{i=1}^n\epsilon_i^{d}(\mathcal E_{i,jk}^V - \mathcal E_{i,jk}^Z) \right| + t \mid \epsilon^d \right)\\
&\quad \leq 2\exp\left(-\frac{n t^2}{32\sum_{i=1}^n(\mathcal E_{i,jk}^V - \mathcal E_{i,jk}^Z)^2}\right) \leq 2\exp\left(-\frac{t^2}{128 C_v B_n^2}\right),
\end{align*}
where the second inequality follows from Condition V. Applying this inequality with $t = 8B_n\sqrt{6C_v\log(p n)}$ and using the fact that
$$
\max_{1\leq j,k\leq p}\left|\frac{1}{\sqrt n}\sum_{i=1}^n \epsilon_i^d(\mathcal E_{i,jk}^V - \mathcal E_{i,jk}^Z)\right| \leq \mathcal B_{n,1,d} \quad \text{on $\mathcal A_d$},
$$
 we have by the union bound that for any $\mathcal B_{n,1,d+1} \geq \mathcal B_{n,1,d} + t$,
$$
\Pr\left(\max_{1\leq j,k\leq p}\left|\frac{1}{\sqrt n}\sum_{i=1}^n \epsilon_i^{d+1}(\mathcal E_{i,jk}^V - \mathcal E_{i,jk}^Z)\right| > \mathcal B_{n,1,d+1}\mid \mathcal A_d\right) \leq \frac{2p^2}{(pn)^3}\leq \frac{2}{n}.
$$
In addition, for all $i=1,\dots,n$ and $j,k,l=1,\dots,p$, setting $\tilde V_i = 1\{\|V_i\|_{\infty}\leq C_p B_n\log(pn)\}$, we have that
\begin{equation}
\begin{split}
|\mathcal E_{i,jkl}^V| &\leq \Ep[|V_{ij}V_{ik}V_{il}|] = \Ep\Big[\tilde V_i |V_{ij}V_{ik}V_{il}|\Big] + \Ep\Big[(1 - \tilde V_i) |V_{ij}V_{ik}V_{il}|\Big]\\\
&\leq C_pB_n\log(p n)\Ep[|V_{ij}V_{ik}|] + (\Ep[1-\tilde V_i])^{1/2}(\Ep[\|V_i\|_{\infty}^6])^{1/2}\\
&\leq C_pB_n\log(p n)\Ep[|V_{ij}V_{ik}|] + C_b^{3/8}B_n^3\log^{3}(p n)/n^2
\end{split}
\label{eq: details moment calculations}
\end{equation}
and similarly
$$
|\mathcal E_{i,jkl}^Z| \leq C_pB_n\log(p n)\Ep[|Z_{ij}Z_{ik}|] + C_b^{3/8}B_n^3\log^{3}(p n)/n^2
$$
by Conditions P and B. Hence, by Condition V and \eqref{eq: lazy condition 1}, there exists a constant $C$ depending only on $C_v$, $C_p$, and $C_b$ such that
$$
\frac{32}{n}\sum_{i=1}^n(\mathcal E_{i,jkl}^V - \mathcal E_{i,jkl}^Z)^2 \leq C B_n^4\log^2(p n).
$$
Thus, by the same argument as above, for all $j,k,l=1,\dots,p$ and $t>0$,
\begin{align*}
&\Pr\left(\left| \frac{1}{\sqrt n}\sum_{i=1}^n\epsilon_i^{d+1}(\mathcal E_{i,jkl}^V - \mathcal E_{i,jkl}^Z) \right| > \left| \frac{1}{\sqrt n}\sum_{i=1}^n\epsilon_i^{d}(\mathcal E_{i,jkl}^V - \mathcal E_{i,jkl}^Z) \right| + t \mid \epsilon^d \right)\\
&\quad \leq 2\exp\left(-\frac{n t^2}{32\sum_{i=1}^n(\mathcal E_{i,jkl}^V - \mathcal E_{i,jkl}^Z)^2}\right) \leq 2\exp\left(-\frac{t^2}{C B_n^4\log^2(p n)}\right).
\end{align*}
Applying this inequality with $t = \sqrt{3C}B_n^2\log^{3/2}(p n)$ shows that for any $\mathcal B_{n,2,d+1} \geq \mathcal B_{n,2,d} + t$, we have
$$
\Pr\left(\max_{1\leq j,k,l\leq p}\left|\frac{1}{\sqrt n}\sum_{i=1}^n \epsilon_i^{d+1}(\mathcal E_{i,jkl}^V - \mathcal E_{i,jkl}^Z)\right| > \mathcal B_{n,2,d+1} \mid \mathcal A_d\right) \leq \frac{2p^3}{(p n)^3}\leq \frac{2}{n}.
$$
Thus,
$
1 - \Pr(\mathcal A_{d+1}\mid \mathcal A_d) \leq 4/n,
$
which in combination with \eqref{eq: event switch proof} implies \eqref{eq: event switch} and completes the proof.
\end{proof}

\begin{lemma}\label{lem: closing}
For any constant $\phi > 0$ such that \eqref{eq: phi restriction} holds, we have $\Ep[\varrho_{\epsilon^D}1\{\mathcal A_D\}] \leq 1/n$. 
\end{lemma}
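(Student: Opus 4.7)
The plan is to exploit the observation that when $\epsilon^D$ equals the zero vector, the interpolated sum $S^V_{n,\epsilon^D}$ coincides identically with $S^Z_n$, so $\varrho_{\epsilon^D}$ vanishes. The task then reduces to estimating the probability that $\epsilon^D \neq 0_n$, a purely combinatorial question about the nested sequence $\epsilon^0,\ldots,\epsilon^D$. Neither the hypothesis \eqref{eq: phi restriction} on $\phi$ nor the event $\mathcal A_D$ will play any role in the argument; the bound holds uniformly.

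First, I would note that for $\epsilon = 0_n$ the definition \eqref{eq: rho eps definition} gives $S^V_{n,0_n} = n^{-1/2}\sum_{i=1}^n Z_i = S^Z_n$, hence $\varrho_{0_n} = 0$. Since $\varrho \in [0,1]$ always and $1\{\mathcal A_D\}\leq 1$, this yields
\[
\Ep[\varrho_{\epsilon^D}1\{\mathcal A_D\}] \leq \Pr(\epsilon^D \neq 0_n) = \Pr(|I_D| \geq 1) \leq \Ep[|I_D|]
\]
by Markov's inequality, where $I_D = \{i : \epsilon_i^D = 1\}$.

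Second, I would compute $\Ep[|I_D|]$ recursively using the construction preceding Lemma \ref{lem: main}. Given $\epsilon^{d-1}$, the sum $|I_d| = \sum_{i\in I_{d-1}}\epsilon^d_i$ is uniformly distributed on $\{0,1,\ldots,|I_{d-1}|\}$, so
\[
\Ep[|I_d|\mid \epsilon^{d-1}] = \frac{0 + 1 + \cdots + |I_{d-1}|}{|I_{d-1}|+1} = \frac{|I_{d-1}|}{2}.
\]
Iterating with $|I_0| = n$ produces $\Ep[|I_D|] = n/2^D$.

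Finally, from $D = [4\log n]+1 \geq 4\log n$ together with $4\log 2 > 2$ (natural logarithm), I would conclude $2^D \geq n^{4\log 2} \geq n^2$, so that $\Ep[|I_D|]\leq n/n^2 = 1/n$, which is the desired bound. The only real content of the proof is the initial identity $\varrho_{0_n} = 0$ and the recursion $\Ep[|I_d|\mid\epsilon^{d-1}] = |I_{d-1}|/2$; no substantive obstacle arises and the choice $D = [4\log n]+1$ is seen to be precisely what is needed to drive the residual mass below $1/n$.
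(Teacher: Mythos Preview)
Your proposal is correct and follows essentially the same approach as the paper: observe that $\varrho_{0_n}=0$, bound $\Ep[\varrho_{\epsilon^D}1\{\mathcal A_D\}]$ by $\Pr(\epsilon^D\neq 0_n)\leq\Ep[|I_D|]$ via Markov, and then use the halving recursion $\Ep[|I_d|\mid\epsilon^{d-1}]=|I_{d-1}|/2$ together with $D\geq 4\log n$ to obtain $n/2^D\leq 1/n$. Your remark that neither the condition on $\phi$ nor the event $\mathcal A_D$ is actually used matches the paper's proof as well.
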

\begin{proof}
Recall that $D = [4\log n] + 1$ and note that $\varrho_{\epsilon^D} = 0$ if $\epsilon^D = (0,\dots,0)'$. Moreover, by Markov's inequality,
\begin{align*}
&\Pr(\epsilon^D \neq (0,\dots,0)') =  \Pr\left(\sum_{i=1}^n\epsilon^D_i \geq 1\right) \leq \Ep\left[\sum_{i=1}^n\epsilon^D_i\right] = \Ep\left[\Ep\left[\sum_{i=1}^n\epsilon^D_i \mid \sum_{i=1}^n\epsilon^{D-1}_i\right]\right] \\
&\quad    = \Ep\left[\frac{1}{2}\sum_{i=1}^n\epsilon^{D-1}_i\right]
 = \dots = \Ep\left[\frac{1}{2^D}\sum_{i=1}^n\epsilon^0_i\right] = \frac{n}{2^D} \leq \frac{n}{2^{4\log n}} \leq \frac{1}{n},
\end{align*}
where the equalities on the second line follow from \eqref{eq: recursive epsilon}. Hence,
$$
\Ep[\varrho_{\epsilon^D}1\{\mathcal A_D\}] \leq \Ep[\varrho_{\epsilon^D}] \leq \Pr(\epsilon^D \neq (0,\dots,0)') \leq 1/ n,
$$
as desired. 
\end{proof}
\begin{proof}[Proof of Theorem \ref{cor: max}]
Throughout the proof, we will assume that
\begin{equation}\label{eq: natural bound}
C_p^4B_n^2\log^5(p n)\leq n
\end{equation}
since otherwise the conclusion  of the theorem is trivial.

Let $K$ be the constant from Corollary \ref{cor: main} and for all $d = 0,\dots,D$, define
\begin{equation}\label{eq: fancy b bounds}
\mathcal B_{n,1,d}= C_1(d+1)B_n\log^{1/2}(p n) \quad \text{and} \quad \mathcal B_{n,2,d}=C_1(d+1)B_n^2\log^{3/2}(p n),
\end{equation}
where $C_1 = C_m + K$, so that $\mathcal A_0$ holds by \eqref{eq: bn bounds 1} and \eqref{eq: bn bounds 2} and, in addition, the requirements of Corollary \ref{cor: main} on $\mathcal B_{n,1,d}$ and $\mathcal B_{n,2,d}$ also hold.

Now, for all $d = 0,\dots,D$, define
$$
%f_d = \inf\left\{x\geq 1\colon \Ep[\varrho_{\epsilon^d}1\{\mathcal A_d\}] \leq  x\left(\frac{B_n^2\log^5(p n)}{n}\right)^{1/4}\right\}
f_d = \inf\left\{x\geq 1\colon \Ep[\varrho_{\epsilon^d}1\{\mathcal A_d\}] \leq  x\left(\left(\frac{B_n^2\log^5(p n)}{n}\right)^{1/4}+\delta\right)\right\}.
$$
Note that $f_d<\infty$ because $\varrho_{\epsilon^d}\leq1$. Then, for all $d = 0,\dots,D-1$, apply Corollary \ref{cor: main} with
$$
\phi = \phi_d = \frac{n^{1/4}}{B_n^{1/2}\log^{3/4}(p n)((d+1)f_{d+1})^{1/3}},
$$
which satisfies the required condition \eqref{eq: phi restriction} since we assume \eqref{eq: natural bound}. Since
\begin{align*}
\frac{B_n^2\phi_d^4\log^{5}(pn)}{n^2}
& \leq\frac{\log^{2}(p n)}{n} 
%\leq \frac{C_p^4B_n^2\log(p n)}{n} 
\leq\frac{\log^{1/4}(p n)}{n^{1/4}}
\leq \frac{C_p B_n^{1/2} \log^{1/4}(p n)}{n^{1/4}}\\
& \leq\frac{C_p\sqrt{\log p}}{\phi_d} \leq C_p((d+1)f_{d+1})^{1/3}\left(\frac{B_n^2\log^5(p n)}{n}\right)^{1/4}, \\
\frac{\mathcal B_{n,1,d}\phi_d^2\log p}{\sqrt n} &\leq \frac{C_1(d+1)}{((d+1)f_{d+1})^{2/3}}, \quad \text{and} \\
\frac{\mathcal B_{n,2,d}\phi^3_d\log^2p}{n} &\bigvee \frac{B_n^2\phi_d^4\log^3(p n)}{n} \leq \frac{C_1\vee 1}{f_{d+1}},
\end{align*}
we have by  Corollary \ref{cor: main}
$$
%\Ep[\rho_{\epsilon^d}1\{\mathcal A_d\}] \leq C_2\Big(f_{d+1}^{2/3} + (d+1)^{2/3} + 1\Big)\left(\frac{B_n^2\log^5(pn)}{n}\right)^{1/4}
\Ep[\rho_{\epsilon^d}1\{\mathcal A_d\}] \leq C_2\Big(f_{d+1}^{2/3} + (d+1)^{2/3} + 1\Big)\left(\left(\frac{B_n^2\log^5(p n)}{n}\right)^{1/4}+\delta\right)
$$
for some constant $C_2\geq 1$ depending only on $C_v$, $C_p$, $C_b$, $C_a$, and $C_m$. Hence,
$$
f_d \leq C_2\Big(f_{d+1}^{2/3} + (d+1)^{2/3} + 1\Big),\quad\text{for all }d = 0,\dots,D-1.
$$
Here, we have $f_D = 1$ by Lemma \ref{lem: closing} since $B_n\geq 1$ by assumption. Therefore, by a simple induction argument, we conclude that there exists a constant $C\geq 1$ depending only on $C_2$ such that
$$
f_d \leq C(d+1),\quad\text{for all }d = 0,\dots,D.
$$
In particular, it follows that
$$
\varrho_{\epsilon^0}1\{\mathcal A_0\} = \Ep[\varrho_{\epsilon^0}1\{\mathcal A_0\}] \leq %C\left(\frac{B_n^2\log^5(p n)}{n}\right)^{1/4}.
C\left(\left(\frac{B_n^2\log^5(p n)}{n}\right)^{1/4}+\delta\right).
$$
Since $\mathcal A_0$ holds by construction, so that $1\{\mathcal A_0\} = 1$, the desired bound follows by combining this inequality and the definition of $\varrho_{\epsilon^0}$.
\end{proof}

\subsection{Stein Kernels and Gaussian Approximation}\label{sec: stein kernels}
Let $C_b^2(\mathbb R^p)$ be the class of twice continuously differentiable functions $\varphi$ on $\mathbb R^p$ such that $\varphi$ and all its partial derivatives up to the second order are bounded where $p\geq 2$. 
%Let $V$ be a centered random vector in $\mathbb R^p$ and assume that there exists a random matrix $\tau$ in $\mathbb R^{p\times p}$ such that
%$$
%\sum_{j=1}^p\Ep[\partial_j \varphi(V)V_j] = \sum_{j,k=1}^p\Ep[\partial_{jk}\varphi(V)\tau_{jk}]
%$$
%for all $\varphi\in C_b^2(\mathbb R^p)$. This random matrix $\tau$ is called a Stein kernel for the random vector $V$.  
Let $V$ be a centered random vector in $\mathbb R^p$ and assume that there exists a measurable function $\tau:\mathbb R^p\to\mathbb R^{p\times p}$ such that
$$
\sum_{j=1}^p\Ep[\partial_j \varphi(V)V_j] = \sum_{j,k=1}^p\Ep[\partial_{jk}\varphi(V)\tau_{jk}(V)]
$$
for all $\varphi\in C_b^2(\mathbb R^p)$. This function $\tau$ is called a \textit{Stein kernel} for the random vector $V$.  Also, let $Z$ be a centered Gaussian random vector in $\mathbb R^p$ with covariance matrix $\Sigma$.
\begin{theorem}[Gaussian Approximation via Stein Kernels]\label{thm: stein kernel}
If $\Sigma_{jj}\geq c$ for all $j=1,\dots,p$ and some constant $c>0$, then
$$
\sup_{y\in\mathbb R^p}\Big|\Pr(V\leq y) - \Pr(Z\leq y)\Big| \leq C\Big(\Delta \log^2 p\Big)^{1/2},
$$
where $C$ is a constant depending only on $c$ and
$
%\Delta = \Ep\left[\max_{1\leq j,k\leq p}|\tau_{jk} - \Sigma_{jk}|\right].
\Delta = \Ep\left[\max_{1\leq j,k\leq p}|\tau_{jk}(V) - \Sigma_{jk}|\right].
$
\end{theorem}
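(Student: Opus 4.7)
The plan is to combine the Stein kernel identity with the smoothed‑indicator test functions used in the proof of Lemma~\ref{lem: main}, and to exploit the fact that along the interpolation path $V_t=\sqrt{1-t}\,V+\sqrt{t}\,Z'$ (with $Z'\sim N(0,\Sigma)$ independent of $V$) the vector $V_t$ is Gaussian conditionally on $V$. This conditional Gaussian structure is what upgrades the naive $\Delta^{1/3}$-type rate (which would result from balancing a $\phi^{-1}\sqrt{\log p}$ smoothing error against a $\Delta\phi^2\log p$ Stein bound) into the $\sqrt{\Delta\log^2 p}$ rate claimed.

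First I would derive the interpolation identity
\[
\Ep[\varphi(Z)]-\Ep[\varphi(V)]=\tfrac12\int_0^1\sum_{j,k=1}^p\Ep\!\bigl[(\Sigma_{jk}-\tau_{jk}(V))\,\partial_{jk}\varphi(V_t)\bigr]\,dt,\qquad \varphi\in C_b^2(\mathbb R^p).
\]
Introducing $\psi_t(v)=\Ep_{Z'}[\varphi(\sqrt{1-t}\,v+\sqrt t\,Z')]$, Gaussian integration by parts in $Z'$ gives $\Ep[\langle Z',\nabla\varphi(V_t)\rangle]=\sqrt t\sum_{j,k}\Sigma_{jk}\Ep[\partial_{jk}\varphi(V_t)]$, while the Stein kernel identity applied to $\psi_t\in C_b^2(\mathbb R^p)$ yields $\Ep[\langle V,\nabla\varphi(V_t)\rangle]=\sqrt{1-t}\sum_{j,k}\Ep[\tau_{jk}(V)\partial_{jk}\varphi(V_t)]$, using $\partial_{jk}\psi_t(v)=(1-t)\Ep_{Z'}[\partial_{jk}\varphi(V_t)\mid V=v]$. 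Differentiating $t\mapsto\Ep[\varphi(V_t)]$ and observing that the $\sqrt t$ and $\sqrt{1-t}$ factors cancel exactly, the identity drops out after integrating from $0$ to $1$.

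Next, for $y\in\mathbb R^p$ and $\phi>0$, I would specialize $\varphi=m^y$, the smoothed indicator of $\{w\le y\}$ built in the proof of Lemma~\ref{lem: main}. From that construction, $\sum_{j,k}|\partial_{jk}m^y(w)|\lesssim\phi^2\log p$ and this sum vanishes outside the slab $S_y=\{w:-\phi^{-1}<\max_j(w_j-y_j)\le\phi^{-1}\}$. Pulling the max out via $\sum_{j,k}|(\Sigma_{jk}-\tau_{jk}(V))\partial_{jk}m^y(V_t)|\le \max_{j,k}|\Sigma_{jk}-\tau_{jk}(V)|\sum_{j,k}|\partial_{jk}m^y(V_t)|$ and conditioning on $V$, the inner expectation reduces to $\Ep[\max_{j,k}|\tau_{jk}(V)-\Sigma_{jk}|\Pr(V_t\in S_y\mid V)]$. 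Since $V_t\mid V\sim N(\sqrt{1-t}\,V,\,t\Sigma)$ with $\min_j(t\Sigma_{jj})\ge tc$, Nazarov's anti-concentration inequality for the Gaussian max yields
\[
\Pr(V_t\in S_y\mid V)\lesssim \frac{\phi^{-1}\sqrt{\log p}}{\sqrt{tc}},
\]
uniformly in $V$. The factor $\Ep[\max_{j,k}|\tau_{jk}(V)-\Sigma_{jk}|]=\Delta$ then factors out cleanly, and $\int_0^1 t^{-1/2}dt=2$ absorbs the singularity, producing $|\Ep[m^y(V)]-\Ep[m^y(Z)]|\lesssim c^{-1/2}\phi\,\Delta\log^{3/2}p$. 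A standard sandwich $\Pr(V\le y)\le\Ep[m^{y+\phi^{-1}\mathbf 1}(V)]$ (and its reverse) combined with Nazarov's anti-concentration for $Z$ gives
\[
\sup_{y\in\mathbb R^p}|\Pr(V\le y)-\Pr(Z\le y)|\lesssim c^{-1/2}\bigl(\phi^{-1}\sqrt{\log p}+\phi\,\Delta\log^{3/2}p\bigr),
\]
and optimizing $\phi=(\Delta\log p)^{-1/2}$ collapses both terms to $\sqrt{\Delta\log^2 p}$.

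The main obstacle is the $V$-uniform anti-concentration estimate for $V_t$ with its explicit $t^{-1/2}$ dependence: one must verify that the Gaussian conditional structure of $V_t\mid V$ is non-degenerate in every coordinate at every $t\in(0,1)$, which is precisely why the hypothesis $\Sigma_{jj}\ge c$ is invoked. A secondary bookkeeping point is that $\psi_t$ inherits $C_b^2$ smoothness from $\varphi$, so the Stein kernel identity applies at every $t$; and the $\lesssim$ constants must be tracked so that the dependence on $b_1,b_2$ in the main theorems is preserved through the bound.
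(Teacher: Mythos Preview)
Your proposal is correct and essentially identical to the paper's proof: both use the Slepian--Stein interpolation $\sqrt{1-t}\,V+\sqrt{t}\,Z'$ (the paper writes it as $\sqrt{t}\,V+\sqrt{1-t}\,Z$, which is just the swap $t\leftrightarrow 1-t$), apply the Stein kernel identity for $V$ and Gaussian integration by parts for $Z'$ to obtain the second-order representation of $\Psi'(t)$, insert the localization $h^y$ from Lemma~\ref{lem: main}, and condition on $V$ to invoke Nazarov's anti-concentration on the Gaussian component, yielding the integrable $t^{-1/2}$ (respectively $(1-t)^{-1/2}$) singularity. The smoothing/optimization step with $\phi=(\Delta\log p)^{-1/2}$ is also the same.
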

\begin{remark}
This theorem improves upon Proposition 4.1 in \cite{K19a}, which shows that
$$
\sup_{y\in\mathbb R^p}\Big|\Pr(V\leq y) - \Pr(Z\leq y)\Big| \leq C\Big(\Delta \log^2 p\Big)^{1/3}
$$
under the same conditions.\qed
\end{remark}

Theorem \ref{thm: stein kernel} is proven in Appendix \ref{sec: proof of kernel result} of the Supplemental Material. It has two important corollaries. 
%First, note that if $V$ is a centered Gaussian random vector, then by the multivariate Stein identity, its Stein kernel coincides with its covariance matrix. Hence, Theorem \ref{thm: stein kernel} immediately implies the following result:
The first  is Proposition \ref{coro:g-g-comparison}, a sharp Gaussian-to-Gaussian comparison inequality stated in Section \ref{sec: main results}:

\begin{proof}[Proof of Proposition \ref{coro:g-g-comparison}]
If $V$ is a centered Gaussian random vector, then by the multivariate Stein identity, its Stein kernel coincides with its covariance matrix. Hence, Theorem \ref{thm: stein kernel} immediately implies the conclusion of Proposition \ref{coro:g-g-comparison}.
\end{proof}

Second, combining Theorem \ref{thm: stein kernel} with Lemma 4.6 in \cite{K19b} gives the following result:
\begin{corollary}[Multiplier-Bootstrap-to-Gaussian Comparison]\label{coro:beta-comparison}
Let $a_1,\dots,a_n$ be vectors  in $\mathbb R^p$ such that
$$
\min_{1\leq j\leq p}\frac{1}{n}\sum_{i=1}^n a_{ij}^2 \geq c\quad\text{and}\quad\max_{1\leq j\leq p}\frac{1}{n}\sum_{i=1}^na_{ij}^4 \leq B^2
$$
for some constants $c,B>0$. 
%Also, let $\varepsilon_1,\dots,\varepsilon_n$ be independent $N(0,1)$ random variables and for some constants $\alpha,\beta>0$ and $v\in[0,1]$, let $e_1,\dots,e_n$ be independent random variables sampled from the distribution of 
%$
%\zeta+\sqrt{1-v}((\alpha + \beta)\eta - \alpha)((\alpha + \beta + 1)/(\alpha\beta))^{1/2},
%$
%where $\zeta$ and $\eta$ are independent random variables such that $\zeta$ has the $N(0,v)$ distribution and $\eta$ has the Beta$(\alpha,\beta)$ distribution. 
Also, let $\varepsilon_1,\dots,\varepsilon_n$ be independent $N(0,1)$ random variables. 
Moreover, for some constants $\alpha,\beta>0$, let $e_1,\dots,e_n$ be independent standardized Beta$(\alpha,\beta)$ random variables so that
\begin{equation}\label{eq: beta distribution mean variance}
\Ep[e_i]=0 \quad\text{and}\quad \Ep[e_i^2]=1,\quad\text{for all }i=1,\dots,n.
\end{equation}
Then, for the random vectors
$$
V = \frac{1}{\sqrt n}\sum_{i=1}^n e_i a_i\quad\text{and}\quad Z = \frac{1}{\sqrt n}\sum_{i=1}^n \varepsilon_i a_i
$$
we have 
\begin{equation}\label{eq: beta distribution comparison}
\sup_{y\in\mathbb R^p}\Big|\Pr(V\leq y) - \Pr(Z\leq y)\Big| \leq C\left(\frac{B^2\log^5 p}{n}\right)^{1/4},
\end{equation}
where $C$ is a constant depending only on $c$, $\alpha$ and $\beta$.
\end{corollary}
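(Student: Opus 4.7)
The plan is to apply Theorem \ref{thm: stein kernel} to the pair $(V,Z)$. First, the moment identities \eqref{eq: beta distribution mean variance} are a direct calculation from the definition of $e_i$: since $\eta\sim\mathrm{Beta}(\alpha,\beta)$ has mean $\alpha/(\alpha+\beta)$ and variance $\alpha\beta/((\alpha+\beta)^2(\alpha+\beta+1))$, the variable $(\alpha+\beta)\eta-\alpha$ is centered with variance $\alpha\beta/(\alpha+\beta+1)$, so multiplication by $\sqrt{(\alpha+\beta+1)/(\alpha\beta)}$ normalizes the variance to one, the factor $\sqrt{1-v}$ rescales to $1-v$, and adding the independent $\zeta\sim N(0,v)$ produces $\Ep[e_i]=0$, $\Ep[e_i^2]=1$. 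Consequently the Gaussian comparator $Z=n^{-1/2}\sum_i\varepsilon_i a_i$ has covariance $\Sigma_{jk}=n^{-1}\sum_i a_{ij}a_{ik}$, with $\Sigma_{jj}\geq c$, so the non-degeneracy hypothesis of Theorem \ref{thm: stein kernel} is met.

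The main technical input is Lemma 4.6 of \cite{K19b}, which for precisely this Gaussian-Beta mixture constructs a Stein kernel for $V$ of the form
$$
\tau_{jk}(V)\;=\;\frac{1}{n}\sum_{i=1}^n a_{ij}a_{ik}\,\psi_i,
$$
where the one-dimensional factors $\psi_i=v+(1-v)\tau_W(W_i)$ involve only the Beta component $W_i$ of $e_i$, satisfy $\Ep[\psi_i]=1$, and are uniformly bounded by a constant depending only on $\alpha,\beta$. (If one insists that $\tau$ be a function of $V$ alone, replacing $\tau_{jk}(V)$ by $\Ep[\tau_{jk}(V)\mid V]$ yields another valid Stein kernel whose fluctuations are no larger, so the bound on $\Delta$ below is unaffected.) In particular $\Ep[\tau_{jk}(V)]=\Sigma_{jk}$, and
$$
\tau_{jk}(V)-\Sigma_{jk}\;=\;\frac{1}{n}\sum_{i=1}^n a_{ij}a_{ik}(\psi_i-1)
$$
is a mean-zero sum of bounded, independent random variables.

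It remains to control $\Delta$ in max-norm. Cauchy-Schwarz together with the fourth-moment hypothesis gives $n^{-1}\sum_i(a_{ij}a_{ik})^2\leq B^2$, and the same hypothesis forces $\max_i|a_{ij}a_{ik}|\leq B\sqrt n$. A Bernstein-type maximal inequality combined with a union bound over the $p^2$ pairs $(j,k)$ then yields $\Delta\lesssim B\sqrt{\log p/n}$, with the sub-Gaussian term dominating in the regime $\log p\lesssim n$ (outside that regime \eqref{eq: beta distribution comparison} is trivial). Substituting into Theorem \ref{thm: stein kernel} gives the rate $(\Delta\log^2 p)^{1/2}\lesssim (B^2\log^5 p/n)^{1/4}$, which is \eqref{eq: beta distribution comparison}. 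The only non-routine ingredient is the explicit Stein-kernel representation supplied by \cite[Lemma 4.6]{K19b}; once that representation is in hand, the remainder is a Bernstein concentration plus a direct application of Theorem \ref{thm: stein kernel}.
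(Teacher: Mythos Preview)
Your approach is essentially the same as the paper's: verify the moment identities, invoke Lemma~4.6 of \cite{K19b} to obtain a bounded Stein kernel for $V$, bound $\Delta=\Ep[\max_{j,k}|\tau_{jk}(V)-\Sigma_{jk}|]$ by $B\sqrt{\log p/n}$, and plug into Theorem~\ref{thm: stein kernel}. The paper simply cites Lemma~4.6 of \cite{K19b} for the bound on $\Delta$ directly, whereas you sketch the concentration argument yourself.

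There is, however, one small slip in your concentration step. With the crude bound $\max_i|a_{ij}a_{ik}|\le B\sqrt n$, a Bernstein inequality produces a sub-exponential term of order $B(\log p)/\sqrt n$, which \emph{dominates} the sub-Gaussian term $B\sqrt{\log p/n}$ whenever $\log p>1$, not the other way around; feeding $B(\log p)/\sqrt n$ into Theorem~\ref{thm: stein kernel} would yield an extra $(\log p)^{1/4}$ and miss the stated rate. The fix is immediate: since $|\psi_i-1|$ is bounded by a constant depending only on $\alpha,\beta$, Hoeffding's lemma makes each summand $a_{ij}a_{ik}(\psi_i-1)$ sub-Gaussian with parameter $C|a_{ij}a_{ik}|$, so the sum is sub-Gaussian with parameter $C(\sum_i(a_{ij}a_{ik})^2)^{1/2}\le CB\sqrt n$, and the standard sub-Gaussian maximal inequality over $p^2$ indices gives $\Delta\lesssim B\sqrt{\log p/n}$ with no sub-exponential correction. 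With this replacement your argument goes through and matches the paper's.
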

\begin{proof}
Recall that $\eta\sim\text{Beta}(\alpha,\beta)$ has  density function $f_{\alpha,\beta}(x) \propto x^{\alpha - 1}(1-x)^{\beta - 1}$ for $x\in[0,1]$, mean $\mu = \alpha/(\alpha + \beta)$, and variance $\sigma^2 = \alpha\beta/((\alpha + \beta)^2(\alpha + \beta + 1))$. 
%Hence, the random variables $e_1,\dots,e_n$ have distribution of $\zeta + \sqrt{1-v}(\eta - \mu)/\sigma$ and so satisfy \eqref{eq: beta distribution mean variance}.
By definition, the common distribution of the random variables $e_1,\dots,e_n$ equals that of $(\eta - \mu)/\sigma$.

Define
$$
\tau(x) = -\frac{\int_{-\mu/\sigma}^xsf(s)ds}{f(x)} = \frac{\int_x^{(1-\mu)/\sigma}sf(s)ds}{f(x)} \quad \text{for} \ x\in\left(-\frac{\mu}{\sigma},\frac{1-\mu}{\sigma}\right),
$$
where $f(x)=\sigma f_{\alpha,\beta}(\sigma x + \mu)$ for $x\in\big (-\frac{\mu}{\sigma},\frac{1-\mu}{\sigma}\big)$
is the density function of $(\eta - \mu)/\sigma$. From L'Hospital's rule, there exists a constant $C_1$ depending only on $\alpha$ and $\beta$ such that
$
|\tau(x)|\leq C_1$ for all $x\in\big(-\frac{\mu}{\sigma}, \frac{1-\mu}{\sigma}\big)$. Also, by integration by parts, 
$
%\Ep[e_1\varphi(e_1)]=\Ep[\varphi'(e_1)\{v+(1-v)\tau(e_1)\}]
\Ep[e_1\varphi(e_1)]=\Ep[\varphi'(e_1)\tau(e_1)]
$
for any continuously differentiable function $\varphi\colon\mathbb R\to\mathbb R$. Then, by Lemma 4.6 in \cite{K19b}, a Stein kernel $\tau^V$ for the random vector $V$ satisfies
$$
%\Ep\left[ \max_{1\leq j,k\leq p}\left| \tau_{jk}^V - \frac{1}{n}\sum_{i=1}^n a_{ij}a_{ik} \right| \right] \leq C_2\sqrt{\frac{\log p}{n}}\times\max_{1\leq j\leq p}\sqrt{\frac{1}{n}\sum_{i=1}^n a_{i j}^4}
\Ep\left[ \max_{1\leq j,k\leq p}\left|\tau_{jk}^V(V) - \frac{1}{n}\sum_{i=1}^n a_{ij}a_{ik} \right| \right] \leq C_2\sqrt{\frac{\log p}{n}}\times\max_{1\leq j\leq p}\sqrt{\frac{1}{n}\sum_{i=1}^n a_{i j}^4}
$$
for some constant $C_2$ depending only on $C_1$. The desired conclusion  \eqref{eq: beta distribution comparison} follows from combining this bound with Theorem \ref{thm: stein kernel} and observing that $\Ep[Z_{j}Z_{k}] =n^{-1}\sum_{i=1}^na_{ij}a_{ik}$ for all $j,k=1,\dots,p$.
\end{proof}

\section{Proofs of Theorems \ref{thm: gaussian approximation main} and \ref{cor: rejection probabilities}}\label{sec: proofs}
In this section, we provide proofs of Theorems \ref{thm: gaussian approximation main} and \ref{cor: rejection probabilities}. Proofs of Theorems \ref{thm: gauss and rademacher} and \ref{thm: infinitesimal factors} will be given in Appendices \ref{sec: proof of theorem 23} and \ref{sec: proof of theorem 24} of the Supplemental Material. To simplify notation, we write
\[
\delta_n=\left(\frac{B_n^2\log^5(pn)}{n}\right)^{1/4}\quad\text{and}\quad
\upsilon_n=\sqrt{\frac{B_n^2\log^3(pn)}{n}}.
\]

Our proof strategy for Theorems \ref{thm: gaussian approximation main} and \ref{cor: rejection probabilities} is summarized as follows. First, we consider the multiplier bootstrap statistic $T_n^*$ with the weights $e_i$ constructed from the standardized Beta($\alpha$,$\beta$) distribution and parameters $\alpha$ and $\beta$ chosen so that $\Ep[e_i^3] = 1$. Thanks to Corollary \ref{coro:beta-comparison} and Proposition \ref{coro:g-g-comparison}, we have Gaussian approximation to this statistic with the rate $\delta_n$. This implies that 
Condition A in Section \ref{sec: main arguments} is satisfied with $Z_i=e_i(X_i-\bar X_n)$ and $\delta=\delta_n$ due to the Gaussian anti-concentration inequality in Lemma \ref{lem: anticoncetration} of the Supplemental Material. In turn, the latter allows us to invoke Theorem \ref{cor: max}, which gives the approximation to $T_n$ by $T_n^*$ with the rate $\delta_n$. (Note that having $\Ep[e_i^3]=1$ is important here since otherwise Theorem \ref{cor: max} would give a slower approximation rate.) Combining this result with the aforementioned Gaussian approximation for $T_n^*$, we obtain the Gaussian approximation for $T_n$ with the rate $\delta_n$. This is done in Lemma \ref{coro: gaussian approximation} and gives Theorem \ref{thm: gaussian approximation main}.

Second, we consider the empirical bootstrap statistic $T_n^*$. Since we now have the Gaussian approximation for $T_n$ with the rate $\delta_n$, it follows that Condition A is satisfied with $Z_i=X_i$ and $\delta=\delta_n$. Hence, applying Theorem \ref{cor: max} with $V_i=X^*_i$ and $Z_i=X_i$, we can verify the empirical bootstrap approximation for $T_n$ with the rate $\delta_n$. This is done in Lemma \ref{thm: empirical bootstrap} and gives one part of Theorem \ref{cor: rejection probabilities}.

Third, we consider the multiplier bootstrap statistic $T_n^*$ with arbitrary weights $e_i$  satisfying \eqref{eq: multiplier bootstrap simplification}. By choosing parameters $\alpha$ and $\beta$ appropriately, we can match the first three moments of these weights by weights constructed from the standardized  Beta($\alpha$,$\beta$) distribution. Thus, yet another application of Theorem \ref{cor: max} allows us to link the distribution of any multiplier bootstrap statistic to the distribution of the multiplier bootstrap statistic with weights constructed from the standardized  Beta($\alpha$,$\beta$) distribution and further, via Corollary \ref{coro:beta-comparison} and Proposition \ref{coro:g-g-comparison}, to the Gaussian distribution. This leads to the Gaussian approximation for the multiplier bootstrap statistic $T_n^*$ with the rate $\delta_n$. This is done in Lemma \ref{eq: second order matching multiplier bootstrap ks metric} and gives the other part of Theorem \ref{cor: rejection probabilities}.

Before proceeding to the main body of the proofs, we present a few auxiliary results.
\begin{lemma}\label{lem: subgaussian growth}
Suppose that Condition E is satisfied. Then
\begin{equation}\label{eq: subgaussian bound on x}
%\max_{1\leq j\leq p}\|X_i\|_{\infty} \leq B_n\sqrt{5\log(p n)}
\max_{1\leq i\leq n}\|X_i\|_{\infty} \leq 5B_n\log(p n)
\end{equation}
with probability at least $1 - 1/(2n^4)$. In addition,
$$
\max_{1\leq i\leq n}\Ep\left[\|X_i\|_{\infty}^8\right] \leq CB_n^8\log^8(p n),
$$
where $C$ is a universal constant.
\end{lemma}
\begin{proof}
By the union bound, Markov's inequality, and Condition E, we have for any $x>0$ that
\begin{align*}
&\Pr\left(\max_{1\leq i\leq n}\max_{1\leq j\leq p}|X_{i j}| > x\right)
 \leq pn \max_{1\leq i\leq n}\max_{1\leq j\leq p} \Pr(|X_{i j}| > x)\\
&\quad  %\leq pn \max_{1\leq i\leq n}\max_{1\leq j\leq p} \frac{\Ep[\exp(|X_{i j}|^2 / B_n^2)]}{\exp(x^2/B_n^2)} \leq 2pn \exp(-x^2/B_n^2). 
\leq pn \max_{1\leq i\leq n}\max_{1\leq j\leq p} \frac{\Ep[\exp(|X_{i j}| / B_n)]}{\exp(x/B_n)} \leq 2pn \exp(-x/B_n). 
\end{align*}
Substituting here $x = 5B_n\log(p n)$ gives the first asserted claim. The second asserted claim follows from combining Condition E, inequalities on page 95 in \cite{VW96}, and Lemma 2.2.2 in \cite{VW96}.
\end{proof}

%\blue{KK: Is there any issue if we simplify (56) below to $B_n^2 \log^5 (pn) \le cn$?}

\begin{lemma}\label{lem: all conditions}
Suppose that Conditions E and M are satisfied and set $\tilde X_i = X_i - \bar X_n$ for all $i = 1,\dots,n$. 
Then there exist a universal constant $c\in(0,1]$ and constants $C>0$ and $n_0\in\mathbb N$ depending only on $b_1$ and $b_2$ such that for all $n\geq n_0$, if the inequality
\begin{equation}\label{eq: bn restriction once again}
B_n^2\log^5(p n)\leq c n
%\red{B_n^2\log^{3}(p n)\leq cn\quad\text{and}\quad B_n^2\log^5(p n)\leq n}
\end{equation}
holds, then the following events hold jointly with probability at least $1 - 1/n-3\upsilon_n$:
\begin{align}
%\frac{b_1}{2}\leq \frac{1}{n}\sum_{i=1}^n \tilde X_{i j}^2 \leq 2b_2\text{ and }\frac{1}{n}\sum_{i=1}^n \tilde X_{i j}^4 \leq 2B_n^2 b_2, \quad \text{for all }j=1,\dots,p,
&\frac{b_1^2}{2}\leq \frac{1}{n}\sum_{i=1}^n \tilde X_{i j}^2\quad \text{and} \quad \frac{1}{n}\sum_{i=1}^n \tilde X_{i j}^4 \leq 2B_n^2 b_2^{2}, \quad \text{for all }j=1,\dots,p,
\label{eq: upper and lower bounds for second moment} \\
&\max_{1\leq j,k\leq p}\left|\frac{1}{\sqrt n}\sum_{i=1}^n(\tilde X_{i j}\tilde X_{i k} - \Ep[X_{ij}X_{i k}])\right| \leq CB_n\sqrt{\log(p n)}, \label{eq: second order deviation} \\
&\max_{1\leq j,k,l\leq p}\left|\frac{1}{\sqrt n}\sum_{i=1}^n(\tilde X_{i j}\tilde X_{i k}\tilde X_{i l} - \Ep[X_{ij}X_{i k}X_{i l}])\right| \leq CB_n^2\sqrt{\log^3(p n)}. \label{eq: third order deviation}
\end{align}
\end{lemma}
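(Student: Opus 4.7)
The plan is to establish all three assertions on a suitable high-probability event by combining truncation, a Bernstein-type deviation inequality for the raw moments, and control of the sample mean $\bar X_n$ to handle the centering correction $\tilde X_i = X_i - \bar X_n$.

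First, I work on the truncation event $E_0 = \{\max_{i,j}|X_{ij}| \leq B_n\sqrt{5\log(pn)}\}$, which has probability at least $1 - 1/(2n^4)$ by Lemma~\ref{lem: subgaussian growth}. On $E_0$ the products $X_{ij}X_{ik}$ and $X_{ij}X_{ik}X_{il}$ are bounded by $B_n^2 \cdot 5\log(pn)$ and $B_n^3 \cdot (5\log(pn))^{3/2}$ respectively, so standard Bernstein applies to the truncated (and recentered) variants. Using Condition~M to bound the variances via $\Ep[X_{ij}^2 X_{ik}^2] \leq (\Ep[X_{ij}^4]\Ep[X_{ik}^4])^{1/2} \lesssim B_n^2 b_2$ and $\Ep[X_{ij}^2 X_{ik}^2 X_{il}^2] \lesssim B_n^2 b_2 \cdot 5\log(pn)$ on $E_0$, followed by a union bound over $(j,k)$ and $(j,k,l)$ with deviation parameter $\sim \log(pn)$, I obtain
\[
\max_{j,k}\Big|\tfrac{1}{\sqrt n}\sum_{i=1}^n (X_{ij}X_{ik}-\Ep[X_{ij}X_{ik}])\Big| \lesssim B_n\sqrt{\log(pn)},
\]
\[
\max_{j,k,l}\Big|\tfrac{1}{\sqrt n}\sum_{i=1}^n (X_{ij}X_{ik}X_{il}-\Ep[X_{ij}X_{ik}X_{il}])\Big| \lesssim B_n^2\log(pn),
\]
with failure probability at most, say, $1/(4n)$. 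Under \eqref{eq: bn restriction once again} the residual Bernstein terms of the form $B_n^3\log^{5/2}(pn)/\sqrt n$ are absorbed into the leading bound. The analogous argument for $X_{ij}^2$ and $X_{ij}^4$ (viewed as the $j=k$ and $(j,k)$-diagonal special cases, using $\Ep[X_{ij}^4]\leq B_n^2 b_2$) yields, on the same event, $|n^{-1}\sum X_{ij}^2 - \Ep[X_{ij}^2]| \leq b_1/4$ and $n^{-1}\sum X_{ij}^4 \leq \tfrac{3}{2}B_n^2 b_2$ uniformly in $j$.

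Next, I control $\bar X_n$. Since each $X_{ij}$ is mean-zero and sub-Gaussian with $\psi_2$-norm $\lesssim B_n$ by Condition~E, a Hoeffding/sub-Gaussian tail bound combined with a union bound over $j$ gives
\[
\max_{1\leq j\leq p} |\bar X_{nj}| \lesssim B_n \sqrt{\log(pn)/n}
\]
with probability at least $1 - 1/(4n)$; call this event $E_2$. Under \eqref{eq: bn restriction once again}, $|\bar X_{nj}|$ is $o(1)$.

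Finally, I pass from $X$ to $\tilde X$ using the identities
\[
\sum_i \tilde X_{ij}\tilde X_{ik} = \sum_i X_{ij}X_{ik} - n\bar X_{nj}\bar X_{nk},
\]
\[
\sum_i \tilde X_{ij}\tilde X_{ik}\tilde X_{il} = \sum_i X_{ij}X_{ik}X_{il} - \sum_{\text{3 sym}} \bar X_{n\bullet}\sum_i X_{i\bullet}X_{i\bullet} + 2n\bar X_{nj}\bar X_{nk}\bar X_{nl}.
\]
Dividing by $\sqrt n$ and plugging in the bound $|\bar X_{nj}| \lesssim B_n\sqrt{\log(pn)/n}$ together with the already-established deviation bounds (and $|\Ep[X_{ij}X_{ik}]|\leq b_2$), each correction term is $\lesssim B_n^2\log(pn)/\sqrt n$ for the second-order case and $\lesssim B_n\sqrt{\log(pn)} \cdot (\sqrt n b_2 + B_n\sqrt{\log(pn)})/\sqrt n \lesssim B_n\sqrt{\log(pn)}+B_n^2\log(pn)/\sqrt n$ for the third-order case. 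Under \eqref{eq: bn restriction once again} both correction sizes are dominated by the target rates $B_n\sqrt{\log(pn)}$ and $B_n^2\log(pn)$ respectively. For the pointwise bound~\eqref{eq: upper and lower bounds for second moment}, since $\bar X_{nj}^2 \lesssim B_n^2\log(pn)/n = o(1)$, the combined event forces $n^{-1}\sum \tilde X_{ij}^2 \in [b_1/2, 2b_2]$ and $n^{-1}\sum \tilde X_{ij}^4 \leq 2B_n^2 b_2$ (using $(X_{ij}-\bar X_{nj})^4 \leq 8X_{ij}^4 + 8\bar X_{nj}^4$ and absorbing the $\bar X_{nj}^4$ contribution into the constant). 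A final union bound over $E_0$, $E_2$, and the deviation events gives the joint probability $1-1/n$ provided $n \geq n_0$.

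The main technical nuisance is calibrating the Bernstein deviation parameter in the third-moment step so that the ``variance'' term $\sqrt{B_n^4 \log^2(pn)}$ produces the right rate $B_n^2\log(pn)$ while the bounded-range term $B_n^3\log^{5/2}(pn)/\sqrt n$ remains subordinate; this is where condition \eqref{eq: bn restriction once again} is used essentially and dictates the power $\log^5$. All other steps are routine moment/tail manipulations.
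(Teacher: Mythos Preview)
Your approach is essentially the same as the paper's: control the deviations of the uncentered products $X_i^h$ (the paper does this uniformly for $m=1,2,3,4$ via the maximal inequality of Lemma~\ref{lem: maximal ineq} plus the Fuk--Nagaev bound of Lemma~\ref{lem: fuk-nagaev}, whereas you use truncation and Bernstein), then expand $\tilde X_i$-products algebraically and absorb the $\bar X_n$-corrections.

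There is, however, one concrete gap. Your argument for the second part of \eqref{eq: upper and lower bounds for second moment} uses $(X_{ij}-\bar X_{nj})^4 \le 8X_{ij}^4 + 8\bar X_{nj}^4$, which from your bound $n^{-1}\sum_i X_{ij}^4 \le \tfrac{3}{2}B_n^2 b_2$ yields only $n^{-1}\sum_i \tilde X_{ij}^4 \le 12 B_n^2 b_2 + o(1)$, not the required $2B_n^2 b_2$. The constant $2$ is part of the statement and cannot be enlarged. To recover it you must instead expand
\[
\tilde X_{ij}^4 = X_{ij}^4 - 4\bar X_{nj}X_{ij}^3 + 6\bar X_{nj}^2 X_{ij}^2 - 4\bar X_{nj}^3 X_{ij} + \bar X_{nj}^4
\]
and show that $\bigl|n^{-1}\sum_i \tilde X_{ij}^4 - n^{-1}\sum_i X_{ij}^4\bigr|$ is $o(B_n^2)$ under \eqref{eq: bn restriction once again}. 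The leading cross term $4|\bar X_{nj}|\cdot n^{-1}\sum_i |X_{ij}|^3$ is handled on your truncation event $E_0$ by $n^{-1}\sum_i |X_{ij}|^3 \le B_n\sqrt{5\log(pn)}\cdot n^{-1}\sum_i X_{ij}^2 \lesssim B_n\sqrt{\log(pn)}$, so this term is $\lesssim B_n^2\log(pn)/\sqrt n = o(B_n^2)$; the remaining terms are smaller still. Combined with the deviation bound $\bigl|n^{-1}\sum_i X_{ij}^4 - \Ep[X_{ij}^4]\bigr| \lesssim B_n^3\log^{3/2}(pn)/\sqrt n = o(B_n^2)$ (which you should also establish, via the $m=4$ analogue of your Bernstein step), this gives $n^{-1}\sum_i \tilde X_{ij}^4 \le B_n^2 b_2 + o(B_n^2) \le 2B_n^2 b_2$ for $n\ge n_0$. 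The paper follows exactly this route, invoking the $m=4$ deviation on the event $\mathcal A$.
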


The proof of this lemma is rather standard but long, and so is deferred to Appendix \ref{sec: proof lemma 52} of the Supplemental Material.

\begin{lemma}\label{coro: gaussian approximation}
Suppose that Conditions E and M are satisfied. Then
\begin{equation}\label{eq: gaussian approximation ks metric}
\sup_{x\in\mathbb{R}}|\Pr(T_n\leq x)-\Pr(T^G_n\leq x)|\leq C\left(\frac{B_n^2\log^5(pn)}{n}\right)^{1/4},
\end{equation}
where $C$ is a constant depending only on $b_1$ and $b_2$.  
\end{lemma}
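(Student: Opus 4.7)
The plan is a three-step triangle inequality argument with a carefully chosen third-order matching multiplier bootstrap as the pivot. Choose multiplier weights $e_1,\dots,e_n$ that are of the Beta-plus-Gaussian form required by Corollary~\ref{coro:beta-comparison} and whose parameters $(\alpha,\beta,v)$ are tuned so that $\Ep[e_i^{3}]=1$; such a choice is available via the explicit construction in Lemma~\ref{lem: multipliers}, and it simultaneously satisfies the third-order matching condition~\eqref{eq: third order matching multipliers} and the sub-Gaussianity~\eqref{eq: multiplier bootstrap simplification}. Denote the corresponding multiplier bootstrap statistic by $T_n^{*}$ and the feasible Gaussian statistic by $T_n^{\hat G}=\max_{1\le j\le p}(\hat G_j+t_j)$ with $\hat G\sim N(0_p,\widehat\Sigma_n)$.

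First, I would apply Lemma~\ref{thm: empirical bootstrap} to these weights to obtain
$$\sup_{x\in\mathbb{R}}\bigl|\Pr(T_n\le x)-\Pr(T_n^{*}\le x\mid X_1,\dots,X_n)\bigr|\le C\bigl(B_n^2\log^5(pn)/n\bigr)^{1/4}$$
on an event $\mathcal{E}_1$ of probability at least $1-2/n$. Second, I would observe that conditionally on the data the bootstrap sum $n^{-1/2}\sum_i e_i(X_i-\bar X_n)$ is exactly the random vector $V$ of Corollary~\ref{coro:beta-comparison} with $a_i=X_i-\bar X_n$, and that by Lemma~\ref{lem: all conditions} the moment bounds~\eqref{eq: upper and lower bounds for second moment} hold on an event $\mathcal{E}_2$ of probability at least $1-1/n$. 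Corollary~\ref{coro:beta-comparison}, after translating both sides by the fixed vector $t$, then gives
$$\sup_{x\in\mathbb{R}}\bigl|\Pr(T_n^{*}\le x\mid X)-\Pr(T_n^{\hat G}\le x\mid X)\bigr|\le C\bigl(B_n^2\log^5(pn)/n\bigr)^{1/4}\quad\text{on }\mathcal{E}_2.$$

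Third, I would apply Corollary~\ref{coro:g-g-comparison} with $\Sigma^{1}=\widehat\Sigma_n$ and $\Sigma^{2}=\Sigma_n$; by~\eqref{eq: second order deviation} of Lemma~\ref{lem: all conditions} the entrywise distance satisfies $\Delta=\max_{j,k}|\widehat\Sigma_{n,jk}-\Sigma_{n,jk}|\lesssim B_n\sqrt{\log(pn)/n}$ on $\mathcal{E}_2$, and Condition~M combined with~\eqref{eq: upper and lower bounds for second moment} yields the required lower bound on the diagonal entries of $\Sigma_n$. The resulting rate $(\Delta\log^2 p)^{1/2}\lesssim B_n^{1/2}\log^{5/4}(pn)/n^{1/4}=(B_n^2\log^5(pn)/n)^{1/4}$ matches the target exactly. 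Combining the three bounds via the triangle inequality on $\mathcal{E}_1\cap\mathcal{E}_2$ and noting that the left-hand side $\sup_x|\Pr(T_n\le x)-\Pr(T_n^{G}\le x)|$ is deterministic---so the nonemptiness of $\mathcal{E}_1\cap\mathcal{E}_2$ suffices---yields the claim. The main obstacle sits inside Step~1, namely the iterative randomized Lindeberg analysis, but that is already packaged into Lemma~\ref{thm: empirical bootstrap}; the only remaining delicate point is ensuring that one can pick a weight distribution satisfying both the third-moment matching condition and the Beta-family structure, which is the content of Lemma~\ref{lem: multipliers}.
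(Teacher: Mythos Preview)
Your three-step triangle inequality---Lemma~\ref{thm: empirical bootstrap}, then Corollary~\ref{coro:beta-comparison}, then Corollary~\ref{coro:g-g-comparison}, followed by the observation that the target quantity is deterministic so a positive-probability event suffices---is exactly the paper's proof. The one slip is your claim that Lemma~\ref{lem: multipliers} furnishes weights in the Beta-plus-Gaussian family required by Corollary~\ref{coro:beta-comparison}: it does not. Lemma~\ref{lem: multipliers} builds a Gaussian-plus-\emph{two-point} mixture, which lies outside the Beta family and so cannot be fed into Corollary~\ref{coro:beta-comparison}. The paper instead selects the Beta parameters directly, taking $v=0$, $\alpha=1/2$, $\beta=3/2$; one checks that the standardized $\mathrm{Beta}(1/2,3/2)$ variable has skewness exactly $1$, hence $\Ep[e_i^3]=1$, and its boundedness delivers the sub-Gaussianity~\eqref{eq: multiplier bootstrap simplification} for free. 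With that substitution your argument goes through verbatim.
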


\begin{proof}
Without loss of generality, we may assume that \eqref{eq: bn restriction once again} holds and that $n$ is large enough so that $n\geq n_0$ for $n_0$ from Lemma \ref{lem: all conditions}, since otherwise the conclusion of the lemma is trivial by taking $C$ large enough. This will justify an application of Lemma \ref{lem: all conditions} when needed. In addition, by again taking $C$ large enough, we may assume that $1/n^4 + 2/n + 3v_n < 1$.

Let $\mathcal{A}_n$ be the event that \eqref{eq: subgaussian bound on x} and \eqref{eq: upper and lower bounds for second moment}--\eqref{eq: third order deviation} hold jointly. By Lemmas \ref{lem: subgaussian growth} and \ref{lem: all conditions}, $\Pr(\mathcal A_n)\geq 1 - 1/(2n^4) - 1/n -3v_n > 0$. Further, let $e_1,\dots,e_n$ be independent standardized Beta$(1/2,3/2)$ random variables, standardized in such a way that they have mean zero and unit variance (cf. Corollary \ref{coro:beta-comparison}), that are independent of $X_{1:n} = (X_1,\dots,X_n)$. It is not difficult to check that  $\Ep[e_i^3]=1$ for all $i=1,\dots,n$. 

Let $T_n^*$ be the multiplier bootstrap statistic with weights $e_1,\dots,e_n$.  
Condition on $X_{1:n}$ such that $\mathcal A_n$ holds. 
Then, by Corollary \ref{coro:beta-comparison} and the definition of $\mathcal A_n$, we have
\begin{equation}\label{beta-to-gauss}
\sup_{y\in\mathbb{R}^p}\left|\Pr\left( \frac{1}{\sqrt n}\sum_{i=1}^n e_i(X_i - \bar X_n) \leq y \mid X_{1:n} \right)-\Pr(\hat{G}\leq y\mid X_{1:n})\right|\leq C_1\delta_n, 
\end{equation}
while by Proposition \ref{coro:g-g-comparison}, we have
\begin{equation*}%\label{gauss-to-gauss}
\sup_{y\in\mathbb{R}^p}|\Pr(\hat{G}\leq y\mid X_{1:n})-\Pr(G\leq y)|\leq C_2\delta_n,
\end{equation*}
where  $C_1$ and $C_2$ are constants depending only on $b_1$ and $b_2$. 

Next, we shall invoke Theorem \ref{cor: max} to  compare the distribution of $T_n$ with the conditional distribution of $T_n^*$. Formally, let $Y_1,\dots,Y_n$ be independent copies of $X_1,\dots,X_n$ that are independent of $X_{1:n}$, and define $T_n'$ by $T_n$ with $X_i$'s replaced by $Y_i$'s. Then, $\Pr (T_n \le x) = \Pr (T_n' \le x \mid X_{1:n})$. Condition on $X_{1:n}$ such that $\mathcal A_n$ holds and apply Theorem \ref{cor: max} with $V_i = Y_i$ and $Z_i = e_i \tilde X_i$ for all $i=1,\dots,n$. Since $\Ep[e_i]=0$ and $\Ep[e_i^2] = \Ep[e_i^3]=1$ for all $i=1,\dots,n$, it is not difficult to see from the definition of $\mathcal A_n$ that Conditions V, P, and B, as well as inequalities (\ref{eq: bn bounds 1}) and (\ref{eq: bn bounds 2}) of Theorem \ref{cor: max} are satisfied with appropriate constants $C_v$, $C_p$, $C_b$, and $C_m$ that depend only on $b_1,b_2$. 
It remains to verify Condition A in Theorem \ref{cor: max}. 
Observe that for  any $y\in\mathbb R^p$ and $t>0$, 
\begin{equation}
 \label{beta-anti}
\begin{split}
&\Pr\left(\frac{1}{\sqrt n}\sum_{i=1}^n e_i(X_i - \bar X_n) \leq y + t \mid X_{1:n}\right)  \leq \Pr\left(\hat G \leq y+t \mid X_{1:n}\right) + C_1\delta_n \quad (\text{by (\ref{beta-to-gauss})}) \\
&\quad  \leq \Pr\left(\hat G \leq y\mid X_{1:n}\right) + K_1t\sqrt{\log p} + C_1\delta_n \quad (\text{by Lemma \ref{lem: anticoncetration} and (\ref{eq: upper and lower bounds for second moment})}) \\
&\quad  \leq\Pr\left(\frac{1}{\sqrt n}\sum_{i=1}^n e_i(X_i - \bar X_n) \leq y \mid X_{1:n}\right) 
+K_1t\sqrt{\log p} + 2C_1\delta_n, \quad (\text{by (\ref{beta-to-gauss})})
\end{split}
\end{equation}
where $K_1>0$ is a constant depending only on $b_1$. Thus, applying Theorem \ref{cor: max}, we conclude that
\begin{equation*}%\label{original-to-beta
\sup_{x\in\mathbb{R}}|\Pr(T_n\leq x)-\Pr(T^*_n\leq x\mid X_{1:n})| = \sup_{x\in\mathbb{R}}|\Pr(T_n'\leq x \mid X_{1:n})-\Pr(T^*_n\leq x\mid X_{1:n})|\leq C_3\delta_n
\end{equation*}
 for some constant $C_3$ depending only on $b_1$ and $b_2$. The asserted claim follows from these bounds via the triangle inequality by noting that the left-hand side of \eqref{eq: gaussian approximation ks metric} is non-stochastic, so that if \eqref{eq: gaussian approximation ks metric} holds with strictly positive probability (recall that $\Pr(\mathcal A_n)>0$), then it holds with probability one.
\end{proof}

\begin{lemma}\label{thm: empirical anticoncentration}
Suppose that Conditions E and M are satisfied. Then for any $y\in\mathbb R^p$ and $t > 0$,
\begin{align*}
&\Pr\left(\frac{1}{\sqrt n}\sum_{i=1}^n X_i \leq y+t\right) - \Pr\left(\frac{1}{\sqrt n}\sum_{i=1}^n X_i \leq y\right) \leq C\left(t\sqrt{\log p} + \left(\frac{B_n^2\log^5(p n)}{n}\right)^{1/4}\right),
\end{align*}
where $C$ is a constant depending only on $b_1$ and $b_2$.
\end{lemma}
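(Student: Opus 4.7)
The plan is to reduce the anti-concentration problem for $T_n$ to the anti-concentration problem for its Gaussian analogue $T_n^G$, using Lemma \ref{coro: gaussian approximation} as the bridge, and then to apply the standard Gaussian (Nazarov-type) anti-concentration inequality for maxima.

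First, observe that by Lemma \ref{coro: gaussian approximation}, under Conditions M and E we have
$$
\sup_{x\in\mathbb R}|\Pr(T_n\leq x)-\Pr(T_n^G\leq x)|\leq C_1\left(\frac{B_n^2\log^5(pn)}{n}\right)^{1/4},
$$
where $C_1$ depends only on $b_1$ and $b_2$. Applying this at the two points $x$ and $x+t$ and using the triangle inequality gives
$$
\Pr(T_n\leq x+t)-\Pr(T_n\leq x)\leq \Pr(T_n^G\leq x+t)-\Pr(T_n^G\leq x)+2C_1\left(\frac{B_n^2\log^5(pn)}{n}\right)^{1/4}.
$$

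Next, I would bound the Gaussian increment by the anti-concentration inequality for maxima of Gaussian processes (Lemma \ref{lem: anticoncetration} in the paper). Recall that $T_n^G=\max_{1\leq j\leq p}(G_j+t_j)$ where $G\sim N(0_p,\Sigma_n)$ and $\Sigma_n = n^{-1}\sum_{i=1}^n\Ep[X_iX_i']$ (since the means have been centered). By the first part of Condition M, the diagonal entries of $\Sigma_n$ are uniformly bounded below by $b_1>0$, so Lemma \ref{lem: anticoncetration} applies and yields
$$
\Pr(T_n^G\leq x+t)-\Pr(T_n^G\leq x)\leq C_2 t\sqrt{\log p},
$$
where $C_2$ depends only on $b_1$. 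The additive shifts $t_j$ do not affect applicability since translating each $G_j$ by a constant does not change the variances.

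Combining the two displays gives the asserted bound with $C=\max(C_2,2C_1)$. No step is particularly delicate: the Gaussian approximation from Lemma \ref{coro: gaussian approximation} already does the heavy lifting, and the remaining anti-concentration is a standard off-the-shelf tool. The only point worth checking is that Condition M furnishes the lower bound on the diagonal of $\Sigma_n$ required by Lemma \ref{lem: anticoncetration}, which is immediate.
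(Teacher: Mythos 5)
Your argument is correct: Lemma \ref{coro: gaussian approximation} is proved before this point and does not rely on the present lemma, so there is no circularity, and the reduction of the increment of $T_n^G$ to Lemma \ref{lem: anticoncetration} is legitimate because $\{T_n^G\le x\}=\{G\le x-t\}$ componentwise and Condition M gives $\Ep[G_j^2]=n^{-1}\sum_i\Ep[X_{ij}^2]\ge b_1$. The paper, however, takes a different route: it never passes through $T_n^G$ at all. Instead it compares $\Pr(n^{-1/2}\sum_i X_i\le y+t)$ and $\Pr(n^{-1/2}\sum_i X_i\le y)$ with the conditional distribution of the third-order matching multiplier bootstrap statistic (via Lemma \ref{thm: empirical bootstrap}), uses the fact that the weights of Lemma \ref{lem: multipliers} contain an independent Gaussian component so that Lemma \ref{lem: anticoncetration} can be applied conditionally on the data (with Lemma \ref{lem: all conditions} supplying the lower bound on the conditional variances), and then observes that both sides of the resulting inequality are non-stochastic, so a bound holding with positive probability holds surely. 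The two proofs cost about the same number of lines, but they lean on different machinery: your version inherits the full dependency chain of Lemma \ref{coro: gaussian approximation}, including the Stein-kernel comparisons of Corollaries \ref{coro:beta-comparison} and \ref{coro:g-g-comparison}, whereas the paper's version needs only Lemma \ref{thm: empirical bootstrap} plus the conditional anti-concentration trick; on the other hand, your version works with a single unconditional Gaussian vector and avoids the ``positive probability implies probability one'' step, which some readers find cleaner. Both yield the stated bound with a constant depending only on $b_1$ and $b_2$.
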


\begin{proof}
%Fix $y\in\mathbb R^p$ and $t>0$ and let the random variables $e_1,\dots,e_n$ be independently sampled from the distribution defined in Lemma \ref{lem: multipliers} (and independently of $X_1,\dots,X_n$). Then for some constant $C$ depending only on $b_1$ and $b_2$, with probability at least $1 - 3/n$,
%\begin{align*}
%&\Pr\left(\frac{1}{\sqrt n}\sum_{i=1}^n X_i \leq y + t\right) \\
%&\quad \leq \Pr\left(\frac{1}{\sqrt n}\sum_{i=1}^n e_i(X_i - \bar X_n) \leq y + t \mid X_{1:n}\right) + C\left(\frac{B_n^2\log^5(p n)}{n}\right)^{1/4}\\
%&\quad \leq \Pr\left(\frac{1}{\sqrt n}\sum_{i=1}^n e_i(X_i - \bar X_n) \leq y \mid X_{1:n}\right) + Ct\sqrt{\log p} + C\left(\frac{B_n^2\log^5(p n)}{n}\right)^{1/4}\\
%&\quad \leq \Pr\left(\frac{1}{\sqrt n}\sum_{i=1}^n X_i \leq y\right) + Ct\sqrt{\log p} + 2C\left(\frac{B_n^2\log^5(p n)}{n}\right)^{1/4},
%\end{align*}
%where the first and the third inequalities follow from Lemma \ref{thm: empirical bootstrap} and the second from Lemmas \ref{lem: all conditions} and \ref{lem: anticoncetration}. Here, both the left-hand side and the right-hand side are non-stochastic, and so if the right-hand side exceeds the left-hand side with strictly positive probability, it must do so with probability one. This gives the asserted claim.
Fix $y\in\mathbb R^p$ and $t>0$. Then for some constant $C$ depending only on $b_1$ and $b_2$,
\begin{align*}
\Pr\left(\frac{1}{\sqrt n}\sum_{i=1}^n X_i \leq y + t\right) &\leq \Pr(G \leq y + t) + C\delta_n
\leq \Pr(G \leq y ) + Ct\sqrt{\log p} + C\delta_n\\
&\leq \Pr\left(\frac{1}{\sqrt n}\sum_{i=1}^n X_i \leq y\right) + Ct\sqrt{\log p} + 2C\delta_n,
\end{align*}
where the first and the third inequalities follow from Lemma \ref{coro: gaussian approximation} and the second from Lemma \ref{lem: anticoncetration} of the Supplemental Material. This gives the asserted claim.
\end{proof}

%\blue{
%KK: Can delete this remark. Just wanted to make sure details. 
%\begin{remark}
%The preceding lemme implies the following result: let $F_{T_n}(t)$ and $F_{T_n}^{-1}(\gamma)$ denote the distribution and quantile functions of $T_n$. Then 
%\[
%\gamma \le F_{T_n}(F_{T_n}^{-1}(\gamma)) \le  \gamma + C\delta_n \quad \text{for all $\gamma \in (0,1)$}.
%\]
%Indeed, the left inequality follows from the definition of the quantile function. From the preceding lemma, we have $F_{T_n}(t) - F_{T_n}(t-) \le C\delta_n$ for all $t \in \R$ with $F_{T_n}(t-) = \lim_{s \to t, s < t} F_{T_n}(s)$. By the definition of the quantile function, $F_{T_n}(F_{T_n}^{-1}(\gamma)-)  \le \gamma$, but $F_{T_n}(F_{T_n}^{-1}(\gamma)-)  \ge F_{T_n}(F_{T_n}^{-1}(\gamma)) - C\delta_n$, so that $F_{T_n}(F_{T_n}^{-1}(\gamma)) \le \gamma + C\delta_n$. 
%\end{remark}
%}

\begin{lemma}\label{thm: empirical bootstrap}
Suppose that Conditions E and M are satisfied and that the random variables $X_1^*,\dots,X_n^*$ are obtained via the empirical bootstrap. Then with probability at least $1 - 2/n-3\upsilon_n$, we have
$$
\sup_{x\in\mathbb R}\left| \Pr\left(T_n \leq x\right) - \Pr\left(T_n^* \leq x \mid X_{1:n}\right) \right| \leq C\left(\frac{B_n^2\log^5(p n)}{n}\right)^{1/4},
$$
where $C$ is a constant depending only on $b_1$ and $b_2$.
\end{lemma}

\begin{proof}
As before, we may assume that \eqref{eq: bn restriction once again} holds and that $n$ is large enough so that $n\geq n_0$ for $n_0$ from Lemma \ref{lem: all conditions}, since otherwise the conclusion of the lemma is trivial by taking $C$ large enough. This will justify an application of Lemma \ref{lem: all conditions} when needed.

Let $Y_1,\dots,Y_n$ be vectors in $\mathbb R^p$ such that 
\begin{equation}\label{eq: Y infty bound}
\|Y_i\|_{\infty} \leq 10B_n\log(p n)\quad\text{for all }i=1,\dots,n,
\end{equation}
\begin{equation}\label{eq: Y2 bound lower and upper}
%b_1/2\leq \frac{1}{n}\sum_{i=1}^n Y_{i j}^2 \leq 2b_2\text{ and }\frac{1}{n}\sum_{i=1}^n Y_{i j}^4 \leq 2B_n^2 b_2,\quad\text{for all }j=1,\dots,p,
b_1^2/2\leq \frac{1}{n}\sum_{i=1}^n Y_{i j}^2 \quad \text{and} \quad \frac{1}{n}\sum_{i=1}^n Y_{i j}^4 \leq 2B_n^2 b_2^{2},\quad\text{for all }j=1,\dots,p,
\end{equation}
\begin{equation}\label{eq: Y2 bound plus}
\max_{1\leq j,k\leq p}\left|\frac{1}{\sqrt n}\sum_{i=1}^n(Y_{i j}Y_{i k} - \Ep[X_{ij}X_{i k}])\right| \leq C_mB_n\sqrt{\log(p n)},
\end{equation}
and
\begin{equation}\label{eq: Y3 bound}
\max_{1\leq j,k,l\leq p}\left|\frac{1}{\sqrt n}\sum_{i=1}^n(Y_{i j}Y_{i k}Y_{i l} - \Ep[X_{ij}X_{i k}X_{i l}])\right| \leq C_mB_n^2\sqrt{\log^3(p n)},
\end{equation}
where $C_m$ is the constant $C$ from Lemma \ref{lem: all conditions}. Also, let $Y_1^*,\dots,Y_n^*$ be independent random vectors with each $Y_i^*$ having uniform distribution on $\{Y_1,\dots,Y_n\}$. 
%In addition, let $\tilde e_1,\dots,\tilde e_n$ be a sequence of independent random variables with distribution constructed in Lemma \ref{lem: multipliers}. This distribution is such that for all $i = 1,\dots,n$, we have $\tilde e_i = \tilde e_{i,1} + \tilde e_{i,2}$, where $\tilde e_{i,1}$ and $\tilde e_{i,2}$ are independent, $\tilde e_{i,1}$ has the $N(0,\sigma^2)$ distribution, $\tilde e_{i,2}$ has a two-point distribution, and $\sigma>0$ can be chosen to be a universal constant. Thus, using \eqref{eq: Y2 bound lower and upper} and applying Lemma \ref{lem: anticoncetration} conditional on $\tilde e_{1,2},\dots,\tilde e_{n,2}$, it follows that for all $y\in\mathbb R^p$ and $t > 0$,
%\begin{equation}\label{eq: preparing application}
%\Pr\left(\frac{1}{\sqrt n}\sum_{i=1}^n \tilde e_i Y_i \leq y + t\right) - \Pr\left(\frac{1}{\sqrt n}\sum_{i=1}^n \tilde e_i Y_i \leq y\right) \leq K_1t\sqrt{\log p},
%\end{equation}
%where $K_1$ is a constant depending only on $b_1$ and $\sigma$.

To prove the asserted claim, we will apply Theorem \ref{cor: max} with $V_i = Y_i^*$ and $Z_i = X_i$ for all $i=1,\dots,n$. Conditions V, P, and B with constants $C_v$, $C_p$, and $C_b$ depending only on $b_1$ and $b_2$ follow immediately from Conditions E and M, Lemma \ref{lem: subgaussian growth}, and the inequalities in \eqref{eq: Y infty bound} and \eqref{eq: Y2 bound lower and upper}. Also, Condition A with $\delta=\delta_n$ and $C_a$ depending only on $b_1$ and $b_2$ follows from Lemma \ref{thm: empirical anticoncentration}. Hence, an application of Theorem \ref{cor: max} is justified if we can verify \eqref{eq: bn bounds 1} and \eqref{eq: bn bounds 2} but these inequalities follow from \eqref{eq: Y2 bound plus} and \eqref{eq: Y3 bound} by noting that
$$
\frac{1}{\sqrt n}\sum_{i=1}^n(\Ep[V_{i j}V_{i k}] - Y_{i j}Y_{i k}) = 0 \quad \text{and} \quad \frac{1}{\sqrt n}\sum_{i=1}^n(\Ep[V_{i j}V_{i k}V_{i l}] - Y_{i j}Y_{i k}Y_{i l}) = 0
$$
for all $j,k,l=1,\dots,p$. 
%In turn, in the case (i), \eqref{eq: xxx} and \eqref{eq: yyy} follow from \eqref{eq: Y2 bound plus} and \eqref{eq: Y3 bound}. In the case (ii), \eqref{eq: xxx} and \eqref{eq: yyy} follow from noting that the left-hand sides of \eqref{eq: xxx} and \eqref{eq: yyy} are both zero by the construction of $Y_i^*$'s. In the case (iii), \eqref{eq: xxx} and \eqref{eq: yyy} follow from noting that the left-hand sides of \eqref{eq: xxx} and \eqref{eq: yyy} are both zero by the fact that $\Ep[e_i^2] = \Ep[e_i^3] = 1$.
%In turn, in the case (i), \eqref{eq: xxx} and \eqref{eq: yyy} follow from noting that the left-hand sides of \eqref{eq: xxx} and \eqref{eq: yyy} are both zero by the construction of $Y_i^*$'s. In the case (ii), \eqref{eq: xxx} and \eqref{eq: yyy} follow from noting that the left-hand sides of \eqref{eq: xxx} and \eqref{eq: yyy} are both zero by the fact that $\Ep[e_i^2] = \Ep[e_i^3] = 1$.
Now, applying Theorem \ref{cor: max} shows that for all $y\in\mathbb R^p$, we have
$$
\left|\Pr\left(\frac{1}{\sqrt n}\sum_{i=1}^n V_i\leq y\right) -  \Pr\left( \frac{1}{\sqrt n}\sum_{i=1}^n X_i \leq y \right)\right|\leq K_1\left(\frac{B_n^2\log^5(p n)}{n}\right)^{1/4}
$$
for some constant $K_1$ depending only on $b_1$, $b_2$, and $C_m$. The asserted claim follows from this bound by setting $Y_i = X_i - \bar X_n$ for all $i = 1,\dots,n$, and noting that in this case \eqref{eq: Y infty bound} holds with probability at least $1 - 1/(2n^4)$ by Lemma \ref{lem: subgaussian growth} and  \eqref{eq: Y2 bound lower and upper}, \eqref{eq: Y2 bound plus}, and \eqref{eq: Y3 bound} hold jointly with probability at least $1 - 1/n-3\upsilon_n$ by Lemma \ref{lem: all conditions}.
\end{proof}

\begin{lemma}\label{eq: second order matching multiplier bootstrap ks metric}
Suppose that Conditions E and M are satisfied and that the random variables $X_1^*,\dots,X_n^*$ are obtained via the multiplier bootstrap with weights $e_1,\dots,e_n$ satisfying \eqref{eq: multiplier bootstrap simplification}. Then with probability at least $1-2/n-3v_n$, we have
\[
\sup_{x\in\mathbb{R}}|\Pr(T_n\leq x)-\Pr(T^*_n\leq x\mid X_{1:n})|\leq C\left(\frac{B_n^2\log^5(pn)}{n}\right)^{1/4},
\]
where $C$ is a constant depending only on $\Ep[e_1^3]$, $b_1$ and $b_2$.  
\end{lemma}

\begin{remark}
%The constant $C$ in this result depends on $\Ep[e_1^3]$ continuously, and so, given that $|\Ep[e_1^3]|\leq\sqrt{\Ep[e_1^6]}\leq\sqrt{4^3\cdot3!\cdot\Ep[\exp(e_1^2/4)]}$, we can take $C$ independent of $\Ep[e_1^3]$ under the implicitly maintained assumption that \eqref{eq: multiplier bootstrap simplification} holds. 
The constant $C$ in this result depends on $\Ep[e_1^3]$ continuously, and so we can take $C$ independent of $\Ep[e_1^3]$ under the implicitly maintained assumption that \eqref{eq: multiplier bootstrap simplification} holds. 
\end{remark}

\begin{proof}
As before, we may assume that \eqref{eq: bn restriction once again} holds and that $n$ is large enough so that $n\geq n_0$ for $n_0$ from Lemma \ref{lem: all conditions}, since otherwise the conclusion of the lemma is trivial by taking $C$ large enough. This will justify an application of Lemma \ref{lem: all conditions} when needed.

Let $\mathcal{A}_n$ be the event that \eqref{eq: subgaussian bound on x} and \eqref{eq: upper and lower bounds for second moment}--\eqref{eq: third order deviation} hold jointly. 
By Lemmas \ref{lem: subgaussian growth} and \ref{lem: all conditions}, we have $\Pr(\mathcal{A}_n)\geq1-2/n-3\upsilon_n$. Moreover, by Proposition \ref{coro:g-g-comparison},
\begin{equation}\label{eq: lem 56 beginning}
\sup_{y\in\mathbb{R}^p}|\Pr(\hat G\leq y\mid X_{1:n})-\Pr(G \leq y)|\leq C_1\delta_n
\end{equation}
on the event $\mathcal A_n$, where $C_1$ is a constant depending only on $b_1$ and $b_2$. 

Next, we claim that the case with $\sigma_e > 0$ can be reduced to the case with $\sigma_e = 0$ (and the constant 3 appearing in \eqref{eq: multiplier bootstrap simplification} replaced by some other universal constant). To prove this claim, define random variables $e'_1,\dots,e'_n$ as in Corollary \ref{coro:beta-comparison} with $\alpha=\beta=1$ such that they are independent of everything else. Then on the event $\mathcal A_n$, by Corollary \ref{coro:beta-comparison}, we have that
\[
\sup_{y\in\mathbb{R}^p}\left|\Pr\left(\frac{1}{\sqrt n}\sum_{i=1}^n e'_i\tilde X_i\leq y\mid X_{1:n}\right)-\Pr\left(\frac{1}{\sigma_e\sqrt n}\sum_{i=1}^ne_{i,1}\tilde X_i\leq y\mid X_{1:n}\right)\right|\leq C_2\delta_n,
\]
where $\tilde X_i=X_i - \bar X_n$ for all $i=1,\dots,n$ and $C_2$ is a constant depending only on $b_1$ and $b_2$. Therefore, noting that the sequences $\{e_{i,1}\}_{i=1}^n$, $\{e_{i,2}\}_{i=1}^n$, and $\{e'_{i}\}_{i=1}^n$ are independent, we have on $\mathcal A_n$ that
\begin{align*}
&\sup_{y\in\mathbb{R}^p}\left|\Pr\left(\frac{1}{\sqrt n}\sum_{i=1}^ne_{i}\tilde X_i\leq y\mid X_{1:n}\right)-\Pr\left(\frac{1}{\sqrt n}\sum_{i=1}^n(\sigma_ee'_i+e_{i,2})\tilde X_i\leq y\mid X_{1:n}\right)\right|\\
&\quad\leq\Ep\Bigg[\sup_{y\in\mathbb{R}^p}\Bigg|\Pr\left(\frac{1}{\sqrt n}\sum_{i=1}^ne_{i,1}\tilde X_i\leq y\mid X_{1:n},\{e_{i,2}\}_{i=1}^n\right) \\
&\qquad\qquad\qquad-\Pr\left(\frac{1}{\sqrt n}\sum_{i=1}^n\sigma_e e'_i\tilde X_i\leq y\mid X_{1:n},\{e_{i,2}\}_{i=1}^n\right)\Bigg|\mid X_{1:n}\Bigg] \leq C_2\delta_n.
\end{align*}
Thus, it suffices to prove the asserted claim with $e_i$'s replaced by $\sigma_ee'_i+e_{i,2}$'s, which are bounded by a universal constant (note that $\sigma_e \le 1$ since $e_i$ has unit variance).

Further, define the function $f\colon(0,1)\to\mathbb{R}$ by
\[
f(\alpha)=\frac{2\sqrt{2}(1-2\alpha)}{3\sqrt{\alpha(1-\alpha)}},\qquad\text{for all }\alpha\in(0,1).
\]
One can directly check that $f(\alpha)$ is the skewness of the Beta$(\alpha,1-\alpha)$ distribution for all $\alpha\in(0,1)$. Since $\lim_{\alpha\to0}f(\alpha)=\infty$, $\lim_{\alpha\to1}f(\alpha)=-\infty$ and $f$ is continuous, there is an $\alpha^*\in(0,1)$ satisfying $f(\alpha^*)=\Ep[e_{1}^3]$. 
%Now, let $\zeta_1,\dots,\zeta_n$ and $\eta_1,\dots,\eta_n$ be respectively i.i.d.~$N(0,1/2)$ and Beta$(\alpha^*,1-\alpha^*)$ variables such that $\zeta_1,\dots,\zeta_n,\eta_1,\dots,\eta_n,X_1,\dots,X_n$ are independent. We set $\tilde e_i=\zeta_i+\sqrt 2(\eta_i-\alpha^*)/\sqrt{\alpha^*(1-\alpha^*)}$ for $i=1,\dots,n$, so that the distribution of $\tilde e_i$'s is equal to the distribution of $e_i$'s in Corollary \ref{coro:beta-comparison} with $v = 1/2$, $\alpha = \alpha^*$ and $\beta = 1 - \alpha^*$. It is then easy to check that $\Ep[\tilde e_i]=0$, $\Ep[\tilde e_i^2]=1$, and $\Ep[\tilde e_i^3]=\Ep[e_i^3]$ for all $i = 1,\dots,n$. Also, set
We define random variables $\tilde e_1,\dots,\tilde e_n$ as in Corollary \ref{coro:beta-comparison} with $\alpha=\alpha^*$ and $\beta=1-\alpha^*$ such that they are independent of everything else. It is then easy to check that $\Ep[\tilde e_i]=0$, $\Ep[\tilde e_i^2]=1$, and $\Ep[\tilde e_i^3]=\Ep[e_i^3]$ for all $i = 1,\dots,n$. Also, applying Corollary \ref{coro:beta-comparison}, we have on $\mathcal{A}_n$ that
\begin{equation}\label{eq: third order to gaussian multipliers}
\sup_{y\in\mathbb{R}^p}\left|\Pr\left(\frac{1}{\sqrt n}\sum_{i=1}^n \tilde e_i \tilde X_i \leq y\mid X_{1:n}\right)-\Pr\left(\hat G \leq y\mid X_{1:n}\right)\right|\leq C_3\delta_n,
\end{equation}
%\[
%\sup_{x\in\mathbb{R}}\left|\Pr(\tilde T^*_n\leq x\mid X_{1:n})-\Pr(T^{\hat{G}}_n\leq x\mid X_{1:n})\right|\leq C_3\left(\frac{B_n^2\log^5(pn)}{n}\right)^{1/4},
%\]
where $C_3$ is a constant depending only on $\alpha^*$, $b_1$ and $b_2$. 

We now apply Theorem \ref{cor: max} with $V_i = e_i \tilde X_i$ and $Z_i = \tilde e_i \tilde X_i$ for all $i=1,\dots,n$ conditional on $X_{1:n}$ on the event $\mathcal A_n$. Conditions V, P, and B with $C_v$, $C_p$, and $C_b$ depending only on $\alpha^*$, $b_1$ and $b_2$ follow immediately from the inequalities \eqref{eq: subgaussian bound on x} and \eqref{eq: upper and lower bounds for second moment} and the boundedness of $e_{i}$'s and $\tilde e_i$'s. Condition A with $\delta=\delta_n$ follows from \eqref{eq: third order to gaussian multipliers} and the derivation in \eqref{beta-anti}. Moreover, \eqref{eq: bn bounds 1} and \eqref{eq: bn bounds 2} are evident by construction. Thus, by Theorem \ref{cor: max}, we have on $\mathcal{A}_n$  that
\begin{equation}\label{eq: lemma 56 end}
\sup_{y\in\mathbb{R}^p}\left|\Pr\left(\frac{1}{\sqrt n}\sum_{i=1}^n \tilde e_i \tilde X_i \leq y\mid X_{1:n}\right)-\Pr\left(\frac{1}{\sqrt n}\sum_{i=1}^n  e_i \tilde X_i  \leq y\mid X_{1:n}\right)\right|\leq C_4\delta_n,
\end{equation}
where $C_4$ is a constant depending only on $\alpha^*$, $b_1$ and $b_2$. The asserted claim now follows from combining \eqref{eq: lem 56 beginning}, \eqref{eq: third order to gaussian multipliers}, and \eqref{eq: lemma 56 end} via the triangle inequality and using Lemma \ref{coro: gaussian approximation}.
\end{proof}

We are now in the position to prove the main results from Section \ref{sec: main results}.

\begin{proof}[Proof of Theorem \ref{thm: gaussian approximation main}]
The asserted claim follows immediately from Lemma \ref{coro: gaussian approximation} by applying \eqref{eq: gaussian approximation ks metric} with $x = c_{1-\alpha}^G$.
\end{proof}

\begin{proof}[Proof of Theorem \ref{cor: rejection probabilities}]
Let $C_1$, $C_2$, and $C_3$ be the constants $C$ in Lemmas \ref{thm: empirical anticoncentration}, \ref{thm: empirical bootstrap}, and \ref{eq: second order matching multiplier bootstrap ks metric}, respectively. Set
$$
\beta_n = (1\vee C_1\vee C_2\vee C_3)\left( \frac{B_n^2\log^5(p n)}{n} \right)^{1/4}.
$$
By Lemmas \ref{thm: empirical bootstrap} and \ref{eq: second order matching multiplier bootstrap ks metric}, we have 
$
\sup_{x\in\mathbb R}\left| \Pr(T_n \leq x) - \Pr(T_n^* \leq x\mid X_{1:n}) \right| \leq \beta_n
$
with probability at least $1 - 2/n-3\upsilon_n$. Hence, letting $c_{1-\gamma}$ be the $(1-\gamma)$th quantile of $T_n$ for all $\gamma\in(0,1)$, we have with the same probability that
$$
\Pr(T_n^* \leq c_{1-\alpha + \beta_n}\mid X_{1:n}) \geq \Pr(T_n \leq c_{1-\alpha + \beta_n}) - \beta_n \geq 1 - \alpha, \quad \text{and}
$$
\begin{align*}
\Pr(T_n^* \leq c_{1-\alpha - 3\beta_n} \mid X_{1:n}) & \leq \Pr(T_n \leq c_{1-\alpha - 3\beta_n}) + \beta_n\\
& \leq 1-\alpha - 2\beta_n + C_{1}\left(\frac{B_n^2\log^5(p n)}{n}\right)^{1/4} < 1 - \alpha,
\end{align*}
where the second inequality follows from Lemma \ref{thm: empirical anticoncentration}. Therefore,
$$
\Pr(c_{1-\alpha - 3\beta_n} < c_{1-\alpha}^B \leq c_{1-\alpha + \beta_n}) \geq 1 - 2/n-3\upsilon_n\geq1-5\upsilon_n,
$$
so that 
$$
\Pr(T_n > c_{1-\alpha}^B) \leq \Pr(T_n > c_{1-\alpha - 3\beta_n}) + 5\upsilon_n \leq \alpha + 3\beta_n + 5\upsilon_n \leq \alpha + 8\beta_n \quad \text{and} 
$$
\begin{align*}
\Pr(T_n > c_{1-\alpha}^B) &\geq \Pr(T_n > c_{1-\alpha + \beta_n}) - 5\upsilon_n \\
& \geq \alpha - \beta_n - C_{1}\left( \frac{B_n^2\log^5(p n)}{n} \right)^{1/4} - 5\upsilon_n \geq \alpha - 7\beta_n,
\end{align*}
where the second inequality follows from Lemma \ref{thm: empirical anticoncentration}. Combining these inequalities gives the asserted claim.
\end{proof}

\begin{acks}[Acknowledgments]
We are grateful to Tim Armstrong, Matias Cattaneo, Xiaohong Chen, and Tengyuan Liang for helpful discussions. We also thank seminar participants at  the University of Pennsylvania and Yale University.
\end{acks}

\begin{funding}
 K. Kato is suport by the NSF DMS-1952306 and DMS-2014636.
\end{funding}

\begin{supplement}
The Supplementary Material contains proofs omitted in the main text as well as several technical tools, and the simulation results. 
%\stitle{Title of Supplement A}
%\sdescription{Short description of Supplement A.}
%\end{supplement}
%\begin{supplement}
%\stitle{Title of Supplement B}
%\sdescription{Short description of Supplement B.}
\end{supplement}

\newpage
\centerline{\textbf{\Large{Supplementary Material}}}

\appendix
\section{Proofs for Section \ref{sec: polynomial moment conditions}}
\label{sec: proof of poly moment}

\subsection{Technical lemma}
The proofs of Theorems \ref{cor: ga under polynomial conditions} and \ref{cor: boot app pol mom cond} rely on the following lemma combined with a truncation argument.  
Lemma \ref{lem:dirty} below is a version of the high-dimensional CLT and would be of independent interest. The proof of Lemma \ref{lem:dirty} relies on some elements of the proof of Lemma \ref{lem: main} and Stein's exchangeable pair approach. 
In what follows,  $\| \cdot \|_{\infty}$ denotes the $\ell^\infty$-norm for vectors, i.e., $\| y \|_{\infty} = \max_{1 \le j \le p} |y_j|$ for $y=(y_1,\dots,y_p)' \in \R^p$. 
\begin{lemma}\label{lem:dirty}
Let $X_{1:n}=(X_1,\dots, X_n)$ be a sequence of centered independent random vectors in $\mathbb{R}^p$. 
Set $S_n=n^{-1/2}\sum_{i=1}^n X_i$ and $\Sigma=n^{-1}\sum_{i=1}^n\Ep[X_iX_i']$. 
Let $\tilde X_{1:n}=(\tilde X_1,\dots,\tilde X_n)$ be an independent copy of $X_{1:n}$ and set $Y_i=(\tilde X_i-X_i)/\sqrt n$ for $i=1,\dots,n$. 
Let $\tilde\Sigma$ be a $d\times d$ positive semidefinite symmetric matrix such that $\min_{1\leq j\leq p}\tilde\Sigma_{jj}\geq c$ for some constant $c>0$. 
Then for any positive constant $\phi$,
\begin{equation}\label{eq:dirty}
\begin{split}
&\sup_{y\in\mathbb R^p}\left|\Pr(S_n\leq y) - \Pr(Z\leq y)\right|\\
&\leq C\left(
\phi(\log p)^{3/2}\Delta_0
+\phi(\log p)^2\sqrt{\Delta_1}+\phi^3(\log p)^{7/2}\Delta_1\right.\\
&\left.+(\log p)\sqrt{\Delta_2(\phi)}+\phi(\log p)^{3/2}\Delta_2(\phi)
+\sqrt{(\log p)^{3}\Delta_3(\phi)}
+\frac{\sqrt{\log p}}{\phi}
\right),
\end{split}
\end{equation}
where $C>0$ is a constant depending only on $c$, $Z$ is a centered Gaussian vector in $\mathbb{R}^p$ with covariance matrix $\tilde\Sigma$, and
\begin{align*}
\Delta_0&=\max_{1\leq j,k\leq p}|\Sigma_{jk}-\tilde\Sigma_{jk}|,\qquad
\Delta_1=\frac{1}{n^2}\max_{1\leq j\leq p}\sum_{i=1}^n\Ep[X_{ij}^4],\\
\Delta_2(\phi)&=\max_{1\leq j\leq p}\sum_{i=1}^n\Ep\left[Y_{ij}^21\{\|Y_i\|_\infty>(\phi\log p)^{-1}\}\right],\\
\Delta_3(\phi)&=\Ep\left[\max_{1\leq i\leq n}\|Y_{i}\|_\infty^21\{\|Y_i\|_\infty>(\phi\log p)^{-1}\}\right].
\end{align*}
In particular, if there exists a constant $\kappa_n>0$ such that $\|X_i\|_\infty\leq \kappa_n$ for every $i=1,\dots,n$,
then 
\begin{equation}\label{bdd-version}
\begin{split}
&\sup_{y\in\mathbb R^p}\left|\Pr(S_n\leq y) - \Pr(Z\leq y)\right|\\
&\leq C'\left(
\sqrt{\Delta_0}\log p
+\left(\Delta_1\log^5p\right)^{1/4}
+\frac{\kappa_n(\log p)^{3/2}}{\sqrt n}
\right),
\end{split}
\end{equation}
where $C'>0$ is a constant depending only on $c$.
\end{lemma}

The proof of this lemma is differed to Appendix \ref{sec: proof of tech lemma} below. 

\subsection{Proof of Theorem \ref{cor: ga under polynomial conditions}}
Without loss of generality, we may assume
\ben{\label{wlog0}
\left(\frac{B_n^2\log^5p}{n}\right)^{1/4}+\sqrt{\frac{B_n^2(\log p)^{3-2/q}}{n^{1-2/q}}}\leq1.
} 
Also, we will use the symbol $\lesssim$ to denote inequalities that hold up to constants depending only on $q$, $b_1$, and $b_2$.

Set $S_n = n^{-1/2}\sum_{i=1}^n X_i$. Recall that $G \sim N(0,\Sigma_n)$ with $\Sigma_n = \Ep[S_nS_n']$. We will show that 
\begin{equation}
\sup_{y\in\mathbb R^p}\left|\Pr(S_n\leq y) - \Pr(G\leq y)\right|\lesssim\left(\frac{B_n^2\log^5p}{n}\right)^{1/4}+\sqrt{\frac{B_n^2(\log p)^{3-2/q}}{n^{1-2/q}}},
\label{eq: poly rectangle}
\end{equation}
which implies the desired result. 
We apply \eqref{eq:dirty} with $\tilde\Sigma=\Sigma_n$. 
Under the assumptions of Theorem \ref{cor: ga under polynomial conditions}, 
\ba{
\Delta_1&\leq\frac{B_n^2}{n},&
\Delta_2(\phi)&\lesssim(\phi\log p)^{q-2}\frac{B_n^q}{n^{q/2-1}},&
\Delta_3(\phi)&\lesssim(\phi\log p)^{q-2}\frac{B_n^q}{n^{q/2}}.
}
Hence,
\ba{
&\sup_{y\in\mathbb R^p}\left|\Pr(S_n\leq y) - \Pr(G\leq y)\right|\\
&\lesssim \phi(\log p)^2\frac{B_n}{\sqrt n}
+\phi^3(\log p)^{7/2}\frac{B_n^2}{n}
+\phi^{q/2-1}(\log p)^{q/2}\sqrt{\frac{B_n^q}{n^{q/2-1}}}\\
&\qquad+\phi^{q-1}(\log p)^{q-1/2}\frac{B_n^q}{n^{q/2-1}}
+\sqrt{\phi^{q-2}(\log p)^{q+1}\frac{B_n^q}{n^{q/2}}}
+\frac{\sqrt{\log p}}{\phi}.
}
Choose the constant $\phi$ such that
\[
\phi^{-1}=\left(\frac{B_n^2\log^3p}{n}\right)^{1/4}+\frac{B_n(\log p)^{1-1/q}}{n^{1/2-1/q}}.
\]
Then we have
\ba{
 &\phi(\log p)^2\frac{B_n}{\sqrt n}
+\phi^3(\log p)^{7/2}\frac{B_n^2}{n}\lesssim\left(\frac{B_n^2\log^5p}{n}\right)^{1/4},\\
&\phi^{q/2-1}(\log p)^{q/2}\sqrt{\frac{B_n^q}{n^{q/2-1}}}+\phi^{q-1}(\log p)^{q-1/2}\frac{B_n^q}{n^{q/2-1}}
\lesssim\sqrt{\frac{B_n^2(\log p)^{3-2/q}}{n^{1-2/q}}},\\
&\sqrt{\phi^{q-2}(\log p)^{q+1}\frac{B_n^q}{n^{q/2}}}\lesssim\frac{B_n(\log p)^{2-1/q}}{n^{1-1/q}}
\leq\sqrt{\frac{B_n^2(\log p)^{3-2/q}}{n^{1-2/q}}},
}
and
\ba{
\frac{\sqrt{\log p}}{\phi}\lesssim\left(\frac{B_n^2\log^5p}{n}\right)^{1/4}+\sqrt{\frac{B_n^2\log^{3-2/q}p}{n^{1-2/q}}}.
}
Combining these bounds leads to \eqref{eq: poly rectangle}. 
\qed

\subsection{Proof of Theorem \ref{cor: boot app pol mom cond}}
Without loss of generality, we may assume
\ben{\label{wlog}
\delta_{n,q} := \left(\frac{B_n^2\log^5(pn)}{n}\right)^{1/4}+\sqrt{\frac{B_n^2\log^{3-2/q}(pn)}{n^{1-2/q}}}\leq1.
} 
Also, we will use the symbol $\lesssim$ to denote inequalities that hold up to constants depending only on $q$, $b_1$, and $b_2$. 

Set $S_n^* = n^{-1/2}\sum_{i=1}^n X_i^*$ . In view of the proof of Theorem \ref{cor: rejection probabilities}, it suffices to show that
 \begin{equation}
\sup_{y \in \R^p}\left|\Pr(S_n^*\le y \mid X_{1:n}) - \Pr(G \le y)\right| \le  C \delta_{n,q} 
\label{eq: bootstrap rectangle}
\end{equation}
holds with probability at least $1-C\delta_{n,q}$ for some constant $C$ that depends only on $q,b_1$, and $b_2$. 
Set $\kappa_n=B_n(n/\log(pn))^{1/q}$ and for $i=1,\dots,n$ and $j=1,\dots,p$, define 
\[
\wh{X}_{ij}=X_{ij}1\{\|X_{i}\|_{\infty}\leq\kappa_n\}.
\] 
Also,  define $\wh S_n^*$ in the same way as $S_n^*$ with $X_{ij}$ replaced by $\wh X_{ij}$. Since
\begin{align*}
\Pr(X_{ij}\neq\wh X_{ij}\text{ for some }i,j)
&\leq\Pr(\max_{1\leq i\leq n}\|X_i\|_\infty>\kappa_n)\\
&\leq\kappa_n^{-q}B_n^q=\frac{\log (pn)}{n},
\end{align*}
we have that
\[
\sup_{y \in \R^p}\left|\Pr(S_n^*\le y \mid X_{1:n}) - \Pr(G \le y)\right|
=\sup_{y \in \R^p}\left|\Pr(\wh S_n^* \le y \mid X_{1:n}) - \Pr(G \le y)\right|
\]
with probability at least $1-\log (pn)/n$. We will show below that
\begin{equation}\label{boot-aim}
\sup_{y \in \R^p}\left|\Pr(\wh S_n^* \le y \mid X_{1:n}) - \Pr(G \le y)\right|
\lesssim \delta_{n,q}
\end{equation}
with probability at least $1-4/n-B_n^q/n^{q/2-1}$.  Since 
\[
B_n^q/n^{q/2-1}=(B_n^2/n^{1-2/q})^{q/2} \le \sqrt{\frac{B_n^2\log^{3-2/q}(pn)}{n^{1-2/q}}},
\]
these results imply \eqref{eq: bootstrap rectangle}.

\medskip

\noindent\textbf{Gaussian multiplier bootstrap}. 
Applying Proposition \ref{coro:g-g-comparison} conditional on the data, we have 
\ben{\label{gmb-1}
\sup_{y \in \R^p}\left|\Pr(\wh S_n^* \le y \mid X_{1:n}) - \Pr(G \le y)\right|
\lesssim \sqrt{\Delta_{n,r}}\log p,
}
where
\[
\Delta_{n,r}:=\max_{1\leq j,k\leq p}\left|\frac{1}{n}\sum_{i=1}^n(\wh X_{ij}-\wh X^{ave}_{n,j})(\wh X_{ik}-\wh X^{ave}_{n,k})-\Sigma_{n,jk}\right|
\]
with $\wh X^{ave}_{n,j}=n^{-1}\sum_{i=1}^n\wh X_{ij}$. 
Since
\begin{equation}\label{eq:cov-diff}
\begin{split}
&\max_{1\leq j,k\leq p}\left|\frac{1}{n}\sum_{i=1}^n\left(\Ep\left[\wh{X}_{ij}\wh{X}_{ik}\right]-\Ep\left[X_{ij}X_{ik}\right]\right)\right| \\
&\leq\max_{1\leq j\leq p}\max_{1\leq i\leq n}2\Ep\left[X_{ij}^21\{\|X_i\|_{\infty}>\kappa_n\}\right] \leq \frac{2B_n^q}{\kappa_n^{q-2}}
=2\frac{B_n^2(\log p)^{1-2/q}}{n^{1-2/q}},
\end{split}
\end{equation}
we can bound $\Delta_{n,r}$ as
\ben{\label{gmb-2}
\Delta_{n,r}\leq\Delta^{(1)}_{n,r}+\{\Delta^{(2)}_{n,r}\}^2+2\frac{B_n^2(\log p)^{1-2/q}}{n^{1-2/q}},
}
where
\[
\Delta^{(1)}_{n,r}:=\max_{1\leq j,k\leq p}\left|\frac{1}{n}\sum_{i=1}^n\left(\wh X_{ij}\wh X_{ik}-\Ep[\wh X_{ij}\wh X_{ik}]\right)\right|,\quad
\Delta^{(2)}_{n,r}:=\max_{1\leq j\leq p}|\wh X^{ave}_{n,j}|.
\]

First we bound $\Delta^{(1)}_{n,r}$. Observe that
\ba{
\max_{1\leq j,k\leq p}\frac{1}{n^2}\sum_{i=1}^n\Ep[\wh X_{ij}^2\wh X_{ik}^2]
\leq\max_{1\leq j\leq p}\frac{1}{n^2}\sum_{i=1}^n\Ep[X_{ij}^4]\leq\frac{B_n^2}{n}
}
and
\ba{
\frac{1}{n}\max_i\max_{1\leq j,k\leq p}|\wh X_{ij}\wh X_{ik}|
\leq\frac{\kappa_n^2}{n}.%=\frac{B_n^2}{n^{1-2/q}(\log p)^{2/q}}.
}
Thus, by Lemma \ref{lem: maximal ineq},
\ba{
\Ep[\Delta^{(1)}_{n,r}]
&\lesssim\sqrt{\frac{B_n^2\log p}{n}}+\frac{\kappa_n^2\log p}{n}.
%+\frac{B_n^2(\log p)^{1-2/q}}{n^{1-2/q}}.
}
Also, by Lemma E.2 in \cite{CCK17}, there is a universal constant $K>0$ such that, for any $t>0$, we have 
$\Delta^{(1)}_{n,r}\lesssim \Ep[\Delta^{(1)}_{n,r}]+t$ with probability at least $1-\exp(-nt^2/(3B_n^2))-3\exp(-tn/(K\kappa_n^2))$. 
Choosing
\[
t=\sqrt{\frac{3B_n^2\log(pn)}{n}}+\frac{2K\kappa_n^2\log(pn)}{n},
\]
we have
\ben{\label{gmb-3}
\Delta^{(1)}_{n,r}
\lesssim\sqrt{\frac{B_n^2\log(pn)}{n}}+\frac{\kappa_n^2\log(pn)}{n}
=\sqrt{\frac{B_n^2\log(pn)}{n}}+\frac{B_n^2\log^{1-2/q}(pn)}{n^{1-2/q}}
}
with probability at least $1-2/n$. 

Next we bound $\Delta^{(2)}_{n,r}$. 
Similarly to the above argument, we have 
\[
\max_{1\leq j\leq p}|\wh X^{ave}_{n,j}-\Ep[\wh X^{ave}_{n,j}]|\lesssim\sqrt{\frac{B_n\log(pn)}{n}}+\frac{B_n\log^{1-1/q}(pn)}{n^{1-1/q}}
\]
with probability at least $1-2/n$. 
Also, since $\Ep[X_{ij}]=0$, 
\ba{
|\Ep[\wh X_{ij}]|=|-\Ep[X_{ij}1\{\|X_i\|_\infty>\kappa_n\}]|\leq\kappa_n^{-q+1}B_n^q=\frac{B_n(\log p)^{1-1/q}}{n^{1-1/q}}.
}
Hence
\ben{\label{gmb-4}
\Delta^{(2)}_{n,r}\lesssim\sqrt{\frac{B_n\log(pn)}{n}}+\frac{B_n(\log p)^{1-1/q}}{n^{1-1/q}}
}
with probability at least $1-2/n$. By \eqref{wlog} and \eqref{gmb-1}--\eqref{gmb-4}, \eqref{boot-aim} holds with probability at least $1-4/n$.
\medskip

%Applying Proposition 2.1 in \cite{CCKK22} conditional on $X$, we have that
%\[
%\sup_{R\in\mathcal R}\left|\Pr^*(S_n^*\in R) - \Pr(Z\in R)\right|
%\lesssim \sqrt{\Delta_{n,r}}\log p,
%\]
%where
%\[
%\Delta_{n,r}:=\max_{1\leq j,k\leq p}\left|\frac{1}{n}\sum_{i=1}^n\left((X_{ij}-\ol{X}_{n,j})(X_{ik}-\ol{X}_{n,k})-\Ep[X_{ij}X_{ik}]\right)\right|.
%\] 
%By the proof of Proposition 4.1 in \cite{CCK17}, there are constants $C,c>0$ depending only on $q$ such that, for any $t>0$, 
%\begin{align*}
%\Delta_{n,r}
%\leq C\left(\sqrt{\frac{B_n^2\log p}{n}}+\frac{B_n^2\log p}{n^{1-2/q}}\right)+t
%\end{align*}
%holds with probability at least $1-\exp(-nt^2/(3B_n^2))-ct^{-q/2}n^{1-q/2}B_n^q$. Let
%\[
%t=\sqrt{\frac{3B_n^2\log(pn)}{n}}+\left(\frac{B_n^2}{n^{1-2/q}}\right)^{\frac{q}{q+1}}.
%\]
%Then
%\begin{align*}
%\exp(-nt^2/(3B_n^2))+ct^{-q/2}n^{1-q/2}B_n^q
%\leq\frac{1}{n}+c\left(\frac{B_n^2}{n^{1-2/q}}\right)^{\frac{q}{2(q+1)}}.
%\end{align*}
%Hence
%\begin{align*}
%\Delta_{n,r}
%\lesssim \sqrt{\frac{B_n^2\log(pn)}{n}}+\frac{B_n^2\log p}{n^{1-2/q}}
%\end{align*}
%holds with probability at least $1-1/n-c(B_n^2/n^{1-2/q})^{\frac{q}{2(q+1)}}$. 

\noindent\textbf{Empirical bootstrap}. 
Applying \eqref{bdd-version} conditional on the data, we obtain
\begin{align}
&\sup_{y \in \R^p}\left|\Pr(\wh S_n^* \le y \mid X_{1:n}) - \Pr(G \le y)\right|
\notag\\
&\lesssim
\sqrt{\Delta_{n,r}}\log p
+\left(\frac{\Delta'_{n,r}\log^5p}{n}\right)^{1/4}
+\frac{\kappa_n(\log p)^{3/2}}{\sqrt n},\label{eb-1}
\end{align}
where, with $\wh X_{ij}^*:=X_{ij}^*1\{\|X_{i}^*\|_\infty\leq\kappa_n\}$,
\ba{
\Delta_{n,r}'=\frac{1}{n}\max_{1\leq j\leq p}\sum_{i=1}^n\Ep[(\wh X_{ij}^*-\wh X^{avg}_{n,j})^4\mid X_{1:n}].
}
We have
\begin{align*}
\Delta_{n,r}'
\lesssim\frac{1}{n}\max_{1\leq j\leq p}\sum_{i=1}^n\wh X_{ij}^4.
\end{align*}
Observe that
\[
\max_{1\leq j\leq p}\sum_{i=1}^n\Ep[\wh X_{ij}^4]
\leq\max_{1\leq j\leq p}\sum_{i=1}^n\Ep[X_{ij}^4]\leq nB_n^2
\]
and
\[
\Ep\left[\max_{1\leq i\leq n}\|\wh X_{i}\|_\infty^{2q}\right]
\leq\kappa_n^q\Ep\left[\max_{1\leq i\leq n}\|\wh X_{i}\|_\infty^{q}\right]
\leq\kappa_n^qB_n^q.
\]
Since $2q>4$, the second bound particularly yields
\[
\Ep\left[\max_{1\leq i\leq n}\|\wh X_{i}\|_\infty^{4}\right]
\leq\kappa_n^2B_n^2.
\]
Thus, by Lemma 9 in \cite{CCK15},
\begin{align*}
\Ep\left[\max_{1\leq j\leq p}\sum_{i=1}^n\wh X_{ij}^4\right]
\lesssim nB_n^2+B_n^2\kappa_n^2(\log p).
\end{align*}
By \eqref{wlog},
\[
\kappa_n^2(\log p)\leq\frac{B_n^2\log^{1-2/q}(pn)}{n^{-2/q}}\leq n.
\]
Hence
\begin{align*}
\Ep\left[\max_{1\leq j\leq p}\sum_{i=1}^n\wh X_{ij}^4\right]
\lesssim nB_n^2.
\end{align*}
Then, by Lemma E.4(ii) in \cite{CCK15} with $s=q/2$, there is a constant $K'>0$ depending only on $q$ such that, for any $t>0$,
\ba{
\max_{1\leq j\leq p}\sum_{i=1}^n\wh X_{ij}^4
\lesssim nB_n^2+t
}
holds with probability at least $1-K'(B_n^2\kappa_n^2/t)^{q/2}$. Choosing $t=nB_n^2(K')^{2/q}$, we have
\ba{
\max_{1\leq j\leq p}\sum_{i=1}^n\wh X_{ij}^4
\lesssim nB_n^2
}
with probability at least $1-(\kappa_n^2/n)^{q/2}\geq1-B_n^q/n^{q/2-1}$. All together, 
\ben{\label{eb-2}
\Delta_{n,r}'
\lesssim B_n^2
}
holds with probability at least $1-B_n^q/n^{q/2-1}$. In addition, %by \eqref{wlog},
\[
\frac{\kappa_n(\log p)^{3/2}}{\sqrt n}
=\frac{B_n\log^{3/2-1/q}(np)}{n^{1/2-1/q}}
=\sqrt{\frac{B_n^2\log^{3-2/q}(p n)}{n^{1-2/q}}}.
\]
Consequently, \eqref{wlog}, \eqref{gmb-2}--\eqref{gmb-4} and \eqref{eb-1} imply that \eqref{boot-aim} holds with probability at least $1-4/n-B_n^q/n^{q/2-1}$. 
\qed

\section{Sharpness of Gaussian-to-Gaussian Comparison in Proposition \ref{coro:g-g-comparison}}\label{sec: sharpness}

\begin{proposition}[Sharpness of Proposition \ref{coro:g-g-comparison}]\label{ggcomp:lower-bound}
Let $(\zeta_i)_{i=1}^\infty$ and $(\eta_j)_{j=1}^\infty$ be two independent sequences of independent $N(0,1)$ random variables.  Also, let $\sigma=\sigma_p$ be a sequence of positive constants such that $\sigma\log p\to0$ and $\sigma p^c\to\infty$ as $p\to\infty$ for any $c>0$. 
Define $Z_{1,ij}:=\zeta_i+\sigma\eta_j$ and $Z_{2,ij}:=\zeta_i$ for all $i,j=1,\dots,p$. Then we have
$$
\liminf_{p\to\infty}\frac{1}{\sqrt{\Delta}\log p}\sup_{y\in\mathbb{R}}\left|\Pr\left(\max_{1\leq i,j\leq p}Z_{1,ij}\leq y\right)-\Pr\left(\max_{1\leq i,j\leq p}Z_{2,ij}\leq y\right)\right|>0,
$$
where $\Delta:=\max_{1\leq i,j,k,l\leq p}\left|\Ep[Z_{1,ij}Z_{1,kl}]]-\Ep[Z_{2,ij}Z_{2,kl}]]\right|=\sigma^2$. 
\end{proposition}
\begin{proof}[Proof of Proposition \ref{ggcomp:lower-bound}]
Without loss of generality, we may assume $\sigma\log p\leq(\sqrt{2}+1/24)^{-1}$. 
Set $M_p:=\max_{1\leq j\leq p}\eta_j$ and $A_p:=\{|M_p-\Ep[M_p]|\leq\sqrt{\log p}/24\}$. 
Then, by equation (1.5) in \cite{Tanguy15}, there is a universal constant $c_1>0$ such that
\[
\Pr(A_p^c)=\Pr\left(|M_p-\Ep[M_p]|>\sqrt{\log p}/24\right)\leq 6e^{-c_1\log p}=6/p^{c_1}.
\]
Thus, by assumption we obtain
\begin{equation}\label{eq:max-dev}
\Pr(A_p^c)=o(\sigma)\qquad\text{as }p\to\infty.
\end{equation}
Now, note that $\max_{1\leq i,j\leq p}Z_{1,ij}=\max_{1\leq i\leq p}\zeta_{i}+\sigma M_p$. 
Then, for every $y\in\mathbb{R}$, the triangle inequality yields
\begin{align*}
&\left|\Pr\left(\max_{1\leq i,j\leq p}Z_{1,ij}\leq y\right)-\Pr\left(\max_{1\leq i,j\leq p}Z_{2,ij}\leq y\right)\right|\\
&\geq\left|\Pr\left(\left\{\max_{1\leq i\leq p}\zeta_i+\sigma M_p\leq y\right\}\cap A_p\right)-\Pr\left(\left\{\max_{1\leq i\leq p}\zeta_i\leq y\right\}\cap A_p\right)\right|-2\Pr(A_p^c).
%&=\Ep\left[\int_{t-M_p\sigma}^tf_p(x)dx;A_p\right]-2\Pr(A_p^c),
\end{align*}
Since $\sqrt{\log p}/12\leq\Ep[M_p]\leq\sqrt{2\log p}$ by the fourth inequality on page 58 of \cite{CCK15}, we have $\sqrt{\log p}/24\leq M_p\leq(\sqrt{2}+1/24)\sqrt{\log p}$ on $A_p$. 
In particular, $M_p>0$ on $A_p$, so we have
\[
\begin{split}
&\left|\Pr\left(\left\{\max_{1\leq i\leq p}\zeta_i+\sigma M_p\leq y\right\}\cap A_p\right)-\Pr\left(\left\{\max_{1\leq i\leq p}\zeta_i\leq y\right\}\cap A_p\right)\right|\\
&\quad =\Ep\left[\int_{y-\sigma M_p}^yf_p(x)dx;A_p\right],
\end{split}
\]
where $f_p$ denotes the density of $\max_{1\leq i\leq p}\zeta_i$. 
Then, by the second inequality on page 58 of \cite{CCK15}, there is a universal constant $c_2>0$ such that $f_p(x)\geq c_2\sqrt{2\log p}$ for all $x\in[d_p-1/\sqrt{\log p},d_p+1/\sqrt{\log p}]$, where
\[
d_p:=\sqrt{2\log p}-\frac{\log(4\pi)+\log\log p}{2\sqrt{2\log p}}.
\]
Since $\sigma M_p\leq1/\sqrt{\log p}$ on $A_p$ by assumption, we deduce that
\begin{align*}
&\sup_{t\in\mathbb{R}}\left|\Pr\left(\max_{1\leq i,j\leq p}Z_{1,ij}\leq y\right)-\Pr\left(\max_{1\leq i,j\leq p}Z_{2,ij}\leq y\right)\right|\\
&\geq\Ep\left[\int_{d_p-\sigma M_p}^{d_p}f_p(x)dx;A_p\right]-2\Pr(A_p^c)
\geq\Ep\left[c_2\sigma M_p\sqrt{2\log p};A_p\right]-2\Pr(A_p^c).
\end{align*}
Since $M_p/\sqrt{2\log p}\to1$ almost surely as $p\to\infty$, we obtain
\[
\liminf_{p\to\infty}\frac{1}{\sigma\log p}\Ep\left[c_2M_p\sqrt{2\log p};A_p\right]
\geq 2c_2
\]
by \eqref{eq:max-dev} and Fatou's lemma. Combining this with \eqref{eq:max-dev} completes the proof of the proposition. 
\end{proof}

\section{Proof of Lemma \ref{lem: main}}\label{sec: proof of main lemma}
%For simplicity of notation, we will provide the proof only for the case $d = 0$. We will point out what changes one has to make to get the result for $d = 1,\dots,D$ along the proof.
Since we assume throughout the paper that $p\geq 2$, the asserted claim is trivial if $\phi<1$. We will therefore assume in the proof that $\phi\geq 1$. In turn, $\phi\geq 1$ together with \eqref{eq: phi restriction} imply that
\begin{equation}\label{eq: lazy condition}
C_p B_n \log^2(p n)\leq \sqrt n.
\end{equation}
This condition will be useful in the proof.

Fix $d = 0,\dots,D-1$ and $e^d\in\{0,1\}^n$ such that if $\epsilon^d = e^d$, then $\mathcal A_d$ holds. All arguments in this proof will be conditional on $\epsilon^d = e^d$. For brevity of notation, however, we make this conditioning implicit and write $\Pr(\cdot)$ and $\Ep[\cdot]$ instead of $\Pr(\cdot\mid \epsilon^d = e^d)$ and $\Ep[\cdot\mid \epsilon^d = e^d]$, respectively. 

Fix any five-times continuously differentiable and decreasing function $g_0\colon\mathbb R\to\mathbb R$ such that (i) $g_0(t)\geq 0$ for all $t\in\mathbb R$, (ii) $g_0(t) = 0$ for all $t\geq 1$, and (iii) $g_0(t) = 1$ for all $t\leq 0$. For this function, there exists a constant $C_g>0$ such that
$$
\sup_{t\in\mathbb R}\Big(|g_0^{(1)}(t)|\vee|g_0^{(2)}(t)|\vee|g_0^{(3)}(t)|\vee|g_0^{(4)}(t)|\vee|g_0^{(5)}(t)|\Big) \leq C_g.
$$
In this proof, we will use the symbol $\lesssim$ to denote inequalities that hold up to a constant depending only on $C_v$, $C_p$, $C_b$, $C_a$, and $C_g$. Since $g_0$ can be chosen to be universal, we say that the inequality for $\varrho_{\epsilon^d}$ in the statement of the lemma holds up to a constant depending only on $C_v$, $C_p$, $C_b$, and $C_a$.

Fix $\phi \geq 1$ and set
$
\beta = \phi\log p.
$
Define functions $g\colon \mathbb R\to\mathbb R$ and $F\colon \mathbb R^p\to\mathbb R$ by $g(t) = g_0(\phi t)$ for all $t\in\mathbb R$ and
$$
F(w)=\beta^{-1}\log\left( \sum_{j=1}^p\exp(\beta w_j) \right),\quad\text{for all }w\in\mathbb R^p.
$$
It is immediate that the function $g$ satisfies
\begin{equation}
g(t)=\begin{cases}
1 & \text{if }t\leq0,\\
0 & \text{if }t\geq\phi^{-1}.
\end{cases}\label{eq: g property}
\end{equation}
It is also straightforward to check that the function $F$ has the following property:
\begin{equation}\label{eq: f properties}
\max_{1\leq j\leq p}w_j  \leq F(w)  \leq  \max_{1\leq j\leq p}w_j + \phi^{-1},\ \text{for all }w\in\mathbb R^p;
\end{equation}
see \cite{CCK13} for details.
Also, for all $y\in\mathbb R^p$, define the function $m^y\colon\mathbb R^p\to\mathbb R$ by
$$
m^y(w) = g(F(w - y)),\quad\text{for all }w\in\mathbb R^p.
$$
Below, we will need partial derivatives of $m^y$ up to the fifth order. For brevity of notation, we will use indices to denote these derivatives. For example, for any $j,k,l,r,h=1,\dots,p$, we will write
$$
m_{jklrh}^y(w)=\frac{\partial^5m^y(w)}{\partial w_j \partial w_k \partial w_l \partial w_r \partial w_h},\quad\text{for all }w\in\mathbb R^p.
$$
Using straightforward but lengthy algebra, we can show that the function $m^y$ has the following property: for all $j,k,l,r,h=1,\dots,p$, there exist functions $U_{jk}^y\colon\mathbb R^p\to\mathbb R$, $U_{jkl}^y\colon\mathbb R^p\to\mathbb R$, $U_{jklr}^y\colon\mathbb R^p\to\mathbb R$, and $U_{jklrh}^y\colon\mathbb R^p\to\mathbb R$ such that (i) for all $w\in\mathbb R^p$, we have
\begin{equation}\label{eq: property 1 - 2 and 3}
|m^y_{jk}(w)| \leq U^y_{jk}(w), \quad |m^y_{jkl}(w)| \leq U^y_{jkl}(w),
\end{equation}
\begin{equation}\label{eq: property 1 - 3 and 4}
|m^y_{jklr}(w)| \leq U^y_{jklr}(w), \quad |m^y_{jklrh}(w)| \leq U^y_{jklrh}(w),
\end{equation}
(ii) for all $w_1\in\mathbb R^p$ and $w_2\in\mathbb R^p$ such that $\beta\|w_2\|_{\infty}\leq 1$, we have
%$$
%U^y_{jk}(w_1+w_2)\lesssim U^y_{jk}(w_1), \quad U^y_{jkl}(w_1+w_2)\lesssim U^y_{jkl}(w_1),
%$$
\begin{equation}\label{eq: property 2 - 4 and 5}
U^y_{jklr}(w_1+w_2)\lesssim U^y_{jklr}(w_1), \quad U^y_{jklrh}(w_1+w_2)\lesssim U^y_{jklrh}(w_1),
\end{equation}
and (iii) for all $w\in\mathbb R^p$,
\begin{equation}\label{eq: property 3 - 2 and 3}
\sum_{j,k=1}^p U_{jk}^y(w) \lesssim \phi^2\log p, \quad \sum_{j,k,l=1}^p U_{jkl}^y(w) \lesssim \phi^3\log^2p,
\end{equation}
\begin{equation}\label{eq: property 3 - 3 and 4}
\sum_{j,k,l,r=1}^p U_{jklr}^y(w) \lesssim \phi^4\log^3p, \quad \sum_{j,k,l,r,h=1}^p U_{jklrh}^y(w) \lesssim \phi^5\log^4p.
\end{equation}
For example, we can set
\begin{align*}
U^y_{jk}(w)& = C_g(\phi^2+\phi\beta)\frac{\exp(\beta(w_j - y_j))\exp(\beta(w_k - y_k))}{\left( \sum_{i=1}^p \exp(\beta(w_i - y_i)) \right)^2}\\
& \quad + C_g\phi\beta 1\{j = k\} \frac{\exp(\beta(w_j - y_j))}{\sum_{i=1}^p \exp(\beta(w_i - y_i))},\quad\text{for all }w\in\mathbb R^p;
\end{align*}
see \cite{CCK13} and \cite{CCK17} for more details.

Further, for all $y\in\mathbb R^p$, define
\begin{equation}\label{eq: iy}
\mathcal I^{y} =m^y( S_{n,\epsilon^d}^V ) - m^y( S_n^Z )
\end{equation}
and
\begin{equation}\label{eq: h definition}
h^y(Y; x) = 1\left\{ -x < \max_{1\leq j\leq p}(Y_j - y_j) \leq x \right\},\ \text{for all }x\geq 0\text{ and }Y\in\mathbb R^p.
\end{equation}
Also, denote
\begin{equation}\label{eq: W definition}
W = \frac{1}{\sqrt n}\sum_{i=1}^n\Big(\epsilon_i^{d + 1}V_i + (1-\epsilon_i^{d+1})Z_i\Big).
\end{equation}

For the rest of the proof, we proceed in five steps. In the first step, we show that
\begin{align}
\sup_{y\in\mathbb R^p} |\Ep[\mathcal I^y]| &\lesssim \frac{B_n^2\phi^4\log^{5}(p n)}{n^2} +  \left( \Ep[\varrho_{\epsilon^{d+1}}] + \frac{\sqrt{\log p}}{\phi}+\delta \right)\nonumber\\
&\quad \times \left( \frac{\mathcal B_{n,1,d}\phi^2\log p}{\sqrt n} + \frac{\mathcal B_{n,2,d}\phi^3\log^2p}{n} + \frac{B_n^2\phi^4\log^3(p n)}{n} \right).\label{eq: EIy bound}
\end{align}
In the second step, we show that
\begin{equation}\label{eq: anticoncentration application}
\varrho_{\epsilon^d} \lesssim \frac{\sqrt{\log p}}{\phi} +\delta+ \sup_{y\in\mathbb R^p}|\Ep[\mathcal I^{y}]|.
\end{equation}
Combining two steps, we obtain the asserted claim. In Steps 3, 4, and 5, we provide some auxiliary calculations.

\medskip
\noindent
{\bf Step 1.} Here, we prove \eqref{eq: EIy bound}. Recalling that $I_d = \{i=1,\dots,n\colon \epsilon_i^d = 1\}$, let $\mathcal S_n$ be the set of all one-to-one functions mapping $\{1,\dots,|I_d|\}$ to $I_d$, and let $\sigma$ be a random function with uniform distribution on $\mathcal S_n$ such that $\sigma$ is independent of $V_1,\dots,V_n$, $Z_1,\dots,Z_n$, and $\epsilon^{d+1}$.

Denote
$$
W_i^{\sigma} = \frac{1}{\sqrt n}\sum_{j=1}^{i-1}V_{\sigma(j)} + \frac{1}{\sqrt n}\sum_{j = i + 1}^{|I_d|} Z_{\sigma(j)} + \frac{1}{\sqrt n}\sum_{j\notin I_d}Z_j,\ \text{for all }i=1,\dots,|I_d|.
$$
Note that for any function $m\colon \mathbb R^p\to\mathbb R$ and any $i\in I_d$, it follows from Lemma \ref{lem: rand lind} that
$$
\Ep\left[ \frac{\sigma^{-1}(i)}{|I_d|+1}m\Big(W^{\sigma}_{\sigma^{-1}(i)} + \frac{V_i}{\sqrt n}\Big) + \Big(1 - \frac{\sigma^{-1}(i)}{|I_d|+1}\Big)m\Big(W^{\sigma}_{\sigma^{-1}(i)} +\frac{ Z_i}{\sqrt n}\Big) \right]
$$
is equal to $\Ep[m(W)]$. Here, Lemma \ref{lem: rand lind} is applied to the first two terms of $W_i^\sigma$ conditional on the set $I_d$ and $\{Z_j : j \notin I_d\}$. We will use this property extensively below without explicit mentioning.

Now, fix $y\in\mathbb R^p$ and observe that
$$
\mathcal I^{y} = \sum_{i=1}^{|I_d|}\left( m^y\left(W_i^{\sigma} + \frac{V_{\sigma(i)}}{\sqrt n}\right) - m^y\left(W_i^{\sigma} + \frac{Z_{\sigma(i)}}{\sqrt n}\right) \right).
$$
Hence, letting $f\colon[0,1]\to\mathbb R$ be a function defined by
$$
f(t) = \sum_{i=1}^{|I_d|}\Ep\left[ m^y\left(W_i^{\sigma} + \frac{t V_{\sigma(i)}}{\sqrt n}\right) - m^y\left(W_i^{\sigma} + \frac{t Z_{\sigma(i)}}{\sqrt n}\right) \right],\ \text{for all }t\in[0,1],
$$
it follows that
$
\Ep[\mathcal I^{y}] = f(1)
$
and by Taylor's expansion,
$$
f(1) = f(0) + f^{(1)}(0) + \frac{f^{(2)}(0)}{2} + \frac{f^{(3)}(0)}{6} + \frac{f^{(4)}(\tilde t)}{24},\ \text{where }\tilde t\in(0,1).
$$
Here, $f(0) = 0$ by construction and $f^{(1)}(0) = 0$ because $\Ep[V_{i j}] = \Ep[Z_{i j}] = 0$ for all $i\in I_d$ and $j=1,\dots,p$. We thus need to bound $|f^{(2)}(0)|$, $|f^{(3)}(0)|$, and $|f^{(4)}(\tilde t)|$. To this end, we show in Steps 3, 4, and 5 that
\begin{align}
|f^{(2)}(0)| & \lesssim \frac{B_n^{2}\phi^4\log^{5}(p n)}{n^{2}} \nonumber\\
&\quad + \Big(\Ep[\varrho_{\epsilon^{d+1}}] + \frac{\sqrt{\log p}}{\phi}+\delta\Big) \Big(\frac{\mathcal B_{n,1,d}\phi^2\log p}{\sqrt n} + \frac{B_n^2\phi^4\log^3(p n)}{n}\Big),\label{eq: f2 bound}
\end{align}
\begin{align}
|f^{(3)}(0)| &\lesssim \frac{B_n^{2}\phi^4 \log^{5}(p n)}{n^{2}} \nonumber\\
&\quad + \Big(\Ep[\varrho_{\epsilon^{d+1}}] + \frac{\sqrt{\log p}}{\phi}+\delta\Big)\Big(\frac{\mathcal B_{n,2,d}\phi^3\log^2p}{n} + \frac{B_n^3\phi^5\log^5(pn)}{n^{3/2}}\Big),\label{eq: f3 bound}
\end{align}
and
\begin{equation}
|f^{(4)}(\tilde t)| \lesssim \frac{B_n^2\phi^4\log^3 p}{n^2} + \left( \Ep[\varrho_{\epsilon^{d+1}}] + \frac{\sqrt{\log p}}{\phi}+\delta \right) \frac{B_n^2\phi^4\log^3 p}{n},\label{eq: f4 bound}
\end{equation}
respectively. Combining these inequalities gives \eqref{eq: EIy bound} and completes Step 1.

\medskip
\noindent
{\bf Step 2.} Here, we prove \eqref{eq: anticoncentration application}. Fix $y\in\mathbb R^p$ and observe that
\begin{align*}
&\Pr(S_{n,\epsilon^d}^V \leq y) \leq\Pr(F( S_{n,\epsilon^d}^V - y - \phi^{-1}) \leq 0)
\leq \Ep[ m^{y + \phi^{-1}}( S_{n,\epsilon^d}^V) ]\\
&\quad \leq \Ep[ m^{y + \phi^{-1}}( S_n^{Z})] + |\Ep[\mathcal I^{y+\phi^{-1}}]|
 \leq \Pr( S_n^{Z} \leq y + 2\phi^{-1}) + |\Ep[\mathcal I^{y+\phi^{-1}}]|\\
&\quad \leq \Pr( S_n^{Z} \leq y) + 2C_a\phi^{-1}\sqrt{\log p} +C_a\delta+ |\Ep[\mathcal I^{y+\phi^{-1}}]|,
\end{align*}
where the first inequality follows from \eqref{eq: f properties}, the second from $m^{y + \phi^{-1}}(\cdot) = g(F(\cdot - y - \phi^{-1}))$ and \eqref{eq: g property}, the third from \eqref{eq: iy}, the fourth from \eqref{eq: g property} and \eqref{eq: f properties}, and the fifth from Condition A. Similarly,
\begin{align*}
&\Pr(S_{n,\epsilon^d}^V \leq y) = \Pr(S_{n,\epsilon^d}^V - y \leq 0)\\
&\quad \geq \Pr(F(S_{n,\epsilon^d}^V - y + \phi^{-1}) \leq \phi^{-1})
\geq \Ep[ m^{y - \phi^{-1}}( S_{n,\epsilon^d}^V) ]\\
&\quad \geq \Ep[ m^{y - \phi^{-1}}( S_n^{Z})] - |\Ep[\mathcal I^{y-\phi^{-1}}]|
 \geq \Pr( S_n^{Z} \leq y - 2\phi^{-1}) - |\Ep[\mathcal I^{y-\phi^{-1}}]|\\
&\quad \geq \Pr( S_n^{Z} \leq y) - 2C_a\phi^{-1}\sqrt{\log p} -C_a\delta - |\Ep[\mathcal I^{y-\phi^{-1}}]|.
\end{align*}
Combining the presented bounds gives \eqref{eq: anticoncentration application} and completes Step 2.

\medskip
\noindent
{\bf Step 3.} Here, we prove \eqref{eq: f2 bound}. We have
\begin{align}
f^{(2)}(0) &= \frac{1}{n}\sum_{i=1}^{|I_d|}\sum_{j, k=1}^p \Ep\Big[ m^y_{j k}(W_i^{\sigma})(V_{\sigma(i) j}V_{\sigma(i) k} - Z_{\sigma(i) j}Z_{\sigma(i) k})\Big]\nonumber\\
&= \frac{1}{n}\sum_{i\in I_d}\sum_{j, k=1}^p \Ep\Big[ m^y_{j k}(W_{\sigma^{-1}(i)}^{\sigma})(V_{i j}V_{i k} - Z_{i j}Z_{i k})\Big]\nonumber\\
& = \frac{1}{n}\sum_{i\in I_d}\sum_{j, k = 1}^p\Ep[ m^y_{j k}(W^{\sigma}_{\sigma^{-1}(i)})] (\mathcal E_{i,j k}^V - \mathcal E_{i,j k}^Z),\label{eq: independence argument}
\end{align}
where the third line follows from observing that conditional on $\sigma$, $W_{\sigma^{-1}(i)}^{\sigma}$ is independent of $V_{i j}V_{i k} - Z_{i j}Z_{i k}$. Thus, denoting
\begin{align*}
R_{i,jk}^{\sigma} & = m_{jk}^y(W_{\sigma^{-1}(i)}^{\sigma}) -\frac{\sigma^{-1}(i)}{|I_d|+1}m^y_{jk}\left(W_{\sigma^{-1}(i)}^{\sigma} + \frac{V_{i}}{\sqrt n}\right)\\
&\quad - \left(1-\frac{\sigma^{-1}(i)}{|I_d|+1}\right)m^y_{jk}\left(W_{\sigma^{-1}(i)}^{\sigma} + \frac{Z_{i}}{\sqrt n}\right),
\end{align*}
for all $i \in I_d$ and $j,k = 1,\dots,p$, we have $f^{(2)}(0) = \mathcal I_{2,1} + \mathcal I_{2,2}$, where
$$
\mathcal I_{2,1} = \frac{1}{n}\sum_{i\in I_d}\sum_{j,k=1}^p\Ep[m^y_{jk}(W)](\mathcal E_{i, j k}^V - \mathcal E_{i,j k}^Z)
$$
and
$$
\mathcal I_{2,2} = \frac{1}{n}\sum_{i\in I_d}\sum_{j,k=1}^p\Ep[R_{i,jk}^{\sigma}](\mathcal E_{i, j k}^V - \mathcal E_{i,j k}^Z)
$$
by our discussion in the beginning of Step 1. To bound $\mathcal I_{2,1}$, we have
\begin{align*}
|\mathcal I_{2,1}| &\leq \sum_{j,k=1}^p\Ep[|m^y_{j k}(W)|]\max_{1\leq j,k\leq p}\left|\frac{1}{n}\sum_{i=1}^n\epsilon_i^d(\mathcal E_{i, j k}^V - \mathcal E_{i,j k}^Z)\right|\\
& \leq \frac{\mathcal B_{n,1,d}}{\sqrt n} \sum_{j,k=1}^p\Ep[|m^y_{j k}(W)|]
\end{align*}
by the definition of $\mathcal A_d$. In addition, by the definition of $m^y$, we have $m^y_{jk}(W) = 0$ if
$$
\max_{1\leq j\leq p}(W_j - y_j) \leq -\phi^{-1}\quad\text{or}\quad\max_{1\leq j\leq p}(W_j  - y_j) > \phi^{-1},
$$
which means that 
\begin{equation}\label{eq: h switching}
m^y_{jk}(W) = h^y(W; \phi^{-1})m^y_{jk}(W)
\end{equation}
by the definition of $h^y$ in \eqref{eq: h definition}. Thus, since
$$
\mathcal P = \Pr\left( -\phi^{-1} < \max_{1\leq j\leq p}\frac{1}{\sqrt n}\sum_{i=1}^n (Z_{i j} - y_j) \leq \phi^{-1} \right) \leq C_a\left(\frac{2\sqrt{\log p}}{\phi}+\delta\right)
$$
by Condition A, it follows that $\sum_{j,k=1}^p\Ep[|m^y_{j k}(W)|]$ is equal to
\begin{align}
& \sum_{j,k=1}^p\Ep[h(W; \phi^{-1})|m^y_{j k}(W)|]
\leq \sum_{j,k=1}^p\Ep[h(W; \phi^{-1})U_{j k}(W)] \nonumber \\
&\quad \lesssim (\phi^2\log p)\Pr\left( -\phi^{-1} < \max_{1\leq j\leq p}(W_j - y_j) \leq \phi^{-1} \right)\nonumber\\
&\quad \leq (\phi^2\log p)\left(2\Ep[\varrho_{\epsilon^{d+1}}] + \mathcal P\right) \lesssim (\phi^2\log p)\left( \Ep[\varrho_{\epsilon^{d+1}}] + \frac{\sqrt{\log p}}{\phi} +\delta\right),\label{eq: induction trick}
\end{align}
where the first inequality follows from \eqref{eq: property 1 - 2 and 3}, the second from \eqref{eq: property 3 - 2 and 3}, and the third from the definition of $\varrho_{\epsilon^{d+1}}$ in \eqref{eq: rho eps definition} and \eqref{eq: W definition}. Hence,
$$
|\mathcal I_{2,1}| \lesssim \frac{\mathcal B_{n,1,d}\phi^2\log p}{\sqrt n}\left( \Ep[\varrho_{\epsilon^{d+1}}] + \frac{\sqrt{\log p}}{\phi} +\delta\right).
$$
To bound $\mathcal I_{2,2}$, by another Taylor's expansion, for all $i \in I_d$ and $j,k=1,\dots,p$, we have
\begin{align*}
|\Ep[R_{i,jk}^{\sigma}]| 
&\leq \sum_{l,r=1}^p\Ep\left[ \left| m^y_{jklr}\left( W_{\sigma^{-1}(i)}^{\sigma} + \frac{\hat t V_i}{\sqrt n} \right)\frac{V_{il}V_{ir}}{n}\right| \right] \\
&\quad +  \sum_{l,r=1}^p\Ep\left[\left| m^y_{jklr}\left( W_{\sigma^{-1}(i)}^{\sigma} + \frac{\hat t Z_i}{\sqrt n} \right)\frac{Z_{il}Z_{ir}}{n}\right| \right] 
\end{align*}
for some $\hat t\in(0,1)$, possibly depending on $i$, $j$, and $k$, and so $|\mathcal I_{2,2}| \leq \mathcal I_{2,2,1} + \mathcal I_{2,2,2}$, where
$$
\mathcal I_{2,2,1} = \frac{1}{n^2}\sum_{i \in I_d}\sum_{j,k,l,r=1}^p\Ep\left[ \left|m_{jklr}^y\left( W_{\sigma^{-1}(i)}^{\sigma} + \frac{\hat t V_i}{\sqrt n} \right)V_{i l}V_{i r}\right| \right]\times|\mathcal E_{i,jk}^V - \mathcal E_{i,jk}^Z|
$$
and
$$
\mathcal I_{2,2,2} = \frac{1}{n^2}\sum_{i\in I_d}\sum_{j,k,l,r=1}^p\Ep\left[ \left|m_{jklr}^y\left( W_{\sigma^{-1}(i)}^{\sigma} + \frac{\hat t Z_i}{\sqrt n} \right)Z_{i l}Z_{i r}\right| \right]\times|\mathcal E_{i,jk}^V - \mathcal E_{i,jk}^Z|.
$$
Below, we bound $\mathcal I_{2,2,1}$ and note that the same argument also applies to $\mathcal I_{2,2,2}$. Denote $x = C_pB_n\log(p n)/\sqrt n + \phi^{-1}$ and $\tilde V_i = 1\{\|V_i\|_{\infty}\leq C_pB_n\log(p n)\}$ for all $i \in I_d$. Then for all $i \in I_d$ and $j,k = 1,\dots,p$, we have
\begin{align}
&\sum_{l,r=1}^p\Ep\left[ \tilde V_i \left| m^y_{jklr}\left( W_{\sigma^{-1}(i)}^{\sigma} + \frac{\hat t V_i}{\sqrt n} \right)V_{il}V_{ir}\right| \right]\nonumber\\
&\quad = \sum_{l,r=1}^p\Ep\left[ \tilde V_i h^y(W^{\sigma}_{\sigma^{-1}(i)}; x) \left| m^y_{jklr}\left( W_{\sigma^{-1}(i)}^{\sigma} + \frac{\hat t V_i}{\sqrt n} \right)V_{il}V_{ir}\right| \right]\nonumber\\
&\quad\leq \sum_{l,r=1}^p\Ep\left[ \tilde V_i h^y(W^{\sigma}_{\sigma^{-1}(i)}; x) U^y_{jklr}\left( W_{\sigma^{-1}(i)}^{\sigma} + \frac{\hat t V_i}{\sqrt n} \right)|V_{il}V_{ir}|\right]\nonumber\\
&\quad\lesssim \sum_{l,r=1}^p\Ep\left[ h^y(W^{\sigma}_{\sigma^{-1}(i)}; x) U^y_{jklr}(W^{\sigma}_{\sigma^{-1}(i)})|V_{il}V_{ir}|\right],\label{eq: intermediate 1}
\end{align}
where the equality follows from the same argument as that leading to \eqref{eq: h switching}, the first inequality follows from \eqref{eq: property 1 - 3 and 4} and the second from \eqref{eq: phi restriction} and \eqref{eq: property 2 - 4 and 5}. In turn, denoting $\tilde Z_i = 1\{\|Z_i\|_{\infty} \leq C_p B_n\log(p n)\}$, it follows that for all $l,r=1,\dots,p$, the expectation in \eqref{eq: intermediate 1} is equal to
\begin{align}
&\Ep\Big[ h^y(W^{\sigma}_{\sigma^{-1}(i)}; x) U^y_{jklr}(W^{\sigma}_{\sigma^{-1}(i)})\Big]\Ep[|V_{il}V_{ir}|]\nonumber\\
&\quad\lesssim \Ep\Big[ \tilde V_i \tilde Z_i h^y(W^{\sigma}_{\sigma^{-1}(i)}; x) U^y_{jklr}(W^{\sigma}_{\sigma^{-1}(i)})\Big]\Ep[|V_{il}V_{ir}|]\nonumber\\
&\quad\lesssim \Ep\Big[ \tilde V_i \tilde Z_ih^y(W; 2x) U^y_{jklr}(W^{\sigma}_{\sigma^{-1}(i)})\Big]\Ep[|V_{il}V_{ir}|]\nonumber\\
&\quad\lesssim \Ep\Big[ h^y(W; 2x) U^y_{jklr}(W)\Big]\Ep[|V_{il}V_{ir}|],\label{eq: intermediate 2 x}
\end{align}
where the first inequality follows from Condition P, the second from the definitions of $h^y$ in \eqref{eq: h definition}, $W$ in \eqref{eq: W definition}, and $W^{\sigma}_{\sigma^{-1}(i)}$ in the beginning of Step 1, and the third from \eqref{eq: phi restriction} and \eqref{eq: property 2 - 4 and 5}. Thus,
\begin{align*}
&\frac{1}{n^2}\sum_{i\in I_d}\sum_{j,k,l,r = 1}^p \Ep\left[ \tilde V_i \left| m^y_{jklr}\left( W_{\sigma^{-1}(i)}^{\sigma} + \frac{\hat t V_i}{\sqrt n} \right)V_{il}V_{ir}\right| \right]\times|\mathcal E_{i,jk}^V - \mathcal E_{i,jk}^Z|\\
&\quad \lesssim \frac{1}{n^2}\sum_{i\in I_d}\sum_{j,k,l,r=1}^p \Ep\Big[ h^y(W; 2x) U^y_{jklr}(W)\Big]\Ep[|V_{il}V_{ir}|]\times|\mathcal E_{i,jk}^V - \mathcal E_{i,jk}^Z|\\
&\quad \lesssim \frac{B_n^2}{n}\sum_{j,k,l,r = 1}^p\Ep\Big[ h^y(W; 2x) U^y_{jklr}(W)\Big] \lesssim \frac{B_n^2\phi^4\log^3p}{n}\left(\Ep[\varrho_{\epsilon^{d+1}}] + \frac{\sqrt{\log p}}{\phi} +\delta\right) 
\end{align*}
where the second inequality follows from Condition V since by H\"{o}lder's inequality,
\begin{align*}
&\max_{1\leq j,k,l,r\leq p}\frac{1}{n}\sum_{i=1}^n\Ep[|V_{il}V_{i r}|]\times|\mathcal E_{i,jk}^V - \mathcal E_{i,jk}^Z|\\
&\qquad \lesssim \max_{1 \leq j,k \leq p} \frac{1}{n}\sum_{i=1}^n\Ep[|V_{ij} V_{ik}|^2] + \max_{1 \leq j,k \leq p} \frac{1}{n}\sum_{i=1}^n\Ep[|Z_{ij} Z_{ik}|^2]\lesssim B_n^2,
\end{align*}
and the third inequality follows from \eqref{eq: phi restriction} and the the same arguments as those leading to \eqref{eq: induction trick}. In addition,
\begin{align}
&\frac{1}{n^2}\sum_{i\in I_d}\sum_{j,k,l,r = 1}^p \Ep\Big[ (1 - \tilde V_i)\Big| m^y_{jklr}\Big( W_{\sigma^{-1}(i)}^{\sigma} + \frac{\hat t V_i}{\sqrt n} \Big)V_{il}V_{ir} \Big| \Big]\times|\mathcal E_{i, j k}^V - \mathcal E^Z_{i,jk}|\nonumber\\
&\quad \lesssim \frac{B_n\phi^4\log^3 p}{n^{3/2}}\sum_{i=1}^n\Ep\Big[ (1 - \tilde V_i)\|V_{i}\|_\infty^2\Big]
\nonumber\\
&\quad \lesssim \frac{B_n\phi^4\log^3 p}{n^{1/2}}\max_{1\leq i\leq n}\Ep\left[(1 - \tilde V_i)\|V_i\|_{\infty}^2\right]\lesssim \frac{B_n^{3}\phi^4\log^{5}(p n)}{n^{5/2}}\lesssim \frac{B_n^{2}\phi^4\log^{5}(p n)}{n^{2}},\label{eq: intermediate 3 x}
\end{align}
where the first inequality follows from the fact that Condition V implies that $|\mathcal E_{i,jk}^V| + |\mathcal E_{i,jk}^Z|\lesssim \sqrt n B_n$ as well as inequalities in \eqref{eq: property 1 - 3 and 4} and \eqref{eq: property 3 - 3 and 4}, the third from noting that $\Ep[(1 - \tilde V_i)\|V_i\|_{\infty}^2] \leq (\Ep[1 - \tilde V_i])^{1/2}(\Ep[\|V_i\|_{\infty}^4])^{1/2}$ by H\"{o}lder's inequality and using Conditions P and B, and the fourth from \eqref{eq: lazy condition}. This shows that
$$
\mathcal I_{2,2,1}\lesssim \frac{B_n^2\phi^4\log^3p}{n}\left(\Ep[\varrho_{\epsilon^{d+1}}] + \frac{\sqrt{\log p}}{\phi} +\delta\right) + \frac{B_n^{2}\phi^4\log^{5}(p n)}{n^{2}}
$$
and since the same bound holds for $\mathcal I_{2,2,2}$ as well, it follows that
\begin{align*}
|\mathcal I_{2,2}| \lesssim \frac{B_n^2\phi^4\log^3p}{n}\left(\Ep[\varrho_{\epsilon^{d+1}}] + \frac{\sqrt{\log p}}{\phi} +\delta\right) + \frac{B_n^{2}\phi^4\log^{5}(p n)}{n^{2}}.
\end{align*}
Combining the bounds on $\mathcal I_{2,1}$ and $\mathcal I_{2,2}$ gives \eqref{eq: f2 bound} and completes Step 3.

\medskip
\noindent
{\bf Step 4.} Here, we prove \eqref{eq: f3 bound}. We have

\begin{align*}
f^{(3)}(0) = \frac{1}{n^{3/2}}\sum_{i\in I_d}\sum_{j, k, l=1}^p \Ep[m^y_{j k l}(W_{\sigma^{-1}(i)}^{\sigma})](\mathcal E^V_{i, j k l} - \mathcal E_{i,jkl}^Z)
\end{align*}
by the same argument as that in \eqref{eq: independence argument}. Further, denoting
\begin{align*}
R_{i,jkl}^{\sigma} & = m^y_{jkl}(W_{\sigma^{-1}(i)}^{\sigma}) -\frac{\sigma^{-1}(i)}{|I_d|+1}m^y_{jkl}\left(W_{\sigma^{-1}(i)}^{\sigma} + \frac{V_{i}}{\sqrt n}\right)\\
&\quad - \left(1-\frac{\sigma^{-1}(i)}{|I_d|+1}\right)m^y_{jkl}\left(W_{\sigma^{-1}(i)}^{\sigma} + \frac{Z_{i}}{\sqrt n}\right),
\end{align*}
for all $i \in I_d$ and $j,k,l = 1,\dots,p$, we have $f^{(3)}(0) = \mathcal I_{3,1} + \mathcal I_{3,2}$, where
$$
\mathcal I_{3,1} = \frac{1}{n^{3/2}}\sum_{i\in I_d}\sum_{j,k,l=1}^p\Ep[m^y_{jkl}(W)](\mathcal E^V_{i, j k l} - \mathcal E_{i,jkl}^Z)
$$
and
$$
\mathcal I_{3,2} = \frac{1}{n^{3/2}}\sum_{i\in I_d}\sum_{j,k,l=1}^p\Ep[R_{i,jkl}^{\sigma}](\mathcal E^V_{i, j k l} - \mathcal E_{i,jkl}^Z).
$$
Here, $|\mathcal I_{3,1}|$ can be bounded using the same arguments as those used to bound $|\mathcal I_{2,1}|$ in the previous step. This gives
$$
|\mathcal I_{3,1}| \lesssim \frac{\mathcal B_{n,2,d}\phi^3\log^2 p}{n}\left( \Ep[\varrho_{\epsilon^{d+1}}] + \frac{\sqrt{\log p}}{\phi} +\delta\right).
$$
To bound $|\mathcal I_{3,2}|$, we have like in the case of $|\mathcal I_{2,2}|$ in the previous step that $|\mathcal I_{3,2}|\leq\mathcal I_{3,2,1} + \mathcal I_{3,2,2}$, where
$$
\mathcal I_{3,2,1} = \frac{1}{n^{5/2}}\sum_{i \in I_d}\sum_{j,k,l,r,h=1}^p\Ep\left[ \left|m_{jklrh}^y\left( W_{\sigma^{-1}(i)}^{\sigma} + \frac{\hat t V_i}{\sqrt n} \right)V_{i r}V_{i h}\right| \right]\times|\mathcal E_{i,jkl}^V - \mathcal E_{i,jkl}^Z|
$$
and
$$
\mathcal I_{3,2,2} = \frac{1}{n^{5/2}}\sum_{i\in I_d}\sum_{j,k,l,r,h=1}^p\Ep\left[ \left|m_{jklrh}^y\left( W_{\sigma^{-1}(i)}^{\sigma} + \frac{\hat t Z_i}{\sqrt n} \right)Z_{i r}Z_{i h}\right| \right]\times|\mathcal E_{i,jkl}^V - \mathcal E_{i,jkl}^Z|.
$$
Further, since
\begin{align}
|\mathcal E_{i,jkl}^V| &\leq \Ep[|V_{ij}V_{ik}V_{il}|] = \Ep\Big[\tilde V_i |V_{ij}V_{ik}V_{il}|\Big] + \Ep\Big[(1 - \tilde V_i) |V_{ij}V_{ik}V_{il}|\Big]\nonumber\\
&\lesssim B_n\log(p n)\Ep[|V_{ij}V_{ik}|] + (\Ep[1-\tilde V_i])^{1/2}(\Ep[\|V_i\|_{\infty}^6])^{1/2}\nonumber\\
&\lesssim B_n\log(p n)\Ep[|V_{ij}V_{ik}|] + B_n^3\log^{3}(p n)/n^2\label{eq: details moment calculations}
\end{align}
and similarly
$$
|\mathcal E_{i,jkl}^Z| \lesssim B_n\log(p n)\Ep[|Z_{ij}Z_{ik}|] + B_n^3\log^{3}(p n)/n^2
$$
by Conditions P and B, we have by the same argument as in the previous step that
\begin{align*}
&\frac{1}{n^{5/2}}\sum_{i\in I_d}\sum_{j,k,l,r,h = 1}^p \Ep\left[ \tilde V_i \left| m^y_{jklrh}\left( W_{\sigma^{-1}(i)}^{\sigma} + \frac{\hat t V_i}{\sqrt n} \right)V_{ir}V_{ih}\right| \right]\times|\mathcal E_{i,jkl}^V - \mathcal E_{i,jkl}^Z|\\
&\quad \lesssim \frac{1}{n^{5/2}}\sum_{i\in I_d}\sum_{j,k,l,r,h=1}^p \Ep\Big[ h^y(W; 2x) U^y_{jklrh}(W)\Big]\Ep[|V_{ir}V_{ih}|]\times|\mathcal E_{i,jkl}^V - \mathcal E_{i,jkl}^Z|\\
&\quad \lesssim  \left( \frac{B_n^3\phi^5\log^5(pn)}{n^{3/2}} + \frac{B_n^4\phi^5\log^{7}(pn)}{n^{7/2}} \right)\left(\Ep[\varrho_{\epsilon^{d+1}}] + \frac{\sqrt{\log p}}{\phi} +\delta\right)\\
& \quad \lesssim \frac{B_n^3\phi^5\log^5(pn)}{n^{3/2}}\left(\Ep[\varrho_{\epsilon^{d+1}}] + \frac{\sqrt{\log p}}{\phi} +\delta\right),
\end{align*}
where in the second inequality, we used $n^{-1}\sum_{i\in I_d}\Ep[|V_{ir}V_{ih}|]\lesssim B_n$, which follows from  Condition V, and  the third inequality follows from \eqref{eq: lazy condition} since $B_n\geq 1$. Also, again like in the previous step,
\begin{align*}
&\frac{1}{n^{5/2}}\sum_{i\in I_d}\sum_{j,k,l,r,h = 1}^p \Ep\left[ (1 - \tilde V_i) \left| m^y_{jklrh}\left( W_{\sigma^{-1}(i)}^{\sigma} + \frac{\hat t V_i}{\sqrt n} \right)V_{ir}V_{ih}\right| \right] \\
& \quad \times|\mathcal E_{i,jkl}^V - \mathcal E_{i,jkl}^Z|
\lesssim \frac{B_n^{3/2}\phi^5\log^4 p}{n^{3/4}}\max_{1\leq i\leq n}\Ep\Big[ (1 - \tilde V_i)\|V_i\|_{\infty}^2 \Big]\\
&\quad \lesssim \frac{B_n^{7/2}\phi^5 \log^{6}(p n)}{n^{11/4}} \lesssim \frac{B_n^2\phi^4\log^5(p n)}{n^2},
\end{align*}
where the first inequality follows from the fact that Condition V implies that $|\mathcal E_{i,jkl}^V| + |\mathcal E_{i,jkl}^Z|\lesssim n^{3/4} B_n^{3/2}$ as well as inequalities in \eqref{eq: property 1 - 3 and 4} and \eqref{eq: property 3 - 3 and 4}, the second from noting that $\Ep[(1 - \tilde V_i)\|V_i\|_{\infty}^2] \leq (\Ep[1 - \tilde V_i])^{1/2}(\Ep[\|V_i\|_{\infty}^4])^{1/2}$ by H\"{o}lder's inequality and using Conditions P and B, and the third from \eqref{eq: phi restriction}. Thus,
$$
\mathcal I_{3,2,1} \lesssim \frac{B_n^3\phi^5\log^5(pn)}{n^{3/2}}\Big(\Ep[\varrho_{\epsilon^{d+1}}] + \frac{\sqrt{\log p}}{\phi}+\delta\Big) + \frac{B_n^2\phi^4\log^5(p n)}{n^2}
$$
and since the same bound holds for $\mathcal I_{3,2,2}$, it follows that
$$
\mathcal I_{3,2} \lesssim \frac{B_n^3\phi^5\log^5(pn)}{n^{3/2}}\Big(\Ep[\varrho_{\epsilon^{d+1}}] + \frac{\sqrt{\log p}}{\phi}+\delta\Big) + \frac{B_n^2\phi^4\log^5(p n)}{n^2}.
$$
Combining these bounds gives \eqref{eq: f3 bound} and completes Step 4.

\medskip
\noindent
{\bf Step 5.} Here, we prove \eqref{eq: f4 bound}. We have
$
f^{(4)}(\tilde t) = \mathcal I_{4,1} - \mathcal I_{4,2},
$
where
$$
\mathcal I_{4,1} =\frac{1}{n^2}\sum_{i\in I_d}\sum_{j,k,l,r=1}^p\Ep\left[ m^y_{jklr}\left(W_{\sigma^{-1}(i)}^{\sigma} + \frac{\tilde t V_i}{\sqrt n}\right)V_{ij}V_{ik}V_{il}V_{ir} \right]
$$
and
$$
\mathcal I_{4,2} = \frac{1}{n^2}\sum_{i\in I_d}\sum_{j,k,l,r=1}^p\Ep\left[ m^y_{jklr}\left(W_{\sigma^{-1}(i)}^{\sigma} + \frac{\tilde t Z_i}{\sqrt n}\right) Z_{ij}Z_{ik}Z_{il}Z_{ir} \right].
$$
Here, again denoting $x = C_pB_n\log(p n)/\sqrt n + \phi^{-1}$, we have
\begin{align*}
&\frac{1}{n^2} \sum_{i\in I_d}\sum_{j,k,l,r=1}^p\Ep\left[ \tilde V_i \left|m^y_{jklr}\left(W_{\sigma^{-1}(i)}^{\sigma} + \frac{\tilde t V_i}{\sqrt n}\right) V_{ij}V_{ik}V_{il}V_{ir}\right| \right]\\
& \quad \lesssim \frac{1}{n^2}\sum_{i\in I_d}\sum_{j,k,l,r=1}^p\Ep\Big[ \tilde V_i h^y(W_{\sigma^{-1}(i)}^{\sigma}; x)U^y_{jklr}(W_{\sigma^{-1}(i)}^{\sigma}) |V_{ij}V_{ik}V_{il}V_{ir}| \Big]\\
& \quad \leq \frac{1}{n^2}\sum_{i\in I_d}\sum_{j,k,l,r=1}^p\Ep\Big[ h^y(W_{\sigma^{-1}(i)}^{\sigma}; x) U^y_{jklr}(W_{\sigma^{-1}(i)}^{\sigma})\Big] \Ep[|V_{ij}V_{ik}V_{il}V_{ir}| ],
\end{align*}
where the first inequality follows from the same argument as that leading to \eqref{eq: intermediate 1}. In addition, for all $i \in I_d$ and $j,k,l,r = 1,\dots,p$, we have
\begin{align*}
\Ep\Big[ h^y(W_{\sigma^{-1}(i)}^{\sigma}; x) U^y_{jklr}(W_{\sigma^{-1}(i)}^{\sigma})\Big]
  \lesssim \Ep\Big[ h^y(W; 2x) U^y_{jklr}(W)\Big]
\end{align*}
by the same argument as that leading to \eqref{eq: intermediate 2 x}.
Hence,
\begin{align*}
&\frac{1}{n^2}\sum_{i\in I_d}\sum_{j,k,l,r=1}^p\Ep\left[ \tilde V_i \left|m^y_{jklr}\left(W_{\sigma^{-1}(i)}^{\sigma} + \frac{\tilde t V_i}{\sqrt n}\right) V_{ij}V_{ik}V_{il}V_{ir}\right| \right]\\
& \quad \lesssim \frac{1}{n^2}\sum_{j,k,l,r=1}^p\Ep[ h^y(W; 2x) U^y_{jklr}(W)] \max_{1\leq j,k,l,r\leq p} \sum_{i=1}^n \Ep[|V_{ij}V_{ik}V_{il}V_{ir}| ]\\
& \quad \lesssim \frac{B_n^2\phi^4\log^3p}{n}\left( \Ep[\varrho_{\epsilon^{d+1}}] + \frac{\sqrt{\log p}}{\phi} +\delta\right),
\end{align*}
where the second inequality follows from \eqref{eq: phi restriction}, \eqref{eq: property 3 - 3 and 4}, Condition V, and the same arguments as those leading to \eqref{eq: induction trick}.
In addition,
\begin{align*}
&\frac{1}{n^2}\sum_{i\in I_d}\sum_{j,k,l,r = 1}^p \Ep\left[ (1 - \tilde V_i)\left|m^y_{jklr}\left(W_{\sigma^{-1}(i)}^{\sigma} + \frac{\tilde t V_i}{\sqrt n}\right) V_{ij}V_{ik}V_{il}V_{ir}\right| \right]\\
&\quad \lesssim \frac{\phi^4\log^3 p}{n^2}\sum_{i=1}^n\Ep[(1 - \tilde V_i)\|V_i\|_{\infty}^4]
\lesssim \frac{B_n^2\phi^4\log^3p}{n^2}
\end{align*}
by \eqref{eq: lazy condition} and the arguments similar to those leading to \eqref{eq: intermediate 3 x}. Therefore,
\begin{align*}
|\mathcal I_{4,1}| \lesssim \frac{B_n^2\phi^4\log^3p}{n}\left( \Ep[\varrho_{\epsilon^{d+1}}] + \frac{\sqrt{\log p}}{\phi} +\delta\right) + \frac{B_n^2\phi^4\log^3p}{n^2},
\end{align*}
and since the same bound holds for $|\mathcal I_{4,2}|$ as well, it follows that \eqref{eq: f4 bound} holds, which completes Step 5 and the proof of the lemma.

\section{Proof of Theorem \ref{thm: stein kernel}}\label{sec: proof of kernel result}
In this proof, we will use the same notation as that used in the proof of Lemma \ref{lem: main}. In particular, we will use the constant $\phi>0$ and the functions $m^y$ and $h^y$. Moreover, we will use indices to denote partial derivatives, e.g. $m_{jk}^y(w) = \partial^2m^y(w)/\partial w_j\partial w_k$. Throughout the proof, we will assume, without loss of generality, that $V$ and $Z$ are independent. We proceed in two steps.

\medskip
\noindent
{\bf Step 1.} Denote
$$
\mathcal I^y = m^y(V) - m^y(Z),\quad\text{for all }y\in\mathbb R^p.
$$
It then follows from exactly the same arguments as those in Step 2 of the proof of Lemma \ref{lem: main}, with Lemma \ref{lem: anticoncetration} playing the role of Condition A, that
$$
\sup_{y\in\mathbb R^p}\Big|\Pr(V\leq y) - \Pr(Z\leq y)\Big| \leq C_1\left(\frac{\sqrt{\log p}}{\phi} + \sup_{y\in\mathbb R^p}|\Ep[\mathcal I^y]|\right),
$$
where $C_1$ is a constant depending only on $c$. In addition, we will prove in Step 2 below that
\begin{equation}\label{eq: stein kernel step 2}
\sup_{y\in\mathbb R^p}|\Ep[\mathcal I^y]| \leq C_2\phi\Delta\log^{3/2}p,
\end{equation}
where $C_2$ is another constant depending only on $c$. Combining these inequalities and substituting $\phi = 1/(\Delta\log p)^{1/2}$ gives the asserted claim.

\medskip
\noindent
{\bf Step 2.} Here, we prove \eqref{eq: stein kernel step 2}. Fix $y\in\mathbb R^p$ and denote
$$
\Psi(t)=\Ep[m^y(\sqrt t V + \sqrt{1-t} Z)],\quad\text{for all }t\in[0,1].
$$
Then
$$
\Ep[\mathcal I^y] = \Psi(1) - \Psi(0) = \int_0^1\Psi'(t)dt,
$$
where
$$
\Psi'(t) = \frac{1}{2}\sum_{j=1}^p\Ep\left[ m_j^y(\sqrt t V + \sqrt{1-t} Z)\left(\frac{V_j}{\sqrt t} - \frac{Z_j}{\sqrt{1-t}}\right) \right].
$$
Also, since $\tau$ is the Stein kernel for $V$,
$$
%\sum_{j=1}^p\Ep\left[ m_j^y(\sqrt t V + \sqrt{1-t} Z) \frac{V_j}{\sqrt t} \right] = \sum_{j,k=1}^p \Ep[m_{jk}^y(\sqrt t V + \sqrt{1-t} Z)\tau_{jk} ]
\sum_{j=1}^p\Ep\left[ m_j^y(\sqrt t V + \sqrt{1-t} Z) \frac{V_j}{\sqrt t} \right] = \sum_{j,k=1}^p \Ep[m_{jk}^y(\sqrt t V + \sqrt{1-t} Z)\tau_{jk}(V) ]
$$
and by the multivariate Stein identity,
$$
\sum_{j=1}^p\Ep\left[ m_j^y(\sqrt t V + \sqrt{1-t} Z) \frac{Z_j}{\sqrt{1-t}} \right] = \sum_{j,k=1}^p \Ep[m_{jk}^y(\sqrt t V + \sqrt{1-t} Z)\Sigma_{jk} ].
$$
Therefore,
$$
%\Psi'(t) = \frac{1}{2}\sum_{j,k=1}^p \Ep\Big[ m_{jk}^y(\sqrt t V + \sqrt{1-t} Z)(\tau_{j k} - \Sigma_{jk}) \Big].
\Psi'(t) = \frac{1}{2}\sum_{j,k=1}^p \Ep\Big[ m_{jk}^y(\sqrt t V + \sqrt{1-t} Z)(\tau_{j k}(V) - \Sigma_{jk}) \Big].
$$
In addition, by \eqref{eq: h switching},
$$
m_{jk}^y(w) = h^y(w;\phi^{-1})m_{jk}^y(w)
$$
for all $w\in \mathbb R^p$. Substituting here $w = \sqrt t V + \sqrt{1-t} Z$ and using the definition of $h^y$ in \eqref{eq: h definition}, we obtain
$$
m_{jk}^y(\sqrt t V + \sqrt{1-t} Z) = h^{y(V,t)}\left(Z,\frac{1}{\phi\sqrt{1 - t}}\right)m_{jk}^y(\sqrt t V + \sqrt{1-t} Z),
$$
where
$$
y(V,t) = \frac{y - \sqrt t V}{\sqrt{1 - t}}.
$$
Hence, using \eqref{eq: property 1 - 2 and 3} and \eqref{eq: property 3 - 2 and 3}, we have
\begin{align*}
&|\Psi'(t)|
 \leq K_1\phi^2(\log p) \Ep\left[ h^{y(V,t)}\left(Z,\frac{1}{\phi\sqrt{1-t}}\right)\times\max_{1\leq j,k\leq p}|\tau_{jk}(V) - \Sigma_{jk}| \right]\\
&\quad = K_1\phi^2(\log p)\Ep\left[\Ep\left[ h^{y(V,t)}\left(Z,\frac{1}{\phi\sqrt{1-t}}\right) \mid V\right]\times \max_{1\leq j,k\leq p}|\tau_{jk}(V) - \Sigma_{jk}|\right]\\
&\quad \leq \frac{K_2\phi\log^{3/2}p}{\sqrt{1-t}}\Ep\left[ \max_{1\leq j,k\leq p}|\tau_{jk}(V) - \Sigma_{jk}| \right]
\end{align*}
by the law of iterated expectations and Lemma \ref{lem: anticoncetration}, where $K_1$ is a universal constant and $K_2$ is a constant depending only on $c$. Conclude that
$$
|\Ep[\mathcal I^y]| \leq \int_0^1\Psi'(t)dt \leq 2K_2\phi\Delta\log^{3/2}p,
$$
which gives \eqref{eq: stein kernel step 2} and completes Step 2 and the proof of the theorem.

\section{Proof of Lemma \ref{lem: all conditions}}\label{sec: proof lemma 52}

Fix $m\in\{1,2,3,4\}$ and let $\mathcal P = \{1,\dots,p\}^m$. Also, for any $y = (y_1,\dots,y_p)'\in\mathbb R^p$ and $h = (h_1,\dots,h_m)'\in\mathcal P$, denote $y^h = y_{h_1}\cdots y_{h_m}$. Then note that there exists a constant $A_1\geq 1$ depending only on $b_2$ such that
\begin{align*}
\max_{h\in\mathcal P}\frac{1}{n}\sum_{i=1}^n\Ep[(X_i^h - \Ep[X_i^h])^2] 
&\leq \max_{h\in\mathcal P}\frac{1}{n}\sum_{i=1}^n\Ep[(X_i^h)^2] \\
& \leq A_1^2 B_n^{\{2(m-1)\}\vee1}\log^{2(m-2)\vee0}(p n)
\end{align*}
by Conditions M and E, Lemma \ref{lem: subgaussian growth}, and calculations similar to those in \eqref{eq: details moment calculations}. Also, by standard calculations (see Lemma 2.2.2 and discussion on page 95 of \cite{VW96}), for some universal constant $A_2\geq 1$,
$$
\Ep\Big[\max_{1\leq i\leq n}\max_{h\in\mathcal P}(X_i^h - \Ep[X_i^h])^2\Big] \leq A_2^2 B_n^{2m}\log^{2m}(p n)
$$
by Condition E. Hence, by Lemma \ref{lem: maximal ineq},
\begin{align*}
&\Ep\left[\max_{h\in\mathcal P}\left| \frac{1}{\sqrt n}\sum_{i=1}^n(X_i^h - \Ep[X_i^h]) \right|\right] \\
&\quad \leq K_1\left( A_1 B_n^{(m-1)\vee(1/2)}\log^{(m-3/2)\vee(1/2)}(p n) + \frac{A_2 B_n^m\log^{m+1}(pn)}{\sqrt n} \right)\\
&\quad  \leq K_1B_n^{(m-1)\vee(1/2)}(A_1 + A_2)\log^{(m-3/2)\vee(1/2)}(pn)
\end{align*}
for some universal constant $K_1\geq 1$, where the second inequality follows from \eqref{eq: bn restriction once again}. Thus, applying Lemma \ref{lem: fuk-nagaev} with $\eta = 1$, $\beta = 1/4$, and $t = 3K_1 B_n^{(m-1)\vee(1/2)} (A_1 + A_2)\log^{(m-3/2)\vee(1/2)}(pn)$ shows that
$$
\max_{h\in\mathcal P}\left|\frac{1}{\sqrt n}\sum_{i=1}^n(X_i^h - \Ep[X_i^h])\right| > 5K_1 B_n^{(m-1)\vee(1/2)} (A_1 + A_2)\log^{(m-3/2)\vee(1/2)}(pn)
$$
with probability at most
\begin{align*}
&\exp(-3\log(p n)) + 3\exp\left( -K_2\left(\frac{B_n^{(m-1)\vee(1/2)}\log^{(m-3/2)\vee(1/2)}(pn) }{ B_n^m\log^{m}(p n)/\sqrt n} \right)^{1/4}\right)\\
&\quad \leq (p n)^{-3} + 3\exp\left( -K_2/(\sqrt c\upsilon_n)^{1/8}\right) \\
&\quad \leq(p n)^{-3} + 3\exp\left( -8/\upsilon_n^{1/8}\right)
%\leq 4/(p n)^3 \leq 1/(4n),
\leq 1/(4n)+3\upsilon_n/4,
\end{align*}
for some universal constant $K_2>0$, where the first inequality follows from \eqref{eq: bn restriction once again}, the second holds if $c=(1\wedge(K_2/8))^{16}$, and the third holds since $4^{1/8}x\leq\exp(x)$ for all $x\geq 0$. Thus, for $A_3 = 5K_1(A_1 + A_2)$, letting $\mathcal A$ be the event that the inequalities
$$
\max_{1\leq j\leq p}\left|\frac{1}{\sqrt n}\sum_{i=1}^n X_{i j}\right| \leq A_3\sqrt{B_n\log(p n)},
$$
$$
\max_{1\leq j,k\leq p}\left|\frac{1}{\sqrt n}\sum_{i=1}^n(X_{i j}X_{i k} - \Ep[X_{i j}X_{i k}])\right| \leq A_3B_n\sqrt{\log(p n)},
$$
$$
\max_{1\leq j,k,l\leq p}\left|\frac{1}{\sqrt n}\sum_{i=1}^n(X_{i j}X_{i k}X_{il} - \Ep[X_{i j}X_{i k}X_{il}])\right| \leq A_3B_n^2\log^{3/2}(p n),
$$
$$
\max_{1\leq j,k,l,r\leq p}\left|\frac{1}{\sqrt n}\sum_{i=1}^n(X_{i j}X_{i k}X_{il}X_{ir} - \Ep[X_{i j}X_{i k}X_{il}X_{ir}])\right| \leq A_3B_n^3\log^{5/2}(p n)
$$
hold jointly, we have that the probability of $\mathcal A$ is at least $1 - 1/n-3\upsilon_n$. On the other hand, given that
\begin{align*}
&\max_{1\leq j,k\leq p}\left|\frac{1}{\sqrt n}\sum_{i=1}^n(\tilde X_{i j}\tilde X_{i k} - \Ep[X_{i j}X_{i k}])\right|\\
&\quad\leq\max_{1\leq j,k\leq p}\left|\frac{1}{\sqrt n}\sum_{i=1}^n(X_{i j} X_{i k} - \Ep[X_{i j}X_{i k}])\right| + \sqrt n \max_{1\leq j\leq p}|\bar X_{n j}|^2
\end{align*}
and
\begin{align*}
&\max_{1\leq j,k,l\leq p}\left|\frac{1}{\sqrt n}\sum_{i=1}^n(\tilde X_{i j}\tilde X_{i k}\tilde X_{i l} - \Ep[X_{ij}X_{i k}X_{i l}])\right|\\
&\quad\leq \max_{1\leq j,k,l\leq p}\left|\frac{1}{\sqrt n}\sum_{i=1}^n(X_{i j}X_{i k}X_{i l} - \Ep[X_{ij}X_{i k}X_{i l}])\right|\\
&\quad\quad + 2\sqrt n\max_{1\leq j\leq p}|\bar X_{n j}|^3 + \max_{1\leq j,k,l\leq p}|\bar X_{nl}|\times \left| \frac{3}{\sqrt n}\sum_{i=1}^n X_{i j}X_{i k} \right|,
\end{align*}
it follows that the inequalities \eqref{eq: second order deviation} and \eqref{eq: third order deviation}  with some constant $C$ depending only on $b_2$ hold on $\mathcal A$.

In addition, it follows from Condition M that the first part of \eqref{eq: upper and lower bounds for second moment} holds as long as
$$
\max_{1\leq j\leq p}\left| \frac{1}{n}\sum_{i=1}^n(\tilde X_{i j}^2 - \Ep[X_{i j}^2]) \right|\leq b_1^2/2.
$$
However, on the event \eqref{eq: second order deviation}, we have
\begin{align*}
\max_{1\leq j\leq p}\left| \frac{1}{n}\sum_{i=1}^n(\tilde X_{i j}^2 - \Ep[X_{i j}^2]) \right|
& \leq \max_{1\leq j,k\leq p}\left| \frac{1}{n}\sum_{i=1}^n(\tilde X_{i j}\tilde X_{i k} - \Ep[X_{i j}X_{i k}]) \right|\\
& \leq \frac{CB_n\sqrt{\log(p n)}}{\sqrt n} \leq \frac{C}{\log^2(p n)} \leq b_1^2/2,
\end{align*}
where the last inequality holds as long as $n\geq n_{0}$ for some constant $n_{0}$ depending only on $b_1$ and $C$. 

Finally, it follows from Condition M that the second part of \eqref{eq: upper and lower bounds for second moment} holds as long as
$$
\max_{1\leq j\leq p}\left| \frac{1}{n}\sum_{i=1}^n\tilde X_{i j}^4 - \Ep[X_{ij}^4] \right| \leq B_n^2 b_2^{2},
$$
which holds on $\mathcal A$ for all $n\geq n_0$ and some $n_0$ depending only on $b_2$ by the same arguments as those used above.  The asserted claim follows.

\section{Proof of Theorem \ref{thm: gauss and rademacher}}\label{sec: proof of theorem 23}

\begin{lemma}\label{lem: not all conditions}
Suppose that Conditions E and M are satisfied. 
Then there exist a universal constant $c\in(0,1]$ and constants $C_1\geq1$ and $n_0\in\mathbb N$ depending only on $b_1$ and $b_2$ such that for all $n\geq n_0$, the inequality
\begin{equation}\label{eq: bn restriction once again 2}
B_n^2\log^{3}(p n)\leq cn
\end{equation}
implies that 
\begin{equation}\label{eq: event randomization proof}
\|\bar X_n\|_{\infty} \leq C_1\sqrt{\frac{B_n\log(p n)}{n}}\ \text{and}\ \frac{b_1^{2}}{2}\leq \frac{1}{n}\sum_{i=1}^n X_{i j}^2,\ \text{for all }j=1,\dots,p
\end{equation}
with probability at least $1 - 1/n$.
\end{lemma}

\begin{proof}
First, by a similar argument to the proof of Lemma \ref{lem: all conditions}, there exist a universal constant $c\in(0,1]$ and a constant $C_1\geq1$ depending only on $b_1$ and $b_2$ such that, if \eqref{eq: bn restriction once again 2} holds, the first part of \eqref{eq: event randomization proof} holds with probability at least $1-1/(2n)$. 
Next, by one-sided Bernstein's inequality (cf. equation (2.23) in \cite{Wa19}), we have for any $t>0$
\begin{align*}
\Pr\left(\frac{1}{n}\sum_{i=1}^n X_{i j}^2\leq\frac{1}{n}\sum_{i=1}^n \Ep[X_{i j}^2]-t\right)
&\leq\exp\left(-\frac{nt^2}{\frac{2}{n}\sum_{i=1}^n \Ep[X_{i j}^4]}\right)\\
&\leq\exp\left(-\frac{nt^2}{2b_2^2B_n^2}\right),
\end{align*}
where the second inequality follows from Condition (M).  Taking $t=\sqrt{4b_2^2B_n^2\log(pn)/n}$, we obtain
\begin{align*}
\Pr\left(\frac{1}{n}\sum_{i=1}^n X_{i j}^2\leq\frac{1}{n}\sum_{i=1}^n \Ep[X_{i j}^2]-t\right)
\leq(pn)^{-2}.
\end{align*}
Therefore, if $t\leq b^2_1/2$, then
\begin{align*}
\Pr\left(\min_{1\leq j\leq p}\frac{1}{n}\sum_{i=1}^n X_{i j}^2\leq\frac{b_1^2}{2}\right)
\leq p(pn)^{-2}\leq1/(2n).
\end{align*}
Thus the second part of \eqref{eq: event randomization proof} holds with probability at least $1-1/(2n)$ as long as $16b_2^2B_n^2\log(pn)\leq b_1^4n$, completing the proof. 
\end{proof}

\begin{proof}[Proof of Theorem \ref{thm: gauss and rademacher}]
%Let $e_1,\dots,e_n$ be Rademacher weights and assume that $B_n^2\log(p n)\leq cn$ and $n\geq n_0$ with the same constants $c$ and $n_0$ as those in Lemma \ref{lem: all conditions} since otherwise the asserted claim is trivial. Then by the proof of Lemma \ref{lem: all conditions}, there exists a constant $C_1\geq 1$ depending only on $b_1$ and $b_2$  such that
%\begin{equation}\label{eq: event randomization proof}
%\|\bar X_n\|_{\infty} \leq C_1\sqrt{\frac{\log(p n)}{n}}\ \text{and}\ \frac{b_1}{2}\leq \frac{1}{n}\sum_{i=1}^n X_{i j}^2 \leq 2b_2,\ \text{for all }j=1,\dots,p
%\end{equation}
%with probability at least $1 - 1/n-\red{3\upsilon_n}$.
Let $e_1,\dots,e_n$ be Rademacher weights and assume that $B_n^2\log^3(p n)\leq cn$ and $n\geq n_0$ with the same constants $c$ and $n_0$ as those in Lemma \red{\ref{lem: not all conditions}} since otherwise the asserted claim is trivial. 

Further, for all $\gamma\in(0,1)$, let $c_{1-\gamma}^{B,0}$ be the $(1-\gamma)$th quantile of the conditional distribution of 
$$
T_n^{*,0} = \max_{1\leq j\leq p}\frac{1}{\sqrt n}\sum_{i=1}^n (e_i X_{i j} + a_j)
$$
given $X_1,\dots,X_n$. Note that $T_n^{*,0}$ is defined as $T_n^*$ with replaced $X_i-\bar X_n$ by $X_i$. Observe that by Lemma \ref{lem: rademacher anticoncentration}, there exists a constant $C_2\geq 1$ depending only on $b_1$ and $b_2$ such that on the event that \eqref{eq: subgaussian bound on x} and \eqref{eq: event randomization proof} hold jointly, we have for any $t>0$ that
%$$
%\sup_{x\in\mathbb R}\Pr(x\leq T_n^{*,0} \leq x+ t \mid X_{1:n}) \leq C_2\left( t\sqrt{\log p} +  \sqrt{\frac{B_n^2\log^2(p n)}{n}}\right),\ \text{for all }t>0.
%$$
\begin{align}
&\sup_{x\in\mathbb R}\Pr(x\leq T_n^{*,0} \leq x+ t \mid X_{1:n})
%&\quad \leq\sup_{x\in\mathbb R}\Pr\left(x\leq T_n^{*,0} \leq x+ t\vee\frac{5B_n\log(pn)}{\sqrt n} \mid X_{1:n}\right)\notag\\
 \leq C_2\left( t\sqrt{\log p} +  \sqrt{\frac{B_n^2\log^3(p n)}{n}}\right).\label{rad-anti}
\end{align}
Hence, given that \eqref{eq: subgaussian bound on x} holds with probability at least $1 - 1/n$ by Lemma \ref{lem: subgaussian growth}, applying Lemma \ref{lem: rand test}, which is justified by Condition S, we obtain
\begin{equation}\label{eq: app rand lemma}
\sup_{\gamma\in(0,1)}|\Pr(T_n > c_{1-\gamma}^{B,0}) - \gamma| \leq C_2\sqrt{\frac{B_n^2\log^{3}(p n)}{n}} + \frac{2}{n}.
\end{equation}
In fact, for every $s\in\{-1,1\}^n$, define the function $g_s:(\mathbb R^p)^n\to(\mathbb R^p)^n$ by $g_s(x_1,\dots,x_n)=(s_1x_1,\dots,s_nx_n)$ for $x_1,\dots,x_n\in\mathbb R^p$, and set $G=\{g_s:s\in\{-1,1\}^n\}$. Thanks to Condition S, we can check that $X=(X_1,\dots,X_n)$ and $G$ satisfy the assumptions in Lemma \ref{lem: rand test}. 
Also, denoting by $T(x)$ the value of $T_n$ when $X=x$, we have $\phi(X)=1\{T_n>c_{1-\alpha}^{B,0}\}$, where $\phi$ is the function defined in Lemma \ref{lem: rand test}. Moreover, the quantity $\Ep[\chi(X)]$ in Lemma \ref{lem: rand test} is bounded by the right-hand side of \eqref{eq: app rand lemma} because \eqref{rad-anti} holds with probability $1-2/n$. Hence, \eqref{eq: app rand lemma} indeed follows from Lemma \ref{lem: rand test}. 

In addition, for
$$
\beta_n = C_2\sqrt{\frac{B_n^2\log^{3}(p n)}{n}} + C_1C_2\sqrt{\frac{2B_n\log^2(p n)\log n}{n}},
$$
where $C_1$ is the same as in Lemma \ref{lem: not all conditions}, we have on the event that \eqref{eq: subgaussian bound on x} and \eqref{eq: event randomization proof} hold jointly that
$$
c_{1-\gamma + \beta_n}^{B,0} - c_{1-\gamma}^{B,0} \geq C_1 \sqrt{\frac{2B_n\log(p n)\log n}{n}},\quad\text{for all }\gamma\in(\beta_n,1)
$$
since otherwise we would have by \eqref{rad-anti} that
$$
%\Pr(c_{1-\gamma}^{B,0} \leq T_n^{*,0} \leq c_{1-\gamma + \beta_n}^{B,0}\mid X_{1:n}) < C_2\sqrt{\frac{B_n^2\log^{\red3}(p n)}{n}} + C_1C_2\sqrt{\frac{2\log^2(p n)\log n}{n}}
\Pr(c_{1-\gamma}^{B,0} \leq T_n^{*,0} \leq c_{1-\gamma + \beta_n}^{B,0}\mid X_{1:n}) < \beta_n
$$
and simultaneously
\begin{align*}
&\Pr(c_{1-\gamma}^{B,0} \leq T_n^{*,0} \leq c_{1-\gamma + \beta_n}^{B,0}\mid X_{1:n})\\
&\quad  = \Pr(T_n^{*,0} \leq c_{1-\gamma + \beta_n}^{B,0}\mid X_{1:n}) - \Pr(T_n^{*,0} < c_{1-\gamma}^{B,0}\mid X_{1:n})\\
&\quad\geq 1-\gamma + \beta_n - (1-\gamma) = \beta_n,
\end{align*}
which is a contradiction.

Thus, on the event  that \eqref{eq: subgaussian bound on x} and \eqref{eq: event randomization proof} hold jointly, we have
\begin{align*}
&\Pr(T_n^* \leq c_{1 - \alpha + 2\beta_n}^{B,0}\mid X_{1:n})\\
& \quad \geq \Pr\left( T_n^{*,0} + C_1\left|\frac{1}{\sqrt n}\sum_{i=1}^n e_i\right|\sqrt{\frac{B_n\log(p n)}{n}} \leq c_{1 - \alpha + 2\beta_n}^{B,0}\mid X_{1:n}  \right)\\
& \quad \geq \Pr\left( T_n^{*,0} + C_1\sqrt{\frac{2B_n\log(p n)\log n}{n}} \leq c_{1 - \alpha + 2\beta_n}^{B,0}\mid X_{1:n}  \right) - 2/n\\
& \quad \geq \Pr(T_n^{*,0} \leq c_{1 - \alpha + \beta_n}^{B,0} \mid X_{1:n}) - 2/n \geq 1 - \alpha + \beta_n - 2/n > 1 - \alpha,
\end{align*}
where the first inequality follows from \eqref{eq: event randomization proof} and the second from the Hoeffding inequality. In addition, by the same arguments, again on the event  that \eqref{eq: subgaussian bound on x} and \eqref{eq: event randomization proof} hold jointly, we have
\begin{align*}
\Pr(T_n^* \leq c_{1-\alpha - 2\beta_n}^{B,0}\mid X_{1:n})
&\leq \Pr(T_n^{*,0} \leq c_{1-\alpha - \beta_n}^{B,0} \mid X_{1:n}) + 2/n\\
&\leq 1 - \alpha - \beta_n + 2/n + C_2\sqrt{\frac{B_n^2\log^{3}(p n)}{n}} < 1-\alpha.
\end{align*}
Hence,
$$
\Pr(c_{1-\alpha - 2\beta_n}^{B,0} < c_{1-\alpha}^B \leq c_{1-\alpha + 2\beta_n}^{B,0}) \geq 1 - 2/n.
$$
Conclude that
\begin{align*}
\Pr(T_n > c_{1-\alpha}^B)
&\leq \Pr(T_n > c_{1-\alpha -2\beta_n}^{B,0}) + 2/n\\
& \leq \alpha +2\beta_n +  2/n+ \beta_n \leq \alpha + 4\beta_n
\end{align*}
and
\begin{align*}
\Pr(T_n > c_{1-\alpha}^B)
& \geq \Pr(T_n > c_{1-\alpha +2\beta_n}^{B,0}) - 2/n\\
& \geq \alpha - 2\beta_n - 2/n -  \beta_n \geq \alpha - 4\beta_n,
\end{align*}
where the second lines follow from \eqref{eq: app rand lemma}. The asserted claim follows.
\end{proof}

\section{Proof of Theorem  \ref{thm: infinitesimal factors}}\label{sec: proof of theorem 24}

We proceed in three steps.

\medskip
\noindent
{\bf Step 1.} Here, we show that in the setting of Section \ref{sec: main arguments}, if Conditions V, P, and B are satisfied, then for all $y\in\mathbb R^p$, we have
\begin{align*}
&\Pr\left(\frac{1}{\sqrt n}\sum_{i=1}^n V_i \leq y\right) - \Pr\left(\frac{1}{\sqrt n}\sum_{i=1}^n Z_i \leq y + \eta/2 \right)\\
&\quad \lesssim (1\vee\eta^{-4})\left(\frac{\mathcal B_{n,1}\log p}{\sqrt n} + \frac{\mathcal B_{n,2}\log^2 p}{n} + \frac{B_n^2\log^3(p n)}{n} + \frac{B_n^2\log^{5}(p n)}{n^2}\right)
\end{align*}
up to a constant depending only on $C_v$, $C_p$, and $C_b$, where $\mathcal B_{n,1}$ and $\mathcal B_{n,2}$ are the left-hand sides of \eqref{eq: bn bounds 1} and \eqref{eq: bn bounds 2}, respectively. To show this result, note that for all $y\in\mathbb R^p$, we have
$$
\Pr\left(\frac{1}{\sqrt n}\sum_{i=1}^n V_i \leq y\right) \leq \Pr\left(\frac{1}{\sqrt n}\sum_{i=1}^n Z_i \leq y + 2/\phi\right) + |\Ep[\mathcal I^{y+\phi}]|
$$ 
by Step 2 of the proof of Lemma \ref{lem: main}, where all the notations are the same as in Lemma \ref{lem: main}. So, we set $\phi = 4/\eta$ and bound $ |\Ep[\mathcal I^{y+\phi}]|$ using Steps 1, 3, 4, and 5 of the proof of Lemma \ref{lem: main} with the only difference that all terms like
$$
\Pr\left( -\phi^{-1} < \max_{1\leq j\leq p}(W_j - y_j) \leq \phi^{-1} \right)
$$
are now upper bounded by one rather than by $2\Ep[\varrho_{\epsilon^1}] + \mathcal P$. This gives the claim of this step.

\medskip
\noindent
{\bf Step 2.} Here, we show that there exists a constant $C\geq1$ depending only on $b_1$ and $b_2$ such that with probability at least $1 - 2/n-3\upsilon_n$, we have for all $x\in\mathbb R$ that
\begin{equation}\label{eq: this is tough}
\mathcal P_x = \Pr\left(T_n^* \leq x\mid X_{1:n}\right) - \Pr\left(T_n \leq x + \eta/2\right)\leq C(1\vee\eta^{-4})\left(\frac{B_n^2\log^s(p n)}{n}\right)^{1/2}
\end{equation}
where $s = 3$ if $c_{1-\alpha}^B$ is obtained via either the empirical bootstrap or the multiplier bootstrap with weights satisfying both \eqref{eq: multiplier bootstrap simplification} and \eqref{eq: third order matching multipliers} and $s = 5$ if $c_{1-\alpha}^B$ is obtained via the multiplier bootstrap with weights satisfying \eqref{eq: multiplier bootstrap simplification} only. To do so, we proceed as in the proof of Lemma \ref{thm: empirical bootstrap}, with the following differences: (i) we now use Step 1 instead of Theorem \ref{cor: max}, and (ii) in the case of the multiplier bootstrap with weights violating \eqref{eq: third order matching multipliers}, instead of \eqref{eq: Y3 bound}, we use the bound
$$
\max_{1\leq j,k,l\leq p}\left| \frac{1}{\sqrt n}\sum_{i=1}^n Y_{i j}Y_{i k}Y_{i l} \right| \leq 4 b_2^{2}B_n\sqrt{5 n\log(p n)},
$$
%which follows from \eqref{eq: Y infty bound} and \eqref{eq: Y2 bound lower and upper}. 
which follows from \eqref{eq: Y infty bound} and the additional assumption $n^{-1}\sum_{i=1}^n\Ep[X_{ij}^2]\leq b_2^2$. This leads to the following inequality: with probability at least $1 - 2 / n-3\upsilon_n$, for all $x\in\mathbb R^p$,
$$
\mathcal P_x \lesssim (1\vee\eta^{-4})\left(\frac{\mathcal B_{n,1}\log p}{\sqrt n} + \frac{\mathcal B_{n,2}\log^2 p}{n} + \frac{B_n^2\log^3(p n)}{n} + \frac{B_n^2\log^{5}(p n)}{n^2}\right)
$$
up to a constant depending only on $b_1$ and $b_2$, where $\mathcal B_{n,1} = B_n\sqrt{\log(p n)}$ in all cases and $\mathcal B_{n,2} = B_n^2\log(p n)$ in the case of the empirical and the multiplier bootstrap with weights satisfying \eqref{eq: third order matching multipliers} and $\mathcal B_{n,2} = B_n\sqrt{n\log(p n)}$ in the case of the multiplier bootstrap with weights violating \eqref{eq: third order matching multipliers}. The asserted claim of this step follows.

\medskip
\noindent
{\bf Step 3.} Here, we complete the proof. Let $\beta_n$ be the right-hand side of \eqref{eq: this is tough} and for all $\gamma\in(0,1)$, let $c_{1-\gamma}$ be the $(1-\gamma)$th quantile of $T_n$. Then by Step 2, with probability at least $1 - 2/n-3\upsilon_n$, we have
$$
\Pr(T_n^* \leq c_{1-\alpha - 2\beta_n} - \eta\mid X_{1:n}) \leq \Pr(T_n < c_{1-\alpha - 2\beta_n}) + \beta_n < 1 - \alpha.
$$
Therefore,
$$
\Pr(c_{1-\alpha}^B \geq c_{1-\alpha - 2\beta_n} - \eta) \geq 1 - 2/n-3\upsilon_n\geq1-5\upsilon_n,
$$
and so
$$
\Pr(T_n > c_{1-\alpha}^B + \eta) \leq \Pr(T_n > c_{1-\alpha-2\beta_n} ) + 5\upsilon_n \leq \alpha + 7\beta_n.
$$
The asserted claim follows.

\section{Proof of Lemma \ref{lem:dirty}}
\label{sec: proof of tech lemma}

First we derive \eqref{bdd-version} from \eqref{eq:dirty}. Choose the constant $\phi$ such that
\[
\phi^{-1}=\left(\Delta_1\log^3p\right)^{1/4}+\frac{2\kappa_n\log p}{\sqrt n}.
\]
Then we obtain
\ba{
&\phi(\log p)^2\sqrt{\Delta_1}+\phi^3(\log p)^{7/2}\Delta_1\lesssim\left(\Delta_1\log^5p\right)^{1/4},\\
&(\log p)\sqrt{\Delta_2(\phi)}+\phi(\log p)^{3/2}\Delta_2(\phi)
+\sqrt{(\log p)^{3}\Delta_3(\phi)}=0,\\
&\frac{\sqrt{\log p}}{\phi}\lesssim \left(\Delta_1\log^5p\right)^{1/4}
+\frac{\kappa_n(\log p)^{3/2}}{\sqrt n}.
}
Combining these bounds with \eqref{eq:dirty} gives \eqref{bdd-version}. 

Next we prove \eqref{eq:dirty}. 
Without loss of generality, we may assume $Z$ is independent of $X_{1:n}$ and $\tilde X_{1:n}$. 
Fix a non-increasing $C^4$ function $g_0\colon\mathbb R\to\mathbb R$ such that (i) $g_0(t)\geq 0$ for all $t\in\mathbb R$, (ii) $g_0(t) = 0$ for all $t\geq 1$, and (iii) $g_0(t) = 1$ for all $t\leq 0$. For this function, there exists a constant $C_g>0$ such that
$$
\sup_{t\in\mathbb R}\Big(|g_0^{(1)}(t)|\vee|g_0^{(2)}(t)|\vee|g_0^{(3)}(t)|\vee|g_0^{(4)}(t)|\Big) \leq C_g.
$$
In this proof, we will use the symbol $\lesssim$ to denote inequalities that hold up to a constant depending only on $c$ and $C_g$. Since $g_0$ can be chosen to be universal, we say that the inequality in the statement of the theorem holds up to a constant depending only on $c$.

Set 
$
\beta = \phi\log p.
$
Define functions $g\colon \mathbb R\to\mathbb R$ and $F\colon \mathbb R^p\to\mathbb R$ as in Appendix \ref{sec: proof of main lemma}.  
%by $g(t) = g_0(\phi t)$ for all $t\in\mathbb R$ and
%$$
%F(w)=\beta^{-1}\log\left( \sum_{j=1}^p\exp(\beta w_j) \right),\quad\text{for all }w\in\mathbb R^p.
%$$
%It is immediate that the function $g$ satisfies
%\begin{equation}
%g(t)=\begin{cases}
%1 & \text{if }t\leq0,\\
%0 & \text{if }t\geq\phi^{-1}.
%\end{cases}\label{eq: g property}
%\end{equation}
%It is also straightforward to check that the function $F$ has the following property:
%\begin{equation}\label{eq: f properties}
%\max_{1\leq j\leq p}w_j  \leq F(w)  \leq  \max_{1\leq j\leq p}w_j + \phi^{-1},\ \text{for all }w\in\mathbb R^p;
%\end{equation}
%see \cite{CCK13} for details.
Also, for all $y\in\mathbb R^p$, define a function $m^y\colon\mathbb R^p\to\mathbb R$ as
$$
m^y(w) = g(F(w - y)),\quad\text{for all }w\in\mathbb R^p.
$$
Below, we will need partial derivatives of $m^y$ up to the fourth order. For brevity of notation, we will use indices to denote these derivatives. For example, for any $j,k,l,r=1,\dots,p$, we will write
$$
m_{jklr}^y(w)=\frac{\partial^4m^y(w)}{\partial w_j \partial w_k \partial w_l \partial w_r},\quad\text{for all }w\in\mathbb R^p.
$$

Further, for all $y\in\mathbb R^p$, define 
\begin{equation*}%\label{eq: iy}
\mathcal I^{y} =m^y(S_n) - m^y(Z)
\end{equation*}
and
\begin{equation*}%\label{eq: h definition}
h^y(W; x) = 1\left\{ -x < \max_{1\leq j\leq p}(W_j - y_j) \leq x \right\},\ \text{for all }x\geq 0\text{ and }W\in\mathbb R^p.
\end{equation*}
By the same argument as those in Step 2 of the proof of Lemma \ref{lem: main}, with Lemma \ref{lem: anticoncetration} playing the role of Condition A therein, we have
\[
\sup_{y\in\mathbb R^p}\Big|\Pr(S_n\leq y) - \Pr(Z\leq y)\Big| \lesssim \frac{\sqrt{\log p}}{\phi} + \sup_{y\in\mathbb R^p}|\Ep[\mathcal I^y]|.
\]
Below we will prove
\begin{multline}\label{aim}
\sup_{y\in\mathbb R^p}|\Ep[\mathcal I^y]|
\lesssim\phi(\log p)^{3/2}\Delta_0
+\phi(\log p)^2\sqrt{\Delta_1}+\phi^3(\log p)^{7/2}\Delta_1\\
+\phi(\log p)^{3/2}\Delta_2(\phi)+(\log p)\sqrt{\Delta_2(\phi)}
+\sqrt{(\log p)^3\Delta_3(\phi)}
+\frac{\sqrt{\log p}}{\phi}.
\end{multline}
%\begin{equation}\label{aim}
%\sup_{y\in\mathbb R^p}|\Ep[\mathcal I^y]|
%\lesssim \phi(\log p)^{3/2}\left(\Delta_0+\sqrt{\Delta_1\log p}+\delta^2\log p\right)
%+\Delta_1\phi^3(\log p)^{7/2}.
%\end{equation}
%Then, taking $\phi_0$ so that
%\begin{equation}\label{eq:phi0}
%\frac{1}{\phi_0}=\sqrt{\Delta_0\log p}+\Delta_1^{1/4}(\log p)^{3/4},
%\end{equation}
%we obtain the desired bound. 
%
%Now we shall prove \eqref{aim} in three steps. 
\smallskip

\noindent\textbf{Step 1.} 
Define a function $\psi^y:\mathbb{R}^p\to\mathbb{R}$ by
\[
\psi^y(w)=\int_0^1\frac{1}{2t}\Ep[m^y(\sqrt{t}w+\sqrt{1-t}Z)-m^y(Z)]dt,\quad w\in\mathbb{R}^p.
\]
$\psi^y$ is a solution to the following Stein equation (cf.~Lemma 1 in \cite{M09}):
\begin{equation}\label{eq:stein}
m^y(w)-\Ep[m^y(Z)]=w\cdot\nabla\psi^y(w)-\sum_{j,k=1}^p\tilde\Sigma_{jk}\partial_{jk}\psi^y(w).
\end{equation}
Also, from \eqref{eq: g property}--\eqref{eq: f properties}, we have (cf. \eqref{eq: h switching})
\[
m^y_{jk}(w)=h^y(w;\phi^{-1})m^y_{jk}(w)
\]
for all $w\in\mathbb{R}^p$. Hence we obtain
\begin{align*}
\partial_{jk}\psi^y(w)
&=\frac{1}{2}\int_0^1\Ep[m^y_{jk}(\sqrt{t}w+\sqrt{1-t}Z)]dt\\
&=\frac{1}{2}\int_0^1\Ep\left[h^{y(w,t)}\left(Z,\frac{1}{\phi\sqrt{1-t}}\right)m^y_{jk}(\sqrt{t}w+\sqrt{1-t}Z)\right]dt,
\end{align*}
where
\[
y(w,t):=\frac{y-\sqrt{t}w}{\sqrt{1-t}}.
\]
By \eqref{eq: property 1 - 2 and 3} and \eqref{eq: property 3 - 2 and 3}, we have
\[
\sum_{j,k=1}^p|m^y_{jk}(w)|\lesssim\phi^2\log p
\]
for all $w\in\mathbb{R}^p$, so we obtain
\begin{align}
\sum_{j,k=1}^p|\partial_{jk}\psi^y(w)|
&\lesssim\phi^2(\log p)\int_0^1\Ep\left[h^{y(w,t)}\left(Z,\frac{1}{\phi\sqrt{1-t}}\right)\right]dt
\nonumber\\
&\lesssim\phi(\log p)^{3/2}\int_0^1\frac{1}{\sqrt{1-t}}dt
\lesssim\phi(\log p)^{3/2},
\label{est-psi2}
\end{align}
where the second estimate follows by Lemma \ref{lem: anticoncetration}. 
Noting Lemmas A.3--A.4 of \cite{CCK13}, we can similarly prove
\begin{equation}\label{est-psi1}
\sum_{j=1}^p|\partial_{j}\psi^y(w)|\lesssim \sqrt{\log p}.
\end{equation}
\smallskip

\noindent\textbf{Step 2.} 
%Let $\tilde{X}_{1:n}=(\tilde{X}_1,\dots,\tilde{X}_n)$ be an independent copy of $X_{1:n}=(X_1,\dots,X_n)$ which is independent of $Z$ as well. 
Let $I$ be a uniform random variable on $\{1,\dots,n\}$ independent of everything else. 
Define $\tilde S_n:=S_n+Y_I$. It is well-known that $(S_n,\tilde S_n)$ is an exchangeable pair and satisfies 
\begin{align}
\Ep[\tilde S_n-S_n\mid X_{1:n}]&=-\frac{1}{n}S_n.\label{eq:lr}
%\Ep[(\tilde S_n-S_n)(\tilde S_n-S_n)'\mid X]&=\frac{2}{n}\Sigma+\frac{1}{n}V,\label{eq:cov}
\end{align}
%where 
%\begin{align*}
%V:=\frac{1}{n}\sum_{i=1}^n(X_iX_i'-\Ep[X_iX_i']).
%\end{align*}
See the proof of \cite[Theorem 4.1]{CCK14ga} for details. 
By exchangeability we have, with $D:=\tilde S_n-S_n=Y_I$,
\begin{align*}
0&=\frac{n}{2}\Ep[D\cdot(\nabla \psi^y(\tilde S_n)+\nabla \psi^y(S_n))1\{\|D\|_\infty\leq\beta^{-1}\}]\\
&=\Ep\left[\frac{n}{2}D\cdot(\nabla \psi^y(\tilde S_n)-\nabla \psi^y(S_n))1\{\|D\|_\infty\leq\beta^{-1}\}+nD\cdot\nabla \psi^y(S_n)1\{\|D\|_\infty\leq\beta^{-1}\}\right]\\
&=\Ep\left[\frac{n}{2}\sum_{j,k=1}^p\tilde D_j\tilde D_k\partial_{jk}\psi^y(S_n)+n\tilde D\cdot\nabla \psi^y(S_n)\right]+R,
%&=E\left[\frac{1}{2}\sum_{j,k=1}^p(\Lambda^{-1}D)_jD_k\partial_{jk}\psi^y(W)+R-W\cdot\nabla \psi^y(W)\right],
\end{align*}
where $\tilde D=D1\{\|D\|_\infty\leq\beta^{-1}\}$,
\[
R=\frac{n}{2}\sum_{j,k,l=1}^p\Ep[\tilde D_j\tilde D_k\tilde D_lU\partial_{jkl}\psi^y(S_n+(1-U)\tilde D)],
\]
and $U$ is a standard uniform random variable independent of everything else. 
Combining this with \eqref{eq:stein}--\eqref{eq:lr}, we obtain
\begin{equation}\label{est-I}
\Ep[\mathcal{I}^y]
\lesssim\sqrt{\log p}\cdot H_1+\phi(\log p)^{3/2}H_2+|R|,
\end{equation}
where
\ba{
H_1&=\Ep[\|n\Ep[D1\{\|D\|_\infty>\beta^{-1}\}\mid X_{1:n}]\|_\infty],\\
H_2&=\Ep\left[\max_{1\leq j,k\leq p}\left|\frac{n}{2}\Ep[\tilde D_j\tilde D_k\mid X_{1:n}]-\tilde\Sigma_{jk}\right|\right].
}
\smallskip

\noindent\textbf{Step 3.} It is straightforward to deduce
\ba{
H_1\leq\Ep\left[\left\|\sum_{i=1}^nY_i1\{\|Y_i\|_\infty>\beta^{-1}\}\right\|_\infty\right].
}
Hence, by Lemma \ref{lem: maximal ineq},
\ben{\label{I-est}
%I&\leq\sqrt{(\log p)\max_{1\leq j\leq p}\sum_{i=1}^n\Ep\left[Y_{ij}^21\{\|Y_i\|_\infty>\beta^{-1}\}\right]}\\
%&\quad+(\log p)\sqrt{\Ep\left[\max_{1\leq i\leq n}\|Y_{i}\|_\infty^21\{\|Y_i\|_\infty>\beta^{-1}\}\right]}.
\sqrt{\log p}\cdot H_1\lesssim(\log p)\sqrt{\Delta_2(\phi)}+\sqrt{(\log p)^3\Delta_3(\phi)}.
}
\smallskip

\noindent\textbf{Step 4.} We bound $H_2$ as $H_2\leq H_{21}+H_{22}+\Delta_0$, where
\ba{
H_{21}&=\Ep\left[\max_{1\leq j,k\leq p}\left|\frac{n}{2}(\Ep[\tilde D_j\tilde D_k\mid X_{1:n}]-\Ep[\tilde D_j\tilde D_k])\right|\right],\\
H_{22}&=\Ep\left[\max_{1\leq j,k\leq p}\left|\frac{n}{2}\Ep[\tilde D_j\tilde D_k]-\Sigma_{jk}\right|\right].
}
It is straightforward to deduce
\ba{
H_{21}\leq\Ep\left[\max_{1\leq j,k\leq p}\left|\sum_{i=1}^n(\tilde Y_{ij}\tilde Y_{ik}-\Ep[\tilde Y_{ij}\tilde Y_{ik}])\right|\right],
}
where $\tilde Y_i=Y_i1\{\|Y_i\|_\infty\leq\beta^{-1}\}$. 
Hence, by Lemma \ref{lem: maximal ineq},
\ba{
H_{21}
&\lesssim\sqrt{(\log p)\max_{1\leq j,k\leq p}\sum_{i=1}^n\Ep\left[\tilde Y_{ij}^2\tilde Y_{ik}^2\right]}
+(\log p)\sqrt{\Ep\left[\max_{1\leq i\leq n}\max_{1\leq j,k\leq p}\tilde Y_{ij}^2\tilde Y_{ik}^2\right]}\\
&\lesssim\sqrt{(\log p)\Delta_1}+\beta^{-2}\log p.
}
Meanwhile, since $\Sigma_{jk}=2^{-1}\sum_{i=1}^n\Ep[Y_{ij}Y_{ik}]$,
\ba{
H_{22}&=\max_{1\leq j,k\leq p}\left|\frac{1}{2}\sum_{i=1}^n\Ep[Y_{ij}Y_{ik}1\{\|Y_i\|_\infty>\beta^{-1}\}]\right|
=\frac{1}{2}\Delta_2(\phi).
}
Consequently,
\ben{\label{II-est}
\phi(\log p)^{3/2}H_{2}\lesssim \phi(\log p)^2\sqrt{\Delta_1}+\frac{\sqrt{\log p}}{\phi}+\phi(\log p)^{3/2}\{\Delta_0+\Delta_2(\phi)\}.
}
\smallskip

\noindent\textbf{Step 5.} 
By exchangeability we have
\begin{align*}
&\Ep[\tilde D_j\tilde D_k\tilde D_lU\partial_{jkl}\psi^y(S_n+(1-U)\tilde D)]\\
&=\Ep[D_jD_kD_lU\partial_{jkl}\psi^y(S_n+(1-U)D)1\{\|D\|_\infty\leq\beta^{-1}\}]\\
&=-\Ep[D_jD_kD_lU\partial_{jkl}\psi^y(\tilde S_n-(1-U)D)1\{\|D\|_\infty\leq\beta^{-1}\}]\\
&=-\Ep[D_jD_kD_lU\partial_{jkl}\psi^y(S_n+UD)1\{\|D\|_\infty\leq\beta^{-1}\}]\\
&=-\Ep[\tilde D_j\tilde D_k\tilde D_lU\partial_{jkl}\psi^y(S_n+U\tilde D)].
\end{align*}
Hence we have
\begin{align*}
R&=\frac{n}{4}\sum_{j,k,l=1}^p\Ep[\tilde D_j\tilde D_k\tilde D_lU\{\partial_{jkl}\psi^y(S_n+(1-U)\tilde D)-\partial_{jkl}\psi^y(S_n+U\tilde D)\}]\\
&=\frac{n}{4}\sum_{j,k,l,r=1}^p\Ep[\tilde D_j\tilde D_k\tilde D_l\tilde D_rU(1-2U)\partial_{jklr}\psi^y(S_n+U\tilde D+U'(1-2U)\tilde D)]\\
&=\frac{1}{4}\sum_{i=1}^n\sum_{j,k,l,r=1}^p\Ep[\tilde Y_{ij}\tilde Y_{ik}\tilde Y_{il}\tilde Y_{ir}U(1-2U)\partial_{jklr}\psi^y(S_n+\hat Y_i)],
\end{align*}
where $\hat Y_i=U\tilde Y_i+U'(1-2U)\tilde Y_i$ and $U'$ is a standard uniform random variable independent of everything else. 
Note that $|U+U'(1-2U)|\leq U\vee(1-U)\leq1$ and thus $\|\hat Y_i\|_\infty\leq\|\tilde Y_i\|_{\infty}\leq\beta^{-1}$. 
From \eqref{eq: g property}--\eqref{eq: f properties}, we have for any $w_1,w_2\in\mathbb{R}^p$ with $\|w_2\|_\infty\leq\beta^{-1}$, 
\begin{align*}
&\partial_{jklr}\psi^y(w_1+w_2)\\
&=\frac{1}{2}\int_0^1\sqrt t\Ep[m^y_{jklr}(\sqrt{t}(w_1+w_2)+\sqrt{1-t}Z)]dt\\
&=\frac{1}{2}\int_0^1\sqrt t\Ep\left[h^{y(w_1,t)}\left(Z,\frac{\phi^{-1}+\beta^{-1}}{\sqrt{1-t}}\right)m^y_{jklr}(\sqrt{t}(w_1+w_2)+\sqrt{1-t}Z)\right]dt.
\end{align*}
By \eqref{eq: property 1 - 3 and 4} and \eqref{eq: property 2 - 4 and 5}, 
\begin{align*}
&|\partial_{jklr}\psi^y(w_1+w_2)|\\
&\lesssim\int_0^1\Ep\left[h^{y(w_1,t)}\left(Z,\frac{\phi^{-1}+\beta^{-1}}{\sqrt{1-t}}\right)U^y_{jklr}(\sqrt{t}w_1+\sqrt{1-t}Z)\right]dt.
\end{align*}
Hence 
\ba{
|R|&\lesssim\sum_{i=1}^n\sum_{j,k,l,r=1}^p\int_0^1\Ep\left[|\tilde Y_{ij}\tilde Y_{ik}\tilde Y_{il}\tilde Y_{ir}|h^{y(S_n,t)}\left(Z,\frac{\phi^{-1}+\beta^{-1}}{\sqrt{1-t}}\right)U^y_{jklr}(\sqrt{t}S_n+\sqrt{1-t}Z)\right]dt\\
&\lesssim\phi^4(\log p)^3\int_0^1\Ep\left[\max_{1\leq j\leq p}\sum_{i=1}^n\tilde Y_{ij}^4h^{y(S_n,t)}\left(Z,\frac{\phi^{-1}+\beta^{-1}}{\sqrt{1-t}}\right)\right]dt\\
&\lesssim\phi^3(\log p)^{7/2}\Ep\left[\max_{1\leq j\leq p}\sum_{i=1}^n\tilde Y_{ij}^4\right],
}
where the second line follows by \eqref{eq: property 3 - 3 and 4} and the last one by Lemma \ref{lem: anticoncetration} (cf.~\eqref{est-psi2}). 
By Lemma 9 in \cite{CCK15},
\ba{
\Ep\left[\max_{1\leq j\leq p}\sum_{i=1}^n\tilde Y_{ij}^4\right]
&\lesssim \max_{1\leq j\leq p}\Ep\left[\sum_{i=1}^n\tilde Y_{ij}^4\right]
+(\log p)\Ep\left[\max_{1\leq i\leq p}\|\tilde Y_{i}\|_\infty^4\right]\\
&\lesssim\Delta_1+\beta^{-4}(\log p).
}
Consequently, 
\ben{\label{est-r}
|R|\lesssim\phi^3(\log p)^{7/2}\Delta_1+\frac{\sqrt{\log p}}{\phi}.
}

Combining \eqref{est-I}, \eqref{I-est}, \eqref{II-est} and \eqref{est-r}, we obtain \eqref{aim}. 
\qed

\section{Technical Lemmas}\label{lem: technical lemmas}

\begin{lemma}[Exponential Inequality for Weighted Sums of Exchangeable Random Variables]\label{lem: exponential inequality}
Let $a_1,\dots, a_n$ be some constants in $\mathbb R$ and let $X_1,\dots,X_n$ be exchangeable random variables such that $|X_i|\leq 1$ almost surely for all $i = 1,\dots,n$. Then
$$
\Pr\left( \left| \sum_{i=1}^n a_i X_i \right| > \left| \sum_{i=1}^n a_i \right| + t \right) \leq 2\exp\left(-\frac{t^2}{32\sum_{i=1}^n a_i^2}\right),\quad\text{for all }t>0.
$$
\end{lemma}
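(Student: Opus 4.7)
\smallskip

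\noindent
\textbf{Proof plan.}
The plan is to exploit exchangeability to introduce a uniform random permutation, condition on the values of $X$ so that only the permutation remains random, and then concentrate a permutation statistic around its mean. Concretely, let $\sigma$ be a uniform random permutation of $\{1,\dots,n\}$ independent of $X_1,\dots,X_n$. By exchangeability,
$$
\sum_{i=1}^n a_iX_i\;\stackrel{d}{=}\;T:=\sum_{i=1}^n a_iX_{\sigma(i)},
$$
so it suffices to bound $\Pr(|T|>|\sum_i a_i|+t)$. Writing $\bar X=n^{-1}\sum_i X_i$ and using $|X_i|\le1$, one has $|\bar X|\le1$, and a direct computation (or the tower property) gives $\Ep[T\mid X]=\bar X\sum_i a_i$ with $|\Ep[T\mid X]|\le|\sum_i a_i|$. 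Hence on $\{|T|>|\sum_i a_i|+t\}$ the triangle inequality yields $|T-\Ep[T\mid X]|>t$, and it remains to show the \emph{conditional} bound
$$
\Pr\bigl(|T-\Ep[T\mid X]|>t\,\big|\,X\bigr)\le 2\exp\!\Bigl(-\frac{t^2}{32\sum_i a_i^2}\Bigr)\quad\text{almost surely.}
$$

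\smallskip

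\noindent
Setting $b_i:=X_i-\bar X$ (so $\sum_i b_i=0$ and $|b_i|\le 2$), the centered quantity is the permutation statistic $W:=\sum_i a_ib_{\sigma(i)}$. The main step is therefore to establish a sub-Gaussian MGF bound of the form
$$
\Ep_{\sigma}\!\bigl[\exp(\lambda W)\,\big|\,X\bigr]\le \exp\!\Bigl(c\,\lambda^{2}\|b\|_{\infty}^{2}\sum_i a_i^2\Bigr)
$$
for some universal constant $c$ (we may afford $c\le 2$), and then apply the standard Chernoff bound. With $\|b\|_\infty\le 2$ this upper bound is $\exp(4c\lambda^{2}\sum_i a_i^{2})$, and optimizing $\lambda$ yields the tail $2\exp(-t^2/(16c\sum_i a_i^2))$. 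Combined with Step 2 this delivers the stated bound for any $c\le 2$.

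\smallskip

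\noindent
To establish the sub-Gaussian bound on $W$, the natural route is the Doob martingale $M_k=\Ep[W\mid X,\sigma(1),\dots,\sigma(k)]$, whose increments factor cleanly as
$$
M_k-M_{k-1}=(b_{\sigma(k)}-\bar b_{R_{k-1}})\bigl(a_k-\bar a_{(k+1):n}\bigr),
$$
where $R_{k-1}$ denotes the indices not yet assigned and $\bar a_{(k+1):n}=(n-k)^{-1}\sum_{i>k}a_i$. The predictable quadratic variation is controlled, in expectation, by $\sum_i a_i^2$ (after noting $|b_{\sigma(k)}-\bar b_{R_{k-1}}|\le 4$ and computing the sample-variance expectation), which is the ingredient that produces a sub-Gaussian (rather than Bernstein-with-log) bound.

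\smallskip

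\noindent
\textbf{Main obstacle.}
The subtle point is exactly the quadratic-variation control: a naive Azuma--Hoeffding bound using the a.s.\ magnitudes $|M_k-M_{k-1}|\le 4|a_k-\bar a_{(k+1):n}|$ leads to $\sum_k(a_k-\bar a_{(k+1):n})^2$, which can exceed $\sum_i a_i^2$ by a factor of $\log n$. Avoiding this log loss, either by symmetrizing the revelation order of $\sigma$ (so that the predictable quadratic variation has expectation exactly $\lesssim\sum_i a_i^2$) and combining with a Freedman-type argument, or by directly invoking a permutation-concentration inequality in the spirit of Hoeffding's combinatorial CLT, is the key technical step; the generous constant $32$ in the exponent accommodates the constants incurred along the way ($\|b\|_\infty\le 2$ contributes a factor of $4$, centering $\bar X$ contributes another factor, and the Chernoff optimization contributes the remaining slack).
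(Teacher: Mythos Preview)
Your setup is essentially the paper's: center by $\bar X$, express the centered sum as a martingale transform, and apply Azuma--Hoeffding. Introducing an auxiliary permutation $\sigma$ and conditioning on $X$ is a cosmetic variation; the paper works directly with the exchangeable sequence $Y_i=X_i-\bar X$ and builds the martingale differences $R_i=Y_i-\frac{1}{n-i+1}\sum_{j\ge i}Y_j$, which is exactly your $b_{\sigma(k)}-\bar b_{R_{k-1}}$ in disguise. Your increment formula $M_k-M_{k-1}=(b_{\sigma(k)}-\bar b_{R_{k-1}})\,c_k$ with $c_k=a_k-\bar a_{(k+1):n}$ is correct (and in fact $|b_{\sigma(k)}-\bar b_{R_{k-1}}|\le 2$, not $4$, since it equals $X_{\sigma(k)}-\bar X_{R_{k-1}}$).

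The gap is that you stop at the ``main obstacle'' without resolving it, and the resolution is much simpler than the routes you propose. First, your claim that $\sum_k c_k^2$ can exceed $\sum_i a_i^2$ by a factor $\log n$ is not correct: by the discrete Hardy inequality, $\sum_k(\bar a_{(k+1):n})^2\le 4\sum_i a_i^2$, so $\sum_k c_k^2\le 10\sum_i a_i^2$ for \emph{any} ordering of the $a_i$'s. But the paper's actual device is cleaner and recovers the stated constant $32$: since the $X_i$'s are exchangeable, one may assume without loss of generality that $|a_1|\ge|a_2|\ge\cdots\ge|a_n|$. Under this ordering,
\[
|c_k|=\Bigl|a_k-\frac{1}{n-k}\sum_{j>k}a_j\Bigr|\le|a_k|+\frac{1}{n-k}\sum_{j>k}|a_j|\le 2|a_k|,
\]
so $\sum_k c_k^2\le 4\sum_k a_k^2$. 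With $|R_k|\le 2$, Azuma--Hoeffding then gives
\[
\Pr\Bigl(\Bigl|\sum_k c_kR_k\Bigr|>t\Bigr)\le 2\exp\Bigl(-\frac{t^2}{8\sum_k c_k^2}\Bigr)\le 2\exp\Bigl(-\frac{t^2}{32\sum_i a_i^2}\Bigr),
\]
which, combined with your centering step, is the lemma. No Freedman-type argument, symmetrized revelation order, or combinatorial permutation inequality is needed.
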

\begin{proof}
Since the random variables $X_i$ are exchangeable, we can and will, without loss of generality, assume that
\begin{equation}\label{eq: decreasing order}
|a_{1}| \geq |a_{2}| \geq \dots\geq |a_{n}|.
\end{equation}
Next, define the sample mean
$
\bar X_n = n^{-1}\sum_{i=1}^n X_i
$
and observe that $|\bar X_n|\leq 1$. Hence, denoting
$$
Y_i = X_i - \bar X_n,\quad\text{for all }i=1,\dots, n,
$$
we have by the triangle inequality that
\begin{equation}\label{eq: decomposition}
\left|\sum_{i=1}^n a_i X_i\right| = \left|\sum_{i=1}^n a_i(X_i - \bar X_n) + \sum_{i=1}^n a_i \bar X_n\right| \leq \left|\sum_{i=1}^n a_i Y_i \right| + \left|\sum_{i=1}^n a_i\right|.
\end{equation}
Now, observe that $Y_1,\dots,Y_n$ are exchangeable random variables, and so for all $i=1,\dots,n$,
$$
\Ep[Y_i\mid Y_1,\dots,Y_{i-1}] = \Ep\left[\frac{1}{n-i+1}\sum_{j=i}^n Y_j\mid Y_1,\dots,Y_{i-1}\right].
$$
Hence, denoting
\begin{equation}\label{eq: martingale}
R_i = Y_i + \frac{1}{n-i+1}\sum_{j=1}^{i-1} Y_j,\quad\text{for all }i=1,\dots, n,
\end{equation}
it follows that for all $i = 1,\dots,n$,
\begin{align*}
\Ep[R_i\mid R_1,\dots,R_{i-1}] 
& = \Ep[R_i\mid Y_1,\dots, Y_{i-1}]\\
& = \Ep\left[Y_i + \frac{1}{n-i+1}\sum_{j=1}^{i-1} Y_j\mid Y_1,\dots,Y_{i-1}\right]\\
& = \Ep\left[ \frac{1}{n-i+1}\sum_{j=1}^n Y_j \mid Y_1,\dots,Y_{i-1} \right] = 0.
\end{align*}
Thus, $(R_i,\mathcal F_i)_{i=1}^n$, where $\mathcal F_i = \{R_1,\dots,R_i\}$ for all $i=1,\dots,n$, is a martingale difference sequence. In addition, for all $i=1,\dots,n$,
$$
|R_i| = \left| Y_i + \frac{1}{n - i + 1}\sum_{j=1}^{i-1} Y_j \right| = \left| Y_i - \frac{1}{n-i+1}\sum_{j=i}^n Y_j \right| \leq \max_{1\leq j\leq n}|X_i-X_j| \leq 2.
$$
Moreover, using an induction argument, it follows from \eqref{eq: martingale} that for all $i=1,\dots,n$,
\begin{equation}\label{eq: inversion}
Y_i = R_i - \sum_{j=1}^{i-1} \frac{R_j}{n - j},
\end{equation}
Indeed, \eqref{eq: inversion} holds trivially for $i=1$. Hence, assuming that \eqref{eq: inversion} holds for all $i=1,\dots,k-1$ for some $k=2,\dots,n$, we have that
\begin{align*}
Y_k
& = R_k - \frac{1}{n-k+1}\sum_{j=1}^{k-1} Y_j 
= R_k - \frac{1}{n-k+1}\sum_{j=1}^{k-1}\left( R_j - \sum_{l=1}^{j-1} \frac{R_l}{n-l} \right)\\
& = R_k - \frac{1}{n-k+1}\sum_{j=1}^{k-1}R_j\left( 1 - \frac{k-1-j}{n-j} \right)
= R_k - \sum_{j=1}^{k-1}\frac{R_j}{n-j},
\end{align*}
meaning that \eqref{eq: inversion} holds for $i=k$ as well, and thus for all $i=1,\dots,n$ by induction. In turn, it follows from \eqref{eq: inversion} that
$$
\sum_{i=1}^n a_i Y_i = \sum_{i=1}^n c_i R_i,
$$
where
$$
c_i = a_i - \frac{1}{n-i}\sum_{j=i+1}^n a_j,\quad\text{for all }i=1,\dots,n.
$$
Here, we have
$$
|c_i|\leq 2|a_i|,\quad\text{for all }i=1,\dots,n,
$$
by \eqref{eq: decreasing order}, and so
$$
\sum_{i=1}^n c_i^2 \leq 4 \sum_{i=1}^n a_i^2.
$$
Hence, by the Azuma-Hoeffding inequality, for any $t>0$,
\begin{align*}
\Pr\left(\left| \sum_{i=1}^n a_i Y_i \right| > t\right) & = \Pr\left(\left| \sum_{i=1}^n c_i R_i \right| > t\right) \\
& \leq 2\exp\left(-\frac{t^2}{8\sum_{i=1}^n c_i^2}\right) \leq 2\exp\left(-\frac{t^2}{32\sum_{i=1}^n a_i^2}\right).
\end{align*}
Combining this bound with \eqref{eq: decomposition} gives the asserted claim of the lemma.
\end{proof}

\begin{lemma}[Randomized Lindeberg Interpolation]\label{lem: rand lind}
Let $\mathcal S_n$ be the set of all one-to-one functions mapping $\{1,\dots,n\}$ to $\{1,\dots,n\}$. Also, let $X_1,\dots,X_n$ and $Y_1,\dots,Y_n$ be sequences of vectors in $\mathbb R^p$, $U$ be a random variable with uniform distribution on $[0,1]$, and $\sigma$ be a random function with uniform distribution on $\mathcal S_n$. Assume that $U$ is independent of $\sigma$, and for all $k=1,\dots,n$, denote
$$
W_k^\sigma= \sum_{j=1}^{k-1}X_{\sigma(j)}+\sum_{j=k+1}^{n}Y_{\sigma(j)}
$$
and
$$
W_{k}=\begin{cases}
W_{\sigma^{-1}(k)}^\sigma+X_k & \text{if }U\leq\frac{\sigma^{-1}(k)}{n+1},\\
W_{\sigma^{-1}(k)}^\sigma+Y_k & \text{if }U>\frac{\sigma^{-1}(k)}{n+1}.
\end{cases} 
$$
Then the distribution of $W_k$ is independent of $k$, i.e. there exists a random vector $\epsilon = (\epsilon_1,\dots,\epsilon_n)'$ with values in $\{0,1\}^n$ such that for all $k = 1,\dots,n$, the distribution of $W_k$ is equal to that of 
\begin{equation}\label{eq: eps distribution}
\sum_{i=1}^n\Big(\epsilon_i X_i + (1-\epsilon_i)Y_i\Big).
\end{equation}
Moreover, the random variables $\epsilon_1,\dots,\epsilon_n$ are exchangeable and are such that
$
\Pr(\sum_{i=1}^n \epsilon_i = s) = 1/(n+1)
$
for all $s=0,\dots,n$. In particular, $\Ep[\sum_{i=1}^n \epsilon_i] = n/2$. 
\end{lemma}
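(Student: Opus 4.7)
The plan is to identify the random indicator vector $\epsilon=(\epsilon_{1},\dots,\epsilon_{n})\in\{0,1\}^{n}$ defined by $\epsilon_{i}=1$ iff $X_{i}$ (rather than $Y_{i}$) enters $W_{k}$, and to compute its joint law in closed form. Once this law is shown to depend neither on $k$ nor on the identities of the coordinates, the distributional identity $W_{k}\stackrel{d}{=}\sum_{i}(\epsilon_{i}X_{i}+(1-\epsilon_{i})Y_{i})$, the exchangeability of $\epsilon$, and the uniformity of $\sum_{i}\epsilon_{i}$ on $\{0,\dots,n\}$ all fall out simultaneously.

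First I would unfold the definition of $W_{k}$. Setting $m:=\sigma^{-1}(k)$, so that $k=\sigma(m)$, and substituting into $W_{\sigma^{-1}(k)}^{\sigma}$, one obtains
\begin{equation*}
W_{k}=\begin{cases}\sum_{j=1}^{m}X_{\sigma(j)}+\sum_{j=m+1}^{n}Y_{\sigma(j)}, & U\leq m/(n+1),\\[2pt]\sum_{j=1}^{m-1}X_{\sigma(j)}+\sum_{j=m}^{n}Y_{\sigma(j)}, & U>m/(n+1).\end{cases}
\end{equation*}
Hence $\{i:\epsilon_{i}=1\}$ equals $\{\sigma(1),\dots,\sigma(m)\}$, a size-$m$ set containing $k$, in the first case, and $\{\sigma(1),\dots,\sigma(m-1)\}$, a size-$(m-1)$ set not containing $k$, in the second.

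Next I would compute $\Pr(\{i:\epsilon_{i}=1\}=A)$ for an arbitrary subset $A\subseteq\{1,\dots,n\}$ of size $s$, using that $m$ is uniform on $\{1,\dots,n\}$, that given $m$ the tuple $(\sigma(j))_{j\neq m}$ is a uniform ordering of $\{1,\dots,n\}\setminus\{k\}$, and that $U$ is independent of $\sigma$. If $k\in A$, only $m=s$ can contribute, giving
\begin{equation*}
\Pr(\{i:\epsilon_{i}=1\}=A)=\frac{1}{n}\cdot\frac{s}{n+1}\cdot\frac{1}{\binom{n-1}{s-1}}=\frac{s!\,(n-s)!}{(n+1)!};
\end{equation*}
if $k\notin A$, only $m=s+1$ can contribute, and the analogous product $(1/n)\cdot((n-s)/(n+1))\cdot 1/\binom{n-1}{s}$ evaluates to the same value. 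Thus $\Pr(\{i:\epsilon_{i}=1\}=A)$ is a function of $|A|$ alone, independent of $k$.

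This closes the proof: exchangeability of $\epsilon$ is immediate since its law is symmetric under permutations of coordinates, and summing the common value $s!(n-s)!/(n+1)!$ over the $\binom{n}{s}$ subsets of cardinality $s$ yields $\Pr(\sum_{i}\epsilon_{i}=s)=1/(n+1)$, which in particular forces $\Ep[\sum_{i}\epsilon_{i}]=n/2$. The only point requiring care is the elementary bookkeeping that the $k$-dependent weights $m/(n+1)$ and $(n-s)/(n+1)$ contributed by the splitter $U$ exactly cancel the $k$-dependent combinatorial factors $\binom{n-1}{s-1}^{-1}$ versus $\binom{n-1}{s}^{-1}$; no genuine probabilistic obstacle arises.
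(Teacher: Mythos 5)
Your proposal is correct and follows essentially the same route as the paper's proof: you compute $\Pr(\{i:\epsilon_i=1\}=A)$ by conditioning on $m=\sigma^{-1}(k)$ and on $U$, obtaining the value $s!(n-s)!/(n+1)!$ in both the $k\in A$ and $k\notin A$ cases, exactly as in the paper's case analysis, and then exchangeability and the uniform law of $\sum_i\epsilon_i$ follow as in the paper. The only (cosmetic) difference is that you phrase the events via the explicit parameterization $m=\sigma^{-1}(k)$ rather than via constraints on $\sigma^{-1}$ directly, which if anything makes the bookkeeping slightly more transparent.
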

\begin{remark}
The first asserted claim of this lemma is the same as Lemma 2 in \cite{DZ17}. We present a self-contained proof of this claim below for reader's convenience.\qed
\end{remark}
\begin{proof}
Fix $k=1,\dots,n$. To show that the distribution of $W_k$ is independent of $k$, it suffices to show that for any subset $S$ of $\{1,\dots,n\}$,
\begin{equation}\label{eq: prob linde}
\Pr\left( W_k = \sum_{i\in S} X_i + \sum_{i\notin S} Y_i \right)
\end{equation}
is independent of $k$. To do so, fix any $S\subset \{1,\dots,n\}$ and denote $s = |S|$. If $k\notin S$, then
\begin{align*}
&\Pr\left( W_k = \sum_{i\in S} X_i + \sum_{i\notin S} Y_i \right)\\
&\quad=\Pr\left( \{\sigma^{-1}(i)\leq s,\forall i\in S\}\cap\{\sigma^{-1}(k) = s+1\}\cap\left\{U>\frac{s + 1}{n+1}\right\} \right)\\
&\quad = \Pr\Big( \{\sigma^{-1}(i)\leq s,\forall i\in S\}\cap\{\sigma^{-1}(k) = s+1\} \Big)\times\Pr\left(U>\frac{s + 1}{n+1}\right)\\
&\quad = \frac{1}{n}\frac{1}{{{n-1}\choose s}}\left(1 - \frac{s+1}{n+1}\right) = \frac{s!(n-s)!}{(n+1)!},
\end{align*}
where we used the fact that $\sigma^{-1}$ is also uniformly distributed on $\mathcal S_n$.
Similarly, if $k\in S$, then
\begin{align*}
&\Pr\left( W_k = \sum_{i\in S} X_i + \sum_{i\notin S} Y_i \right)\\
&\quad = \Pr\left( \{ \sigma^{-1}(i)\leq s,\forall i\in S \}\cap\{ \sigma^{-1}(k) = s \}\cap\left\{ U \leq \frac{s}{n+1} \right\} \right)\\
&\quad = \Pr\Big( \{\sigma^{-1}(i)\leq s,\forall i\in S\}\cap\{ \sigma^{-1}(k)=s \}\Big)\times\Pr\left( U \leq \frac{s}{n+1} \right)\\
&\quad = \frac{1}{n}\frac{1}{{{n-1}\choose{s-1}}}\frac{s}{n+1} = \frac{s!(n-s)!}{(n+1)!}.
\end{align*}
Hence, the probability in \eqref{eq: prob linde} is independent of $k$, and so is the distribution of $V_k$.

Further, since $W_k$ can only take values of the form $\sum_{i\in S}X_i + \sum_{i\notin S}Y_i$, where $S$ is a subset of $\{1,\dots,n\}$, it follows that there exists a random vector $\epsilon = (\epsilon_1,\dots,\epsilon_n)'$ with values in $\{0,1\}^n$ such that the distribution of $W_k$ is equal to that of \eqref{eq: eps distribution}. To see that the random variables $\epsilon_1,\dots,\epsilon_n$ are exchangeable, note that for any subset $S$ of $\{1,\dots,n\}$ with $s = |S|$ elements,
\begin{align*}
&\Pr\left(\epsilon_i =1 \ \forall i\in S\text{ and }\epsilon_i = 0 \ \forall i\notin S\right)\\
&\quad = \Pr\left( W_k = \sum_{i\in S}X_i + \sum_{\notin S}Y_i \right)  = \frac{s!(n-s)!}{(n+1)!},
\end{align*}
which depends on the set $S$ only via $s$. Thus, permuting the random variables $\epsilon_1,\dots,\epsilon_n$ in the vector $\epsilon$ creates a vector with the same distribution, which means that these random variables are exchangeable.

Finally, for any $s = 0,\dots,n$,
$$
\Pr\left( \sum_{i=1}^n \epsilon_i = s \right) = {n \choose s}\frac{s!(n-s)!}{(n+1)!} = \frac{1}{n+1}
$$
and
$$
\Ep\left[\sum_{i=1}^n\epsilon_i\right] = \sum_{s=0}^{n}\frac{s}{n+1} = \frac{n(n+1)}{2(n+1)} = \frac{n}{2}.
$$
This completes the proof of the lemma.
\end{proof}

\begin{lemma}[Third-Order Matching Multipliers with Gaussian Component]\label{lem: multipliers}
Let
$
\gamma\in(0;  1/2 - 1/(2\sqrt 5))
$
be a constant. Then
$$
\sigma=\left(1 - \frac{(1-\gamma)^{1/3}\gamma^{1/3}}{(1-2\gamma)^{2/3}}\right)^{1/2}
$$
is a real number satisfying $\sigma > 0$. Further, denote
$$
a=\frac{(1-\gamma)^{2/3}}{\gamma^{1/3}(1-2\gamma)^{1/3}}\quad\text{and}\quad b=-\frac{\gamma^{2/3}}{(1-\gamma)^{1/3}(1-2\gamma)^{1/3}}
$$
and let $e_1$ and $e_2$ be independent random variables such that $e_1$ has the $N(0,\sigma^2)$ distribution and $e_2$ takes values $a$ and $b$ with probabilities $\gamma$ and $1 - \gamma$, respectively. Then the random variable $e = e_1 + e_2$ has the following properties:
\begin{equation}\label{eq: multiplier equalities}
\Ep[e]=0,\quad \Ep[e^2]=1,\quad\text{and}\quad\Ep[e^3]=1.
\end{equation}
\end{lemma}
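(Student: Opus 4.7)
The proof is essentially a direct computation, so the plan is to break it into three small checks: positivity of $\sigma$, the moments of the discrete part $e_2$, and the combination via independence.

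First, I would verify that $\sigma$ is a well-defined positive real number. Since $\gamma<1/2-1/(2\sqrt 5)<1/2$, we have $1-2\gamma>0$, so the cube roots are unambiguous. The condition $\sigma^2>0$ is equivalent to $\gamma^{1/3}(1-\gamma)^{1/3}<(1-2\gamma)^{2/3}$, which after cubing becomes $\gamma(1-\gamma)<(1-2\gamma)^2$, i.e.\ $5\gamma^2-5\gamma+1>0$. The roots of this quadratic are $\frac{1}{2}\pm\frac{1}{2\sqrt 5}$, and $\gamma$ lies strictly below the smaller root by hypothesis, so $5\gamma^2-5\gamma+1>0$ and $\sigma>0$.

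Second, I would compute the three moments of $e_2$ by hand. For $\Ep[e_2]=\gamma a+(1-\gamma)b$, both terms simplify to $\gamma^{2/3}(1-\gamma)^{2/3}/(1-2\gamma)^{1/3}$ with opposite signs, giving $\Ep[e_2]=0$. For $\Ep[e_2^2]=\gamma a^2+(1-\gamma)b^2$, factoring yields
\[
\Ep[e_2^2]=\frac{\gamma^{1/3}(1-\gamma)^{1/3}}{(1-2\gamma)^{2/3}}\big[(1-\gamma)+\gamma\big]=\frac{\gamma^{1/3}(1-\gamma)^{1/3}}{(1-2\gamma)^{2/3}}.
\]
For $\Ep[e_2^3]=\gamma a^3+(1-\gamma)b^3$, the cube roots clear and one obtains
\[
\Ep[e_2^3]=\frac{(1-\gamma)^2-\gamma^2}{1-2\gamma}=\frac{1-2\gamma}{1-2\gamma}=1,
\]
using the algebraic identity $(1-\gamma)^2-\gamma^2=1-2\gamma$, which is the only mildly clever step.

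Third, since $e_1$ and $e_2$ are independent with $e_1\sim N(0,\sigma^2)$, expanding $(e_1+e_2)^k$ and using $\Ep[e_1]=\Ep[e_1^3]=0$, $\Ep[e_1^2]=\sigma^2$ gives immediately $\Ep[e]=\Ep[e_2]=0$, $\Ep[e^2]=\sigma^2+\Ep[e_2^2]$, and $\Ep[e^3]=3\sigma^2\Ep[e_2]+\Ep[e_2^3]=\Ep[e_2^3]=1$. The choice of $\sigma^2$ is engineered precisely so that $\sigma^2+\Ep[e_2^2]=1$, yielding $\Ep[e^2]=1$ and completing all three identities in \eqref{eq: multiplier equalities}. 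There is no real obstacle in this proof; the only thing to be careful about is keeping track of the cube-root exponents in the simplifications of $\Ep[e_2^2]$ and $\Ep[e_2^3]$.
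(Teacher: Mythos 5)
Your proposal is correct and follows essentially the same route as the paper's proof: positivity of $\sigma$ via the quadratic $5\gamma^2-5\gamma+1>0$, the moment identities $\Ep[e_2]=0$, $\Ep[e_2^2]=1-\sigma^2$, $\Ep[e_2^3]=1$, and then combining with the Gaussian moments of $e_1$ by independence. The only difference is that you spell out the cube-root algebra that the paper dismisses as "straightforward to check," and those computations are accurate.
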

\begin{remark}
The distribution of the random variable $e$ constructed in this lemma is different from that used in \cite{DZ17}. It appears that our construction is easier to work with. \qed
\end{remark}
\begin{proof}
To show that $\sigma$ is a real number satisfying $\sigma > 0$, it suffices to show that
$$
(1-2\gamma)^2 > (1-\gamma)\gamma,
$$
which in turn is equivalent to
$$
5\gamma^2 - 5\gamma + 1 > 0,
$$
which holds by the choice of $\gamma$. Further, it is straightforward to check that
$$
\Ep[e_2]=0,\quad \Ep[e_2^2] = 1 - \sigma^2,\quad\text{and}\quad\Ep[e_2^3]=1.
$$
Thus, given that
$$
\Ep[e_1]=0,\quad\Ep[e_1^2]=\sigma^2,\quad\text{and}\quad\Ep[e_1^3]=0,
$$
the equalities in \eqref{eq: multiplier equalities} follow from
$$
\Ep[e] = \Ep[e_1]+\Ep[e_2],\ \Ep[e^2] = \Ep[e_1^2]+\Ep[e_2^2], \text{ and } \Ep[e^3] = \Ep[e_1^3] + \Ep[e_2^3].
$$
This completes the proof of the lemma.
\end{proof}

\begin{lemma}[Randomization Tests with Mass Points]\label{lem: rand test}
Let $\mathcal X$ and $X$ be a set and a random variable taking values in this set. Also, let $G$ be a set of $M$ one-to-one functions mapping $\mathcal X$ onto $\mathcal X$ such that (i) for all $g\in G$, the distribution of $g(X)$ is equal to that of $X$, (ii) for all $g\in G$, we have $g^{-1}\in G$, and (iii) for all $g_1,g_2\in G$, we have $g_2\circ g_1\in G$. Further, let $T$ be a function mapping $\mathcal X$ to $\mathbb R$ and for $\alpha\in(0,1)$, define $\phi\colon \mathcal X\to\{0,1\}$ by
$$
\phi(x)=\begin{cases}
1, & \text{if }\sum_{g\in G}1\{ T(x)>T(g(x))\} \geq M(1-\alpha),\\
0, & \text{if }\sum_{g\in G}1\{ T(x)>T(g(x))\} <M(1-\alpha),
\end{cases}
\ \text{for all }x\in\mathcal X.
$$
Finally, define $\chi\colon\mathcal X\to\mathbb R$ by
$$
\chi(x)=\max_{t\in\mathbb R}|\{g\in G\colon T(g(x)) = t\}|/M,\quad\text{for all }x\in\mathcal X.
$$
Then
$$
\alpha - \Ep[\chi(X)] \leq \Ep[\phi(X)] \leq \alpha.
$$
\end{lemma}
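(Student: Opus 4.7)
The plan is to exploit the group structure of $G$ together with the invariance assumption $g(X)\stackrel{d}{=}X$ to rewrite $\Ep[\phi(X)]$ as the expectation of an orbit average, and then analyze that orbit average deterministically via rank counts on each orbit.

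First, since $\Ep[\phi(X)]=\Ep[\phi(g(X))]$ for every $g\in G$, averaging over $g$ yields
$$
\Ep[\phi(X)] = \Ep\left[\frac{1}{M}\sum_{g\in G}\phi(g(X))\right].
$$
Fix $x\in\mathcal X$. Because $G$ is closed under composition, the reindexing $g\mapsto g\circ g_0$ is a bijection of $G$ for each $g_0\in G$, so the multiset $\mathcal M(x):=\{T(g(x)):g\in G\}$ is the same when $x$ is replaced by any $g_0(x)$ on its orbit. Writing $N(v):=\#\{g\in G: T(g(x))=v\}$ and $R^-(v):=\#\{g\in G: T(g(x))<v\}$, we have $\phi(g_0(x))=1$ iff $R^-(T(g_0(x)))\geq M(1-\alpha)$. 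Letting $v^*$ denote the smallest value in $\mathcal M(x)$ with $R^-(v^*)\geq M(1-\alpha)$ (undefined if no such value exists, in which case the sum below is $0$), we obtain
$$
S(x):=\sum_{g\in G}\phi(g(x)) \;=\; \sum_{v\geq v^*}N(v) \;=\; M-R^-(v^*).
$$

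The upper bound is then immediate: $R^-(v^*)\geq M(1-\alpha)$ forces $S(x)\leq M\alpha$, and taking expectations gives $\Ep[\phi(X)]\leq\alpha$.

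The only nontrivial step is the lower bound, which requires controlling the jump of $R^-$ at $v^*$. Let $v^-$ be the value in $\mathcal M(x)$ immediately below $v^*$; it exists because $\alpha\in(0,1)$ makes $M(1-\alpha)>0$, so $v^*$ cannot be the minimum of $\mathcal M(x)$. By minimality of $v^*$, $R^-(v^-)<M(1-\alpha)$, and hence
$$
R^-(v^*) = R^-(v^-)+N(v^-) < M(1-\alpha)+N(v^-) \leq M(1-\alpha)+M\chi(x),
$$
yielding $S(x)>M\alpha-M\chi(x)$. Dividing by $M$ and taking expectations produces $\Ep[\phi(X)]\geq\alpha-\Ep[\chi(X)]$. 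The degenerate case where $v^*$ is undefined (so $S(x)=0$) is handled directly: then the maximum $v_{\max}\in\mathcal M(x)$ satisfies $R^-(v_{\max})=M-N(v_{\max})<M(1-\alpha)$, which forces $\chi(x)\geq N(v_{\max})/M>\alpha$, so the desired bound $0\geq\alpha-\chi(x)$ holds trivially. This mass-point correction is exactly where $\chi$ enters and is the only place the argument departs from the classical continuous-distribution randomization identity.
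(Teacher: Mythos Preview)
Your proof is correct and follows essentially the same route as the paper: both reduce to the orbit average $\frac{1}{M}\sum_{g\in G}\phi(g(x))$ via invariance and the group property, and then bound this average deterministically on each orbit. The paper compresses your rank-count analysis into the single phrase ``by construction of the function $\phi$'' (after introducing a two-argument $\phi(x,y)$ to make the orbit structure transparent), whereas you spell out the threshold value $v^*$ and the jump $N(v^-)$ explicitly; but the underlying argument is the same.
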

\begin{remark}
If $X$ is observable data and $T(X)$ is a statistic, we can think of $\phi(X)$ as a level $\alpha$ randomization test that exploits symmetries of $X$ with respect to a set of transformations $G$. The result presented here is then similar to Theorem 15.2.1 in \cite{LR05}, with the difference coming from the fact that we do not allow the function $\phi$ to take values in $(0,1)$ and instead quantify how much the test can under-reject because of the mass points.
\end{remark}
\begin{proof}
Define $\phi\colon \mathcal X\times\mathcal X\to\{0,1\}$ by
$$
\phi(x,y)=\begin{cases}
1, & \text{if }\sum_{g\in G}1\{ T(x)>T(g(y))\} \geq M(1-\alpha),\\
0, & \text{if }\sum_{g\in G}1\{ T(x)>T(g(y))\} <M(1-\alpha),
\end{cases}
\ \text{for all }x,y\in\mathcal X,
$$
so that $\phi(x) = \phi(x,x)$ for all $x\in\mathcal X$. Observe that for any $x\in\mathcal X$, we have 
$$
\frac{1}{M}\sum_{g\in G}\phi(g(X),X) \leq \alpha\quad\text{and}\quad \frac{1}{M}\sum_{g\in G}\phi(g(X),X) \geq \alpha - \chi(X)
$$ 
by construction of the function $\phi$. Hence,
\begin{align*}
\alpha
& \geq \frac{1}{M}\sum_{g\in G}\Ep[\phi(g(X),X)] = \frac{1}{M}\sum_{g\in G}\Ep[\phi(g(X),g(X))] \\
& = \frac{1}{M}\sum_{g\in G}\Ep[\phi(X,X)] = \Ep[\phi(X,X)] = \Ep[\phi(X)],
\end{align*} 
where the first equality follows from noting that for all $g_2\in G$, we have $\{T(g_1(X))\}_{g_1\in G} = \{T(g_1(g_2(X)))\}_{g_1\in X}$, and the second from noting that $g(X)$ is equal in distribution to $X$ for all $g\in G$.
Similarly, we also have
\begin{align*}
\alpha - \Ep[\chi(X)]
& \leq \frac{1}{M}\sum_{g\in G}\Ep[\phi(g(X),X)] = \Ep[\phi(X)].
\end{align*} 
Combining these bounds gives the asserted claim.
\end{proof}

\section{Other Useful Lemmas}\label{sec: useful lemmas}
In this section, we collect maximal, deviation, and anti-concentration inequalities that are useful for our analysis. Lemmas \ref{lem: maximal ineq}--\ref{lem: anticoncetration} are taken from \cite{CCK17} where the last one is based on Nazarov's \cite{Na03} work. Lemma \ref{lem: rademacher anticoncentration} is essentially taken from \cite{OST18}.
\begin{lemma}[Maximal Inequality for Centered Sums]
\label{lem: maximal ineq}
Let $X_{1},\dots,X_{n}$ be independent centered random vectors in $\R^{p}$ with $p\geq 2$. Define $Z$, $M$, and $\sigma^2$ by $Z = \max_{1 \leq j \leq p} | \sum_{i=1}^{n} X_{ij}|$, $M = \max_{1 \leq i \leq n} \max_{1 \leq j \leq p} | X_{ij} |$ and $\sigma^{2} = \max_{1 \leq j \leq p} \sum_{i=1}^{n} \Ep [ X_{ij}^{2} ]$. Then
\begin{equation*}
\Ep [Z]  \leq K (\sigma \sqrt{\log p} + \sqrt{\Ep [ M^{2} ]} \log p).
\end{equation*}
where $K$ is a universal constant.
\end{lemma}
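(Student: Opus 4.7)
The plan is to follow the classical symmetrization--contraction pipeline and close the bound with a self-bounding argument. First, by a standard symmetrization inequality, $\Ep[Z] \leq 2\,\Ep[Z^{\epsilon}]$, where $Z^{\epsilon} = \max_{1 \leq j \leq p} |\sum_{i=1}^n \epsilon_i X_{ij}|$ and $\epsilon_1,\dots,\epsilon_n$ are i.i.d.\ Rademacher variables independent of $X_1,\dots,X_n$. Conditional on $X$, each coordinate $\sum_i \epsilon_i X_{ij}$ is sub-Gaussian with variance proxy $V_j^2 := \sum_i X_{ij}^2$, so the standard sub-Gaussian maximal inequality gives
\begin{equation*}
\Ep\bigl[Z^{\epsilon} \mid X\bigr] \leq K_1\sqrt{\log p}\cdot\sqrt{\max_j V_j^2}.
\end{equation*}
Taking expectations and applying Jensen's inequality yields $\Ep[Z] \leq K_2 \sqrt{\log p}\cdot \sqrt{\Ep[R^2]}$, where $R^2 = \max_j V_j^2$.

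Next, I would bound $\Ep[R^2]$ via the decomposition
\begin{equation*}
R^2 \leq \sigma^2 + \max_{1 \leq j \leq p}\Bigl|\sum_{i=1}^n \bigl(X_{ij}^2 - \Ep[X_{ij}^2]\bigr)\Bigr|.
\end{equation*}
The first term is already at the desired order. For the second I would apply another round of symmetrization followed by the Ledoux--Talagrand contraction principle: conditioning on $(|X_{ij}|)_{i,j}$, the map $t \mapsto t^2/(2M)$ is $1$-Lipschitz on $[-M,M]$ and vanishes at $0$, so contraction gives
\begin{equation*}
\Ep\max_{1 \leq j \leq p}\Bigl|\sum_i \epsilon_i X_{ij}^2\Bigr| \leq 4\,\Ep\bigl[M\cdot Z^{\epsilon}\bigr] \leq 4\sqrt{\Ep[M^2]}\cdot\sqrt{\Ep\bigl[(Z^{\epsilon})^2\bigr]}.
\end{equation*}

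To close the loop I would invoke the self-bounding trick: the sub-Gaussian maximal inequality also controls second moments, and an $L^2$ version (obtained by tail integration, e.g.\ via Lemma~2.2.2 in van der Vaart--Wellner) gives $\Ep[(Z^{\epsilon})^2] \leq K_3 (\log p)\,\Ep[R^2]$. Combining the last three displays produces a quadratic-type inequality
\begin{equation*}
\Ep[R^2] \leq \sigma^2 + C\sqrt{\log p}\cdot\sqrt{\Ep[M^2]}\cdot\sqrt{\Ep[R^2]},
\end{equation*}
which I would solve by the standard $x \leq a + b\sqrt{x}$ trick to get $\sqrt{\Ep[R^2]} \leq \sigma + C'\sqrt{\log p}\cdot\sqrt{\Ep[M^2]}$. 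Plugging this into the bound $\Ep[Z] \leq K_2\sqrt{\log p}\cdot\sqrt{\Ep[R^2]}$ yields $\Ep[Z] \leq K(\sigma\sqrt{\log p} + \sqrt{\Ep[M^2]}\log p)$, as required.

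The main obstacle I anticipate is the careful bookkeeping of universal constants through the two symmetrization rounds and the contraction step: one must ensure Ledoux--Talagrand is applied to a map that vanishes at the origin and has a uniform Lipschitz constant (hence the rescaling by $M$), and the self-bounding step crucially needs the $L^2$, rather than merely $L^1$, form of the sub-Gaussian maximal inequality, since otherwise Cauchy--Schwarz would leak an extra $\sqrt{\log p}$ factor and spoil the advertised rate.
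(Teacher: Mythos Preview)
The paper does not give its own proof of this lemma: it is stated in Section~\ref{sec: useful lemmas} with the remark that Lemmas~\ref{lem: maximal ineq}--\ref{lem: anticoncetration} are taken from \cite{CCK17}, and no argument is provided. Your proposal is correct and is in fact the standard proof that appears in \cite{CCK17} (and earlier in \cite{CCK13}): symmetrize, apply the sub-Gaussian maximal inequality conditionally, then control $\Ep[\max_j \sum_i X_{ij}^2]$ by a second symmetrization plus Ledoux--Talagrand contraction with $\phi(t)=t^2/(2M)$, and close with the self-bounding quadratic inequality. The bookkeeping you flag (contraction requires $\phi(0)=0$ and a uniform Lipschitz constant, hence conditioning on $X$ so that $M$ is fixed; and the need for the $L^2$ rather than $L^1$ sub-Gaussian bound when applying Cauchy--Schwarz) is exactly right, so there is no gap.
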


\begin{lemma}[Deviation Inequality for Centered Sums]
\label{lem: fuk-nagaev}
Assume the setting of Lemma \ref{lem: maximal ineq}. For every $\eta > 0, \beta \in (0,1]$ and $t>0$, we have
\[
\Pr \{  Z \geq (1+\eta) \Ep [ Z ] + t \} \leq \exp \{ -t^{2}/(3\sigma^{2}) \} +3\exp \{ -( t/(K\| M \|_{\psi_{\beta}}))^{\beta} \},
\]
where $K$ is a constant depending only on $\eta$ and $\beta$. 
\end{lemma}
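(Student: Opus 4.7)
The plan is to deduce this Fuk--Nagaev type bound from Bousquet's version of Talagrand's concentration inequality for suprema of empirical processes, combined with a truncation argument that converts the $\psi_\beta$-envelope hypothesis into a bounded envelope. Since $Z$ is the supremum of an empirical process over the finite class $\mathcal{F} = \{\pm \pi_j : 1 \le j \le p\}$ of signed coordinate projections $\pi_j(x) = x_j$, Talagrand's concentration machinery is directly applicable.

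First I would fix a truncation level $u > 0$, to be chosen at the end, and decompose $X_{ij} = Y_{ij} + R_{ij}$, where $Y_{ij} = X_{ij}\mathbf{1}\{|X_{ij}| \le u\} - \Ep[X_{ij}\mathbf{1}\{|X_{ij}| \le u\}]$ is centered and bounded by $2u$, and $R_{ij}$ is the centered residual. Setting $Z^{Y} = \max_j|\sum_i Y_{ij}|$ and $Z^{R} = \max_j|\sum_i R_{ij}|$, one has $Z \le Z^{Y} + Z^{R}$. Applying Bousquet's inequality to $Z^{Y}$, whose summands have variance proxy still bounded by $\sigma^{2}$, yields, for any $\eta' > 0$,
\[
\Pr\bigl(Z^{Y} \ge (1+\eta')\Ep[Z^{Y}] + s\bigr) \le \exp\{-s^{2}/(3\sigma^{2})\} + \exp\{-c_{\eta'} s/u\},
\]
once the additional variance contribution $u\,\Ep[Z^Y]$ appearing in Bousquet is absorbed into the prefactor $1+\eta'$. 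The residual $Z^{R}$ is handled by observing that on $\{M \le u\}$ it reduces to a deterministic bias of size $O(n\max_{i,j}\Ep[|X_{ij}|\mathbf{1}\{|X_{ij}|>u\}])$, which is absorbable into $(1+\eta')\Ep[Z^{Y}]$ once $u \gtrsim \|M\|_{\psi_\beta}$, while on $\{M > u\}$, Markov's inequality for the Orlicz norm gives $\Pr(M > u) \le 2\exp\{-(u/\|M\|_{\psi_\beta})^{\beta}\}$. Choosing $u = K_{0}\|M\|_{\psi_\beta}(t/\|M\|_{\psi_\beta})^{1/\beta}$ with $K_{0}$ depending only on $\eta$ and $\beta$ equalizes the Talagrand sub-exponential term and the truncation tail at $\exp\{-(t/(K\|M\|_{\psi_\beta}))^{\beta}\}$, while preserving the Gaussian term $\exp\{-t^{2}/(3\sigma^{2})\}$. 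Readjusting $\eta'$ to absorb $|\Ep[Z^{Y}]-\Ep[Z]|$ into $\eta$ then assembles the stated bound.

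The main obstacle will be the constant bookkeeping in the last step: one must verify that the biases introduced by truncation, both in the centering of $Y_{ij}$ and in passing from $\Ep[Z^{Y}]$ back to $\Ep[Z]$, can be absorbed uniformly in $t$ into the $(1+\eta)$ prefactor, with the final $K$ depending only on $\eta$ and $\beta$. A cleaner route that sidesteps much of this bookkeeping is to appeal directly to Adamczak's tail inequality for suprema of unbounded empirical processes, whose proof follows exactly this outline but is packaged so that the constants come out in the required form.
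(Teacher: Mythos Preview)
The paper does not prove this lemma at all: it is listed among the ``Other Useful Lemmas'' in Section~\ref{sec: useful lemmas} with the preamble ``Lemmas \ref{lem: maximal ineq}--\ref{lem: anticoncetration} are taken from \cite{CCK17},'' so there is no proof here to compare against. Your proposed argument---truncate at level $u$, apply Bousquet's form of Talagrand's inequality to the bounded part, control the large-value event via the $\psi_\beta$ tail of $M$, and optimize over $u$---is exactly the standard derivation, and is in fact the route by which this inequality is obtained in the cited source (there it is packaged via Adamczak's tail inequality for unbounded empirical processes, which you correctly identify as the clean version of precisely this argument). So your plan is correct and aligns with how the result is actually established upstream.

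One small caution on the bookkeeping you flag: the claim that the deterministic bias $\sum_i \Ep[|X_{ij}|\mathbf{1}\{|X_{ij}|>u\}]$ is absorbable into $(1+\eta')\Ep[Z^{Y}]$ ``once $u\gtrsim \|M\|_{\psi_\beta}$'' is not automatic, because $\Ep[Z]$ can be small relative to $n$ times a single-coordinate tail expectation. In Adamczak's argument this step is handled via a Hoffmann--J{\o}rgensen inequality rather than a naive comparison, and that is where the dependence of $K$ on $\eta$ and $\beta$ really comes from. If you carry out the argument by hand rather than invoking Adamczak, that is the place to be careful.
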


\begin{lemma}[Gaussian Anti-Concentration Inequality]\label{lem: anticoncetration}
Let $Y = (Y_1,\dots,Y_n)'$ be a centered Gaussian random vector in $\mathbb R^p$ with $p\geq 2$ such that $\Ep[Y_j^2]\geq b$ for all $j = 1,\dots,p$ and some constant $b>0$. Then for every $y\in\mathbb R^p$ and $t>0$,
$$
\Pr(Y\leq y + t) - \Pr(Y\leq y) \leq C t \sqrt{\log p},
$$
where $C$ is a constant depending only on $b$.
\end{lemma}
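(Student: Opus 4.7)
The plan is to reduce the question to a density-style bound on the maximum of a shifted Gaussian vector, and then establish that density bound via a log-sum-exp smoothing combined with Gaussian integration by parts, following Nazarov's approach.

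First, I would write
\[
\Pr(Y \leq y+t) - \Pr(Y \leq y) = \Pr\bigl(0 < \max_{1 \leq j \leq p}(Y_j - y_j) \leq t\bigr).
\]
Setting $Z_j := Y_j - y_j$, the vector $Z$ is Gaussian with the same covariance as $Y$ (since the $y_j$'s are deterministic shifts), so $\Ep[Z_j^2] \geq b$ for every $j$. The task therefore reduces to the one-sided anti-concentration bound
\[
\sup_{u \in \R}\Pr\bigl(u < \max_{1 \leq j \leq p} Z_j \leq u + t\bigr) \leq C t\sqrt{\log p},
\]
for some $C$ depending only on $b$, where $Z$ need not be centered.

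Second, I would replace $\max_j Z_j$ by its log-sum-exp smoothing $F_\beta(z) = \beta^{-1}\log\sum_{j=1}^p e^{\beta z_j}$, which sandwiches the maximum within $\beta^{-1}\log p$. Choosing a smooth bump $\chi$ adapted to a slight enlargement of $(u, u+t]$, with $\|\chi'\|_\infty \lesssim 1/t$, I would dominate the probability of interest by $\Ep[\chi(F_\beta(Z) - u)]$ up to a smoothing error that becomes negligible once $\beta^{-1}\log p$ is taken of order $t$. The remaining job is then to bound $\Ep[\chi(F_\beta(Z) - u)]$, equivalently the density of $F_\beta(Z)$, uniformly in $u$.

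Third, I would control this density via Gaussian integration by parts. Applying the multivariate Stein identity $\Ep[(Z_j - \Ep[Z_j])h(Z)] = \sum_k \Sigma_{jk}\Ep[\partial_k h(Z)]$ with $h$ built from $\chi\circ F_\beta$ and its derivatives, and exploiting the structural identities $\sum_j \partial_j F_\beta(z) \equiv 1$ and $\sum_{j,k}|\partial^2_{jk}F_\beta(z)| \lesssim \beta$ together with the variance lower bound $\Sigma_{jj} \geq b$, one obtains a pointwise density bound for $F_\beta(Z)$ of order $\sqrt{\log p}/\sqrt{b}$. Taking $\beta \asymp t^{-1}\log p$ balances the smoothing error against the shell width, and integrating the density bound over an interval of length $t$ recovers $Ct\sqrt{\log p}$.

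The hard part is obtaining the $\sqrt{\log p}$ factor rather than the trivial $\sqrt{p}$ bound. A naive union bound over marginal densities $\Pr(u < Z_j \leq u+t) \leq t/\sqrt{2\pi b}$ already yields $\sqrt{p}$, which is far too weak. The key improvement comes from the fact that the weight vector $\partial_j F_\beta(z) = e^{\beta z_j}/\sum_k e^{\beta z_k}$ is a probability distribution concentrated on only about $\log p$ coordinates near the argmax of $z$, so effectively only that many indices contribute to the Gaussian surface measure of the sublevel set at any given level; this concentration is exactly what the Stein-based computation quantifies.
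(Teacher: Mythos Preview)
The paper does not give its own proof of this lemma: the preamble to Section~\ref{sec: useful lemmas} states that Lemmas~\ref{lem: maximal ineq}--\ref{lem: anticoncetration} are taken from \cite{CCK17}. So there is nothing in the paper to compare your argument against; the result is quoted as Nazarov's Gaussian anti-concentration inequality.

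On your sketch itself: Step~1 is a correct reduction, and Step~2 is the standard soft-max sandwich. The substance is entirely in Step~3, and there your argument has a genuine gap. The multivariate Stein identity gives you $\Ep[(Z_j-\Ep Z_j)h(Z)]=\sum_k\Sigma_{jk}\Ep[\partial_k h(Z)]$, not $\Ep[\partial_j h(Z)]$ by itself; with no assumption on $\Sigma$ beyond the diagonal lower bound $\Sigma_{jj}\ge b$, you cannot simply invert this to isolate the derivatives you need. The structural facts $\sum_j\partial_jF_\beta\equiv1$ and $\sum_{j,k}|\partial_{jk}^2F_\beta|\lesssim\beta$ do not, by themselves, combine with Stein to produce a uniform density bound of order $\sqrt{\log p}/\sqrt b$ for $F_\beta(Z)$; that bound \emph{is} essentially the content of the lemma, so asserting it here is close to circular. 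Your last paragraph gives good intuition (only $\sim\log p$ softmax weights matter) but does not convert into a proof.

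For reference, the proof in the cited literature proceeds without soft-max or Stein: differentiate $G(s)=\Pr(Y\le y+s)$ in $s$, write
\[
G'(s)=\sum_{j=1}^p \sigma_j^{-1}\phi\!\left(\frac{y_j+s}{\sigma_j}\right)\Pr\bigl(Y_k\le y_k+s,\ k\ne j\ \big|\ Y_j=y_j+s\bigr),
\]
and split the sum according to whether $(y_j+s)/\sigma_j$ exceeds $\sqrt{2\log p}$. Terms with large $(y_j+s)/\sigma_j$ are handled by the Gaussian tail (each contributes $O(1/(p\sqrt b))$), and the remaining terms are controlled via a short differential-inequality argument for $G$. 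If you want to salvage the Stein route, you would need to make the integration-by-parts step explicit and show precisely how the diagonal bound $\Sigma_{jj}\ge b$ alone yields the $\sqrt{\log p}$ density estimate.
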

\begin{lemma}[Rademacher Anti-Concentration Inequality]\label{lem: rademacher anticoncentration}
%Let $Z_1,\dots,Z_n$ be vectors in $\mathbb R^p$ with $p\geq 2$ and let $e_1,\dots,e_n$ be independent Rademacher random variables. Define $Y = n^{-1/2}\sum_{i=1}^n e_i Z_{i j}$ and assume that for some constants $b_1,b_2,B>0$, (i) $b_1n\leq \sum_{i=1}^n Z_{i j}^2 \leq b_2n$ for all $j = 1,\dots,p$ and (ii) $\|Z_i\|_{\infty}\leq B$ for all $i = 1,\dots,n$. Then for every $y\in\mathbb R^p$ and $t\geq B/\sqrt n$,
Let $Z_1,\dots,Z_n$ be vectors in $\mathbb R^p$ with $p\geq 2$ and let $e_1,\dots,e_n$ be independent Rademacher random variables. Define $Y = n^{-1/2}\sum_{i=1}^n e_i Z_{i}$ and assume that for some constants $b,B>0$, (i) $bn\leq \sum_{i=1}^n Z_{i j}^2$ for all $j = 1,\dots,p$ and (ii) $\|Z_i\|_{\infty}\leq B$ for all $i = 1,\dots,n$. Then for every $y\in\mathbb R^p$ and $t>0$,
$$
\Pr(Y\leq y + t) - \Pr(Y\leq y) \leq C (t + B/\sqrt n) \sqrt{\log p},
$$
%where $C$ is a constant depending only on $b_1$ and $b_2$.
where $C$ is a constant depending only on $b$.
\end{lemma}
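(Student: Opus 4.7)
The claim is an anti-concentration bound for the maximum of the Rademacher affine forms $L_j(\mathbf e) := n^{-1/2}\sum_i Z_{ij} e_i - y_j = Y_j - y_j$: we must show
\[
\Pr\bigl(\max_{1 \leq j \leq p} L_j(\mathbf e) \in (0, t]\bigr) \leq Ct\sqrt{\log p}
\]
for $t \geq B/\sqrt n$, with $C$ depending only on $b_1,b_2$. The plan is to invoke (and translate into our normalization) the anti-concentration theorem for maxima of Rademacher affine forms developed in \cite{OST18}, which is the Rademacher counterpart of Lemma \ref{lem: anticoncetration}.

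A naive approach via Gaussian coupling would replace $\mathbf e$ by a standard Gaussian vector $\mathbf g$ and apply Lemma \ref{lem: anticoncetration} to $\max_j L_j(\mathbf g)$. However, the best Kolmogorov-distance bound between the Rademacher and Gaussian distributions of $\max_j L_j$ scales like $(B^2\log^5(pn)/n)^{1/4}$, which is larger than the target $Ct\sqrt{\log p}$ when $t$ is near its lower bound $B/\sqrt n$. The essence of the \cite{OST18} result is that it gives the correct anti-concentration rate for Rademacher maxima directly, provided the window width $t$ is at least the per-step size $\max_j\|(Z_{1j},\dots,Z_{nj})\|_\infty/\sqrt n \leq B/\sqrt n$; below that threshold, Rademacher sums can genuinely concentrate on lattices and no bound of the form $Ct\sqrt{\log p}$ can hold, which is why the hypothesis $t \geq B/\sqrt n$ is necessary.

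The concrete steps are: (1) normalize by setting $a_{ji} = Z_{ij}/\sqrt n$ and verify from hypotheses (i) and (ii) that the rows satisfy $\|a_j\|_2^2 \in [b_1,b_2]$ and $\|a_j\|_\infty \leq B/\sqrt n$; (2) apply the \cite{OST18} anti-concentration bound to the $p$ shifted affine forms $a_j\cdot\mathbf e - y_j$ with the interval $(0,t]$, whose width $t \geq B/\sqrt n$ exceeds $\max_j \|a_j\|_\infty$; (3) read off that the resulting constant depends only on the ratio $\max_j\|a_j\|_2/\min_j\|a_j\|_2 \leq (b_2/b_1)^{1/2}$, hence only on $b_1,b_2$. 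The main obstacle will be extracting the precise anti-concentration statement from \cite{OST18}, whose headline results are phrased in the language of pseudorandom generators that fool polytopes under the uniform distribution on the cube; the anti-concentration inequality we need appears as an intermediate ``local hyperconcentration'' lemma, and its constants must be tracked carefully through the translation to our normalization.
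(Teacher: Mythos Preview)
Your approach is essentially the paper's: both rely on extracting an anti-concentration estimate from the proof of Theorem 7.1 in \cite{OST18}, after normalizing so that each row $a_j = (Z_{1j},\dots,Z_{nj})/\sqrt n$ has $\|a_j\|_2^2\in[b_1,b_2]$ and $\|a_j\|_\infty\le B/\sqrt n$.

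One point you should anticipate: what the \cite{OST18} argument actually delivers is not $Ct\sqrt{\log p}$ directly, but a two-term bound of the form
\[
\Pr(Y\le y+t)-\Pr(Y\le y)\le K t\sqrt{\log p}+\exp\bigl(\log p-K/t^2\bigr),
\]
with $K$ depending only on $b_1,b_2$. The paper then disposes of the second term by a short case analysis: if $2t^2(\log(1/t)+\log p)>K$ the claimed bound is trivial (the right-hand side $Ct\sqrt{\log p}$ exceeds $1$ for suitable $C$), while if $2t^2(\log(1/t)+\log p)\le K$ one has $\exp(\log p-K/t^2)\le\exp(-K/(2t^2))\le t$. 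Your step (3) (``read off the constant'') glosses over this; when you carry out step (2) you will find the exponential tail term and will need this extra argument to absorb it into $Ct\sqrt{\log p}$.
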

\begin{proof}
Since the result for $t\in(0,B/\sqrt n)$ follows from the result for $t=B/\sqrt n$, it suffices to consider the case $t\geq B/\sqrt n$. Next, by the proof of Theorem 7.1 in \cite{OST18}, there exists a constant $K$ depending only on $b$ such that for all $y\in\mathbb R^p$ and $t\geq B/\sqrt n$, we have
\begin{equation}\label{eq: odonnell anticoncentration}
\Pr(Y\leq y + t) - \Pr(Y\leq y) \leq K t \sqrt{\log p} + \exp(\log p - K/t^2).
\end{equation}
Here, since the asserted claim is trivial if $2t^2(\log(1/t) + \log p) > K$, we can assume that $2t^2(\log(1/t) + \log p) \leq K$, in which case the right-hand side of \eqref{eq: odonnell anticoncentration} is bounded from above by
$$
Kt\sqrt{\log p} + t\exp(-K/(2t^2))\leq Kt\sqrt{\log p} + t.
$$
The asserted claim follows.
\end{proof}

\section{Simulation Results}\label{sec: simulation results}
In this section, we present results of a small-scale Monte Carlo simulation study. The purpose of the simulation study is two-fold. First, it confirms that all approximation methods discussed in the previous section work well in finite samples. Second, it compares the relative performance of different methods in the high-dimensional regime. 

We generate random vectors $X_1,\dots,X_n$ by setting
\begin{equation}\label{eq: asymmetric case}
X_{i j} = F^{-1}(\Phi(Y_{i j})),\quad\text{for all }i=1,\dots,n\text{ and }j=1,\dots,p,
\end{equation}
where random vectors $Y_1,\dots,Y_n$ are sampled independently from the centered Gaussian distribution with covariance matrix $\Sigma$ such that $\Sigma_{j k} = \rho^{|j-k|}$ for all $j,k=1,\dots,p$, $\Phi$ is the cdf of the $N(0,1)$ distribution, and, depending on the experiment, $F^{-1}$ is the quantile function of either the Weibull or the Gamma distribution. For both distributions, we set the scale parameter to be one but we set the shape parameter $k$ to be either $2$, $3$, or $4$ in the case of the Weibull distribution and either $1$, $3$, or $5$ in the case of the Gamma distribution. Depending on the experiment, we set the correlation parameter $\rho$ to be either $0$, $0.25$, $0.5$, or $0.75$. Also, we set $n = 400$ and $p$ to be either $400$ or $800$.

We refer to \eqref{eq: asymmetric case} as the case of asymmetric distributions. In addition, since we obtain better bounds for the multiplier bootstrap with Rademacher weights if Condition S is satisfied, we also consider the case of symmetric distributions by setting
$$
X_{i j} = F^{-1}(\Phi(Y^1_{i j})) - F^{-1}(\Phi(Y^2_{i j})),\quad\text{for all }i=1,\dots,n\text{ and }j=1,\dots,p,
$$
where $Y_1^1,\dots,Y_n^1$ and $Y_1^2,\dots,Y_n^2$ are two independent copies of $Y_1,\dots,Y_n$. Since approximations are better in this case, to differentiate between different types of approximations, we replace the sample size $n=400$ by $n=100$ and we keep the same choices for all other parameters. 
Table \ref{tab:mom} reports the first four moments of $X_{ij}$'s.

For all types of the bootstrap, we calculate the critical value $c_{1-\alpha}^B$ using 500 bootstrap samples. To implement the third-order matching multiplier bootstrap, we sample the weights $e_i$ from the distribution constructed in Lemma \ref{lem: multipliers} with $\gamma = 0.2$. In all cases, we set the nominal level $\alpha = 0.1$. We estimate each rejection probability $\Pr(T_n > c_{1-\alpha}^B)$ using 20,000 simulations. 

The results of our simulations for the Weibull and the Gamma distributions are presented in Tables 1 and 2, respectively, and can be summarized as follows. First, we observe similar patterns in both tables. Second, all methods perform well in most cases even though we consider relatively small sample sizes, with the exception of the multiplier bootstrap with Rademacher weights, which tends to substantially over-reject in the case of the Gamma distributions, especially with small $k$.  Third, in the case of the asymmetric distributions, the empirical and the third-order matching multiplier bootstrap methods clearly outperform the multiplier bootstrap methods with Gaussian and Rademacher weights. This is especially clear, for example, in the case of the Gamma distribution with $k=3$ and $p=400$, where the rejection probabilities $\Pr(T_n > c_{1-\alpha}^B)$ are about $0.09-0.10$ for the empirical and the third-order matching multiplier bootstrap methods but are about $0.13-0.15$ for the multiplier bootstrap methods with Gaussian and Rademacher weights. Fourth, the multiplier bootstrap method with Gaussian weights improves and becomes comparable to the empirical and the third-order matching bootstrap methods in the case of symmetric distributions. However, the multiplier bootstrap method with Rademacher weights improves substantially more and in overall gives the best results among all methods in this case. An especially striking example of this conclusion is the case of the Gamma distribution with $k=1$ and $p = 800$, where the rejection probabilities $\Pr(T_n > c_{1-\alpha}^B)$ are about $0.10-0.11$ for the multiplier bootstrap method with Rademacher weights but are about $0.05-0.07$ for all other bootstrap methods. %All these findings are consistent with our theoretical results presented in the previous section.

\newpage

\begin{table}[H]\label{tab: 1}
\caption{\small{Results of Monte Carlo experiments for bootstrap rejection probabilities $\Pr(T_n > c_{1-\alpha}^B)$ with $\alpha = 10\%$} and 4 types of bootstrap: multiplier bootstrap with Gaussian weights (GB), empirical bootstrap (EB), multiplier bootstrap with Rademacher weights (RB), and third-order matching multiplier bootstrap (MB). The case of the Weibull distributions.
\medskip
}
 
\hspace*{-1cm}
\begin{tabular}{cccccccccc}
\hline\hline
 \tabularnewline
\multicolumn{9}{c}{Design 1: Asymmetric Distributions, $n=400$}\tabularnewline
 \tabularnewline

\hline\hline
\multirow{2}{*}{$k$} & \multirow{2}{*}{$\rho$} & \multicolumn{4}{c}{$p=400$}\vline & \multicolumn{4}{c}{$p=800$} \tabularnewline
\cline{3-10}
 & & GB & EB & RB & MB & GB & EB & RB & MB   \tabularnewline
\hline\hline

\multirow{4}{*}{$2$}
&.00    &   .117 &   .098 &   .125 &   .099 &  .125 &   .102 &   .133 &   .102\tabularnewline
&.25    &   .121 &   .100 &   .126 &   .099 &  .121 &   .097 &   .129 &   .097\tabularnewline
&.50    &   .114 &   .095 &   .122 &   .096 &  .124 &   .100 &   .133 &   .102\tabularnewline
&.75    &   .117 &   .098 &   .122 &   .099 &  .121 &   .099 &   .128 &   .099\tabularnewline
\cline{1-10}

\multirow{4}{*}{$3$}
&.00    &   .110 &   .105 &   .115 &   .105 &  .106 &   .100 &   .114 &   .101\tabularnewline
&.25    &   .105 &   .101 &   .110 &   .100 &  .107 &   .102 &   .114 &   .099\tabularnewline
&.50    &   .103 &   .098 &   .108 &   .098 &  .107 &   .101 &   .113 &   .100\tabularnewline
&.75    &   .106 &   .103 &   .112 &   .101 &  .104 &   .099 &   .112 &   .098\tabularnewline
\cline{1-10}

\multirow{4}{*}{$4$}
&.00    &   .096 &   .099 &   .101 &   .097 &  .095 &   .099 &   .102 &   .098\tabularnewline
&.25    &   .096 &   .099 &   .102 &   .098 &  .098 &   .102 &   .105 &   .103\tabularnewline
&.50    &   .093 &   .095 &   .097 &   .095 &  .100 &   .102 &   .107 &   .103\tabularnewline
&.75    &   .099 &   .101 &   .103 &   .101 &  .098 &   .102 &   .104 &   .100\tabularnewline
\hline\hline

 \tabularnewline
\multicolumn{9}{c}{Design 2: Symmetric Distributions, $n=100$}\tabularnewline
 \tabularnewline

\hline\hline

\multirow{2}{*}{$k$} & \multirow{2}{*}{$\rho$} & \multicolumn{4}{c}{$p=400$}\vline & \multicolumn{4}{c}{$p=800$} \tabularnewline
\cline{3-10}
 & & GB & EB & RB & MB & GB & EB & RB & MB   \tabularnewline
\hline\hline

\multirow{4}{*}{$2$} 
&.00  &     .088 &   .087 &   .110 &   .087 &    .082 &   .083 &   .108 &   .081\tabularnewline
&.25  &     .083 &   .082 &   .104 &   .082 &    .082 &   .083 &   .108 &   .081\tabularnewline
&.50  &     .089 &   .088 &   .109 &   .087 &    .082 &   .082 &   .109 &   .081\tabularnewline
&.75  &     .090 &   .090 &   .108 &   .089 &    .085 &   .084 &   .108 &   .084\tabularnewline

\cline{1-10}
\multirow{4}{*}{$3$}
&.00  &     .088 &   .090 &   .109 &   .088 &    .086 &   .086 &   .109 &   .084\tabularnewline
&.25  &     .086 &   .088 &   .108 &   .087 &    .085 &   .086 &   .109 &   .085\tabularnewline
&.50  &     .090 &   .090 &   .110 &   .089 &    .087 &   .088 &   .110 &   .086\tabularnewline
&.75  &     .093 &   .095 &   .109 &   .093 &    .089 &   .089 &   .111 &   .089\tabularnewline

\cline{1-10}
\multirow{4}{*}{$4$}
&.00  &     .086 &   .090 &   .108 &   .086 &    .085 &   .086 &   .108 &   .081\tabularnewline
&.25  &     .085 &   .087 &   .105 &   .084 &    .082 &   .081 &   .104 &   .080\tabularnewline
&.50  &     .090 &   .091 &   .109 &   .089 &    .088 &   .088 &   .111 &   .085\tabularnewline
&.75  &     .092 &   .092 &   .107 &   .090 &    .093 &   .092 &   .113 &   .091\tabularnewline

\hline\hline
\end{tabular}
\end{table}

\newpage

\begin{table}[H]\label{tab: 2}
\caption{\small{Results of Monte Carlo experiments for bootstrap rejection probabilities $\Pr(T_n > c_{1-\alpha}^B)$ with $\alpha = 10\%$} and 4 types of bootstrap: multiplier bootstrap with Gaussian weights (GB), empirical bootstrap (EB), multiplier bootstrap with Rademacher weights (RB), and third-order matching multiplier bootstrap (MB). The case of the Gamma distributions.
\medskip
}
 
\hspace*{-1cm}
\begin{tabular}{cccccccccc}
\hline\hline
 \tabularnewline
\multicolumn{9}{c}{Design 1: Asymmetric Distributions, $n=400$}\tabularnewline
 \tabularnewline

\hline\hline
\multirow{2}{*}{$k$} & \multirow{2}{*}{$\rho$} & \multicolumn{4}{c}{$p=400$}\vline & \multicolumn{4}{c}{$p=800$} \tabularnewline
\cline{3-10}
 & & GB & EB & RB & MB & GB & EB & RB & MB   \tabularnewline
\hline\hline

\multirow{4}{*}{$1$}
&.00  &   .143  &  .081 &   .166 &   .087 &   .157 &   .084 &   .190 &   .092\tabularnewline
&.25  &   .151  &  .085 &   .171 &   .093 &   .156 &   .081 &   .190 &   .091\tabularnewline
&.50  &   .142  &  .081 &   .167 &   .087 &   .155 &   .078 &   .185 &   .087\tabularnewline
&.75  &   .143  &  .082 &   .164 &   .088 &   .150 &   .080 &   .179 &   .088\tabularnewline
\cline{1-10}

\multirow{4}{*}{$3$}
&.00   &   .135  &  .096 &   .147 &   .098 &   .136 &   .092 &   .152 &   .096\tabularnewline
&.25   &   .131  &  .092 &   .143 &   .095 &   .140 &   .092 &   .155 &   .095\tabularnewline
&.50   &   .130  &  .092 &   .142 &   .092 &   .134 &   .092 &   .151 &   .096\tabularnewline
&.75   &   .129  &  .096 &   .140 &   .097 &   .130 &   .090 &   .144 &   .093\tabularnewline
\cline{1-10}

\multirow{4}{*}{$5$}
&.00   &   .123  &  .094 &   .134 &   .096 &   .126 &   .093 &   .136 &   .093\tabularnewline
&.25   &   .124  &  .095 &   .133 &   .096 &   .130 &   .094 &   .144 &   .097\tabularnewline
&.50   &   .118  &  .094 &   .130 &   .095 &   .130 &   .094 &   .142 &   .098\tabularnewline
&.75   &   .123  &  .094 &   .132 &   .096 &   .125 &   .092 &   .135 &   .093\tabularnewline
\hline\hline

 \tabularnewline
\multicolumn{9}{c}{Design 2: Symmetric Distributions, $n=100$}\tabularnewline
 \tabularnewline

\hline\hline

\multirow{2}{*}{$k$} & \multirow{2}{*}{$\rho$} & \multicolumn{4}{c}{$p=400$}\vline & \multicolumn{4}{c}{$p=800$} \tabularnewline
\cline{3-10}
 & & GB & EB & RB & MB & GB & EB & RB & MB   \tabularnewline
\hline\hline

\multirow{4}{*}{$1$} 
&.00   &   .070 &   .061 &   .107 &   .068 &  .064 &   .053 &   .110  &  .061\tabularnewline
&.25   &   .066 &   .059 &   .103 &   .064 &  .062 &   .053 &   .108  &  .062\tabularnewline
&.50   &   .071 &   .063 &   .108 &   .069 &  .063 &   .053 &   .108  &  .062\tabularnewline
&.75   &   .074 &   .066 &   .107 &   .072 &  .065 &   .055 &   .104  &  .062\tabularnewline

\cline{1-10}
\multirow{4}{*}{$3$}
&.00   &   .081 &   .078 &   .109 &   .079 &  .073 &   .070 &   .107  &  .071\tabularnewline
&.25   &   .080 &   .077 &   .107 &   .079 &  .076 &   .072 &   .109  &  .074\tabularnewline
&.50   &   .081 &   .077 &   .109 &   .080 &  .076 &   .074 &   .109  &  .076\tabularnewline
&.75   &   .087 &   .085 &   .111 &   .086 &  .082 &   .076 &   .112  &  .079\tabularnewline

\cline{1-10}
\multirow{4}{*}{$5$}
&.00   &   .081 &   .080 &   .105 &   .081 &  .077 &   .076 &   .107  &  .076\tabularnewline
&.25   &   .081 &   .079 &   .105 &   .079 &  .077 &   .075 &   .106  &  .076\tabularnewline
&.50   &   .083 &   .080 &   .107 &   .083 &  .082 &   .079 &   .111  &  .081\tabularnewline
&.75   &   .090 &   .088 &   .112 &   .090 &  .086 &   .084 &   .113  &  .084\tabularnewline

\hline\hline
\end{tabular}
\end{table}

\begin{table}[ht]
\centering
\caption{\small{The first four moments of marginal distributions used in the simulation study.}}
\label{tab:mom}

%\vspace{-1.5mm}
\begin{tabular}{cccccccccc}
\hline\hline\\[-7.5pt]
& & & \multicolumn{5}{c}{Weibull Distributions} \\[2.5pt]
\hline\hline
\multirow{2}{*}{$k$} & \multicolumn{4}{c}{Asymmetric Distributions} & & \multicolumn{4}{c}{Symmetric Distributions} \\
\cline{2-10}
 & 1st & 2nd & 3rd & 4th & & 1st & 2nd & 3rd & 4th \\ 
  \hline\hline
  2 & 0.886 & 1 & 1.329 & 2 & & 0 & 0.429 & 0 & 0.575 \\ 
  3 & 0.893 & 0.903 & 1 & 1.191 & & 0 & 0.211 & 0 & 0.127\\ 
  4 & 0.906 & 0.886 & 0.919 & 1 & & 0 & 0.129 & 0 & 0.048 \\
  \hline\hline
  \\[-7.5pt]
& & & \multicolumn{5}{c}{Gamma Distributions} \\[2.5pt]
\hline\hline
  \multirow{2}{*}{$k$} & \multicolumn{4}{c}{Asymmetric Distributions} & & \multicolumn{4}{c}{Symmetric Distributions} \\
\cline{2-10}
& 1st & 2nd & 3rd & 4th & & 1st & 2nd & 3rd & 4th \\ 
\hline\hline
1 & 1 & 2 & 6 & 24  & & 0 & 2 & 0 & 24 \\ 
  3 & 3 & 12 & 60 & 360 &  & 0 & 6 & 0 & 144\\ 
  5 & 5 & 30 & 210 & 1680  &  & 0 & 10 & 0 & 360 \\ 
   \hline
\end{tabular}
\end{table}

\end{document}